\documentclass[11pt,reqno]{amsart}

\pdfoutput=1
\usepackage{hyphenat}
\usepackage{LPPSS1_Macros}

\providebibmacro*{bbx:dashcheck}{}

\allowdisplaybreaks

\title[AS of the degree-one vortex in the abelian YMH model: Linear Theory]{Asymptotic stability of the degree-one vortex in the abelian Yang-Mills-Higgs model: Linear Theory}

\author[J. L\"uhrmann]{Jonas L\"uhrmann}
\address{Department of Mathematics and Computer Science, University of Cologne, Cologne, Germany}
\email{jonas.luehrmann@uni-koeln.de}

\author[J. M. Palacios]{Jos\'e M. Palacios}
\address{Institute of Mathematics, \'Ecole Polytechnique F\'ed\'erale de Lausanne (EPFL), Lausanne, Switzerland}
\email{jose.palaciosarmesto@epfl.ch}

\author[F. Pusateri]{Fabio Pusateri}
\address{Department of Mathematics, University of Toronto, Toronto, Ontario, Canada}
\email{fabiop@mail.math.toronto.edu}

\author[W. Schlag]{Wilhelm Schlag}
\address{Department of Mathematics, Yale University, New Haven, Connecticut, USA}
\email{wilhelm.schlag@yale.edu}

\author[S. Shahshahani]{Sohrab Shahshahani}
\address{Department of Mathematics and Statistics, University of Massachusetts Amherst, Amherst, Massachusetts, USA}
\email{sshahshahani@umass.edu}

\thanks{
J.L.\  was partially supported by NSF CAREER grant DMS-2235233.
F.P.\ is supported in part by a start-up grant from the University of Toronto and NSERC grants RGPIN-2018-0648 and RGPIN-2025-06419. J.M.P.\ was partially supported by NSERC grants RGPIN-2018-0648, and is also supported by the Swiss National Science Foundation grant 225701.
W.S.\ is partially supported by the United States NSF DMS-2350356. S.S. was supported by the Simons Foundation grant 639284.
}

\begin{document}

\begin{abstract}
We study the linearized dynamics near the degree--one vortex of the $(1+2)$--dimensional abelian Yang--Mills--Higgs model at the self--dual coupling, restricted to equivariant perturbations in the orthogonal gauge. The linearized operator is a selfadjoint matrix Schr\"odinger operator $\bfM$ on radial $L^2_{\mathrm{rad}}(\R^2;\R^4)$ with continuous spectrum $[1,\infty)$ and a two--dimensional internal mode at a unique gap eigenvalue $\lambda^2\in(0,1)$, as established in Part~I of our three-paper series on asymptotic stability of the ground state vortex. In this second part of the series, we prove linear estimates for $\bfM$ for applications in Part~III. Specifically, we prove dispersive and local-energy decay estimates, as well as a \emph{transference} relation which allows us to implement the space-time resonance method with respect to the flat Klein-Gordon operator in the nonlinear analysis in Part~III.

The engine for proving linear estimates for $\bfM$ in our approach is the distorted Fourier transform associated with $\bfM$. 
The construction of the distorted Fourier transform together with a detailed analysis of the underlying generalized eigenfunctions occupy the first half of this paper. For this we exploit the super-symmetric factorization of $\bfM$, and the diagonal structure of the super-symmetric partner operator, to relate the problem to the Weyl--Titchmarsh theory of two strongly singular scalar half--line Schr\"odinger operators. 
\end{abstract}

\maketitle

\vspace{-0.5em}

\setcounter{tocdepth}{2}

\tableofcontents

\section{Introduction}

This paper is the second part of a three-part series, cf.~\cites{LPPSS1PartI}{LPPSS3}, devoted to the asymptotic stability problem for equivariant perturbations of the degree-one vortex in the $(1+2)$--dimensional abelian Yang--Mills--Higgs (AYHM) model at the critical (self--dual) coupling. At this critical point the static energy admits a Bogomolny factorization, and finite--energy degree-one vortices are precisely the solutions of the associated first order Bogomolny equations. The aim of the present paper is to develop the linear theory needed for the nonlinear analysis carried out in the third part \cite{LPPSS3} of this series. The construction of the distorted Fourier transform relative to the linearized operator lies at the heart of this linear analysis. The description of the spectrum of the linearized operator, which was obtained in the first part \cite{LPPSS1PartI} of the series with the aid of rigorous numerics, is the starting point of the current paper.

\subsection{The vortex profiles and linearization}
The AYHM Lagrangian action is
\begin{equation}
    \calL_\lambda[\phi,A]:=\int_{\bbR^{1+2}}\Big(\frac{1}{4}F_{\mu\nu}F^{\mu\nu}+\frac{1}{2}\bfD_\mu\phi\overline{\bfD^\mu\phi}+\frac{\lambda}{8}(1-|\phi|^2)^2\Big)\ud x \,\ud t.
\end{equation}
Here $\phi$, the Higgs field, is a complex valued scalar field and $A$, the electromagnetic potential, is a real-valued $1$-form. $F:=\ud A$ denotes the curvature $2$-form, and $\bfD:=\nabla-iA$ the covariant derivative. Greek indices run over $0,1,2$ corresponding to the spacetime variables. Roman indices, run over $1,2$, and denote the spatial variables. The Einstein summation convention is in place, and indices are lowered and raised with respect to the Minkowski metric and its inverse.  In this work we focus exclusively on the choice of the coupling constant $\lambda=1$, known as the self-dual case. The corresponding Euler-Lagrange equations are
\begin{equation}\label{eq:AYMH1}
    \bfD^\mu\bfD_\mu\phi+\frac{1}{2}(1-|\phi|^2)\phi=0,\qquad \nabla^\mu F_{\mu\nu}+\Im\big(\overline{\phi}\bfD_\nu\phi\big)=0.
\end{equation}
An important feature of the Lagrangian action is its invariance under gauge transformations of the form
\begin{equation}
    (\phi,A)\mapsto (e^{i\chi}\phi,A+\ud \chi),\qquad \chi\in C^2(\bbR^{1+2}).
\end{equation}
As a consequence one needs to fix a gauge to arrive at a well-posed Cauchy formulation of the equations. Note that the gauge invariance above includes the case where $\chi$ is a constant.

A special class of solutions to \eqref{eq:AYMH1} are those that arise as critical points of the static energy functional
\begin{equation}
    \calE(\phi,A)=\int_{\bbR^{2}}\Big(\frac{1}{4}B^2+\frac{1}{2}\bfD_j\phi\overline{\bfD^j\phi}+\frac{1}{8}(1-|\phi|^2)^2\Big)\ud x.
\end{equation}
Here $B=F_{12}$ is the magnetic field, which completely determines the curvature form in $\bbR^2$. Remarkably, in the self-dual regime the energy functional admits a Bogomolny factorization, so that the critical points satisfy the first order equations 
\begin{equation}
    (\bfD_1+i\bfD_2)\phi=0,\qquad B-\frac{1}{2}(1-|\phi|^2)=0.
\end{equation}
Finite energy solutions of these equations have been completely classified in \cite{bookJT}. We refer the reader to \cite{bookJT,bookManSut}, as well as the introduction to our companion works \cites{LPPSS1PartI}{LPPSS3} for more details. In this article we are interested in the non-trivial ground state, the degree-one vortex, which  in polar coordinates and the Coulomb gauge $\nabla^jA_j=0$ takes the form
\begin{equation}
\underline{\Phi}(r,\theta)=e^{i\theta}U(r),
\qquad
\underline{A}(r,\theta)=a_\theta(r)\,\ud\theta.
\end{equation}
The profiles $(U,a_\theta)$ solve the Bogomolny system
\begin{equation}\label{eq:Ua_bogo}
U'(r)=\frac{1-a_\theta(r)}{r}U(r),
\qquad
a_\theta'(r)=\frac{r}{2}\bigl(1-U(r)^2\bigr),
\end{equation}
with boundary conditions $U(0)=0$, $a_\theta(0)=0$, $U(r)\to1$, $a_\theta(r)\to 1$ as $r\to\infty$. They satisfy the asymptotics
\begin{align}
U(r)& = \uprz\, r-\frac{\uprz}{8}r^3+O(r^5), & &  a_\theta(r)= \frac{1}{4}r^2-\frac{\uprz^2}{8}r^4+O(r^6),  & &   r\to 0,\qquad \label{Uaexp0}
\\ 
U(r)&= 1-mr^{-\frac12}e^{-r}+O(r^{-\frac32}e^{-r}), & &    a_\theta(r)= 1-mr^{1/2}e^{-r}+O(r^{-\frac12}e^{-r}) , & &   r\to\infty,\qquad \label{Uaexpinfty}
\end{align}
for some constants $\uprz,m>0$, cf. \cite{LPPSS1PartI}.
In the context of the time-dependent equations~\eqref{eq:AYMH1}, an important question is the stability of the ground state. A key step in this direction is the analysis of the linearized equations about the vortex. As shown for instance in \cite{LPPSS3}, in Stuart's gauge, cf. \cite{Stu}, $\nabla^j A_j=\Im(\overline{\underline{\Phi}}\phi)$, and under the equivariance ansatz
\begin{equation}
    \phi(t,r,\theta)\equiv \big(U(r)+\alpha(t,r)+i\beta(t,r)\big)e^{i\theta},\qquad A=\eta_0(t,r)\,\ud t+\eta_r(t,r)\,\ud r+ \big(\eta_\theta(t,r)+a_\theta(r)\big)\,\ud \theta,
\end{equation}
the linearized equations take the form
\begin{equation}\label{eq:lin_sys_intro}
(\pt^2+\bfM)\bmr=\bmN,
\qquad
\bfM:=\begin{pmatrix}\bfL&0\\0&\bfL\end{pmatrix},
\qquad
\bfL:=\begin{pmatrix}L_1&-2U'\\-2U'&L_2\end{pmatrix}.
\end{equation}
Here $\bfN$ denotes the nonlinearity, $\bmr^t:=(\alpha,r^{-1}\eta_\theta,\beta,-\eta_r)$, and the equations \eqref{eq:lin_sys_intro} are coupled to an elliptic constraint 
\begin{equation}
    (-\Delta+U^2)\eta_0=\calN_{\eta_0}
\end{equation}
which allows one to recover $\eta_0$ from the nonlinearity $\calN_0$, which in turn involves $\bmr$. The operators $L_1$ and $L_2$ are the radial Schr\"odinger operators
\begin{equation}\label{eq:bL1L2}
L_1:=-\Delta+b(r)^2-\frac{\partial_r a_\theta(r)}{r}+U(r)^2,
\qquad
L_2:=-\Delta+\frac{1}{r^2}+U(r)^2,
\end{equation}
where
\begin{equation}\label{eq:bdef1}
     b(r):=\frac{1-a_\theta(r)}{r}.
\end{equation}
A nice feature of Stuart's gauge is that the operator $\bfM$ has a block diagonal structure with identical diagonal blocks. Moreover, in this gauge, the elliptic equation for $\eta_0$ is decoupled from the evolution equation for $\bmr$ at the linear level. But perhaps most significantly, as described in \cite[Section 2]{Stu}, the motivation for the choice of gauge is that formally it makes the perturbation orthogonal to the zero modes corresponding to the gauge invariance of the problem. The only other invariances of equations~\eqref{eq:AYMH1} are those generated by the action of the Poincar\'e group. Since spatial translations do not preserve the equivariance ansatz, one then expects that in Stuart's gauge zero is not an eigenvalue of the linearized operator. This is indeed the case. In fact with the aid of rigorous numerics, \cite[Theorem~1.1]{LPPSS1PartI} established the following spectral theorem for~$\bfM$.

\begin{theorem}[Spectrum of the linearized operator]\label{maintheo1}
Let $\bfM$ be the self-adjoint operator defined by \eqref{eq:lin_sys_intro}--\eqref{eq:bL1L2} on $(L^2_{\mathrm{rad}}(\R^2))^4$. Then:
\begin{itemize}
\item[(i)] The restriction $\bfM\vert_{\mathrm{Ran\,}\bfP_c}$ has purely absolutely continuous spectrum $[1,\infty)$, where $\bfP_c$ denotes the projection onto the continuous spectrum. In particular, the threshold $1$ is neither an eigenvalue nor a resonance. 
\item[(ii)] $\bfM$ has a unique gap eigenvalue of multiplicity two (an internal mode), which we denote by $\lambda^2\in(0,1)$ (numerically $\lambda^2\approx 0.7774$). The associated eigenspace is spanned by
\begin{align*}
    \bmY_1 := \lambda^{-1} (U\psi, -\psi',0,0)^T, \qquad \bmY_2 := \lambda^{-1} (0, 0, U\psi , -\psi')^T, \qquad \bfM \bmY_j = \lambda^2\bmY_j,
\end{align*}
where $\psi$ is the $L^2_{r \ud r}$-normalized\footnote{This means $\int_0^\infty \psi(r)^2r\, dr=1$ without a $2\pi$ factor. That $\|\bmY_1\|_{L^2_{r \ud r}} = \|\bmY_2\|_{L^2_{r \ud r}} = 1$ follows from the eigenvalue equation and integration by parts. } radial ground state of $-\Delta + U^2$ with energy $\lambda^2$.
\end{itemize}

\end{theorem}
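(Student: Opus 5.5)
Since $\bfM=\mathrm{diag}(\bfL,\bfL)$, it suffices to prove the corresponding statements for the $2\times2$ operator $\bfL$ on $(L^2_{\mathrm{rad}}(\R^2))^2$: the spectrum of $\bfM$ is that of $\bfL$ with doubled multiplicities, which accounts for the two-dimensional eigenspace spanned by $\bmY_1,\bmY_2$. The central tool is the supersymmetric factorization coming from the Bogomolny structure \eqref{eq:Ua_bogo}. I would write $\bfL=\calD^*\calD$ for a first-order operator $\calD$ built from $\partial_r$, $U$, $U'/U$ and $b$, and I would check that the partner $\calD\calD^*$ is diagonal, $\mathrm{diag}(-\Delta+U^2,\ -\Delta+\tfrac{1}{r^2}\cdot(\ldots)+U^2)$. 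More concretely, I expect the partner to be $-\Delta+U^2$ on radial functions together with a scalar operator of the form $-\partial_r^2-\tfrac1r\partial_r+V_2(r)$, where $V_2$ carries an angular-momentum type singularity at $r=0$. The identity is verified by direct computation using $U'=bU$ and $a_\theta'=\tfrac r2(1-U^2)$.

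Intertwining, $\calD\bfL=(\calD\calD^*)\calD$, gives $\sigma(\bfL)\setminus\{0\}=\sigma(\calD\calD^*)\setminus\{0\}$, with the same multiplicities of eigenvalues, and it transports the absolutely continuous parts and threshold behavior. I would exclude $0$ by showing $\ker\calD=\{0\}$ in $L^2$; this follows from the explicit ODE solutions, which fail to be $L^2$ at $0$ or at $\infty$. The statement then reduces to two scalar problems.

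(a) The scalar operators have potentials tending to $1$ exponentially fast by \eqref{Uaexpinfty}. Weyl's theorem then gives essential spectrum $[1,\infty)$, and standard Weyl–Titchmarsh theory for short-range half-line operators gives absolute continuity on $(1,\infty)$ with no embedded eigenvalues. Threshold regularity at energy $1$ is obtained by showing that the Jost-type solution at energy $1$, which is asymptotically constant, is not the regular solution at $0$. This is a nonvanishing Wronskian condition.

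(b) For the gap spectrum, the second scalar operator has potential $\ge1$. I expect $V_2\ge 1$ by a positivity argument using $U<1$ and $a_\theta<1$, so it has no eigenvalues below $1$. The operator $-\Delta+U^2$ has potential $U^2\in[0,1)$, which is attractive relative to $1$, so in 2D it has at least one eigenvalue below $1$. Uniqueness of that eigenvalue, together with its location $\lambda^2\approx0.7774$, is established through Sturm oscillation counting on the ground state $\psi$ combined with validated numerics: enclose $U$ rigorously and bound the number of zeros of the energy-$1$ solution. Pulling $\psi$ back through $\calD^*$ gives the eigenfunction $\calD^*(\psi,0)^T=(U\psi,-\psi')^T$ up to normalization, matching $\bmY_1$. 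The normalization follows from $\|\calD^*\psi\|^2=\langle\calD\calD^*\psi,\psi\rangle=\lambda^2$.

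The main obstacle is the set of quantitative spectral facts, namely uniqueness of the gap eigenvalue and absence of a threshold resonance at $1$. These depend on the specific profiles $U,a_\theta$, which are known only through the ODE, and so require computer-assisted enclosures. To settle them I would first try interval-arithmetic ODE integration with rigorous tail and origin asymptotics spliced in using \eqref{Uaexp0}–\eqref{Uaexpinfty}.
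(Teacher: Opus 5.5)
Your architecture is the one the paper relies on; the theorem is imported from Part~I. It consists of the Bogomolny factorization $\bfL=\calB^*\calB$ and the diagonal partner $\calB\calB^*=\mathrm{diag}(\calL_1,\calL_2)$, with $\calL_2=-\Delta+U^2$ and $\calL_1=-\Delta+V_1$, $V_1=\frac{(2-a_\theta)^2}{r^2}+\frac12(1+U^2)$. It then transfers the spectrum away from $0$ and uses validated numerics for the $-\Delta+U^2$ channel: one gap eigenvalue, and $\widetilde c_{1,*}\neq0$ in Lemma~\ref{lem_p2_t2}, i.e.\ no threshold resonance. Your pull-back $\lambda^{-1}\calB^*(0,\psi)^T=\lambda^{-1}(U\psi,-\psi')^T$ and its normalization are also correct.

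Your argument breaks in the other, angular-momentum, channel $\calL_1$.

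\begin{itemize}
\item Its potential does not tend to $1$ exponentially: $V_1=1+r^{-2}+O(r^{-1/2}e^{-r})$. So the energy-$1$ solutions at infinity behave like $r^{\pm1}$, not like $1$ and $\log r$, and the resonance criterion you describe does not apply there. Absolute continuity is unaffected, since $r^{-2}$ is integrable at infinity.
\item More importantly, $V_1\ge1$ does not follow from $U<1$ and $a_\theta<1$. Write $V_1-1=\frac{(2-a_\theta)^2}{r^2}-\frac12(1-U^2)$. Your bounds make the first term exceed $r^{-2}$, but they also make the second term strictly positive. So for $r>\sqrt2$ the sign is a quantitative comparison of the two profiles, which you have not supplied.
\end{itemize}

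The missing idea is the explicit threshold solution furnished by the static equations: $(\calL_1-1)(rU)=0$, equivalently $\calH_1(r^{3/2}U)=r^{3/2}U$, as in the proof of Lemma~\ref{lem_p1_t1}. This function has three properties:
\begin{itemize}
\item $rU>0$ on $(0,\infty)$;
\item it is the regular branch at the origin, $rU\sim U'(0)r^2$, matching the $4r^{-2}$ singularity of $V_1$;
\item it grows like $r$ at infinity, rather than decaying like $r^{-1}$.
\end{itemize}
Sturm oscillation then shows that $\calL_1$ has no spectrum below $1$ and no threshold resonance, without any numerics. Alternatively, use the ground-state substitution $f=rU\,h$, which gives $\langle(\calL_1-1)f,f\rangle=\int_0^\infty (rU)^2|h'|^2\,r\,\ud r$ for $f\in C_c^\infty(0,\infty)$.
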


Theorem~\ref{maintheo1} serves as the starting point of this work, where we establish  decay estimates for the linear  evolution for $\bfM$. In our approach, the distorted Fourier transform for $\bfM$ plays a crucial role in deriving these estimates. The decay estimates are used in our companion paper \cite{LPPSS3} to establish nonlinear asymptotic stability of the degree-one vortex under regular and localized equivariant perturbations. We will discuss the distorted Fourier transform and the linear estimates in the following subsection.

\subsection{Main results}
The distorted transform provides (i) a diagonalization of $\bfM$ on $\mathrm{Ran}(\bfP_c)$, with $\bfP_c$ the projection onto the continuous spectral subspace, (ii) a functional calculus for $\bfM$, and (iii) an oscillatory integral representation of the linear flow against an explicit spectral measure. In this work we prove decay estimates using a detailed asymptotic analysis of this oscillatory integral representation.  The following theorem summarizes our findings about the distorted Fourier transform. We use $\calM_{n}(\bbC)$ to denote the space of $n\times n$ complex matrices. 

\begin{theorem}\label{propdFT}
There exists a  function $E:[0,\infty)\times[0,\infty)\to\calM_2(\bbC)$, $\small{E=\pmat{E_{1,1}&E_{2,1}\\ E_{1,2}&E_{2,2}}}$ such that with
\begin{equation}
    \begin{aligned}
        \bfE(r,\freq) := 
        \begin{pmatrix} 
            E(r,\freq) & 0_{2\times 2 } \\ 0_{2 \times 2} & E(r,\freq)
        \end{pmatrix}  , 
    \end{aligned}
\end{equation}
and the distorted Fourier transform $\wtilcalF$ relative to $\bfM$ and its inverse $\wtilcalF^{-1}$ defined as
\begin{equation}
    \wtilcalF[ \bmg ](\freq) \equiv \widetilde{\bmg}(\freq) := \int_0^\infty \overline{\bfE(r,\freq)}^{t}  \bmg(r) r\, \ud r, 
\end{equation}
and
\begin{equation}
    \wtilcalF^{-1}[\bmh](r) := \int_0^\infty \bfE(r,\freq)  \bmh(\freq) \freq\, \ud \freq,
\end{equation}
where $\bmg \in L^2_{r \ud r}(\bbR_+;\bbC^4)$ and $\bmh\in L^2_{k \ud k}(\bbR_+;\bbC^4)$, the following properties are satisfied:
\begin{enumerate}
\item Diagonalization of $\bfM$:
\begin{equation}
    \wtilcalF[ \bfM \bmg ](\freq) = (1+\freq^2) \widetilde{\bmg}(\freq).
\end{equation}
\item Fourier inversion:
\begin{equation}
    \bfP_c = \wtilcalF^{-1} \circ \wtilcalF.
\end{equation}
\item Plancherel: 
\begin{equation}
    \bigl\| \widetilde{\bmg} \bigr\|_{L^2_{\freq \ud \freq}(\bbR_+;\bbC^4)} = \bigl\| \bfP_c\,\bmg \bigr\|_{L^2_{r \ud r}(\bbR_+;\bbC^4)}.
\end{equation}
\item $rk\leq 1$ asymptotics: The components $E_{\ell,m}(r,k)$ of $E(r,k)$ satisfy the following asymptotics when $rk\leq 1$:
\begin{equation}\label{eq:Eklein}
E(r,k)=
\pmat{\dfrac{k r}{\jap{r}}\phi_{1,1}(r,k) &  \dfrac{\log(2+k)}{\jap{k}\jap{\log k}}\dfrac{r\log(2+r)}{\jap{r}}\phi_{2,1}(r,k)\\[3ex]
\dfrac{k r^3}{\jap{r}^2}\phi_{1,2}(r,k) & \dfrac{\log(2+k)}{\jap{\log k}}\dfrac{r}{\jap{r}^{2}}\jap{k}\big(1+(rk)^2\log(2+r)\big)\phi_{2,2}(r,k)},
\end{equation}
where $\phi_{\ell,m}$ satisfy $|(r^{-1}\partial_k)^j\phi_{\ell,m}(r,k)|\lesssim 1$, $j=0,1,2$.
\item $rk\geq 1$ asymptotics: The components $E_{\ell,m}(r,k)$ of $E(r,k)$ satisfy the following asymptotics when $rk\geq 1$: 
\begin{equation}\label{eq:Egross}
E(r,k)=\sum_{\pm}e^{\pm ikr}
\pmat{\dfrac{k^{\frac{1}{2}}}{r^{\frac{1}{2}}\jap{k}}\psi_{1,1}^\pm(r,k)&\dfrac{r^{\frac{1}{2}}}{\jap{r}k^{\frac{1}{2}}\jap{k}}\psi_{2,1}^\pm(r,k)\\[3ex]
\dfrac{r^{\frac{1}{2}}}{\jap{r}k^{\frac{1}{2}}\jap{k}}\psi_{1,2}^\pm(r,k) & \dfrac{k^{\frac{1}{2}}}{r^{\frac{1}{2}}\jap{k}}\psi_{2,2}^\pm(r,k)},
\end{equation}
where $\psi_{\ell,m}^\pm$ satisfy $|(k\partial_k)^j\psi_{\ell,m}(r,k)|\lesssim 1$, $j=0,1,2$.
\end{enumerate}
\end{theorem}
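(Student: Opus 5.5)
The plan is to work with the $2\times 2$ block $\bfL$ only, since $\bfM=\mathrm{diag}(\bfL,\bfL)$ and all claims for $\bfM$ follow componentwise. Following the abstract, the key structural input is a supersymmetric factorization $\bfL = \bfA^*\bfA$ (up to the shift by $1$), where $\bfA$ is a first order matrix operator built from $U$, $b$ and $\partial_r$ and dictated by the Bogomolny system \eqref{eq:Ua_bogo}. The partner operator $\bfA\bfA^*$ should then be \emph{diagonal}, $\mathrm{diag}(H_1,H_2)$, with two scalar half-line Schr\"odinger operators $H_j=-\partial_r^2-\frac1r\partial_r+\frac{\nu_j^2}{r^2}+V_j(r)$. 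These are strongly singular at $r=0$ and have exponentially decaying $V_j$ (up to constant shifts) at infinity, by \eqref{Uaexp0}--\eqref{Uaexpinfty}. First I verify the intertwining $\bfA\bfL=\bfA\bfA^*\bfA$ using \eqref{eq:Ua_bogo}. I also check that $\bfA\bfA^*$ has no eigenvalues in $[0,1)$ other than those corresponding to $\lambda^2$, and that $\ker\bfA$ is trivial in $L^2$. This gives $\bfP_c$ for $\bfL$ as $\bfA^*(\cdot)\bfA$ conjugated spectral measures of the $H_j$, normalized by the factor $(1+k^2)^{-1}$.

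For each $H_j$ I apply Weyl--Titchmarsh theory with spectral parameter $z=1+k^2$. I build the regular (Frobenius) solution $\phi_j(r,z)\sim r^{\nu_j}$ at $0$, analytic in $z$, and the Jost solutions $f_j^\pm(r,k)\sim r^{-1/2}e^{\pm ikr}$ at infinity via Volterra iteration, using the exponential decay of $V_j$. The spectral measure is then $d\rho_j=|W(\phi_j,f_j^+)|^{-2}\,(\dots)\,dk$, where the Jost function is expanded as $k\to0$ and $k\to\infty$. The threshold nonresonance from Theorem~\ref{maintheo1}(i) is what keeps $W$ nonzero at $k=0$. Because the $\nu_j^2/r^2$ potential in dimension two produces logarithms at $k\to0$, this is where the factors $\log(2+k)/\jap{\log k}$ enter. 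I then define $E(r,k)$ columnwise as $(1+k^2)^{-1/2}\bfA^*\big(\phi_j(\cdot,k)e_j\big)$, multiplied by the square root of the spectral density. Properties (1)--(3) then follow from the scalar Weyl--Titchmarsh Plancherel/inversion theorems transported through the partial isometry $\bfA(\bfL)^{-1/2}$ restricted to $\mathrm{Ran}\bfP_c$.

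For the asymptotics (4)--(5), in the region $rk\le1$ I expand $\phi_j(r,z)$ as a power series in $r^2 z$ when $r\lesssim1$. For $1\lesssim r\le 1/k$, I compare with the free solutions, namely Bessel $I$/$K$-type functions of $\sqrt{z}$ in the exponentially decaying regime, together with the $\log$ solution. Applying $\bfA^*$ (one derivative plus multiplication by $U,b$) produces the stated $r$-weights, for example the extra $r^2/\jap{r}$ for $E_{1,2}$. In the region $rk\ge1$ I write $\phi_j=a_j(k)f_j^++\overline{a_j(k)}f_j^-$ and apply $\bfA^*$ to the Jost solutions, with symbol-type bounds for $(k\partial_k)^j$ coming from the Volterra iteration. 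The off-diagonal entries are smaller because $\bfA^*$ mixes components only through decaying coefficients, giving the factor $r^{1/2}\jap{r}^{-1}k^{-1/2}$.

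The main obstacle I expect is the small-$k$ analysis of the two singular scalar problems. Here one must obtain uniform derivative bounds $(r^{-1}\partial_k)^j$ and $(k\partial_k)^j$ across the matching region $r\sim1/k$ with the logarithmic behavior of the Jost function. I would handle this by a perturbative Volterra construction around explicit Hankel/Bessel solutions of the free singular operator, with $\log k$ tracked explicitly.
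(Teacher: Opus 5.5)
Your route to (1)--(3) is sound but differs from the paper's. You transport the scalar Weyl--Titchmarsh transforms of $\calL=\calB\calB^*$ through the unitary $\calB\bfL^{-1/2}$. Note that $\bfL=\calB^*\calB$ holds exactly, with no shift, and that you still must check that $\calB\calB^*$ is the realization selecting the regular branch $r^{1/2}$ of the limit-circle operator $\calH_2$. This is the Darboux route the paper only mentions formally. The paper instead computes the matrix Green's function and its jump via Stone's formula, which gives $D_+=\mathrm{diag}(2i\jap{k}^2\overline{a_1},2i\jap{k}^2\overline{a_2})$ and an explicit resolvent kernel reused later. Both routes give the same $E=\jap{k}^{-1}\calB^*(\cdots)$ up to unimodular factors in $k$.

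The genuine gap is in the scalar analysis behind (4)--(5). From $\calB\calB^*$ one gets $\calL_1=-\Delta+(2-a_\theta)^2r^{-2}+\frac12(1+U^2)$, so the first channel has angular momentum $2$ at $r=0$ but $1$ at $r=\infty$. In your notation, exponential decay of $V_1-1$ forces $\nu_1=1$. But then $V_1\sim3r^{-2}$ at the origin, and the regular solution behaves like $r^{2}$, not $r^{\nu_1}$. If you take $\nu_1=2$ instead, then $V_1-1\sim-3r^{-2}$ at infinity.

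This scale-invariant tail is not perturbative at low energy. A Volterra iteration around $\sqrt{r}H_2^{(1)}(kr)$ produces a relative correction of size $(kr)^{-1}$. That is not small at $kr\sim1$ and meaningless below it, so it yields no accurate Jost solution anywhere in the overlap $kr\lesssim1$ with the regular solution. Moreover, the order-$2$ model encodes the high-energy law $a_1\sim k^{-2}$, whereas in fact $a_1(k)\simeq c_*k^{-1}$ as $k\to0$. It is this low-energy law that produces the factor $k$ in the first column of \eqref{eq:Eklein}.

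The missing idea is to keep the inverse-square term in the comparison operator. Concretely:
\begin{enumerate}
\item Expand in $k^2$ around the explicit zero-energy solution $\Phi_1^{(0)}=\uprz^{-1}r^{3/2}U$ supplied by the Bogomolny equations. It interpolates between $r^{5/2}$ and $c_*r^{3/2}$ and so carries the change of index.
\item Build $\Psi_1$ on $\sqrt{r}H_1^{(1)}(kr)$ down to $R_k=k^{-1/2}$ and match there.
\item Use the order-$2$ models $J_2$, $H_2^{(1)}$ only for $k\ge1$, where the $3r^{-2}$ tail costs just $O(k^{-1})$.
\end{enumerate}

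Three further statements in your sketch are wrong. First, the off-diagonal entries of $E$ are not controlled by decaying coefficients. The off-diagonal entries of $\calB^*$ are multiplication by $U\to1$. The ratio $k^{-1}$ between the off-diagonal and diagonal weights in \eqref{eq:Egross} (for $r\ge1$) comes from $\partial_r$ acting on the oscillating Jost solutions in the diagonal entries. Hence for $k<1$ the off-diagonal entries are the larger ones; compare also the weights $kr^3\jap{r}^{-2}$ and $kr\jap{r}^{-1}$ in \eqref{eq:Eklein}.

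Second, the logarithms come from the degree-zero channel $\calH_2$: the small-argument logarithm of $H_0^{(1)}$ and the branch $r^{1/2}\log r$ of $\Phi_2^{(0)}$. Threshold non-resonance there means $\widetilde c_{1,*}\neq0$, and it does more than keep a Wronskian nonzero: it makes $|a_2(k)|\simeq\jap{\log k}$ grow as $k\to0$. This growth is precisely the factor $\jap{\log k}^{-1}$ in the second column.

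Third, in $1\lesssim r\le k^{-1}$ the comparison solutions are $J_\nu(kr)$, $Y_\nu(kr)$ at small argument, not $I/K$ functions of $\sqrt z$. Above the threshold nothing decays exponentially.
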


The full power of the distorted Fourier transform, in particular the asymptotics of the Fourier basis, are not be used in our nonlinear analysis in \cite{LPPSS3}. In fact, the distorted Fourier transform  appear there only in the ODE analysis for the contribution of the internal mode to the evolution. Instead, our main use of the distorted Fourier transform is to derive linear decay estimates that are used in an essential way in \cite{LPPSS3}.

We now turn to the description of the linear decay estimates which we divide into two categories: (1) Dispersive, or pointwise, and local energy decay estimates. (2) Transference relations. For (1) the statements are more or less standard and we will not replicate them in the introduction. They are proved in Section~\ref{sec:linear_decay_estimates}. Broadly speaking there are two types of estimates here. First, are the dispersive or pointwise bounds. They are derived as corollaries of the $L^1\to L^\infty$ dispersive estimate for the linear evolution proved in Proposition~\ref{propKG1dec}. In \cite{LPPSS3} they are used in the form of Corollaries~\ref{cor:Lpdisp1} and \ref{cor34KG1dec}, and most crucially in  estimating spatially localized terms appearing in the analysis of the internal mode contribution.  Estimate \eqref{eq:Linftyweighteddispersivebound1} in Corollary~\ref{cor:Lpdisp1} are also used in \cite{LPPSS3} to control the contribution of the initial data. The second type of estimates are local energy decay estimates. In \cite{LPPSS3} they are used crucially in estimating various spatially localized errors that result from the difference of $\bfM$ and $-\Delta+1$. They are also used in treating some spatially localized errors in the ODE analysis for the contribution of the internal mode. The corresponding results are given in Lemmas~\ref{locdec_lem2},~\ref{locdec_lem3},~\ref{lem:LEDallfereqs1},~\ref{lem:ILEDDuhamel}, and~\ref{lem:LAP1}. Finally, for reference in \cite{LPPSS3}, the known analogues of some of the dispersive and local energy decay estimates for the standard Klein-Gordon equation on $\bbR^{1+2}$ are recalled in the appendix. 

The second type of estimates for which Theorem~\ref{maintheo1} is used are the transference estimates. Since these are less standard we provide a bit of context before stating the result. The reader is referred to  \cite[Section~2]{LPPSS3} for a more detailed discussion of how they arise in the nonlinear analysis. In the setup of \cite{LPPSS3} we derive pointwise decay estimates by controlling (see Lemma~\ref{lem:decay})
\begin{equation}\label{eq:dxiprofileintro1}
    \|\nabla_\xi e^{-it\jap{\xi}}(2i\jap{\xi})^{-1}(\partial_t+i\jap{\xi})\hat{\bmu} \|_{L^2_\xi(\bbR^2)}.
\end{equation}
Here the Fourier transform is the standard, not distorted, Fourier transform on $\bbR^2$ and $\bmu=e^{i\theta}\bfP_c\bmv$, where
\begin{equation}\label{eq:lin_sys_intro2}
    (\partial_t^2+\bfM)\bfP_c\bmv=\bfP_c\bmF,
\end{equation}
with $\bmF$ spatially localized and enjoying favorable time decay. To be precise we need to consider the real and imaginary parts of $e^{i\theta}\bfP_c\bmv$ separately. See \cite[Section~2]{LPPSS3} and the statement of Theorem~\ref{thm:transference1} below for exactly how this arises in the analysis of \eqref{eq:lin_sys_intro}. The quantity $\bmf:=e^{-it\jap{D}}(2i\jap{D})^{-1}(\partial_t+i\jap{D})\bmu$, $\jap{D}:=\sqrt{-\Delta+1}$, is  referred to as the profile (associated to $-\Delta+1$ not $\bfM$) of $\bmu$ in the language of the space-time resonance approach.  In relation to the classical vector field method, estimating $\|\nabla_\xi\hat\bmf\|_{L^2_\xi(\bbR^2)}$ is in effect analogous to controlling the $L^2_x(\bbR^2)$ norm of one Lorentz boost vector field applied to $\bmu$. The quantity in \eqref{eq:dxiprofileintro1} is in terms of the profile of $\bmu$ associated with $-\Delta+1$ while $\bfP_c\bmv=e^{-i\theta}\bmu$ satisfies \eqref{eq:lin_sys_intro2} which has $\bfM$ as the linear operator. In our approach, we obtain control of \eqref{eq:dxiprofileintro1} by \emph{transferring} the estimate to the estimate for the profile defined with respect to $\bfM$, that is, to estimating
\begin{equation}
    \|\partial_k e^{-it\jap{k}}(2i\jap{k})^{-1}(\partial_t+i\jap{k})\widetilde\calF[\bfP_c\bmv] \|_{L^2_{k\ud k}}.
\end{equation}
The final resulting linear estimate, which is used in \cite{LPPSS3}, is recorded in our next theorem. While our proof relies on the distorted Fourier transform the result is stated without reference to it.  Note that in the statement we have introduced factors of $e^{i\theta}$ to view various functions of $r$ as equivariant functions on $\bbR^2$. This is the format that is most directly applicable in \cite{LPPSS3}. As above, we  use the notation $\hat{\bmg}$ to denote the standard Fourier transform of a function $\bmg$ on $\bbR^2$. We also use the notation $L_x^{\infty-}$ to denote the $L^p$ norm on $\bbR^2$ for $1\ll p<\infty$.
\begin{theorem}\label{thm:transference1}
    Suppose 
    \begin{equation} \label{equ:weighted_estimate_black_box_evol_equ}
        \left\{ \begin{aligned}
            &(\pt^2 + \bfM) \bfP_c \bmv = \bfP_c \bmF, \quad (t,r) \in [0,T] \times [0,\infty), \\
            &\bigl( \bfP_c \bmv(0), \bfP_c \pt \bmv(0) \bigr) = (\bma, \bmb).
        \end{aligned} \right.
    \end{equation}
    Define the profiles
    \begin{equation} \label{equ:weighted_estimate_black_box_def_profile}
        \begin{aligned}
            \bmg_\real(t) &:= e^{-it\jD} (2i\jD)^{-1} (\pt + i\jD) \Re\bigl( e^{i\theta} \bfP_c \bmv(t) \bigr), \\ 
            \bmg_\imag(t) &:= e^{-it\jD} (2i\jD)^{-1} (\pt + i\jD) \Im\bigl( e^{i\theta} \bfP_c \bmv(t) \bigr).
        \end{aligned}
    \end{equation}
    Then we have for $0 \leq t \leq T$,
    \begin{equation} \label{equ:weighted_estimate_black_box_bound}
        \begin{aligned}
            &\sum_{\ast \in \{\real, \imag\}} \bigl\| \jxi^2 \nabla_\xi \widehat{\bmg}_\ast(t,\xi) \bigr\|_{L^2_\xi} \\
            &\quad \quad \lesssim \bigl\| \jx e^{i\theta} \bma \bigr\|_{H^2_x} + \bigl\| \jx e^{i\theta} \bmb \bigr\|_{H^1_x} + \bigl\| \jx e^{i\theta} \bmF(0,r) \bigr\|_{L^2_x} + \bigl\| \jx e^{i\theta} (\pt \bmF)(0,r) \bigr\|_{L^2_x} \\ 
            &\quad \quad \quad + \bigl\| \jx e^{i\theta} \bmF(t,r) \bigr\|_{L^2_x} + \sum_{\ast \in \{\real, \imag\}} \Bigl( \bigl\| \bmg_\ast(t) \bigr\|_{H^2_x} + \jt \bigl\| \jD^2 e^{it\jD} \bmg_\ast(t) \bigr\|_{L^{\infty-}_x} \Bigr) \\ 
            &\quad \quad \quad + \sum_{j=0,1} \bigl\| s \cdot \jx \bigl( e^{i\theta} \partial_s^j \bmF(s,r) \bigr) \bigr\|_{L^2_s([0,t]; L^2_x)} + \bigl\| s \cdot \jx \jD \bigl( e^{i\theta} \bmF(s,r) \bigr) \bigr\|_{L^2_s([0,t]; L^2_x)} \\
            &\quad \quad \quad + \sum_{j=1,2} \bigl\| \jap{x} \bigl( e^{i\theta} \partial_s^j \bmF(s,r) \bigr) \bigr\|_{L^1_s([0,t]; L^2_x)} + \bigl\| \jx \jD \bigl( e^{i\theta} \bmF(s,r) \bigr) \bigr\|_{L^1_s([0,t]; L^2_x)}.
        \end{aligned}
    \end{equation}
\end{theorem}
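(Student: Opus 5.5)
The plan is to transfer the weighted estimate for the flat profile $\bmg_\ast$ to one for the distorted profile of $\bfP_c\bmv$, and to control the latter through the Duhamel formula in the distorted Fourier variables. Write $\widetilde{\bmv}(t,k):=\wtilcalF[\bfP_c\bmv(t)](k)$. By Theorem~\ref{propdFT}(1), equation \eqref{equ:weighted_estimate_black_box_evol_equ} becomes $(\partial_t^2+\jap{k}^2)\widetilde{\bmv}=\widetilde{\bmF}$. Hence the distorted profile
\[
\widetilde{\bmf}(t,k):=e^{-it\jap{k}}(2i\jap{k})^{-1}(\partial_t+i\jap{k})\widetilde{\bmv}(t,k)
\]
satisfies
\[
\partial_t\widetilde{\bmf}(t,k)=e^{-it\jap{k}}(2i\jap{k})^{-1}\widetilde{\bmF}(t,k),
\]
with initial value determined by $(\bma,\bmb)$. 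Differentiating in $k$ gives
\[
\partial_k\widetilde{\bmf}(t)=\partial_k\widetilde{\bmf}(0)+\int_0^t e^{-is\jap{k}}\Big(-is\tfrac{k}{\jap{k}}\tfrac{\widetilde{\bmF}(s)}{2i\jap{k}}+\partial_k\big[(2i\jap{k})^{-1}\widetilde{\bmF}(s)\big]\Big)\ud s .
\]
The term with the factor $s$ is the dangerous one. I would integrate it by parts in time, writing $se^{-is\jap{k}}=\tfrac{i}{\jap{k}}s\,\partial_s e^{-is\jap{k}}$. This produces boundary terms $t\,\widetilde{\bmF}(t)$ and integrals of $\widetilde{\bmF}+s\,\partial_s\widetilde{\bmF}$. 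Repeating once more converts remaining powers of $s$ into $\partial_s^2\bmF$, or into boundary terms at $t$ that are absorbed by $\jt\|\jD^2e^{it\jD}\bmg_\ast\|_{L^{\infty-}}$ via the equation. This explains the $\partial_s^j\bmF$, $s\cdot\jx\bmF$ and $\bmF(0)$, $\partial_t\bmF(0)$ terms on the right-hand side of \eqref{equ:weighted_estimate_black_box_bound}.

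The key input is a lemma bounding $\|\jap{k}^a\partial_k\wtilcalF[\bmh]\|_{L^2_{k\ud k}}\lesssim\|\jx\bmh\|_{H^a}$ for localized $\bmh$, together with $L^2$-in-time square-function versions of it. This is where the asymptotics \eqref{eq:Eklein}, \eqref{eq:Egross} enter. For $rk\ge1$, the derivative $\partial_k$ of $e^{\pm ikr}\psi^\pm$ costs a factor $r$, which is absorbed by $\jx$, and $k\partial_k\psi$ is bounded. For $rk\le1$, the symbol bounds $|(r^{-1}\partial_k)^j\phi|\lesssim1$ give the same gain.

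The transference step then relates $\nabla_\xi\widehat{\bmg}_\ast$ to $\partial_k\widetilde{\bmf}$. I would represent $\Re(e^{i\theta}\bfP_c\bmv)$ through the inverse distorted transform. The flat transform of $e^{i\theta}h(r)$ is $e^{i\theta_\xi}$ times a Hankel transform of order one. So $\widehat{\bmg}_\ast(\xi)=e^{i\theta_\xi}\int_0^\infty K(|\xi|,k)\,\widetilde{\bmf}(k)\,k\,\ud k$ plus conjugate-frequency terms, where $K(\rho,k)=\int_0^\infty J_1(\rho r)E(r,k)r\,\ud r$ modulo the $\bfP_c$ components.

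I expect $K$ to split as $c(k)\delta(\rho-k)/k$ plus a smoother kernel. The $\delta$-part comes from matching the $e^{\pm ikr}$ asymptotics of $E$ with those of $J_1$. The non-diagonal entries of $E$ decay like $r^{-1/2}$ and, together with the remaining corrections, give a kernel $K_{\mathrm{reg}}$. The diagonal piece contributes $\partial_k\widetilde{\bmf}$ directly, with $\jxi^2$ converted to $\jap{k}^2$. Derivatives falling on $e^{\pm it(\jap{k}-\jap{\xi})}$ phases in the nondiagonal part cost a factor $t$. These are exactly what the terms $\jt\|\jD^2e^{it\jD}\bmg_\ast\|_{L^{\infty-}}$ and $\|\bmg_\ast\|_{H^2}$ are designed to absorb: the off-diagonal operator is bounded from $L^{\infty-}$-type spaces after a $\jx$-localization, because $E-E_{\mathrm{flat}}$ has extra decay in $r$.

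\textbf{Main obstacle.} The hardest step is the precise analysis of $K_{\mathrm{reg}}$ and its $\rho$-derivative. One must show that $\nabla_\xi$ applied to the regular part yields an operator bounded $L^2\to L^2$, up to terms controlled by spatially localized norms of $e^{it\jD}\bmg_\ast$. Trouble arises at small $k$, where the logarithmic factors in \eqref{eq:Eklein} appear, and in the mixed-component entries $E_{2,1},E_{1,2}$. I would first try to write $E(r,k)-E_{\mathrm{free}}(r,k)$ as a sum of terms $e^{\pm ikr}\times$ (symbols decaying like $\jap{r}^{-1}$) via \eqref{eq:Egross}. I would then treat the resulting pseudo-differential pieces with Schur's test and Hardy-type inequalities, using the $L^{\infty-}$ term to handle the borderline $r^{-1/2}$ tails.
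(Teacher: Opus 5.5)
Your route, transferring at the level of profiles through $K(\rho,k)=\int_0^\infty J_1(\rho r)E(r,k)\,r\,\ud r$, is the alternative the paper calls possible in principle but avoids. As you set it up, it rests on a false structural claim at exactly the step you flag as the main obstacle: $K$ is not ``$c(k)\delta(\rho-k)/k$ plus a smoother kernel''.

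\textbf{Why the kernel is not $\delta$ plus smooth.} The first row of $E(\cdot,k)$ solves an order-zero Bessel equation at large $r$ (Lemma~\ref{lem:Fbasisrepalt1}). By \eqref{eq:eta12-small}, $\eta_1\to-i$ and $\eta_2\to i$ as $k\to0$. So by \eqref{eq:E11grenz} and \eqref{eq:E12grenz} this row oscillates with the phase of $J_0(kr)$, whereas the factor $e^{i\theta}$ forces an order-one Hankel transform. Since $J_1(\rho r)J_0(kr)\,r\sim\frac{1}{\pi\sqrt{\rho k}}\bigl(\sin((\rho-k)r)-\cos((\rho+k)r)\bigr)$, the $\delta(\rho-k)$-coefficient of $K$ in the first component vanishes to leading order at low frequency. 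What remains is a Cauchy kernel $\mathrm{p.v.}\,(\rho-k)^{-1}$. At intermediate frequencies the phases $\eta_j(k)$ produce such p.v.\ parts in the other entries as well.

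This has three consequences. First, $E-E_{\mathrm{flat}}$ has no extra $r$-decay: every entry is of size $(rk)^{-1/2}$, up to $k$-dependent factors, for $r\ge1$, $rk\ge1$ (cf.\ \eqref{eq:Egross}). Second, $\partial_\rho$ of your ``regular'' part is a hypersingular kernel and is not $L^2$-bounded. Third, the factors of $t$ from differentiating the phases multiply a Cauchy-type, non-localized operator, so your absorption into $\jap{t}\|\jap{D}^2e^{it\jap{D}}\bmg_\ast\|_{L^{\infty-}_x}$ does not apply. Any localization would have to come from a cancellation between the $\rho$- and $k$-derivatives of the phases, which your sketch does not exhibit.

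\textbf{The missing idea} is to let the boost absorb the degree change. The paper never takes a flat transform of $E$. It reduces $\jap{\xi}^2\nabla_\xi\widehat{\bmg}_\ast$, by an exact identity, to $\Omega u$, $\partial_t\Omega u$ and $\Omega\bfL_{\mathrm{nr}}^{1/2}u$ plus lower-order terms. It then splits $\Omega(e^{i\theta}\psi)$ into $\Omega_1^\pm\psi$ and uses the Bessel representation of $E$ with \eqref{eq:Besselders1} to obtain \eqref{eq:trans1}. There $\bfJ_{0,1},\bfY_{0,1}$ become $\bfJ_{1,0},\bfY_{1,0}$ inside the $L^2$-bounded operators $\calT_\ast,\calR_\ast$. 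The whole critical $r^{-2}$ mismatch is isolated in $\chi_{\geq1}(r)\frac{t}{r}(f_1,0)^t$, which decays only like $r^{-1}$ and is handled by H\"older through the $L^{\infty-}_x$ term. Since $\widetilde\Omega$ commutes with $\partial_t^2+\jap{k}^2$, the distorted side closes by Duhamel.

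You also leave the low-frequency mechanism unidentified. The $\widetilde\calM_k$-terms need the relative phases in \eqref{eq:eta12-small}, not just bounds on $|a_j|$, together with the logarithmic Hardy inequality \eqref{eq:Hardydeg0}.

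\textbf{The Duhamel step.} Integrating by parts in time does not remove the factor $s$; it reappears as $s\,\partial_s\widetilde{\bmF}$. A second integration by parts leaves $\int_0^t s\,\partial_s^2\widetilde{\bmF}\,\ud s$ and a boundary term $t\,\partial_t\widetilde{\bmF}(t)$, neither of which is on the right-hand side of \eqref{equ:weighted_estimate_black_box_bound}. Nor can boundary terms at $s=t$ be absorbed ``via the equation'' into $\jap{t}\|\jap{D}^2e^{it\jap{D}}\bmg_\ast\|_{L^{\infty-}_x}$: that quantity involves neither $\partial_t^2\bmv$ nor any $L^2$-based norm of $\bmF$.

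The $s$-weighted, $L^2$-in-time terms come instead from the dual ILED estimate \eqref{eq:ILEDDuhamel1} with $\gamma_2=1$ (Lemma~\ref{lem:LEDallfereqs1}), applied with no time integration by parts. It bounds $\bigl\|\int_0^t e^{-is\jap{k}}\frac{k}{\jap{k}}\,s\,\widetilde{\bmF}(s)\,\ud s\bigr\|_{L^2_{k\ud k}}$ directly by $\|s\jap{r}\bmF\|_{L^2_sL^2_{r\ud r}}$. The factor $k/\jap{k}$ produced by $\partial_k e^{-is\jap{k}}$ is what allows the single weight $\jap{r}$.
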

In the next subsection we outline some of the main ideas that go into the proofs of Theorems~\ref{propdFT} and~\ref{thm:transference1}.
\subsection{Outline of the proofs of Theorems~\ref{propdFT} and~\ref{thm:transference1}}
We begin with a discussion of the proof of Theorem~\ref{propdFT}. A well-known consequence of the Bogomolny structure is the super-symmetric factorization of the linearized operator,
\begin{equation}\label{eq:LBstarB_intro}
\bfL=\calB^*\calB,
\end{equation}
with
\begin{equation}\label{eq:calB_intro}
\calB=\begin{pmatrix} \partial_r-b & U \\ U & \partial_r+\frac1r\end{pmatrix},
\qquad
\calB^*=\begin{pmatrix} -\partial_r-\frac{1}{r}-b & U \\ U & -\partial_r\end{pmatrix}.
\end{equation}
As already observed in \cite{LPPSS1PartI}, the super-symmetric partner $\calL:=\calB\calB^\ast$ is diagonal. That is,
\begin{equation}\label{eq:partner_intro}
\calL:=\calB\calB^*=\mathrm{diag}(\calL_1,\calL_2),
\qquad
\calL_1=-\Delta+V_1,
\quad
\calL_2=-\Delta+V_2,
\end{equation}
with
\begin{equation}\label{eq:V1V2_intro}
V_1(r)=\frac{(2-a_\theta(r))^2}{r^2}+\frac12(1+U(r)^2),
\qquad
V_2(r)=U(r)^2.
\end{equation}
Conjugation by $r^{1/2}$ then leads to two strongly singular scalar Schr\"odinger operators $\calH_j:=r^{1/2}\cdot\calL_j\cdot r^{-1/2}$, $j=1,2$, on the half--line. Explicitly
\begin{equation*}
    \calH:=r^{1/2}\cdot\calL\cdot r^{-1/2}=\mathrm{diag}(\calH_1,\calH_2),
\end{equation*}
where
\begin{align*}
\mathcal{H}_1 = -\partial_r^2 - \frac{1}{4r^2} + 1 + \big( V_1(r) - 1\big),\qquad \mathcal{H}_2 = -\partial_r^2 - \frac{1}{4r^2} + 1 + \big( U^2(r)-1 \big).
\end{align*}
The Weyl--Titchmarsh theory of $\calH_j$, which we carry out in the spirit of \cite{GZ} and \cite{KST}, then leads to the distorted Fourier transform for $\calL_j$ and hence of $\calL$. The starting point of the Weyl--Titchmarsh theory of $\calH_j$ is  the understanding of the spectra of these operators. For this we again rely on the properties established in \cite{LPPSS1PartI} with the aid of rigorous numerics. In particular, the edge of the continuous spectrum is regular for both $\calH_1$ and $\calH_2$, in the sense that there are no eigenvalues or resonances at the bottom of the continuous spectrum. Nevertheless, the low-energy analysis of the Jost solutions as well as the spectral density for $\calH_1$ are delicate in view of the critical $r^{-2}$ rate of decay of the potential for large $r$. This can already be seen at the level of the original operator $\bfL$. Indeed, let us decompose $\bfL$ in two different ways as
\begin{align}\label{eq:Lzerodefintro1}
\begin{split}
    &\bfL=\bfL_0+\bfV_0,\\
    &\bfL_0:=\pmat{-\Delta+\frac{1}{r^2}+1&0\\0&-\Delta+\frac{1}{r^2}+1},\quad \bfV_0:=\pmat{\tfrac{a_\theta^2-2a_\theta}{r^2}-\tfrac{3}{2}(1-U^2(r))&-\tfrac{2(1-a_\theta)U}{r}\\-\tfrac{2(1-a_\theta)U}{r}&U^2-1},
\end{split}
\end{align}
and
\begin{align}
\begin{split}\label{eq:Linfdefintro1}
    &\bfL=\bfL_\infty+\bfV_\infty,\\
    &\bfL_\infty:=\pmat{-\Delta+1&0\\0&-\Delta+\frac{1}{r^2}+1},\quad \bfV_\infty:=\pmat{\tfrac{(1-a_\theta)^2}{r^2}-\tfrac{3}{2}(1-U^2(r))&-\tfrac{2(1-a_\theta)U}{r}\\-\tfrac{2(1-a_\theta)U}{r}&U^2-1},
    \end{split}
\end{align}
Then $\bfV_0$ is smooth at the origin but decays at the rate $r^{-2}$ at infinity. Conversely, $\bfV_\infty$ has exponential decay at infinity but has an $r^{-2}$ singularity at $r=0$. 
It is then reasonable to expect that for small $r$ the distorted Fourier basis for $\bfL$ can be approximated by Bessel functions of order one, while for large $r$ Bessel functions of orders both zero and one are needed. At the level of $\calL$, the Jost solutions for $\calH_0$ are well-approximated by Bessel functions of order zero for both small and large $r$. But, for $\calH_1$ at small $r$ Bessel functions of order two provide a good approximation for small $r$ and Bessel functions of order one provide a good approximation at large $r$. The discrepancy between $\bfL$ and $\calL$ comes from the fact that the operator $\calB$ changes the degree of a function by one. The formal expectations above turn out to be correct in terms of the behavior in $r$, but at low frequencies modifications are needed in the $k$ behavior. This can be seen for instance by looking at the upper right entry in \eqref{eq:Eklein} for $r$ and $k$ small. In this range this entry is of order $(\log k)^{-1}r$, which in comparison with $J_1(kr)$ has the same behavior in $r$, but worse by a factor of $(k\log k)^{-1}$ in $k$. Consistent with the  uncertainty principle, this small $k$ asymptotics corresponds to the large $r$ asymptotics of the operator\footnote{That this behavior is seen in the upper right entry of $E(r,k)$ is a consequence of the matrix algebra of the problem. See Section~\ref{sec:spectral_fourier_bfH}.}. The logarithmic improvement over the small $k$ asymptotics of $J_0(kr)$ is a reflection of the absence of threshold resonances. While the small $k$ leading asymptotics is still no worse, in fact better, compared with $J_0(kr)$, the subtle low frequency behavior of the Fourier basis requires special care in using Theorem~\ref{propdFT} to prove estimates, especially in relation to the proof of Theorem~\ref{thm:transference1}.

Formally, the distorted Fourier transform for $\bfL$, and hence of $\bfM$, can then be recovered from that of $\calL$ by inverting the Darboux transform~$\bfL\mapsto \calL$. Alternatively, one can directly develop the Weyl--Titchmarsh theory for the strongly singular, but non-diagonal, matrix operator $\bfH:=r^{1/2}\cdot \bfM\cdot r^{-1/2}$, for instance along the lines of \cite{KS06} or \cite{LSS1}. In this approach the Weyl--Titchmarsh theory of $\calH_j$ allows one to easily compute the connection coefficients relating the Weyl solutions at large and small $r$, which is the most delicate point in the argument. In this work we pursue this alternate approach.


There is an important difference between the present construction and the
one used in \cite[Proposition~5.6, Proposition~5.7]{KST}.
There the outgoing Weyl solution is obtained by factoring out its oscillation
at infinity and constructing a Poincar\'e expansion in inverse powers of
$q=r\sqrt{\xi}$, where $\xi$ denotes the energy parameter.  This gives a
precise description in the oscillatory region $q\gtrsim1$.  The connection
argument in \cite{KST} is made at $r=\delta\xi^{-1/2}$ with $\delta>0$
fixed, and therefore does not by itself give estimates uniformly far to the
left of the transition scale.

Here we retain the inverse-square term in the comparison equation and use
Bessel and Hankel functions as the model solutions.  For $\calH_1$, the
model \eqref{eq:Psi0-def} and the Volterra equation
\eqref{eq:Psi-Volterra} yield
\eqref{eq:Psi10dom}, \eqref{eq:Psip10dom} down to
$R_k=k^{-1/2}$.  Since the turning scale is $r_t\simeq k^{-1}$, one has
$R_k/r_t=k^{1/2}\to0$.  Thus the comparison extends far past the turning
point into the region $kr\ll1$.  For $\calH_2$ there is no genuine turning
point, but the order-zero Hankel model \eqref{eq:Psi20-def} and the Volterra
equation \eqref{eq:Psi2-low-k-Volterra} extend through the transition
$kr\simeq1$ down to the radius \eqref{eq:Rk-H2-small}, where
$kR_k=|\log k|^{-2}\to0$.  The classical connection formulas are therefore
already built into the comparison functions.  In particular, the
small-argument logarithm of $H_0^{(1)}$ is present from the outset, whereas
it is not visible in an oscillatory expansion constructed at infinity.

The overlap at the matching radii also determines the leading complex
phases of the connection coefficients, rather than only their absolute
values.  The formulas \eqref{eq:a-smallk} and \eqref{eq:a2-smallk}, together
with \eqref{eq:eta12-def}, give the cancellations
\eqref{eq:eta12-small}.  These enter explicitly in
\eqref{eq:qY01klargrenz}, \eqref{eq:qY01schranke} and hence in the
low-frequency analysis of the transference identity \eqref{eq:trans1}.
Bounds for $|a_j(k)|$ alone would not retain the relative phases needed for
these cancellations.

The factor $\jap{\log k}^{-1}$ in the second channel is a consequence of
the threshold non-resonance of $\calH_2$.  Indeed
$\widetilde c_{1,*}\neq0$ in \eqref{Phi2_0}, and
\eqref{eq:a2-smallk} give
\[
  a_2(k)^{-1}=O\bigl(|\log k|^{-1}\bigr),\qquad k\to0,
\]
in agreement with \eqref{propSMrho} and the Fourier-basis estimates
\eqref{eq:Eklein}.  This is precisely a borderline gain: it supplies the missing factor in the logarithmic Hardy inequality
\eqref{eq:Hardydeg0}, and thereby gives the one-weight estimate
\eqref{eq:dkbounds2} for the operator \eqref{eq:tildecalMdef1}.  The same
endpoint mechanism occurs in the proof of \eqref{eq:calTcalRbounds1}: after
the changes of variables $r=e^\tau$ and $k=e^{-\sigma}$, the model kernel
\[
 \chi_{\geq2}(r)\chi_{\leq1}(rk)
 \frac{1}{rk\jap{\log k}}
\]
reduces to the bounded Hardy operator
$g\mapsto\int_\tau^\infty g(\sigma)\sigma^{-1}\,\ud\sigma$.
Without the logarithmic gain, both estimates would have a logarithmic
divergence.  A small power loss would replace the single spatial weight in
\eqref{equ:weighted_estimate_black_box_bound} by
$\jap{x}^{1+\epsilon}$.  Since that $x$-weight corresponds to the single
Lorentz field underlying \eqref{eq:dxiprofileintro1}, such a loss would
require more weighted control than is available in the nonlinear analysis 
of~\cite{LPPSS3}.


The basic idea of the proof of Theorem~\ref{thm:transference1} can already be clearly explained in a scalar problem in dimension one. Indeed, suppose $\bfM$ is replaced by $H:=-\frac{\ud^2}{\ud x^2}+1+V(x)$ where $V(x)$ is smooth and exponentially decaying as $x\to\pm\infty$. Moreover, assume that $H$ has no eigenvalues or resonances. Then for $u$ satisfying 
\begin{align}
    (\partial_t^2+H)u(t,x)=f(t,x), \quad u(0,x)=\partial_t u(0,x)=0
\end{align}
we want to estimate $\|\partial_\xi e^{-it\jap{\xi}} \jap{\xi}^{-1}(\partial_t+i\jap{\xi})\hatu\|_{L^2_\xi}$. We have chosen zero initial data for simplicity of exposition. As mentioned above, this is in effect equivalent to estimating $\|\Omega u\|_{L^2_x}$, with $\Omega:=x\partial_t+t\partial_x$. For a more precise statement of this fact see the proof of Theorem~\ref{thm:transference1}, in particular equation \eqref{equ:weighted_energy_g_jxi_nablaxi_comp}. To estimate $\|\Omega u\|_{L^2_x}$ we write
\begin{equation}\label{eq:u1ddistortedinversionintro1}
    u(t,x)=\int_{-\infty}^\infty \phi(x,k) \tilde{u}(t,k)\,\ud k,
\end{equation}
where $\phi(x,k)$ is the distorted Fourier basis satisfying $H\phi(x,k)=\jap{k}^2\phi(x,k)$, and $$\tilde{u}(t,k):=\int_{-\infty}^\infty\overline{\phi(x,k)}u(x)\,\ud x$$ is the distorted Fourier transform of $u$. By the variation of parameters formula
\begin{equation}
    \phi(x,k)=e^{ixk}s_+(x,k)+e^{-ixk}s_-(x,k),
\end{equation}
where for suitable functions $q_\pm(k)$,
\begin{equation}
    s_+(x,k)=q_+(k)-i\int_{-\infty}^xe^{-iyk}V(y)\phi(y,k)\,\ud y,\qquad s_-(x,k)=q_-(k)-i\int_x^\infty e^{iyk}V(y)\phi(y,k)\,\ud y.
\end{equation}
It follows that
\begin{align*}
    &\partial_x\phi(x,k)=ike^{ixk}s_+(x,k)-ike^{-ixk}s_-(x,k),\\
    &x\phi(x,k)=-i\partial_k(e^{ixk}s_+(x,k)-e^{-ixk}s_-(x,k))+i(e^{ixk}\partial_ks_+(x,k)-e^{-ixk}\partial_ks_-(x,k)).
\end{align*}
Combining these relations with \eqref{eq:u1ddistortedinversionintro1} and integrating by parts once in $k$ gives
\begin{align}\label{eq:transferenceintro1}
    \begin{split}
    \Omega u(t,x)&=i\int_{-\infty}^\infty\big(e^{ikx}s_+(x,k)-e^{-ixk}s_-(x,k)\big)\widetilde\Omega \tilde{u}(t,k)\ud k\\
    &\quad+ i\int_{-\infty}^\infty\big(e^{ikx}\partial_ks_+(x,k)-e^{-ixk}\partial_ks_-(x,k)\big)\partial_t\tilde{u}(t,k)\ud k,
    \end{split}
\end{align}
where $\widetilde{\Omega}:=\partial_t\partial_k+tk$. We can then hope to be able to bound the $L^2_x$ norm of the first integral by the $L^2_k$ norm of $\widetilde\Omega\tilde{u}$, and treat the second integral as an error term. Note that $[\widetilde\Omega,\partial_t^2+\jap{k}^2]=0$ so that
\begin{equation}
    (\partial_t^2+\jap{k}^2)\widetilde\Omega\tilde{u}=\widetilde\Omega\tilde{f}.
\end{equation}
This is a restatement of the fact that $[\Omega,\partial_t^2-\partial_x^2+1]=0$, written on the Fourier side, where for such algebraic calculations there is no difference between the standard and distorted Fourier transforms. Moreover, as with the standard Fourier transform, we expect that $\partial_k$ and $k$ appearing in $\widetilde{\Omega}$ on the distorted Fourier side correspond to an $x$ weight and a derivative, respectively, on the physical side. Assuming that we have access to  energy and (dual) integrated local energy decay (ILED) estimates for $H$, it follows that with $\tilde{f}$ denoting the distorted Fourier transform of $f$, and for a suitable $\gamma>0$ coming from the ILED estimate,
\begin{equation}
    \sup_t\|\Omega u\|_{L^2_x}\lesssim \|\jap{r}^\gamma t\jap{D}f\|_{L^2_t L^2_x}+\|\jap{x}\partial_tf\|_{L^1_tL^2_x}.
\end{equation} 
This already explains the appearance of the main terms on the right-hand side of \eqref{equ:weighted_estimate_black_box_bound}, the remaining terms appearing from the contribution of the initial data or the error analysis.

In our two dimensional equivariant setting, Bessel functions of the first and second kinds play the role of the exponentials  $e^{\pm ixk}$ in the outline above. In this context identity \eqref{eq:trans1} in Lemma~\ref{lem:trans1} provides  the analogue of \eqref{eq:transferenceintro1}. The explanation for the appearance of Bessel functions of different orders on the two sides of \eqref{eq:trans1} (see the definition of $\calT_\ast$ in the lemma) is that in two dimensions, the Lorentz boosts $\Omega_{0j}:=x^j\partial_t+t\partial_{x^j}$ change the degree of an equivariant function to which they get applied. Finally, we point out that as mentioned above the critical decay rate of the potential\footnote{The critical decay is related to the existence of finite energy equivariant vortices. Indeed, for $|(\nabla-i\underline{A})\underline{\Phi}|$ to be square integrable with $\underline{\Phi}(r,\theta)=e^{i\theta}U(r)$ and $\lim_{r\to\infty}U(r)=1$, we must have $\lim_{r\to\infty}\underline{A}_\theta(r)=1$. The critical decay of the potential comes precisely from the contribution of $r^{-2}\underline{A}_\theta$ to the magnetic Laplacian. This explains the cancellation in $\bfL_1$ of the $r^{-2}$ factor from restricting the Laplacian to the first harmonic. See \eqref{eq:bL1L2} and \eqref{eq:bdef1}, where $a_\theta=\underline{A}_\theta$.} in $\bfL$ and the fact that the Fourier basis for $\bfL_\infty$ in \eqref{eq:Linfdefintro1} involves Bessel functions of different orders, call for delicate error analysis in implementing our scheme. See for instance the different types of singularities in the different rows of the operator $\widetilde\calM_k$ defined in \eqref{eq:tildecalMdef1} which appears in the error analysis in Lemma~\ref{lem:calTcalRbounds1}.

A final remark is in order about the foregoing scheme for deriving a transference relation. Our approach above was based on relating the Lorentz boost operators on the physical and Fourier sides, namely $\Omega$ and $\widetilde\Omega$. This is natural in view of the well-known commutation property $[\partial_t^2-\partial_x^2+1,\Omega]=0$. However, it is in principle possible to work directly at the level of the profiles,  at the cost of a more complicated  transference relation, which would lead to a more difficult error analysis. To wit, in the one-dimensional model from above and upon commuting $\partial_\xi$ and $e^{-it\jap{\xi}}\jap{\xi}^{-1}(\partial_t+i\jap{\xi})$, suppose we want to compare the quantities
\begin{equation}
    \widehat{\Xi}\hat{u}(t,\xi)\qquad \mathrm{and}\quad \widetilde{\Xi}\tilde{u}(t,k),\qquad \widehat{\Xi}:=\partial_\xi-\frac{it\xi}{\jap{\xi}},\quad \widetilde{\Xi}:=\partial_k-\frac{itk}{\jap{k}}.
\end{equation}
Here $\hat{u}$ and $\tilde{u}$ denote the standard and distorted Fourier transforms of $u$. Write
\begin{equation}
    \hat{u}(t,\xi)=\int_{-\infty}^\infty\int_{-\infty}^\infty \tilde{u}(t,k)e^{-ix\xi}\big(e^{ixk}s_+(x,k)+e^{-ixk}s_-(x,k)\big)\,\ud k \,\ud x.
\end{equation}
Then using the relations $\partial_\xi e^{-ix\xi}=-ix\xi$, and $\frac{it\xi}{\jap{\xi}}e^{-ix\xi}=-t(\sqrt{1-\partial_x^2})^{-1}\partial_xe^{-ix\xi}$, a similar computation as in the derivation of \eqref{eq:transferenceintro1} gives
\begin{align*}
    \widehat\Xi\hat{u}(t,\xi)&=\int_{-\infty}^\infty\int_{-\infty}^\infty \widetilde{\Xi}\tilde{u}(t,k) e^{-ix\xi}\big(e^{ixk}s_{+}(x,k)-e^{-ixk}s_{-}(x,k)\big)\,\ud k \,\ud x\\
    &\quad+\int_{-\infty}^\infty\int_{-\infty}^\infty \tilde{u}(t,k) e^{-ix\xi}\big(e^{ixk}\partial_ks_{+}(x,k)-e^{-ixk}\partial_ks_{-}(x,k)\big)\,\ud k \,\ud x\\
    &\quad+\int_{-\infty}^\infty\int_{-\infty}^\infty itk\tilde{u}(t,k) e^{-ix\xi}\big([(1-\partial_x^2)^{-\frac{1}{2}},s_{+}]e^{ixk}-[(1-\partial_x^2)^{-\frac{1}{2}},s_{-}]e^{-ixk}\big)\,\ud k \,\ud x.
\end{align*}
This is the analogue of \eqref{eq:transferenceintro1}, if we work directly at the level of profiles without using $\Omega$ and $\widetilde{\Omega}$.

\subsection{Related works} 

The theory of static vortices in the self-dual case was largely developed by Jaffe and Taubes, and much of the related material is collected in \cite{bookJT}. We also refer to \cite[Chapter 7]{bookManSut} as well as the introductions of our companion works \cites{LPPSS1PartI}{LPPSS3} for a more comprehensive review of the corresponding literature. The local in time vortex dynamics of the AYMH near the self-dual regime was studied in \cite{Stu}. In particular, Stuart's gauge, which we use in this work, was introduced in \cite{Stu}. Away from the self-dual regime linearized spectral theory and effective dynamics were studied in \cite{GusSig2,GusSigVort,GusVort0,GusVort}. The distorted Fourier transform and linear decay estimates for the related non-gauged vortices in both the Schr\"odinger and wave settings were studied in \cite{LSS1, PPGL,CGP}. Our construction of the distorted Fourier transform in this work follows most closely the method of \cite{KST}, although we also use ideas from \cite{BusPer92,KriSchNLS,LSS1,SSS10p1, SSS10p2}. Since our focus in this article is the distorted Fourier theory and linear decay estimates in the gauged self-dual setting, we refer to \cites{LPPSS1PartI}{LPPSS3} for a more thorough discussion of the developments in other related linear and nonlinear contexts.

To the best of our knowledge, our scheme for using transference relations in the nonlinear analysis is different from existing works. However, use of transference relations in the analysis of solutions near solitons has a long history. Closest to our implementation are the works \cite{CGV13,DKSW16,Stewart24,KSW25, DonKri16}. The works \cite{CGV13,Stewart24,KSW25} use transference relations to study nonlinear Schr\"odinger equations, while \cite{DonKri16, DKSW16} study a massless wave equation. In \cite{DonKri16, DKSW16} the transference relation is proved for the scaling vector field, and estimates on the Lorentz boost vector fields are deduced from this.  Analogous transference relations have been used in the analysis of singularity formation near solitons, for instance in \cite{KST} for the wave equation and in \cite{KriSch05} for the Schr\"odinger equation. For  related approaches in the case of the Schr\"odinger equation see \cite{Naumkin16,Naumkin18,RodTao15}. More recently, starting with \cite{GHW}, an alternative approach has been used by several authors where the vector field, or space-time resonance, method is used directly on the distorted Fourier side. This leads to  multi-linear analysis on the distorted Fourier side. A key object of study in this approach is the nonlinear spectral distribution, which is a measure used to capture nonlinear interactions on the distorted Fourier side. See for instance \cite{Li25,GerPus22, ChenPus24, ChenLuhr24,PalPus24,PusSof20,LiLuhr24,GPR18,ColGer25,GermPusZhang22}, and the references therein. Another common approach is based on applying the wave operator to the entire nonlinear problem, thereby reducing the linear operator to the unperturbed one at the cost of introducing non-local nonlinearities. See for instance \cite{Delort16,DMKink}. For the massless wave equation we also mention the approach developed in \cite{DR1,Mos1,Schlue1} which combines ILED and a vector field analysis purely on the physical side. For related works on pointwise decay for wave equations with variable coefficients see \cite{OPT26,BVW18,LukOh24,AAG18,BVW15, DSS11, DSS12, Hintz22, MTT12} and references therein. 
\subsection{Organization of the paper}
Section~\ref{sec:spectral_fourier} contains the construction of the distorted Fourier transform for $\bfM$. After recalling the Weyl--Titchmarsh framework for the scalar half--line operators arising from the super-symmetric reduction, we construct the Weyl solutions $\Phi_j(r,k)$ near $r=0$ and $\Psi_j(r,k)$ near $r\to\infty$, and identify the corresponding spectral densities $a_j(k)$. The analysis is carried out separately for $\mathcal{H}_1$ and $\mathcal{H}_2$ in Subsections~\ref{subsec:calH1} and~\ref{subsec:calH2}. These scalar constructions are then combined in Subsection~\ref{sec:spectral_fourier_bfH} to obtain the matrix of generalized eigenfunctions $\bfE(r,k)$ and to prove Theorem~\ref{propdFT}.
The linear decay estimates are established in Section~\ref{sec:linear_decay_estimates}. First, in Subsection~\ref{ssecLP} we define and prove estimates on the Littlewood-Paley projections associated with $\bfL$. Then in Subsections~\ref{ssecKG} and~\ref{subec:localenergy} we prove dispersive and local energy decay estimates, respectively.
Section~\ref{sec:working_title_transference} is devoted to the transference relations and the proof of Theorem~\ref{thm:transference1}.
Finally, in the appendix we collect a number of linear estimates for the Klein-Gordon equation on $\bbR^2$ that are used in the nonlinear analysis in \cite{LPPSS3}.


\section{Spectral and Distorted Fourier Theory} \label{sec:spectral_fourier}

In this section we develop the spectral and distorted Fourier theory for the  selfadjoint matrix Schr\"odinger operator
\begin{equation}
    \bfM := \begin{pmatrix} \bfL & 0 \\ 0 & \bfL \end{pmatrix},
    \qquad
    \bfL := \begin{pmatrix} L_1 & -2U' \\ -2U' & L_2 \end{pmatrix},
\end{equation}
where $L_1$ and $L_2$ are given by \eqref{eq:bL1L2}. In particular we prove Theorem~\ref{propdFT}. The basic operator--theoretic properties of $\bfM$ are recalled in Theorem~\ref{maintheo1} in the introduction. 
We have stated Theorem~\ref{propdFT} in terms of the operator $\bfM$ rather than $\bfL$ because this is how it will be mostly used in our in our companion paper~\cite{LPPSS3}. However, for proving linear estimates it is more convenient to work with $\bfL$. For this we define 
\begin{equation}
    \wtilcalF_\bfL[\bmg](k):=\int_0^\infty \overline{E(r,k)}^{\,t}
    \bmg(r)r\,\ud r,
\end{equation}
where $\bmg\in L^2_{r \ud r}(\bbR_+;\bbC^2)$. Applying Theorem~\ref{propdFT} to $\bbC^2$ valued functions $(\bmg,0)^t$ we find the inverse of $\wtilcalF_\bfL$ to be 
\begin{equation}
    \wtilcalF_\bfL^{-1}[\bmh](r)=\int_0^\infty E(r,k)\bmh(k)k \ud k,
\end{equation}
where $\bmh\in L^2_{k \ud k}(\bbR_+;\bbC^2)$. By this we mean that $P_c^\bfL=\wtilcalF_{\bfL}^{-1}\circ\wtilcalF_\bfL$, where $P_c^\bfL$ denotes the orthogonal projection to the continuous spectrum of $\bfL$. Moreover, we have
\begin{align*}
    \|\wtilcalF_\bfL[\bmg]\|_{L^2_{k\ud k}}=\|P_c^\bfL\bmg\|_{L^2_{r\ud r}},\qquad\mathrm{and}\qquad \wtilcalF_\bfL[\bfL \bmg](k)=\jap{k}^2\wtilcalF_\bfL[\bmg](k).
\end{align*}

The remainder of this section is organized as follows. First, in Sections~\ref{subsec:calH1} and~\ref{subsec:calH2}, we analyze the distorted Fourier transform for the scalar operators $\calH_1$ and $\calH_2$. Then, using the super-symmetric factorization of $\bfM$, in Section~\ref{sec:spectral_fourier_bfH} we relate the distorted Fourier transform for $\bfM$ to those of $\calH_1$ and $\calH_2$ and prove Theorem~\ref{propdFT}.

\subsection{General spectral theory}
Here, following the general procedure in \cite{GZ,KST,KMS}, we analyze the distorted Fourier transform for the operator $\calH_j=r^{1/2}\calL_j r^{-1/2}$. Denote by
\begin{align}\label{def_Phij}
\Phi_j(r,k) \quad \hbox{ and } \quad \Theta_j(r,k)
\end{align}
a real-valued fundamental system of generalized eigenfunctions for the problem
\begin{align*}
\mathcal{H}_j f = (1+k^2) f,
\end{align*}
with $\Phi_j\in L^2((0,1))$, and let $\Psi_j(r,z)$, $\Im z>0$, be the
Weyl solution at infinity satisfying
\[
  \mathcal H_j\Psi_j(\cdot,z)=(1+z^2)\Psi_j(\cdot,z).
\]
For $k>0$ we denote its boundary value by $\Psi_j(r,k)$.
In the sequel we always assume that $\Phi_j$, $\Theta_j$, and $\Psi_j$ are chosen so that $W(\Theta_j,\Phi_j) = 1$ and $W(\overline{\Psi_j},\Psi_j) = 2i$, where
\[
W(f,g) = fg' - f'g.
\]
The general theory of \cite{GZ} gives
\begin{align}\label{def_aj}
\Phi_j(r,k)=2\Re\bigl(a_j(k)\Psi_j(r,k)\bigr),
\qquad
a_j(k)=\frac{W(\Phi_j,\overline{\Psi_j})}
{W(\Psi_j,\overline{\Psi_j})}.
\end{align}
Writing $\lambda=k^2$ for the spectral parameter above the threshold,
the corresponding spectral density is
\[
  \frac{d\rho_j}{d\lambda}(\lambda)
  =\frac{1}{4\pi|a_j(\sqrt\lambda)|^2}.
\]
Equivalently, in the momentum variable $k$,
\[
  \frac{d\rho_j}{dk}(k)
  =\frac{k}{2\pi|a_j(k)|^2}.
\]
Thus, if
\[
  \widetilde f(k)=\int_0^\infty \Phi_j(s,k)f(s)\,ds,
\]
then
\[
  P_c^{\mathcal H_j}f(r)=\int_0^\infty
  \Phi_j(r,k)\widetilde f(k)
  \frac{k}{2\pi|a_j(k)|^2}\,dk.
\]
\subsection{Linear spectral analysis for \texorpdfstring{$\mathcal{H}_1$}{H1}}\label{subsec:calH1}
Recall that $\mathcal{H}_1$ can be written  explicitly as  
\begin{align*}
\mathcal{H}_1 &= -\partial_r^2 - \frac{1}{4r^2} + 1 + \big( V_1(r) - 1\big),
\end{align*}
where 
\begin{align}\label{FouV1}
V_1(r)-1=\frac{1}{r^2} - \frac{1}{2}(1-U^2) + \frac{1}{r^2}(1-a)(3-a).
\end{align}
Note that $V_1(r)=4 r^{-2}+O(1)$ as $r \rightarrow 0$ and $V_1(r)=1 + r^{-2} + O(r^{-1/2}e^{-r})$ as $r \rightarrow \infty$.
We seek to construct generalized eigenfunctions of
\begin{align}\label{L1evalue}
\mathcal{H}_1 f = (k^2 + 1)f, \quad k \geq 0,
\end{align}
and obtain precise asymptotics as $r\to 0$ and $r\to+\infty$ for the Weyl solution at zero, as well as for the Weyl solution at infinity. For the former, we first solve the equation at $k=0$ and then proceed perturbatively for $rk \lesssim 1$ with $0<k\le 1$.  For large $k$ it is more efficient to approximate by Bessel functions on $0<r\le 1$. The Weyl solutions around $r=\infty$ we will find as perturbations of Hankel functions. Throughout, we rely on the asymptotic information on $(U,a_\theta)$ stated in~\eqref{Uaexp0}, \eqref{Uaexpinfty}. The key feature here is the appearance of two angular momenta: $\ell=2$ near $r=0$ and $\ell=1$ for large~$r$. This will determine the correct choice of basis functions depending on the energy~$k^2+1$. 

\subsubsection{Fundamental solutions at zero energy}
We look for $\Phi_1^{(0)}$, the zero-energy solution, which solves $\mathcal{H}_1 \Phi_1^{(0)} = \Phi_1^{(0)}$ and is square-integrable near $r=0$. For large $r$ the equation is asymptotic to
\[
-y''+\frac{3}{4r^2}y=0,
\]
whose fundamental system is given by
\begin{align}\label{fsys_h1_inf_app}
y_1(r)=r^{3/2}, \qquad y_2(r)=\tfrac{1}{2}r^{-1/2}.
\end{align}
We choose the normalization so that $W[y_2,y_1](r)=1$. 

\begin{lem}\label{lem_p1_t1}
There exists a fundamental system of real-valued solutions of $\mathcal{H}_1f=f$ with the following asymptotic behavior
\begin{align}\label{Phi1_0}
\Phi_{1}^{(0)} (r) & = \begin{cases}
r^{5/2}+O(r^{9/2}), & r\lesssim 1,
\\ c_*r^{3/2}+O(re^{-r}),  &  r \gtrsim 1, 
\end{cases}
\\ \Theta_{1}^{(0)} (r) & = \begin{cases}
 \tfrac{1}{4}r^{-3/2} + O(r^{1/2}), & r\lesssim 1,
\\ \tfrac1{2c_*}r^{-1/2}+O(r^{-1}e^{-r}), &   r \gtrsim 1, 
\end{cases} \label{Theta1_0}
\end{align}
for some $c_*\neq0$. These functions are normalized so that $W[\Theta_{1}^{(0)},\Phi_{1}^{(0)}]=1$. Furthermore, the symbol-type bounds
\[
|(r\partial_r)^m \Phi_{1}^{(0)} (r) |\le C_m \, r^{\frac52}\langle r\rangle^{-1} 
\]
and 
\[
|(r\partial_r)^m \Theta_{1}^{(0)} (r) |\le C_m\,  r^{-\frac32}\langle r\rangle
\]
hold for all $m\ge1$ and $r>0$. 
\end{lem}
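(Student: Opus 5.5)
We construct $\Phi_1^{(0)}$ from the regular singular point at $r=0$ and then read off its large-$r$ behavior, using the threshold non-resonance of $\calH_1$ established in \cite{LPPSS1PartI} to get $c_*\neq0$. The equation $\calH_1 f=f$ reads $-f''+\bigl(-\tfrac{1}{4r^2}+V_1(r)\bigr)f=0$. By \eqref{Uaexp0}, $(2-a_\theta)^2=4-r^2+O(r^4)$ is even and smooth, so $W_0(r):=V_1(r)-\tfrac{4}{r^2}=O(1)$ is smooth and even near $0$. The indicial roots of $-f''+\tfrac{15}{4r^2}f=0$ are $r^{5/2}$ and $r^{-3/2}$.

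\emph{Small $r$.} We define $\Phi_1^{(0)}$ as the solution of the Volterra equation
\[
f(r)=r^{5/2}+\int_0^r \tfrac14\bigl(r^{5/2}s^{-3/2}-r^{-3/2}s^{5/2}\bigr)W_0(s)f(s)\,\ud s .
\]
The kernel has Wronskian normalization $W[\tfrac14 r^{-3/2},r^{5/2}]=1$. Iterating gives $f=r^{5/2}(1+O(r^2))$ for $r\le 1$. Differentiating the equation gives the symbol bounds $(r\partial_r)^m$ on $(0,1]$, and the evenness of $W_0$ makes the correction of size $r^{9/2}$. Next, $\Theta_1^{(0)}$ near $0$ is any solution with $W[\Theta,\Phi]=1$. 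It can be taken as $\Phi\int_r^{1}\Phi^{-2}$ plus a multiple of $\Phi$, which gives $\tfrac14 r^{-3/2}+O(r^{1/2})$. A log term $r^{5/2}\log r$ could appear at order $r^{5/2}$ relative to $r^{-3/2}\cdot r^4$, but it is absorbed into $O(r^{1/2})$.

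\emph{Large $r$.} Here $V_1-1-\tfrac1{r^2}=O(r^{1/2}e^{-r})$ by \eqref{FouV1} and \eqref{Uaexpinfty}, so the equation is an exponentially small perturbation of $-y''+\tfrac{3}{4r^2}y=0$, with fundamental system \eqref{fsys_h1_inf_app}. We solve Volterra equations from $r=\infty$:
\[
y_j(r)-\int_r^\infty \bigl(y_1(r)y_2(s)-y_2(r)y_1(s)\bigr)\,e(s)\,y_j(s)\,\ud s,\qquad e:=V_1-1-r^{-2}.
\]
For the decaying solution $\tilde y_2\sim\tfrac12 r^{-1/2}$ the iteration converges with error $O(r^{-1}e^{-r})$. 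For the growing $\tilde y_1\sim r^{3/2}$ the kernel contributes $r^{3/2}\int_r^\infty s^{-1/2}s^{3/2}e$, which is $O(re^{-r})$ after accounting for the factors. Then $\Phi_1^{(0)}=c_*\tilde y_1+d\,\tilde y_2$. The key point is $c_*\neq0$: if $c_*=0$, then $\Phi_1^{(0)}\sim r^{-1/2}$ and $r^{-1/2}\Phi_1^{(0)}$ would be a bounded (degree-$1$) threshold resonance of $\calL_1$ at energy $1$. This is excluded by the spectral information from \cite{LPPSS1PartI}, equivalently by Theorem~\ref{maintheo1}(i) transported through the supersymmetric factorization. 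The term $d\tilde y_2=O(r^{-1/2})$ is not $O(re^{-r})$, so we rather state the asymptotic as $c_*r^{3/2}+O(r^{-1/2})$. I expect the intended statement to allow this, or to absorb it by redefining the fundamental system. In any case the main obstacle is $c_*\neq0$, which must be imported from the rigorous-numerics input of Part~I.

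\emph{Choice of $\Theta_1^{(0)}$ and bounds.} We then fix $\Theta_1^{(0)}:=\tfrac{1}{c_*}\tilde y_2$ plus a suitable multiple of $\Phi_1^{(0)}$ if needed. Since $W[\tilde y_2,\tilde y_1]=1$, this gives $W[\Theta_1^{(0)},\Phi_1^{(0)}]=1$. It has large-$r$ behavior $\tfrac1{2c_*}r^{-1/2}+O(r^{-1}e^{-r})$, and its small-$r$ expansion is as in \eqref{Theta1_0} by the Wronskian argument above; both normalizations are compatible. The symbol bounds for $r\ge1$ follow by differentiating the Volterra representations, since each $r\partial_r$ preserves the power behavior and the exponentially small errors. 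On the compact middle region they follow from smoothness. Combining the two regimes yields $|(r\partial_r)^m\Phi_1^{(0)}|\lesssim r^{5/2}\jap{r}^{-1}$ and $|(r\partial_r)^m\Theta_1^{(0)}|\lesssim r^{-3/2}\jap{r}$.
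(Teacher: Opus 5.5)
You have a genuine gap in the large-$r$ asymptotics of $\Phi_1^{(0)}$. Write $\Phi_1^{(0)}=c_*\tilde y_1+d\,\tilde y_2$. The remainder $O(re^{-r})$ claimed in \eqref{Phi1_0} holds if and only if $d=0$, and your Volterra argument gives no information about $d$. You only obtain $c_*r^{3/2}+O(r^{-1/2})$, and then suggest weakening the statement or redefining the fundamental system. Neither way out exists. $\Phi_1^{(0)}$ is uniquely fixed by the normalization $r^{5/2}+O(r^{9/2})$ at $r=0$, since adding any multiple of the second solution produces an $r^{-3/2}$ term. So $d$ is an intrinsic number, and the lemma as stated asserts that it vanishes. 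A generic perturbative or connection argument cannot produce this; it is a structural property of the vortex.

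The missing idea is that the threshold solution is explicit. From the Bogomolny system \eqref{eq:Ua_bogo} you get $(rU)'=(2-a_\theta)U$, hence $-\Delta(rU)=\tfrac r2(1-U^2)U-\tfrac{(2-a_\theta)^2}{r}U$. Adding $V_1\,rU$ gives $\calL_1(rU)=rU$, i.e.\ $\calH_1(r^{3/2}U)=r^{3/2}U$. The paper sets $\Phi_1^{(0)}:=U'(0)^{-1}r^{3/2}U$. Then \eqref{Uaexp0} gives $r^{5/2}-\tfrac18r^{9/2}+O(r^{13/2})$ near $0$, and \eqref{Uaexpinfty} gives $U'(0)^{-1}\bigl(r^{3/2}-m\,re^{-r}+O(e^{-r})\bigr)$ at infinity. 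So $c_*=1/U'(0)>0$ with no $r^{-1/2}$ component. You therefore need neither the non-resonance input from the rigorous numerics of Part~I nor any Volterra analysis at infinity.

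$\Theta_1^{(0)}$ then follows by reduction of order, $\Theta_1^{(0)}=\Phi_1^{(0)}\int_r^\infty(\Phi_1^{(0)}(s))^{-2}\,ds$. The plus sign is what gives $W[\Theta_1^{(0)},\Phi_1^{(0)}]=1$ and $\Theta_1^{(0)}\sim\tfrac14r^{-3/2}$; this function coincides with your $c_*^{-1}\tilde y_2$. All the asymptotics and symbol bounds for both functions are then inherited from those of $U$. The rest of your construction (the small-$r$ Volterra equation, the Wronskian argument for $\Theta$, the symbol bounds) is fine, but it does not close the lemma without this identity.
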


\begin{proof}
Equation \eqref{eq:AYMH1} for $(\Phi(r,\theta),A(r,\theta))=(U(r)e^{i\theta},a_\theta(r)\ud \theta)$  implies that $\calH_1(r^{3/2}U)=r^{3/2}U$. Therefore $\Phi_1^{(0)}(r):=(U'(0))^{-1}r^{3/2}U(r)$ satisfies \eqref{Phi1_0}. Let
\begin{equation}\label{Theta10}
\Theta_1^{(0)}(r):=-\Phi_1^{(0)}(r)\int_r^\infty (\Phi_1^{(0)}(s))^{-2}\, ds.
\end{equation}
Then $W[\Theta_1^{(0)},\Phi_1^{(0)}]=1$ and \eqref{Theta1_0} is satisfied. The derivative estimates for $\Phi_1^{(0)}$ and $\Theta_1^{(0)}$ follow from the corresponding estimates for $U$.
\end{proof}

\subsubsection{Solving from $r=0$}
In this subsection we construct solutions $\Phi_1(r,k)\sim r^{5/2}$ as $r\to0$. We begin by deriving a power series expansion for $\Phi_1(r,k)$ by perturbing from the fundamental system at energy $0$. The following result holds for all $k>0$, but it is only efficient in the range $0<k\le1$. 

\begin{lem}\label{Phi1Op1_1}
For all $r>0$ and $0<k$ we have the expansion
\begin{align}\label{Phi1Op1_2}
\Phi_1(r,k) = \Phi_{1}^{(0)}(r) + r^{3/2} \sum_{j\geq 1} (rk)^{2j} \Phi_{1,j}(r),
\end{align}
which converges absolutely. The expansion converges uniformly if $rk \leq 1$, and the functions $\Phi_{1,j}$ are smooth. In the case $j=1$ we have
\begin{align}\label{Phi1Op1_10}
\Phi_{1,1}(r) = \begin{cases} 
-\tfrac{1}{12}r  + O(r^3), &  r\lesssim 1,
\\ -\tfrac{c_*}{8} + O(r^{-2}), &  r \gtrsim 1.
\end{cases}
\end{align}
Moreover, for some absolute $C>0$, for all $j\geq 1$
\begin{equation}\label{Phi1Op1_3}
 | \Phi_{1,j}(r) | \leq\frac{C^j}{j!}  r\langle r\rangle^{-1}
\end{equation}
In \eqref{Phi1Op1_3} we also have consistent symbol-type bounds for the derivatives, that is, for some constants $C_{m}>0$
\begin{equation}\label{Phi1Op1_4}
 | (r\partial_r)^m\Phi_{1,j}(r) | \leq     \frac{C_{m}^j}{j!}  r\langle r\rangle^{-1}
\end{equation}
for all $m\ge1$ and $j\geq1$. 
\end{lem}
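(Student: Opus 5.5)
We treat $k^2$ as a perturbation of the zero-energy equation and solve iteratively with the variation of parameters kernel built from the fundamental system of Lemma~\ref{lem_p1_t1}. Since $\mathcal H_1 f=(1+k^2)f$ is the same as $(\mathcal H_1-1)f=k^2 f$, we set $\Phi_1=\sum_{j\ge0}k^{2j}f_j$ with $f_0=\Phi_1^{(0)}$ and
\[
f_j(r)=-\int_0^r G(r,s)\,f_{j-1}(s)\,ds,\qquad G(r,s):=\Phi_1^{(0)}(r)\Theta_1^{(0)}(s)-\Theta_1^{(0)}(r)\Phi_1^{(0)}(s).
\]
This $G$ is the Cauchy kernel: $u(r)=\int_0^r G(r,s)g(s)\,ds$ solves $-u''+(V-1)u=-g$ with zero Cauchy data at $s=r$, because $W[\Theta_1^{(0)},\Phi_1^{(0)}]=1$. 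Hence $(\mathcal H_1-1)f_j=k^{-2}\cdot k^2 f_{j-1}$ with the correct sign, which fixes the sign convention. We then define $\Phi_{1,j}(r):=r^{-3/2-2j}f_j(r)$, so that $f_j=r^{3/2}r^{2j}\Phi_{1,j}$ and \eqref{Phi1Op1_2} holds formally. Using $r^{5/2}$ behavior at $0$, the integral starting at $s=0$ selects the solution that is $O(r^{5/2})$ at zero, and in fact the correction is $O(r^{9/2})$.

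The main step is the inductive bound \eqref{Phi1Op1_3}. From Lemma~\ref{lem_p1_t1} we have
\[
|G(r,s)|\lesssim \frac{r^{5/2}\langle r\rangle^{-1}\,s^{-3/2}\langle s\rangle}{1}+\frac{r^{-3/2}\langle r\rangle\, s^{5/2}\langle s\rangle^{-1}}{1}
\]
for $s\le r$. For $s\le r\le1$ this is $\lesssim s^{-3/2}r^{5/2}$, and more precisely $|G|\lesssim s\,(r/s)^{5/2}$. For large $r,s$ it is $\lesssim r^{3/2}s^{-1/2}+r^{-1/2}s^{3/2}\lesssim r^{3/2}s^{-1/2}$, and the mixed region is handled the same way.

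We assume $|f_{j-1}(s)|\le \frac{C^{j-1}}{(j-1)!}s^{3/2+2(j-1)}\,s\langle s\rangle^{-1}$, which for $j=1$ is Lemma~\ref{lem_p1_t1}. Inserting this gives, for $r\le1$,
\[
|f_j(r)|\lesssim \frac{C^{j-1}}{(j-1)!}\,r^{5/2}\int_0^r s^{-3/2}\,s^{5/2+2j-2}\,ds\sim \frac{C^{j-1}}{(j-1)!\,(2j)}\,r^{3/2+2j}\,r.
\]
For $r\ge1$,
\[
|f_j(r)|\lesssim \frac{C^{j-1}}{(j-1)!}\,r^{3/2}\int_1^r s^{-1/2}\,s^{3/2+2j-2}\,ds\sim \frac{C^{j-1}}{(j-1)!\,(2j)}\,r^{3/2+2j},
\]
plus a harmless contribution from $s\le1$. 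The factor $1/(2j)$ produced by the power integral gives the $1/j!$, which also yields convergence uniformly on $rk\le1$ and absolute convergence for every $r$ via $\sum (Crk)^{2j}/j!$.

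The delicate point, which we expect to be the main obstacle, is to ensure that at large $r$ the $\Theta$-term does not generate growth worse than $r^{3/2+2j}$. This means checking the cancellation-free bound $r^{-1/2}\int^r s^{3/2}\,s^{1/2+2j}\,ds\lesssim r^{3/2+2j}$, which holds. In addition, the constants must remain uniform in $j$.

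For $j=1$ we compute explicitly. Near $0$ we have
\[
f_1\approx -\int_0^r\tfrac14\bigl(r^{5/2}s^{-3/2}-r^{-3/2}s^{5/2}\bigr)s^{5/2}\,ds=-\tfrac14 r^{9/2}\bigl(\tfrac13-\tfrac17\bigr)\cdot\ldots,
\]
and matching with the coefficient in \eqref{Phi1Op1_10} is a routine evaluation using the exact normalization of Lemma~\ref{lem_p1_t1}. At infinity, the leading part $c_*r^{3/2}$ together with $\Theta\sim\frac{1}{2c_*}r^{-1/2}$ gives
\[
-\bigl(c_*r^{3/2}\tfrac{1}{2c_*}\textstyle\int^r c_*s\,ds-\tfrac1{2c_*}r^{-1/2}\int^r c_*^2s^3\,ds\bigr)=-c_*r^{7/2}\bigl(\tfrac14-\tfrac18\bigr)=-\tfrac{c_*}{8}r^{7/2},
\]
with $O(r^{3/2})$ errors, which gives $-\frac{c_*}8+O(r^{-2})$.

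The derivative bounds \eqref{Phi1Op1_4} follow by induction on $m$. We write $f_j(r)=\Phi_1^{(0)}(r)\int_0^r\Theta_1^{(0)} f_{j-1}-\Theta_1^{(0)}(r)\int_0^r\Phi_1^{(0)}f_{j-1}$ and apply $r\partial_r$. Each derivative either falls on $\Phi_1^{(0)}$ or $\Theta_1^{(0)}$, where the symbol bounds of Lemma~\ref{lem_p1_t1} apply, or on the integral, where it produces $r\,\Theta_1^{(0)}(r)f_{j-1}(r)$ and its partner. These two boundary terms cancel exactly by the Wronskian structure, since $G(r,r)=0$. For higher derivatives, the terms generated by the boundary are bounded by the inductive hypothesis, and the constants $C_m^j$ absorb the combinatorics. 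Smoothness of $\Phi_{1,j}$ follows from the same representation together with the smoothness of $U,a_\theta$.
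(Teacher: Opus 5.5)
Your proposal is correct and follows the paper's scheme: iteration in $k^2$ against the zero-energy Cauchy kernel of $\Phi_1^{(0)},\Theta_1^{(0)}$, with the factor $1/j$ coming from $\int s^{2j-1}\,ds$. Two steps are carried out differently.

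For \eqref{Phi1Op1_3}, the paper uses reduction of order to rewrite the kernel as the positive kernel $K(r,s)=(s/r)^{3/2}\Phi_1^{(0)}(r)\Phi_1^{(0)}(s)\int_s^r(\Phi_1^{(0)})^{-2}\,dt$. It then uses the explicit leading forms $\frac{1}{4r^3}(r^4-s^4)$ and $\frac{s}{2}(1-s^2/r^2)$ and splits into regions. You instead bound the two products in $G$ separately. That is enough, and it is cleaner than you present it. Since $x\mapsto x^4\langle x\rangle^{-2}$ is increasing, the $\Theta_1^{(0)}(r)\Phi_1^{(0)}(s)$ term is dominated by the other one for every $s\le r$. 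Hence $|G(r,s)|\lesssim r^{5/2}\langle r\rangle^{-1}s^{-3/2}\langle s\rangle$ globally, and the induction is the single line $|f_j(r)|\lesssim\frac{C^{j-1}}{(j-1)!}\,r^{5/2}\langle r\rangle^{-1}\int_0^r s^{2j-1}\,ds$. There is no delicate point at large $r$.

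For \eqref{Phi1Op1_4}, the paper substitutes $s=ru$ and differentiates in the dilation parameter. The domain is then fixed, no boundary terms occur, and the factor $\int_0^1u^{2j-2}\cdots\,du$ supplies the $1/j$ at every order. You differentiate in $r$ directly. This also works but needs more bookkeeping.

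You should make that bookkeeping explicit, because only the first boundary term vanishes. At second order you pick up $-r\,[(r\partial_rG)(r,s)]_{s=r}\,f_{j-1}(r)=-r^2f_{j-1}(r)$. Equivalently, use $r^2f_j''=r^2(\mathcal V-1)f_j-r^2f_{j-1}$, with $\mathcal V$ the potential of $\mathcal H_1$. This term has the right size $r^{5/2+2j}\langle r\rangle^{-1}$ but carries only the prefactor $C^{j-1}/(j-1)!$, with no gain of $1/j$. It is absorbed because $j\le 2^j$, by taking $C_m$ large relative to the lower-order constants. The passage $(r\partial_r)^m\Phi_{1,j}=r^{-3/2-2j}(r\partial_r-\tfrac32-2j)^mf_j$ costs a further $O(j^m)$, which is also absorbable.

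There are also two computational slips.
\begin{itemize}
\item Near $r=0$, the $j=1$ bracket is $r^{5/2}\int_0^r s\,ds-r^{-3/2}\int_0^r s^5\,ds=r^{9/2}(\tfrac12-\tfrac16)$. This gives exactly $f_1=-\tfrac1{12}r^{9/2}+O(r^{13/2})$, with no extra normalization. Your $(\tfrac13-\tfrac17)$ would give $-\tfrac1{21}$.
\item In your large-$r$ check, $f_{j-1}(s)\sim s^{2j-1/2}$, not $s^{1/2+2j}$. As written, the integral is of size $r^{5/2+2j}$ and the claimed inequality fails. With the correct exponent it is $r^{3/2+2j}/(2j+2)$.
\end{itemize}
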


\begin{proof}
To solve $\mathcal{H}_1\Phi_1(r,k) = (k^2+1) \Phi_1(r,k)$, we begin with the ansatz
\begin{align*}
\Phi_1(r,k) = r^{3/2} \sum_{j\geq 0} k^{2j} f_j(r), \qquad f_0(r) := r^{-3/2}\Phi_1^{(0)}(r).
\end{align*}
Substituting it into the equation yields
\begin{align}\label{Phi1Op1_5}
\mathcal{H}_1 (r^{3/2} f_j(r))-r^{3/2}f_j(r) = r^{3/2} f_{j-1}(r), \quad j\geq 0, \quad f_{-1}(r) = 0.
\end{align}
Comparing with \eqref{Phi1Op1_2}, we have $f_j(r) = r^{2j} \Phi_{1,j}(r)$. From \eqref{Phi1Op1_5} and the backward Green's function
\begin{align*}
(\mathcal{H}_1-1)^{-1}(r,s)=\Phi_1^{(0)}(r)\Theta_1^{(0)}(s) - \Theta_1^{(0)}(r)\Phi_1^{(0)}(s)  \quad  \hbox{ for }  \quad r>s,
\end{align*}
we obtain the recursion formula
\begin{align}\label{Phi1Op1_13}
r^{3/2} f_j(r) = \int_0^r 
\Big( \Theta_1^{(0)}(s)\Phi_1^{(0)}(r)-\Theta_1^{(0)}(r)\Phi _1^{(0)}(s)\Big) s^{3/2} f_{j-1}(s)\, ds.
\end{align}
Recalling \eqref{Theta10}, it follows that the recursion for $f_j(r)$ can be conveniently rewritten as 
\begin{align}\label{Phi1Op1_6}
\begin{split}
& f_j(r) =- \int_0^r K(r,s) f_{j-1}(s)\, ds, \quad j\geq 1
\\
& K(r,s) := \Big(\dfrac{s}{r}\Big)^{3/2}\Phi_1^{(0)}(r)\Phi_1^{(0)}(s) \int_s^r \dfrac{1}{\Phi_1^{(0)}(t)^2} \, dt
\end{split}
\end{align}
Using the asymptotics in \eqref{Phi1_0}, \eqref{Theta1_0} for small and large $r$, we obtain
\begin{align}\label{Phi1Op1_7}
K(r,s)
= \left\{ 
\begin{array}{ll}
\dfrac{1}{4r^3} \big(r^4-s^4 \big) +O\big(r^{2}(r-s)\big) & s < r \lesssim 1,
\\ 
\\
\dfrac{s}{2}\big(1-\dfrac{s^2}{r^2}\big) + O\big(s^{\frac12}e^{-s} [(r-s)\wedge 1]\big) &  r > s \gtrsim 1.
\end{array} 
\right.
\end{align}
In the region $0<s\le 1\le r$ we have the upper bound $0<K(r,s)\le K_0$, for some absolute constant $K_0$. The error bounds are stable under differentiation with respect to~$r$ in the following sense:
\[
|(r\partial_r)^m O\big(r^{2}(r-s)\big)| \leq C_m\, r^3 \qquad  s < r \lesssim 1
\]
as well as
\[
|(r\partial_r)^m O\big(s^{\frac12}e^{-s} [(r-s)\wedge 1]\big)| \leq C_m\, r e^{-s} \qquad  1\lesssim s < r
\]
Similar estimates hold for derivatives in $s$, but we do not need them.

\medskip
\noindent
{Proof of \eqref{Phi1Op1_10}.}
From \eqref{Phi1Op1_6}, when $r \lesssim 1$, we can use \eqref{Phi1_0} again to see that
\begin{equation}\label{Phi1Op1_14}\begin{aligned}
f_1(r) & = -r^{-3/2}\Phi_{1}^{(0)}(r) \int_0^r \Phi_1^{(0)}(s)^2 \Big( \int_s^r \frac{1}{\Phi_1^{(0)}(t)^2} \, dt \Big) \, ds \nonumber
\\
  & = -(r + O(r^2)) \int_0^r (s^5 + O(s^7)) \Big( \int_s^r \frac{1}{t^5} (1+O(t^2)) \, dt \Big) \, ds
\\ & = -\frac{1}{12} r^3 + O(r^5). \nonumber
\end{aligned}\end{equation}
Since $f_1(r) = r^{2} \Phi_{1,1}(r)$, the asymptotic formula for $r\lesssim 1$ in \eqref{Phi1Op1_10} follows.
When $r \gg 1$, we use \eqref{Phi1_0} and \eqref{Phi1Op1_7} to infer that
\begin{align*}
\int_{1}^r K(r,s)f_0(s)\, ds  & = \int_{1}^r  \Big(\dfrac{s}{2}\big(1-\dfrac{s^2}{r^2}\big) + O\big(s^{\frac12}e^{-s} [(r-s)\wedge 1]\big) \Big)\dfrac{1}{s^{3/2}}\Phi_1^{(0)}(s)ds
\\
 & = \dfrac{c_*}{8}r^2+O(1). \nonumber
\end{align*}
Since the integration from $0$ to $1$ gives a bounded contribution, it follows that
\begin{align*}
f_1(r) & = -\dfrac{c_*}{8}r^{2}+ O(1), \qquad r \gtrsim 1,
\end{align*}
which gives the second asymptotic formula for $\Phi_{1,1}(r)$.

\medskip

\noindent
{Proof of \eqref{Phi1Op1_3}.}
We proceed by induction, having already proved the case $n=1$. We assume that there exists a constant $C>0$ such that, for a given $n\ge2$ and all $r>0$,
\begin{equation}
    \label{eq:fkbd}
    0<|f_{m}(r)|\leq \dfrac{C^{m}}{m!}r^{2m+1} \langle r\rangle^{-1},\qquad m \leq n-1.
\end{equation}
Substituting this into the identity for $f_n(r)$ in terms of $f_{n-1}(r)$, for $r\geq 1$, we find that
\begin{align*}
|f_n(r)| & =\Big|\int_{0}^r K(r,s)f_{n-1}(s)\,ds\Big|
\\ & \leq \dfrac{1}{2}\int_{1}^r \Big(s\big(1-\dfrac{s^2}{r^2}\big)  + O\big(s^{\frac12}e^{-s} [(r-s)\wedge 1]\big) \Big) \dfrac{C^{n-1}}{(n-1)!}s^{2n-2}\, ds 
+ \int_0^1 K_0 \dfrac{C^{n-1}}{(n-1)!}s^{2n-1}\, ds \\
&= \dfrac{1}{2}\int_{0}^r  s\big(1-\dfrac{s^2}{r^2}\big)\dfrac{C^{n-1}}{(n-1)!}s^{2n-2}\, ds    + C_1 \int_1^r  s^{\frac12}e^{-s} [(r-s)\wedge 1]  \dfrac{C^{n-1}}{(n-1)!}s^{2n-2}\, ds  \\
&\qquad\qquad  + \int_0^1 \Big[ K_0 +  \frac{1}{2}\big(1-\dfrac{s^2}{r^2}\big)\Big] \dfrac{C^{n-1}}{(n-1)!}s^{2n-1}\, ds 
\\ & \leq  \dfrac{C^{n-1}r^{2n}}{4\, n!} + C_1 \frac{C^{n-1} r^{2n-\frac32}}{n!}+ \big (K_0+\frac12\big)\dfrac{C^{n-1}}{2\, n!}\le \dfrac{C^{n}r^{2n}}{n!}.
\end{align*}
This implies the second estimate in \eqref{Phi1Op1_3}. To pass to the last line, we bounded $e^{-s} [(r-s)\wedge 1]\le 1$.
If $0<r\le 1$, then
\begin{align*}
|f_{n}(r) |& =\Big| \int_0^r K(r,s) f_{n-1}(s) \, ds\Big|
\\ & \leq \int_0^r  \left(\dfrac{1}{4r^3} \big(r^4-s^4 \big) + C_1r^{-1}(r^4-s^4)\right)\dfrac{C^{n-1}}{(n-1)!}s^{2n-1} \, ds
  \\ & =\dfrac{C^{n-1}}{8\, n!} r^{2n+1}+ C_1\dfrac{C^{n-1} }{2\,  n!} r^{2n+3} \leq \dfrac{C^{n}}{ n!} r^{2n+1}.
\end{align*}
This concludes the proof of \eqref{Phi1Op1_3}.
To see that $f_j$ is smooth on $(0,\infty)$, one can proceed by induction. The smoothness of $f_0(r)$ follows directly from that of $\Phi_1^{(0)}(r)$ and the identity $f_0(r)=r^{-3/2}\Phi_1^{(0)}(r)$. The general inductive step follows from the recursive representation \eqref{Phi1Op1_6} and the smoothness of $K(r,s)$ on $\{(r,s)\in\R^2: \, 0<s<r\}$.

\medskip
\noindent
{Proof of \eqref{Phi1Op1_4}.}
We proceed by a double induction in $j$ and $m$. First note that, from \eqref{Phi1Op1_6}, after changing variables we infer that for all $j\ge1$,
\begin{align}\label{Phi1Op1_15}
\Phi_{1,j}(r\lambda)=-\dfrac{1}{\lambda r}\int_0^1 K( r\lambda , \lambda ru) u^{2j-2} \Phi_{1,j-1}(\lambda ru)\, du,
\end{align}
for $\lambda\simeq1$. Moreover, by \eqref{Phi1Op1_7},
\begin{align*}
\dfrac{1}{\lambda r} K(\lambda r, \lambda ru)
= \left\{ 
\begin{array}{ll}
\dfrac{1}{4} \big(1-u^4 \big) +O\big(r^{2}\lambda^2(1-u)\big), & u < 1  \lesssim (\lambda r)^{-1},
\\ 
\\
\dfrac{1}{2} u \big(1- u^2 \big) + O\big((\lambda ru)^{\frac12}e^{-\lambda ru} [(1-u)\wedge (\lambda r)^{-1}]\big), &  1 > u \gtrsim (\lambda r)^{-1}.
\end{array} 
\right.
\end{align*}
Note that, at leading order, the above expression does not depend on $\lambda$.

We now show that the result holds for all $j\geq1$ and $m=1$ fixed. Indeed, assuming \eqref{Phi1Op1_4} holds for $(j-1,m)$ with $m=1$, then, from \eqref{Phi1Op1_15}, we obtain that for $r\lesssim 1$, \begin{align*}
    \big\vert r \partial_r \Phi_{1,j}(r)\big\vert & \leq \int_0^1 \Big|\partial_\lambda \Big[\dfrac{1}{\lambda r}K( r\lambda , \lambda ru)\Big]_{\lambda=1}\Big| \; u^{2j-2} \big\vert \Phi_{1,j-1}( ru)\big\vert \, du
    \\ & + \int_0^1 \bigg\vert \dfrac{1}{ r}K( r , ru) \bigg\vert \, u^{2j-2} \,  \Big\vert \partial_\lambda \Phi_{1,j-1}(\lambda r u)\Big\vert_{\lambda=1} \, du
    \\ & \lesssim \int_0^1 \bigg(\dfrac{1}{4}(1-u^4) +O\big(r^2(1-u)\big)\bigg) u^{2j-2} \dfrac{1}{(j-1)!} \dfrac{r u}{\langle r u\rangle } \, du
    \\ & \lesssim \dfrac{1}{2j} \, \dfrac{1}{(j-1)!}\, r. 
\end{align*}
In the case $r\gtrsim 1$ we obtain \begin{align*}
    \big\vert r \partial_r \Phi_{1,j}(r)\big\vert & \leq \int_0^1 \Big|\partial_\lambda \Big[\dfrac{1}{\lambda r}K( r\lambda , \lambda ru)\Big]_{\lambda=1}\Big| \; u^{2j-2} \big\vert \Phi_{1,j-1}( ru)\big\vert \, du
    \\ & + \int_0^1 \bigg\vert \dfrac{1}{ r}K( r , ru) \bigg\vert \, u^{2j-2} \,  \Big\vert \partial_\lambda \Phi_{1,j-1}(\lambda r u)\Big\vert_{\lambda=1} \, du
    \\ & \lesssim \int_0^1 \bigg(\dfrac{1}{2}u (1-u^2)+O\big(( ru)^{\frac12}e^{-ru} [(1-u)\wedge  r^{-1}] \big)\bigg) u^{2j-2} \dfrac{1}{(j-1)!} \dfrac{r u}{\langle r u\rangle } \, du
    \\ & \lesssim \dfrac{1}{2j} \, \dfrac{1}{(j-1)!},
\end{align*}
which is consistent with \eqref{Phi1Op1_4}. Proving the base inductive hypothesis for $j=1$ fixed and $m\geq1$ follows similar direct calculations (assuming it holds for $(j,m-1)$ with $j=1$) and our previous bounds on $\Phi_{1,1}$ (see \eqref{Phi1Op1_14}).

Now we seek to prove the main inductive step. Taking the derivative bounds on the errors in \eqref{Phi1Op1_7} into account, we infer that for $j\ge1$
\begin{align*}
| r^m\partial_r^m \Phi_{1,j}(r)| & \leq \sum_{\ell=0}^m \binom{m}{\ell}\int_0^1 \Big|\partial_\lambda^{m-\ell} \Big[\dfrac{1}{\lambda r}K( r\lambda , \lambda ru)\Big]_{\lambda=1}\Big| \; u^{2j-2} \Big\vert \partial_\lambda ^\ell \Phi_{1,j-1}(\lambda ru)\Big\vert_{\lambda=1}\, du.
\end{align*}
Then, if $r\lesssim 1$, using the inductive hypothesis, we get that 
\begin{align*}
    | r^m\partial_r^m \Phi_{1,j}(r)| & \leq C_m \int_0^1 \Big|\dfrac{1}{4}(1-u^4) + O\big(r^2(1-u)\big) \Big| \; u^{2j-2} \dfrac{C_m^{j-1}}{(j-1)!} \dfrac{r u}{\langle r u \rangle} \, du 
    \\ & \leq C_m^j \dfrac{1}{2j}\, \dfrac{1}{(j-1)!} r,
\end{align*}
having used that for $\ell<m$ the kernel gives lower order terms (i.e., when at least one $\partial_\lambda$ hits the kernel) with a different constant only depending on $m$, and hence they can be absorbed by the leading order term by enlarging the constant $C_m$. In the case $r \gtrsim 1$, similarly we have that 
\begin{align*}
    & | r^m\partial_r^m \Phi_{1,j}(r)| & 
    \\ & \leq C_m \int_0^1 \Big| \dfrac{1}{2} u \big(1- u^2 \big) + O\big((\lambda ru)^{\frac12}e^{-\lambda ru} [(1-u)\wedge (\lambda r)^{-1}]\big) \Big| \; u^{2j-2} \dfrac{C_m^{j-1}}{(j-1)!} \dfrac{r u}{\langle r u \rangle} \, du 
    \\ & \leq C_m^j \dfrac{1}{2j}\, \dfrac{1}{(j-1)!},
\end{align*}
having used again that the terms coming from the cases $\ell<m $ can be absorbed by the leading order term for $m=\ell$ by simply enlarging the constant $C_m$. This concludes the proof of \eqref{Phi1Op1_4}.
\end{proof}

For $k\ge1$ it becomes inefficient to perturb off the $0$-energy solutions. Instead, we start from a Bessel approximation that remains accurate on $(0,1]$. This will be crucial for large $k$ since we solve the connection problem at $r=R_k:=k^{-1/2}$. To set up the analysis, observe that \eqref{L1evalue} is equivalent to
\begin{equation}\label{eq:W0}
\Bigl(-\partial_r^2+\frac{15}{4r^2}-W_0(r)\Bigr)f(r)=k^2 f(r),
\qquad
W_0(r)=\frac{4a_\theta(r)-a_\theta(r)^2}{r^2}+\frac12\bigl(1-U(r)^2\bigr). 
\end{equation}
Furthermore, \eqref{Uaexp0} implies that  $W_0(r)=\frac32+O(r^2)$ as $r\to0$.
For $k\geq1$ define  
\begin{equation}\label{eq:Phi10_J2}
\Phi_{1,0}(r,k):=8k^{-2}\sqrt r\,J_2(kr).
\end{equation}
and the Green kernel
\[
G_{k,2}(r,s):=\frac{\pi}{2}\sqrt{rs}\,\Bigl[J_2(kr)Y_2(ks)-Y_2(kr)J_2(ks)\Bigr], 
\qquad 0<s\le r\le1.
\]
Then on $(0,1]$ the real-valued Weyl solution at zero $\Phi_1(r,k)$ admits  the Volterra representation
\begin{equation}\label{eq:Phi1-high-k-Volterra}
\Phi_1(r,k)=\Phi_{1,0}(r,k)+\int_0^r G_{k,2}(r,s)W_0(s)\Phi_1(s,k)\,ds.
\end{equation}
The following lemma controls the error between $\Phi_1(r,k)$ and the Bessel approximation. 

\begin{lem}\label{lem:Phi1-high-k-J2}
As $k\to\infty$,
\begin{align}
\sup_{0<r\leq 1}\Bigl|\Phi_1(r,k)-\Phi_{1,0}(r,k)\Bigr|&\lesssim k^{-7/2},\label{eq:Phi1-high-k-J2-bd1}
\\
\sup_{0<r\leq 1}\Bigl|\partial_r\Phi_1(r,k)- \partial_r\Phi_{1,0}(r,k)\Bigr|&\lesssim k^{-5/2}.\label{eq:Phi1-high-k-J2-bd2}
\end{align}
\end{lem}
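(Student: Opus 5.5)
We solve the Volterra equation \eqref{eq:Phi1-high-k-Volterra} by Picard iteration on $(0,1]$. The key input is a pointwise bound for the kernel $G_{k,2}$ that tracks the $r$-behaviour at the origin together with the oscillation. We use the standard Bessel bounds: $|J_2(x)|\lesssim \min(x^2,x^{-1/2})$ and $|Y_2(x)|\lesssim \max(x^{-2},x^{-1/2})$ (in each case the first term is relevant for $x\lesssim1$, the second for $x\gtrsim1$). Introduce the weight
\[
m_k(r):=\sqrt r\,\min\bigl((kr)^2,(kr)^{-1/2}\bigr),
\]
so that $|\Phi_{1,0}(r,k)|\lesssim k^{-2}m_k(r)$. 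Splitting into the regions $s\le r\le k^{-1}$, $s\le k^{-1}\le r$, and $k^{-1}\le s\le r$, we will check
\[
|G_{k,2}(r,s)|\lesssim \frac{m_k(r)}{m_k(s)}\,\min\Bigl(s,\frac1k\Bigr),\qquad 0<s\le r\le1 .
\]
In the first region the kernel is $\lesssim \sqrt{rs}\,(r/s)^2\simeq s\,m_k(r)/m_k(s)$. In the last region it is bounded by $k^{-1}$, since products of two oscillatory Bessel factors are $\lesssim (k\sqrt{rs})^{-1}\sqrt{rs}$. The mixed region is interpolated in the same way.

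Since $W_0$ is bounded on $(0,1]$ by \eqref{Uaexp0} (indeed $W_0=\tfrac32+O(r^2)$), the $n$-th Picard iterate $\Phi^{(n)}$ satisfies
\[
\frac{|\Phi^{(n)}(r)|}{m_k(r)}\le k^{-2}\,\frac{\bigl(C\int_0^r \min(s,k^{-1})\,ds\bigr)^n}{n!}.
\]
This follows from the usual Volterra induction. For $r\le1$ we have $\int_0^1\min(s,k^{-1})\,ds\lesssim k^{-1}$, so the series converges and gives
\[
|\Phi_1(r,k)-\Phi_{1,0}(r,k)|\lesssim k^{-2}\cdot k^{-1}\, m_k(r).
\]
Moreover $\sup_{r\le1} m_k(r)\lesssim k^{-1/2}$: on $r\le k^{-1}$ we have $m_k\le k^{-1/2}$, and on $r\ge k^{-1}$ we have $m_k=k^{-1/2}$. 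Together these yield \eqref{eq:Phi1-high-k-J2-bd1} with the claimed rate $k^{-7/2}$. Uniqueness and the identification with the Weyl solution $\Phi_1$ follow because the Volterra solution is $O(r^{5/2})$ at $0$, hence $L^2$ near $0$. Its normalization is fixed by $8k^{-2}\sqrt r J_2(kr)\sim r^{5/2}$, consistent with \eqref{Phi1_0}, and the integral term is $o(r^{5/2})$.

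For the derivative bound \eqref{eq:Phi1-high-k-J2-bd2}, differentiate the Volterra equation in $r$. The boundary term vanishes because $G_{k,2}(r,r)=0$, leaving
\[
\partial_r(\Phi_1-\Phi_{1,0})=\int_0^r\partial_rG_{k,2}(r,s)W_0(s)\Phi_1(s,k)\,ds .
\]
Using $(x^{1/2}\mathcal C_2(x))'$ bounds, we get $|\partial_r G_{k,2}(r,s)|\lesssim k\,\frac{m_k(r)}{m_k(s)}\min(s,k^{-1})$ for $r\gtrsim k^{-1}$, and $\lesssim r^{-1}\cdot(\text{kernel bound})$ for $r\le k^{-1}$. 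Inserting $|\Phi_1|\lesssim k^{-2}m_k$ gives the bound $k\cdot k^{-3}\cdot k^{-1/2}=k^{-5/2}$ for $r\ge k^{-1}$. For $r\le k^{-1}$ the bound is $r^{-1}k^{-3}m_k(r)\lesssim k^{-3}k^2r^{3/2}\le k^{-5/2}$.

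The main obstacle is the uniform kernel estimate across the transition region $kr\sim1$, specifically the mixed region $s\le k^{-1}\le r$ where $Y_2(ks)$ is singular. It has to be paired with $J_2(kr)$ carefully: the bound $\sqrt{rs}\,(ks)^{-2}(kr)^{-1/2}$ must be compared with $m_k(r)\,s/m_k(s)=\sqrt r(kr)^{-1/2}s^{1/2}(ks)^{-2}$, which agree. Once this is verified the rest is routine Volterra bookkeeping.
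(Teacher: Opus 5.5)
Your proposal is correct and is essentially the paper's proof written in the unscaled variable. Your weight is $m_k(r)=k^{-1/2}\omega(kr)$ for the paper's weight $\omega$, and your three-region bounds on $G_{k,2}$ and $\partial_r G_{k,2}$ are exactly the paper's bounds on $\mathcal G(x,y)$ and $\partial_x\mathcal G(x,y)$ after substituting $x=kr$, $y=ks$. The only difference is bookkeeping: your explicit Picard induction with the measure $\min(s,k^{-1})\,ds$ replaces the paper's statement that the Volterra operator has norm $O(k^{-1})$ on the weighted space $X$.
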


\begin{proof}
The functions $\sqrt r\,J_2(kr)$ and $\sqrt r\,Y_2(kr)$ solve
\[
\Bigl(-\partial_r^2+\frac{15}{4r^2}\Bigr)f=k^2 f,
\]
and their Wronskian is
\[
W\bigl(\sqrt r\,J_2(kr),\sqrt r\,Y_2(kr)\bigr)=\frac{2}{\pi}.
\]
Hence $G_{k,2}$ is the Green kernel for the comparison operator $-\partial_r^2+\frac{15}{4r^2}-k^2$ with regular boundary condition at $r=0$, and variation of constants yields~\eqref{eq:Phi1-high-k-Volterra}.

To estimate the Volterra equation, write $x:=kr$ and
\[
\phi(x,k):=k^{5/2}\Phi_1(x/k,k),\qquad 0<x<k.
\]
Then \eqref{eq:Phi1-high-k-Volterra} becomes
\begin{equation}\label{eq:phi-scaled-Volterra}
\phi(x,k)=8\sqrt x\,J_2(x)+k^{-2}\int_0^x \mathcal G(x,y)W_0(y/k)\phi(y,k)\,dy,
\end{equation}
where
\[
\mathcal G(x,y):=\frac{\pi}{2}\sqrt{xy}\,\Bigl[J_2(x)Y_2(y)-Y_2(x)J_2(y)\Bigr],
\qquad 0<y\le x<k.
\]
Using the standard bounds
\[
|J_2(z)|\lesssim
\begin{cases}
z^2,&0<z\le1,
\\
z^{-1/2},&z\ge1,
\end{cases}
\qquad
|Y_2(z)|\lesssim
\begin{cases}
z^{-2},&0<z\le1,
\\
z^{-1/2},&z\ge1,
\end{cases}
\]
and the analogous estimates for one derivative, we obtain
\[
|\mathcal G(x,y)|\lesssim
\begin{cases}
x^{5/2}y^{-3/2},&0<y\le x\le1,
\\
y^{-3/2},&0<y\le1\le x,
\\
1,&1\le y\le x,
\end{cases}
\qquad
|\partial_x\mathcal G(x,y)|\lesssim
\begin{cases}
x^{3/2}y^{-3/2},&0<y\le x\le1,
\\
y^{-3/2},&0<y\le1\le x,
\\
1,&1\le y\le x.
\end{cases}
\]
Let
\[
\omega(x):=
\begin{cases}
x^{5/2},&0<x\le1,
\\
1,&x\ge1.
\end{cases}
\]
Then $|8\sqrt x\,J_2(x)|\lesssim \omega(x)$, and the previous kernel bounds imply
\[
\omega(x)^{-1}\int_0^x |\mathcal G(x,y)|\omega(y)\,dy\lesssim 1+x\lesssim k,
\qquad 0<x\leq k.
\]
Since $W_0(y/k)$ is uniformly bounded for $0<y\leq k$, the Volterra operator on the right-hand side of \eqref{eq:phi-scaled-Volterra} has norm $O(k^{-1})$ on the Banach space
\[
X:=\Bigl\{f\in C((0,k])\:\Big|\: \|f\|_X:=\sup_{0<x\leq k}\omega(x)^{-1}|f(x)|<\infty\Bigr\}.
\]
Therefore,
\[
\|\phi(\cdot,k)-8\sqrt{\cdot}\,J_2(\cdot)\|_X\lesssim k^{-1}.
\]
Differentiating \eqref{eq:phi-scaled-Volterra} and using the bound for $\partial_x\mathcal G$ in exactly the same way yields
\[
\sup_{0<x\leq k}\Bigl|\partial_x\phi(x,k)-8\partial_x\bigl(\sqrt x\,J_2(x)\bigr)\Bigr|\lesssim k^{-1}.
\]
Rescaling back to $r=x/k$ gives \eqref{eq:Phi1-high-k-J2-bd1}--\eqref{eq:Phi1-high-k-J2-bd2}.
\end{proof}
So far we have focused on  $\Phi_1(r,k)$ in the region where $rk\leq1$. For future reference we also let
\begin{equation}\label{eq:Theta1varpar1}
    \Theta_1(r,k):=\Phi_1(r,k)\int_r^{1/k}\frac{\ud s}{(\Phi_1(s,k))^2},
\end{equation}
where the normalization is chosen so that $W(\Theta_1,\Phi_1)=1$. Note that $\Theta_1(r,k)$ also satisfies $(\calH_1-\jap{k}^2)\Theta_1(r,k)=0$.
\subsubsection{Constructing the outgoing Jost solution}
In this section we construct the Weyl--Titchmarsh functions $\Psi_1(r,k)$ coming from $r=\infty$. 
We proceed via approximation by Hankel functions. We again distinguish between small and large $k$.

\begin{lem}\label{lemWeyl1}
Let $0<k\le 1$ and define  
\begin{equation}\label{eq:Psi0-def}
\Psi_{1,0}(r,k):=e^{i3\pi/4}\sqrt{\frac{\pi r}{2}}\,H_1^{(1)}(kr) \sim k^{-\frac12} e^{ikr}
\end{equation}
as $r\to\infty$. 
Then,  for all $r\ge R_k:=k^{-\frac12}$,
\begin{align}
\Psi_1(r,k)&=\Psi_{1,0}(r,k)\bigl(1+O(e^{-r/2})\bigr),\label{eq:Psi10dom}\\
\Psi_1'(r,k)&=\Psi_{1,0}'(r,k)\bigl(1+O(e^{-r/2})\bigr).\label{eq:Psip10dom}
\end{align}
The $O(e^{-r/2})$ terms are stable under $(k\partial_k)^\ell$, as well as under any fixed number of derivatives in~$r$. 
\end{lem}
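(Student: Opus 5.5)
The idea is to treat $\calH_1-\jap{k}^2$ as a perturbation of the order-one Bessel operator, using an exponentially decaying potential. Write
\[
\calH_1-(1+k^2)=-\partial_r^2+\frac{3}{4r^2}-k^2+W_\infty(r),\qquad W_\infty(r):=V_1(r)-1-\frac1{r^2}=-\tfrac12(1-U^2)+\frac{(1-a_\theta)(3-a_\theta)}{r^2}.
\]
By \eqref{Uaexpinfty}, $|W_\infty(r)|\lesssim r^{-1/2}e^{-r}$ for $r\ge1$, with the same bound for all $r$-derivatives. The functions $\sqrt r H_1^{(1)}(kr)$ and $\sqrt r H_1^{(2)}(kr)$ solve the comparison equation $(-\partial_r^2+\frac{3}{4r^2}-k^2)f=0$. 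The outgoing Weyl solution is then the unique solution of the Volterra equation from infinity
\begin{equation*}
\Psi_1(r,k)=\Psi_{1,0}(r,k)-\int_r^\infty G_k(r,s)W_\infty(s)\Psi_1(s,k)\,\ud s,
\end{equation*}
with kernel
\[
G_k(r,s)=\frac{\pi}{2}\sqrt{rs}\,\bigl[J_1(kr)Y_1(ks)-Y_1(kr)J_1(ks)\bigr],
\]
whose normalization comes from the Wronskian $2/\pi$. The phase $e^{i3\pi/4}$ gives $\Psi_{1,0}\sim k^{-1/2}e^{ikr}$ by the Hankel asymptotics, hence $W(\overline{\Psi_1},\Psi_1)=2i$. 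Since $W_\infty$ decays exponentially, the solution is $L^2$ at infinity for $\Im z>0$ and has the correct boundary value at $z=k$.

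To get the relative error $1+O(e^{-r/2})$, I would divide by the model and set $h:=\Psi_1/\Psi_{1,0}$. This is legitimate because $H_1^{(1)}$ has no zeros on $(0,\infty)$, since $|H_1^{(1)}|^2=J_1^2+Y_1^2>0$. The equation becomes
\[
h(r)=1-\int_r^\infty K(r,s)W_\infty(s)h(s)\,\ud s,\qquad K(r,s):=G_k(r,s)\frac{\Psi_{1,0}(s,k)}{\Psi_{1,0}(r,k)}.
\]
The key kernel bound to prove is $|K(r,s)|\lesssim s\langle\log(ks)\rangle$, or simply $\lesssim \langle s\rangle^{2}$, uniformly for $R_k\le r\le s$ and $0<k\le1$. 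I would check it in three regimes:
\begin{itemize}
\item When $1\le kr\le ks$, both Hankel factors are oscillatory of size $(kr)^{-1/2}$, $(ks)^{-1/2}$, and $K=O(k^{-1})$. This is the delicate range, and I would sharpen it by writing $G_k$ through $H^{(1)},H^{(2)}$: the term with $H^{(1)}(kr)^{-1}H^{(1)}(kr)$ is $O(k^{-1})$, and the other term is $O(\min(s-r,k^{-1}))$.
\item When $kr\le ks\le 1$, we have $|\Psi_{1,0}(s)/\Psi_{1,0}(r)|\approx (r/s)^{1/2}$ and $|G_k|\lesssim s^{-1/2}r^{3/2}+r^{-1/2}s^{3/2}$, so $K\lesssim s$.
\item The mixed case $kr\le 1\le ks$ is handled similarly.
\end{itemize}
Since $r\ge R_k=k^{-1/2}$, any polynomial loss in $s$ or $k^{-1}\le s^2$ is absorbed by $e^{-s}$. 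A Neumann series therefore gives
\[
|h(r)-1|\lesssim\int_r^\infty s^{2}e^{-s}\,\ud s\lesssim e^{-r/2},
\]
which is \eqref{eq:Psi10dom}. For \eqref{eq:Psip10dom}, differentiate the equation in $r$ and estimate $\partial_r K$ the same way. The ratio $\Psi_{1,0}'/\Psi_{1,0}$ is bounded away from zero and comparable to $k$ when $kr\gtrsim1$ and to $r^{-1}$ when $kr\lesssim1$, so the error stays relative.

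For stability under $(k\partial_k)^\ell$ and $\partial_r^m$, I would use that $K(r,s)$ is built from Bessel and Hankel functions of $kr$ and $ks$. Hence $k\partial_k$ acts as $r\partial_r+s\partial_s$ on these factors and produces at most polynomial factors in $s$, again absorbed by $e^{-s/2}$. Differentiating the Volterra equation and inducting in $\ell$ and $m$ then gives the same $O(e^{-r/2})$ bounds.

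\textbf{Main obstacle.} I expect the hardest step to be the uniform kernel bound across the transition $kr\simeq1$. There the naive bound $|Y_1(kr)/H_1^{(1)}(kr)|\le1$ is fine, but $|\Psi_{1,0}(s)/\Psi_{1,0}(r)|$ must be tracked carefully in the mixed regime. I would first attempt the crude bound $|K|\lesssim k^{-1}+s\lesssim s^2$ and rely on $r\ge k^{-1/2}$ plus exponential decay. This is exactly why the lemma is only claimed down to $R_k=k^{-1/2}$.
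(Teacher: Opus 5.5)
Your proposal is correct and follows essentially the paper's argument: the same Volterra equation from infinity with the order-one Bessel kernel $G_k$, and a Volterra iteration in which the polynomial size of the kernel is absorbed by the $e^{-s}$ decay of $W_\infty$, with the derivative statements obtained by differentiating the integral equation. (The paper's bounds amount to $|G_k(r,s)|\le|\Psi_{1,0}(r)||\Psi_{1,0}(s)|$, so your $K$ satisfies $|K|\le|\Psi_{1,0}(s)|^2\lesssim k^{-2}s^{-1}+k^{-1}\lesssim s^3$ once $s\ge R_k$.) One small slip: your $W_\infty=V_1-1-r^{-2}$ is the negative of the paper's, so the integral term in your Volterra equation should carry a plus sign; this is immaterial for the estimates.
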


\begin{proof}
Write
\begin{equation}\label{eq:Wtail-def}
\Bigl(-\partial_r^2+\frac{3}{4r^2}-W_\infty(r)\Bigr)\Psi_1(r,k)=k^2\Psi_1(r,k),
\qquad
W_\infty(r):=\frac12(1-U(r)^2)-\frac{(1-a_\theta(r))(3-a_\theta(r))}{r^2}.
\end{equation}
By \eqref{Uaexpinfty},
\[
W_\infty(r)=O(r^{-1/2}e^{-r}) \qquad r\to\infty.
\]
Let
\begin{equation}\label{eq:Gk-def}
\begin{aligned}
    G_k(r,s) &:=\frac{\Psi_{1,0}(r,k)\overline{\Psi_{1,0}(s,k)}-\overline{\Psi_{1,0}(r,k)}\Psi_{1,0}(s,k)}{-2i} \\
&=\frac{\pi}{2}\sqrt{rs}\,\Bigl[J_1(kr)Y_1(ks)-Y_1(kr)J_1(ks)\Bigr],
\end{aligned}
\end{equation}
for $R_k\le r\le s$. 
Then $G_k$ is the Green kernel for the index-$1$ Bessel equation, and $\Psi_1$ is given on $[R_k,\infty)$ by the Volterra equation
\begin{equation}\label{eq:Psi-Volterra}
\Psi_1(r,k)=\Psi_{1,0}(r,k)-\int_r^\infty G_k(r,s)W_\infty(s)\Psi_1(s,k)\,d s.
\end{equation}

The standard small- and large-argument bounds for $H_1^{(1)}$, together with the corresponding bounds for $J_1$ and $Y_1$, imply that for $r\ge R_k$
\begin{align}
|\Psi_{1,0}(r,k)|&\simeq k^{-1}\bigl(r^{-1/2}+k^{1/2}\bigr),\label{eq:H1-model-size}\\
|\Psi_{1,0}'(r,k)|&\simeq k^{-1}\bigl(r^{-3/2}+k^{3/2}\bigr),\label{eq:H1-model-der-size}
\end{align}
and for $R_k\le r\le s$
\begin{align}
|G_k(r,s)|&\lesssim k^{-2}(r^{-1/2}+k^{1/2})(s^{-1/2}+k^{1/2}),\label{eq:H1-Gk-size}\\
|\partial_rG_k(r,s)|&\lesssim k^{-2}(r^{-3/2}+k^{3/2})(s^{-1/2}+k^{1/2}).\label{eq:H1-Gk-der-size}
\end{align}
A standard Volterra iteration   now yields
\[
\Psi_1(r,k)=\Psi_{1,0}(r,k)\bigl(1+O(e^{-r/2})\bigr),
\]
which proves \eqref{eq:Psi10dom}.
For the derivative, differentiate \eqref{eq:Psi-Volterra}; because $G_k(r,r)=0$, we obtain
\[
\Psi_1'(r,k)-\Psi_{1,0}'(r,k)
=-
\int_r^\infty \partial_r G_k(r,s)W_\infty(s)\Psi_1(s,k)\,ds.
\]
Using \eqref{eq:H1-model-size}, \eqref{eq:H1-model-der-size}, and \eqref{eq:H1-Gk-der-size} in the same way as above gives~\eqref{eq:Psip10dom}.

Finally, stability of the $O(e^{-r/2})$ errors under $(k\partial_k)^\ell$ and under any fixed number of $r$-derivatives follows by differentiating \eqref{eq:Psi-Volterra}. Every derivative falling on $G_k$, $\Psi_{1,0}$, or $W_\infty$ produces only polynomial factors in $r$ and $k$, and these are absorbed by the exponentially decaying weight $e^{-r/2}$.
\end{proof}

Next, we treat $k\ge1$. In that case we use a different comparison solution, adapted to the turning point at scale $r\sim k^{-1}$. 

\begin{lem}\label{lem:Weyl1-high-k-H2}
For $k\ge1$, define
\[
\Psi_{1,0}(r,k):=e^{i5\pi/4}\sqrt{\frac{\pi r}{2}}\,H_2^{(1)}(kr)\sim k^{-\frac12} e^{ikr}.
\]
As $k\to\infty$, uniformly for $r\ge R_k:=k^{-1/2}$,
\begin{align}
\Psi_1(r,k)&= 
\Psi_{1,0}(r,k)\bigl(1+O(k^{-1})\bigr),\label{eq:Psi1-high-k-approx1}
\\
\Psi_1'(r,k)&= 
\Psi_{1,0}'(r,k)\bigl(1+O(k^{-1})\bigr).\label{eq:Psi1-high-k-approx2}
\end{align}
The $O(k^{-1})$ terms are stable under the $k\partial_k$ and $r\partial_r$ operators. 
\end{lem}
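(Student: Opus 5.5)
Following the proof of Lemma~\ref{lemWeyl1}, we rewrite \eqref{L1evalue} for $k\ge1$ so that the comparison operator is $-\partial_r^2+\frac{15}{4r^2}$:
\[
\Bigl(-\partial_r^2+\frac{15}{4r^2}-W_0(r)\Bigr)\Psi_1=k^2\Psi_1,
\]
with $W_0$ as in \eqref{eq:W0}. This $W_0$ is bounded on $(0,\infty)$: it equals $\tfrac32+O(r^2)$ near $0$, and since $a_\theta\to1$ it satisfies $W_0=\frac{3}{r^2}+O(r^{-1/2}e^{-r})$ at infinity. The model solution $\Psi_{1,0}=e^{i5\pi/4}\sqrt{\pi r/2}\,H_2^{(1)}(kr)$ solves the comparison equation exactly. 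The phase is chosen so that $\Psi_{1,0}\sim k^{-1/2}e^{ikr}$, and hence $W(\overline{\Psi_{1,0}},\Psi_{1,0})=2i$. The Green kernel is
\[
G_{k,2}(r,s)=\frac{\pi}{2}\sqrt{rs}\,[J_2(kr)Y_2(ks)-Y_2(kr)J_2(ks)],
\]
and we define $\Psi_1$ on $[R_k,\infty)$ through the Volterra equation
\[
\Psi_1(r,k)=\Psi_{1,0}(r,k)-\int_r^\infty G_{k,2}(r,s)W_0(s)\Psi_1(s,k)\,ds .
\]
The potential $W_0$ is not integrable at infinity: it behaves like $3/r^2$, and a $1/r^2$ tail gives an $O(1/(kr))$ correction to the phase. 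This correction is still integrable, because $\int_r^\infty s^{-2}\,ds=r^{-1}$. It is $\le k^{-1}$ for $r\ge1$, while for $R_k\le r\le1$ the boundedness of $W_0$ controls the integral.

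We rescale to $x=kr$ and $h(x)=\Psi_1(x/k,k)/\Psi_{1,0}(x/k,k)$. The equation for $h$ has kernel $k^{-2}\,\mathcal G(x,y)\,\Psi_{1,0}(y)/\Psi_{1,0}(x)$, integrated over $y\in(x,\infty)$ against $W_0(y/k)$. We use the standard bounds
\[
|H_2^{(1)}(z)|\simeq z^{-2}\ (z\le1),\qquad |H_2^{(1)}(z)|\simeq z^{-1/2}\ (z\ge1),
\]
together with $|\mathcal G(x,y)|\lesssim \min(\ldots)$ exactly as in Lemma~\ref{lem:Phi1-high-k-J2}, but now for $x\le y$. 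For $x\ge1$ the relevant kernel $|\mathcal G(x,y)\Psi_{1,0}(y)/\Psi_{1,0}(x)|$ is $\lesssim 1$. Splitting $y\le k$ from $y\ge k$, we get
\[
k^{-2}\int_x^\infty |W_0(y/k)|\,dy\lesssim k^{-2}\Bigl(\int_x^k 1\,dy+\int_k^\infty k^2y^{-2}\,dy\Bigr)\lesssim k^{-1}.
\]
For $x\in[k^{1/2},1]$, which is nonempty only if $k\le1$ and therefore irrelevant for large $k$, nothing needs to be done. Since $R_k=k^{-1/2}$ gives $x=kR_k=k^{1/2}\ge1$, the whole range lies in the oscillatory region $x\ge1$, where $H_2^{(1)}$ has no zeros and is of size $x^{-1/2}$. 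The Volterra operator therefore has norm $O(k^{-1})$ in $L^\infty$, and iteration gives $h=1+O(k^{-1})$, which is \eqref{eq:Psi1-high-k-approx1}. For \eqref{eq:Psi1-high-k-approx2} we differentiate the Volterra equation, using $G_{k,2}(r,r)=0$, and bound $\partial_x\mathcal G$ in the same way. We also use $|\Psi_{1,0}'|\simeq k^{1/2}$ for $kr\ge1$, with the quotient $\Psi_{1,0}'/\Psi_{1,0}=ik+O(1/r)$ nonvanishing.

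Symbol stability under $k\partial_k$ and $r\partial_r$ follows by applying these operators to the rescaled equation. On the model parts they act through $x\partial_x$, which preserves the Hankel symbol bounds $|(x\partial_x)^\ell H_2^{(1)}(x)e^{-ix}|\lesssim x^{-1/2}$; this is why we work with $e^{-ix}$-normalized quotients. On $W_0(y/k)$ we have $k\partial_k W_0(y/k)=-(y/k)W_0'(y/k)$, which is again bounded with the same $y^{-2}k^2$ tail. We expect the main obstacle to be exactly this bookkeeping of the slowly decaying $3/r^2$ tail under the derivatives. The oscillatory factor $e^{i(y-x)}$ in the kernel must not produce growth when $k\partial_k$ hits it; we handle this by writing the kernel as a combination of $e^{\pm i(x-y)}$ terms and $e^{2i(y-x)}$ terms with symbol coefficients. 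The non-oscillatory part is integrated directly using the $y^{-2}$ tail, and the oscillatory part, if needed, is integrated by parts once in $y$, gaining $x^{-1}$.
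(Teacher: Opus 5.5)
Your proposal is correct, and for the size estimates it follows the paper's proof. Both use the comparison operator $-\partial_r^2+\frac{15}{4r^2}$, the Volterra equation on $[R_k,\infty)$, the bounds $|G_{k,2}^+|\lesssim k^{-1}$, $|\partial_r G_{k,2}^+|\lesssim 1$ and $\int_{R_k}^\infty |W_0|\,ds\lesssim 1$. Both also use the fact that $kR_k=k^{1/2}\to\infty$ keeps everything in the oscillatory regime, where $|\Psi_{1,0}|\simeq k^{-1/2}$ and $|\Psi_{1,0}'|\simeq k^{1/2}$.

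The paper proves additive bounds $O(k^{-3/2})$ and $O(k^{-1/2})$ and then divides by these lower bounds. Your quotient $h$ gives the same result. It is also the right object for the derivative claims, since $k\partial_k$ of the additive error picks up a factor $kr$ from the phase.

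Where you genuinely go beyond the paper is the stability under $k\partial_k$ and $r\partial_r$. The paper only says this follows by differentiating the Volterra equation ``exactly as in the small-$k$ case''. But in that case the polynomial factors were absorbed by the decay $W_\infty=O(r^{-1/2}e^{-r})$, which $W_0\sim 3r^{-2}$ does not have. Concretely,
$r\partial_r(h-1)=r\,\Psi_{1,0}(r)^{-2}\int_r^\infty \Psi_{1,0}(s)\Psi_1(s)W_0(s)\,ds$,
and bounding this in absolute value gives only $O(1)$ for $r\ge 1$. The $O(k^{-1})$ bound needs the phase $e^{2ik(s-r)}$. That is exactly what your argument supplies: split the quotient kernel into a non-oscillating part and an $e^{2i(y-x)}$ part, then integrate by parts once in $y$. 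The boundary term at $y=x$ is $O(xk^{-2}|W_0(x/k)|)=O(k^{-1})$, and $\partial_y W_0(y/k)$ decays like $k^2y^{-3}$. So your route costs a little more work but actually justifies the symbol-type claim, which the paper's one-line argument does not cover as written.

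One slip: $3/r^2$ \emph{is} integrable at infinity, and your own estimate uses this. What it lacks is exponential decay; it is not even in $L^1(r\,dr)$.
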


\begin{proof}
By \eqref{eq:W0}, we have
\[
\Bigl(-\partial_r^2+\frac{15}{4r^2}-W_0(r)\Bigr)\Psi_1(r,k)=k^2\Psi_1(r,k).
\]
By \eqref{Uaexp0} and \eqref{Uaexpinfty},
\[
W_0(r)=\frac32+O(r^2),\qquad r\to0,
\]
and
\[
W_0(r)=\frac{3}{r^2}+O(r^{-1/2}e^{-r}),\qquad r\to\infty.
\]
In particular,
\[
|W_0(r)|\lesssim \mathbf 1_{(0,1]}(r)+r^{-2}\mathbf 1_{[1,\infty)}(r),
\qquad
\int_{R_k}^\infty |W_0(s)|\,ds\lesssim 1
\]
uniformly in $k\ge1$.
Since $\sqrt r\,H_2^{(1)}(kr)$ solves
\[
\Bigl(-\partial_r^2+\frac{15}{4r^2}\Bigr)f=k^2f,
\]
variation of constants gives
\begin{equation}\label{eq:Psi1-high-k-Volterra}
\Psi_1(r,k)=\Psi_{1,0}(r,k)-\int_r^\infty G_{k,2}^+(r,s)W_0(s)\Psi_1(s,k)\,ds,
\end{equation}
where
\[
G_{k,2}^+(r,s):=\frac{\Psi_{1,0}(r,k)\overline{\Psi_{1,0}(s,k)}-\overline{\Psi_{1,0}(r,k)}\Psi_{1,0}(s,k)}{-2i}
=\frac{\pi}{2}\sqrt{rs}\,\Bigl[J_2(kr)Y_2(ks)-Y_2(kr)J_2(ks)\Bigr].
\]
For $r\ge R_k$ and $s\ge r$ one has $kr,ks\ge k^{1/2}$, and therefore the large-argument bounds for Bessel and Hankel functions imply
\[
|\Psi_{1,0}(r,k)|\lesssim k^{-1/2},
\qquad
|\Psi_{1,0}'(r,k)|\lesssim k^{1/2},
\]
\[
|G_{k,2}^+(r,s)|\lesssim k^{-1},
\qquad
|\partial_r G_{k,2}^+(r,s)|\lesssim 1.
\]
Applying standard Volterra estimates to \eqref{eq:Psi1-high-k-Volterra} therefore yields
\[
\sup_{r\ge R_k}|\Psi_1(r,k)-\Psi_{1,0}(r,k)|\lesssim
k^{-1}\Bigl(\int_{R_k}^\infty |W_0(s)|\,ds\Bigr)\sup_{s\ge R_k}|\Psi_1(s,k)|
\lesssim k^{-3/2},
\]
and, after differentiating \eqref{eq:Psi1-high-k-Volterra},
\[
\sup_{r\ge R_k}|\Psi_1'(r,k)-\Psi_{1,0}'(r,k)|\lesssim
\Bigl(\int_{R_k}^\infty |W_0(s)|\,ds\Bigr)\sup_{s\ge R_k}|\Psi_1(s,k)|
\lesssim k^{-1/2}.
\]
Since $kr\ge k^{1/2}\to\infty$, the standard large-argument asymptotics also give the lower bounds
\[
|\Psi_{1,0}(r,k)|\gtrsim k^{-1/2},
\qquad
|\Psi_{1,0}'(r,k)|\gtrsim k^{1/2},
\]
uniformly for $r\ge R_k$. Therefore the additive estimates above are equivalent to \eqref{eq:Psi1-high-k-approx1}, \eqref{eq:Psi1-high-k-approx2}. The stability under $k\partial_k$ and $r\partial_r$ is proved by differentiating \eqref{eq:Psi1-high-k-Volterra} exactly as in the small-$k$ case.
\end{proof}

\subsubsection{The spectral measure}
We now derive asymptotics for the spectral measure associated with $\mathcal{H}_1$. The following proposition is the main result of this section.

\begin{prop}\label{SMOp1_1}
The generalized eigenfunctions associated to $\mathcal{H}_1$ are given by
\begin{align}\label{SMOp1_2}
\Phi_1(r,k) = 2\Re (a_1(k) \Psi_1(r,k)).
\end{align}
Here $\Psi_1$ is the Weyl solution at infinity from Lemmas \ref{lemWeyl1} and \ref{lem:Weyl1-high-k-H2}, and $a_1(k)$ is smooth, always non-zero, and satisfies
\begin{align}\label{SMOp1_3}
|a_1(k)| \simeq \dfrac{1}{k \langle k \rangle}.
\end{align}
In fact, we have the sharper statements
\begin{equation}\label{eq:a-smallk}
a_1(k)=c_*\sqrt{\frac{2}{\pi}}\,e^{-3\pi i/4}k^{-1}
\bigl(1+O(k|\log k|)\bigr)
\qquad k\to0
\end{equation}
and
\begin{equation}\label{eq:a-largek}
a_1(k)=4\sqrt{\frac{2}{\pi}}\,e^{-5\pi i/4}k^{-2}\bigl(1+O(k^{-1})\bigr),
\qquad k\to\infty.
\end{equation}
Moreover, $a_1(k)$ satisfies the symbol type bounds
\begin{align}\label{SMOp1_13}
\vert (k \partial_k )^na_1(k)\vert \lesssim \dfrac{1}{k \langle k \rangle}
\end{align}
for every $n\ge0$. The spectral density with respect to the energy variable
$\lambda=k^2$ is given by
\begin{align}\label{SMOp1_4}
 \frac{1}{4\pi |a_1(k)|^2} \simeq k^2\langle k^2\rangle,
\end{align}
with consistent symbol-type upper bounds on the derivatives.
\end{prop}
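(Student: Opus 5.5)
The plan is to compute $a_1(k)=W(\Phi_1,\overline{\Psi_1})/W(\Psi_1,\overline{\Psi_1})$ from the general formula \eqref{def_aj}. The identity \eqref{SMOp1_2} is the standard Weyl–Titchmarsh fact, valid because $\Phi_1$ is real and $\Psi_1,\overline{\Psi_1}$ form a fundamental system for $k>0$. First we check the normalization $W(\overline{\Psi_1},\Psi_1)=2i$. This follows from the model: the Wronskian is $r$-independent, so it may be evaluated as $r\to\infty$, where $\Psi_1\sim k^{-1/2}e^{ikr}$ by Lemmas \ref{lemWeyl1} and \ref{lem:Weyl1-high-k-H2}. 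Hence $a_1(k)=W(\Phi_1,\overline{\Psi_1})/(-2i)$. Since $\Phi_1$ is real and non-trivial, $a_1\neq0$; otherwise $\Phi_1\equiv0$. Smoothness in $k$ follows from the smooth dependence of $\Phi_1$ (analytic in $k^2$) and of the Volterra solution $\Psi_1$.

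The Wronskian is evaluated at the matching radius $r=R_k=k^{-1/2}$, where both descriptions are available.

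\textbf{Small $k$.} At $R_k$ we have $rk=k^{1/2}\le1$. By Lemma \ref{Phi1Op1_1}, $\Phi_1(R_k,k)=c_*R_k^{3/2}(1+O(R_k^{-2})+O(k))$, and similarly for the derivative, with corrections $O(r^{-1}e^{-r})$ from \eqref{Phi1_0} and $(rk)^2$ terms. On the other side, $\Psi_1=\Psi_{1,0}(1+O(e^{-R_k/2}))$. Inserting the small-argument expansion of $H_1^{(1)}$,
\[
H_1^{(1)}(z)=-\tfrac{2i}{\pi z}+\tfrac{z}{2}+\tfrac{i}{\pi}z\log(z/2)+\dots,
\]
gives $\overline{\Psi_{1,0}}\approx e^{-3\pi i/4}\sqrt{\pi r/2}\,\tfrac{2i}{\pi kr}$ at leading order. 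The Wronskian of $r^{3/2}$ with $r^{-1/2}$ equals $-2$ (up to the sign convention), so
\[
W(\Phi_1,\overline{\Psi_1})\approx c_*\,e^{-3\pi i/4}\,\tfrac{2i}{\pi k}\sqrt{\pi/2}\cdot(\text{const}).
\]
Dividing by $-2i$ gives \eqref{eq:a-smallk}.

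The main obstacle is controlling the error terms. These are the $O(R_k^{-2})=O(k)$ correction in $\Phi_1$ and the subleading $z\log z$ term of $H_1^{(1)}$. Paired against $r^{3/2}$ in the Wronskian, the latter contributes a relative size $(kr)^2\log(kr)$, i.e.\ $O(k|\log k|)$ at $r=R_k$. The term $r^{3/2}$ itself is recessive relative to $r^{-1/2}$, and the $O(R_k^{-2})$ correction yields $O(k)$. The constant is fixed by $W[y_2,y_1]=1$ from \eqref{fsys_h1_inf_app}.

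\textbf{Large $k$.} Here $R_k=k^{-1/2}\le1$, so we use Lemma \ref{lem:Phi1-high-k-J2}. It gives $\Phi_1=8k^{-2}\sqrt r J_2(kr)+O(k^{-7/2})$, with derivative error $O(k^{-5/2})$. Lemma \ref{lem:Weyl1-high-k-H2} gives $\Psi_1=\Psi_{1,0}(1+O(k^{-1}))$. Using $\overline{H_2^{(1)}}=H_2^{(2)}$ and $W(\sqrt rJ_2(kr),\sqrt rH_2^{(2)}(kr))=-\tfrac{2i}{\pi}$, we get
\[
W(\Phi_{1,0},\overline{\Psi_{1,0}})=8k^{-2}e^{-5\pi i/4}\sqrt{\pi/2}\,(-2i/\pi).
\]
Hence $a_1=4\sqrt{2/\pi}\,e^{-5\pi i/4}k^{-2}$ at leading order. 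The errors contribute at most $k^{-7/2}\cdot k^{1/2}+k^{-5/2}\cdot k^{-1/2}=O(k^{-3})$, which is relative size $O(k^{-1})$; this gives \eqref{eq:a-largek}.

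On compact $k$-ranges, continuity together with $a_1\neq0$ gives $|a_1|\simeq1$. Combining this with the two asymptotic regimes yields \eqref{SMOp1_3} and \eqref{SMOp1_4}.

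For the symbol bounds \eqref{SMOp1_13}, we apply $k\partial_k$ to the Wronskian identity at a fixed point. The Wronskian is independent of $r$, so we may differentiate at $r=R_k$ while treating $r$ as frozen. We then use that $k\partial_k$ preserves the error bounds in Lemmas \ref{lemWeyl1} and \ref{lem:Weyl1-high-k-H2}. For $\Phi_1$ we use that it depends on $(rk)^2$ via \eqref{Phi1Op1_2}, where $k\partial_k$ acts like $r\partial_r$ on the series with the bounds \eqref{Phi1Op1_4}. In the large-$k$ regime we use the scaling structure of \eqref{eq:phi-scaled-Volterra}. Derivative bounds for the density then follow by the quotient rule.
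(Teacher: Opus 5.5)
Your proposal is correct and follows the paper's proof essentially step for step. You write $a_1=\frac i2 W(\Phi_1,\overline{\Psi_1})$, evaluate it at $R_k=k^{-1/2}$, and use the zero-energy series with the small-argument expansion of $H_1^{(1)}$ as $k\to0$ and the $J_2$/$H_2^{(1)}$ models of Lemmas~\ref{lem:Phi1-high-k-J2} and~\ref{lem:Weyl1-high-k-H2} as $k\to\infty$, obtaining the same constants and error rates.

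The one point to tighten is your sketch of \eqref{SMOp1_13}, which the paper leaves to the reader. At large $k$, a frozen $r=R_k$ does not work: every $k\partial_k$ landing on the oscillatory model factors costs $kr=k^{1/2}$, so termwise bounds cannot give \eqref{SMOp1_13} for all $n$. Instead freeze $x=kr$, i.e.\ use $a_1(k)=\frac i2 k^{-2}W_x\bigl(\phi(\cdot,k),\overline{\psi(\cdot,k)}\bigr)$ with $\phi$ as in \eqref{eq:phi-scaled-Volterra} and $\psi(x,k):=k^{1/2}\Psi_1(x/k,k)$. There $k$ enters the Volterra equations only through $k^{-2}W_0(y/k)$, so the $k\partial_k$ derivatives are harmless.
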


\begin{proof}
The identities \eqref{SMOp1_2} and \eqref{SMOp1_4} are part of the general Weyl--Titchmarsh theory recalled in \eqref{def_aj}. In particular,
\begin{align}\label{SMOp1_15}
a_1(k) =\dfrac{W(\Phi_1(\cdot,k),\overline{\Psi_1(\cdot,k)})}{W(\Psi_1(\cdot,k),\overline{\Psi_1(\cdot,k)})}= \frac{i}{2} W(\Phi_1(\cdot,k),\overline{\Psi_1(\cdot,k)}).
\end{align}
Smoothness of $a_1$ on $(0,\infty)$ follows from smooth dependence of ODE solutions on the parameter $k$. Moreover, $a_1(k)\neq0$ for all $k>0$: indeed, if $a_1(k_0)=0$, then $W(\Phi_1(\cdot,k_0),\overline{\Psi_1(\cdot,k_0)})=0$, so $\Phi_1(\cdot,k_0)$ and $\overline{\Psi_1(\cdot,k_0)}$ are linearly dependent. Since $\Phi_1(\cdot,k_0)$ is real-valued, this would force $\Psi_1(\cdot,k_0)$ and $\overline{\Psi_1(\cdot,k_0)}$ to be linearly dependent, contradicting the normalization $W(\overline{\Psi_1},\Psi_1)=2i$.

We first treat $k\to0$. Set $R_k:=k^{-1/2}$. Since $kR_k=k^{1/2}\to0$, the small-argument asymptotics of $H_1^{(1)}$ give
\[
H_1^{(1)}(z)=-\frac{2i}{\pi z}\bigl(1+O(z^2|\log z|)\bigr),
\qquad
\frac{d}{dz}H_1^{(1)}(z)=\frac{2i}{\pi z^2}\bigl(1+O(z^2|\log z|)\bigr),
\]
as $z\to0$. Therefore, \eqref{eq:Psi10dom} and \eqref{eq:Psip10dom} imply
\begin{align*}
\overline{\Psi_1(R_k,k)}
&=\sqrt{\frac{2}{\pi}}\,e^{-i\pi/4}k^{-1}R_k^{-1/2}
\bigl(1+O(k|\log k|)\bigr),\\
\partial_r\overline{\Psi_1(R_k,k)}
&=-\frac12\sqrt{\frac{2}{\pi}}\,e^{-i\pi/4}k^{-1}R_k^{-3/2}
\bigl(1+O(k|\log k|)\bigr).
\end{align*}
Here the error coming from Lemma~\ref{lemWeyl1} is $O(e^{-R_k/2})=O(e^{-1/(2\sqrt{k})})$, hence negligible compared to $O(k|\log k|)$.
On the other hand, \eqref{Phi1Op1_2}, \eqref{Phi1_0}, \eqref{Phi1Op1_3}, and \eqref{Phi1Op1_4} yield
\begin{align*}
\Phi_1(R_k,k)&=\Phi_1^{(0)}(R_k)+R_k^{3/2}\sum_{j\ge1}(kR_k)^{2j}\Phi_{1,j}(R_k)
=c_*R_k^{3/2}\bigl(1+O(k)\bigr),\\
\Phi_1'(R_k,k)&
=\frac32c_*R_k^{1/2}\bigl(1+O(k)\bigr).
\end{align*}
Therefore,
\begin{align*}
W\bigl(\Phi_1(\cdot,k),\overline{\Psi_1(\cdot,k)}\bigr)
&=\Phi_1(R_k,k)\,\partial_r\overline{\Psi_1(R_k,k)}
-\Phi_1'(R_k,k)\,\overline{\Psi_1(R_k,k)}\\
&=-2c_*\sqrt{\frac{2}{\pi}}\,e^{-i\pi/4}k^{-1}
\bigl(1+O(k|\log k|)\bigr).
\end{align*}
Using \eqref{SMOp1_15}, we obtain
\[
a_1(k)=\frac{i}{2}W\bigl(\Phi_1(\cdot,k),\overline{\Psi_1(\cdot,k)}\bigr)
=c_*\sqrt{\frac{2}{\pi}}\,e^{-3\pi i/4}k^{-1}
\bigl(1+O(k|\log k|)\bigr),
\]
which is \eqref{eq:a-smallk}.

We now turn to $k\to\infty$. Again set $R_k:=k^{-1/2}$. Then $R_k<1$ and $kR_k=k^{1/2}\to\infty$. By Lemma~\ref{lem:Phi1-high-k-J2},
\[
\Phi_1(R_k,k)=\Phi_{1,0}(R_k,k)+O(k^{-7/2}),
\qquad
\Phi_1'(R_k,k)=\Phi_{1,0}'(R_k,k)+O(k^{-5/2}).
\]
Similarly, Lemma~\ref{lem:Weyl1-high-k-H2} gives
\[
\Psi_1(R_k,k)=\Psi_{1,0}(R_k,k)+O(k^{-3/2}),
\qquad
\Psi_1'(R_k,k)=\Psi_{1,0}'(R_k,k)+O(k^{-1/2}).
\]
By the standard large-argument bounds we have
\[
|\Phi_{1,0}(R_k,k)|\lesssim k^{-5/2},\qquad |\Phi_{1,0}'(R_k,k)|\lesssim k^{-3/2},
\]
\[
|\Psi_{1,0}(R_k,k)|\lesssim k^{-1/2},\qquad |\Psi_{1,0}'(R_k,k)|\lesssim k^{1/2}.
\]
Consequently,
\[
W\bigl(\Phi_1(\cdot,k),\overline{\Psi_1(\cdot,k)}\bigr)
=
W\bigl(\Phi_{1,0}(\cdot,k),\overline{\Psi_{1,0}(\cdot,k)}\bigr)+O(k^{-3}).
\]
Since
\[
W\bigl(\sqrt r\,J_2(kr),\sqrt r\,H_2^{(2)}(kr)\bigr)= -\frac{2i}{\pi},
\]
we obtain
\begin{align*}
W\bigl(\Phi_{1,0}(\cdot,k),\overline{\Psi_{1,0}(\cdot,k)}\bigr)
&=8k^{-2}e^{-i5\pi/4}\sqrt{\frac{\pi}{2}}
W\bigl(\sqrt r\,J_2(kr),\sqrt r\,H_2^{(2)}(kr)\bigr)
\\
&=-8i\sqrt{\frac{2}{\pi}}\,e^{-i5\pi/4}k^{-2}. 
\end{align*}
Thus
\[
a_1(k)=\frac{i}{2}W\bigl(\Phi_1(\cdot,k),\overline{\Psi_1(\cdot,k)}\bigr)
=
4\sqrt{\frac{2}{\pi}}\,e^{-i5\pi/4}k^{-2}\bigl(1+O(k^{-1})\bigr),
\]
which is \eqref{eq:a-largek}.
 We leave the symbol-type bounds to the reader. 
\end{proof}
Formula \eqref{SMOp1_2} allows us to express $\Phi_1$ in terms of $\Psi_1$ and $\overline{\Psi_1}$. Conversely, $\Psi_1$ can be expressed in terms of $\Phi_1$ and $\Theta_1$. The precise formula is recorded in the following lemma.
\begin{lemma}\label{lem:Psi1toTheta1Phi1}
    The Weyl solution at infinity $\Psi_1$ satisfies
    \begin{equation}\label{eq:Psi1toPhi1Theta1}
        \Psi_1(r,k)=W(\Theta_1,\Psi_1)\Phi_1(r,k)+W(\Psi_1,\Phi_1)\Theta_1(r,k).
    \end{equation}
    Moreover, 
    \begin{equation}\label{eq:Psi1Theta1Wornskian1}
        |W(\Theta_1,\Psi_1)|\simeq k\jap{k},\qquad |W(\Psi_1,\Phi_1)|\simeq \frac{1}{k\jap{k}}.
    \end{equation}
\end{lemma}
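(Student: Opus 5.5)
The identity \eqref{eq:Psi1toPhi1Theta1} is pure linear algebra. Since $\Phi_1(\cdot,k)$ and $\Theta_1(\cdot,k)$ both solve $(\calH_1-\jap{k}^2)f=0$ and $W(\Theta_1,\Phi_1)=1$, they form a fundamental system. So we write $\Psi_1=\alpha\Phi_1+\beta\Theta_1$ with constants $\alpha,\beta$ depending only on $k$. Taking the Wronskian with $\Theta_1$ gives
\[
W(\Theta_1,\Psi_1)=\alpha W(\Theta_1,\Phi_1)=\alpha.
\]
Taking the Wronskian with $\Phi_1$ gives
\[
W(\Psi_1,\Phi_1)=\beta W(\Theta_1,\Phi_1)=\beta.
\]
This is \eqref{eq:Psi1toPhi1Theta1}.

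The second bound in \eqref{eq:Psi1Theta1Wornskian1} follows directly from Proposition~\ref{SMOp1_1}. Since $\Phi_1$ is real, we have $W(\Psi_1,\Phi_1)=-\overline{W(\Phi_1,\overline{\Psi_1})}$. By \eqref{SMOp1_15}, $W(\Phi_1,\overline{\Psi_1})=-2i\,a_1(k)$, so $|W(\Psi_1,\Phi_1)|=2|a_1(k)|\simeq (k\jap{k})^{-1}$ by \eqref{SMOp1_3}.

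For the first bound, we can avoid asymptotics of $\Theta_1$ by using $2i=W(\overline{\Psi_1},\Psi_1)$. Substitute the decomposition $\Psi_1=\alpha\Phi_1+\beta\Theta_1$ and its conjugate, using that $\Phi_1,\Theta_1$ are real. This gives
\[
W(\overline{\Psi_1},\Psi_1)=(\bar\alpha\beta-\alpha\bar\beta)W(\Phi_1,\Theta_1)=-2i\,\Im(\bar\alpha\beta)\cdot(-1)\cdot(\pm)\ldots,
\]
that is, $|\Im(\bar\alpha\beta)|=1$ after tracking signs. Hence $|\alpha|\ge |\beta|^{-1}\simeq k\jap{k}$. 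This is only a lower bound.

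For the matching upper bound $|\alpha|\lesssim k\jap{k}$, I would evaluate $\alpha=W(\Theta_1,\Psi_1)$ directly at a convenient radius, namely $r=1/k$, where $\Theta_1(1/k,k)=0$ by \eqref{eq:Theta1varpar1}. There $\Theta_1'(1/k,k)=-\Phi_1(1/k,k)^{-1}$, so
\[
\alpha=-\Theta_1'(1/k)\Psi_1(1/k)=\frac{\Psi_1(1/k,k)}{\Phi_1(1/k,k)}.
\]
This requires $\Phi_1(1/k,k)\neq0$, and the remaining work is two-sided size estimates at $r=1/k$.

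\emph{Small $k$.} Lemma~\ref{Phi1Op1_1} with $kr=1$ gives $\Phi_1(1/k,k)=k^{-3/2}(c_*-\tfrac{c_*}{8}+\dots)$, since all terms are $k^{-3/2}$ times a convergent series. I need the sum to be nonzero. This is the main obstacle: the series at $rk=1$ is not manifestly nonvanishing. If direct control fails, I would fall back to choosing the matching radius $r=\delta/k$ in \eqref{eq:Theta1varpar1} (equivalently, changing the upper limit). With $\delta$ small, the series is dominated by $c_*r^{3/2}$, and the constant offset changes $\alpha$ only by a multiple of $\beta$, of size $O(k^{-1})$, which is harmless. Lemma~\ref{lemWeyl1} gives $|\Psi_1(1/k,k)|\simeq k^{-1/2}$, so $|\alpha|\simeq k$.

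\emph{Large $k$.} Lemma~\ref{lem:Phi1-high-k-J2} does not reach $r=1/k>k^{-1/2}$ in the needed form, so I would again work at $r=\delta/k\le 1$ with Lemma~\ref{lem:Phi1-high-k-J2}. This gives $|\Phi_1|\simeq k^{-5/2}\delta^{5/2}$. Lemma~\ref{lem:Weyl1-high-k-H2} applies only for $r\ge k^{-1/2}$, so instead I would use the Wronskian at $R_k=k^{-1/2}$: compute $\Theta_1(R_k),\Theta_1'(R_k)$ from the integral formula $\Theta_1=\Phi_1\int_r^{\delta/k}\Phi_1^{-2}$ together with the Bessel asymptotics of $\Phi_{1,0}$, and pair them with $\Psi_{1,0}(R_k)$. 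The leading term is the Wronskian of $\sqrt r J_2$, $\sqrt r Y_2$-type combinations with $\sqrt r H^{(1)}_2$, which is explicitly of size $k^2$. Here the oscillation of $J_2$ for $kr\gg1$ makes the integral formula for $\Theta_1$ awkward, and I expect controlling possible zeros of $\Phi_1$ to be the delicate point.
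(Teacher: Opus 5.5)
Your derivation of \eqref{eq:Psi1toPhi1Theta1}, of $|W(\Psi_1,\Phi_1)|=2|a_1(k)|\simeq(k\jap{k})^{-1}$, and of the lower bound for $\alpha=W(\Theta_1,\Psi_1)$ is correct. The last point simplifies the paper's route. The bookkeeping is exact: $W(\overline{\Psi_1},\Psi_1)=(\alpha\bar\beta-\bar\alpha\beta)\,W(\Theta_1,\Phi_1)=2i\,\Im(\alpha\bar\beta)$, so $\Im(\alpha\bar\beta)=1$ and $|\alpha|\ge|\beta|^{-1}$, and you never need a lower bound for $|\Psi_1(1/k,k)|$.

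The gap is in the upper bound. Like the paper, you reduce it to the exact identity $\alpha=\Psi_1(1/k,k)/\Phi_1(1/k,k)$. After that everything rests on $|\Phi_1(1/k,k)|\gtrsim k^{-3/2}\jap{k}^{-1}$, which you do not prove; the same fact is needed for \eqref{eq:Theta1varpar1} to make sense. Your fallbacks do not supply it.

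Moving the upper limit to $\delta/k$ changes $\Theta_1$ by $c\,\Phi_1$ with $c=\int_{\delta/k}^{1/k}\Phi_1(s,k)^{-2}\,ds$. This has three problems:
\begin{enumerate}
\item It presupposes that $\Phi_1(\cdot,k)$ has no zero on $[\delta/k,1/k]$, which is exactly the point at issue.
\item In the expected regime $c\simeq k^2\delta^{-2}$, so the offset $c\beta$ has size $k\delta^{-2}$, not $O(k^{-1})$. An offset of size $O(k^{-1})$ would in fact swamp $\alpha\simeq k$, so it would not be harmless.
\item It concerns a different $\Theta_1$ from the one in the statement.
\end{enumerate}

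For large $k$ you have the geometry reversed. Since $1/k\le k^{-1/2}\le 1$, \eqref{eq:Phi1-high-k-J2-bd1} applies at $r=1/k$ and gives $\Phi_1(1/k,k)=k^{-5/2}\bigl(8J_2(1)+O(k^{-1})\bigr)$ at once. What Lemma~\ref{lem:Weyl1-high-k-H2} does not cover is $\Psi_1(1/k,k)$. However, its Volterra argument runs unchanged on $[1/k,\infty)$: the bounds $|\Psi_{1,0}|\lesssim k^{-1/2}$ and $|G_{k,2}^+(r,s)|\lesssim k^{-1}$ only use $kr,ks\ge 1$, and $\int_{1/k}^\infty|W_0|\,ds\lesssim 1$. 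Hence $|\Psi_1(1/k,k)|\lesssim k^{-1/2}$, and no Wronskian computation at $R_k$ involving $\Theta_1$ is needed.

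You are right that the factorial bounds of Lemma~\ref{Phi1Op1_1} cannot decide the sign of the series at $rk=1$. For small $k$ the missing input is the connection formula \eqref{SMOp1_2} together with the \emph{phase} in \eqref{eq:a-smallk}. Since $1/k\ge R_k$, Lemma~\ref{lemWeyl1} and \eqref{eq:a-smallk} give $a_1(k)\Psi_1(r,k)=c_*k^{-1}\sqrt{r}\,H_1^{(1)}(kr)\bigl(1+O(k|\log k|)\bigr)$ at $r=1/k$. Hence $\Phi_1(1/k,k)=2c_*J_1(1)\,k^{-3/2}\bigl(1+O(k|\log k|)\bigr)\neq 0$.

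That still leaves a compact range of $k$ uncontrolled. Here is an argument that handles all $k>0$ at once.
\begin{enumerate}
\item Let $v:=\Phi_1^{(0)}=U'(0)^{-1}r^{3/2}U$ and $w:=\Phi_1(\cdot,k)/v$. Then $(v^2w')'=-k^2v^2w$, with $w(0^+)=1$ and $v^2w'\to 0$ as $r\to 0$.
\item So as long as $w>0$, $w$ is decreasing and $w'(r)\ge -k^2v(r)^{-2}\int_0^r v^2\,ds\ge -k^2 r/4$. The last step holds because $U$ is positive and increasing by \eqref{eq:Ua_bogo}.
\item Hence $w(r)\ge 1-k^2r^2/8$, that is, $\tfrac{7}{8}\Phi_1^{(0)}\le \Phi_1(\cdot,k)\le \Phi_1^{(0)}$ on $(0,1/k]$. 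Lemma~\ref{lem_p1_t1} then gives $\Phi_1(1/k,k)\simeq k^{-3/2}\jap{k}^{-1}$ uniformly in $k>0$.
\item Combine this with $|\Psi_1(1/k,k)|\lesssim k^{-1/2}$: from Lemma~\ref{lemWeyl1} for small $k$, the extended Volterra argument for large $k$, and continuity in between. This gives $|\alpha|\lesssim k\jap{k}$.
\end{enumerate}
Your Wronskian lower bound then finishes the proof.
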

\begin{proof}
    The identity \eqref{eq:Psi1toPhi1Theta1} follows immediately from the facts that $\Phi_1$ and $\Theta_1$ form a fundamental system for $\calH_1-\jap{k}^2$ and  $W(\Theta_1,\Phi_1)=1$. The estimates \eqref{eq:Psi1Theta1Wornskian1} on the Wronskians are proved as in Proposition~\ref{SMOp1_1} and using also the relation~\eqref{eq:Theta1varpar1}. Indeed, the only new estimate is for $W(\Theta_1,\Psi_1)$ where we use~\eqref{eq:Theta1varpar1} to conclude that $|W(\Theta_1,\Psi_1)(k)|=|\Psi_1(k^{-1},k)||\Phi_1(k^{-1},k)|^{-1}$. This gives the desired estimate.
\end{proof}

\subsection{Linear spectral analysis for \texorpdfstring{$\mathcal{H}_2$}{H2}}\label{subsec:calH2}

Recall that $\mathcal{H}_2 = r^{1/2} \mathcal{L}_2 r^{-1/2}$ can be written as
\begin{align*}
\mathcal{H}_2 := -\partial_r^2-\tfrac{1}{4r^2}+U^2(r).
\end{align*}
We seek to construct generalized eigenfunctions of
\begin{align}\label{L2evalue}
\mathcal{H}_2 f = (k^2 + 1)f, \quad k \geq 0.
\end{align}
Unlike in the case of $\mathcal{H}_1$, there is no turning point.  After moving the threshold to the right-hand side, the spectral equation is a short-range perturbation of the exact Bessel operator
\[
-\partial_r^2-\frac{1}{4r^2}.
\]
Accordingly, the same model solutions built from $J_0$, $Y_0$, and $H_0^{(1)}$ will be used both in the threshold regime $k\to0$ and in the high-energy regime $k\to\infty$.

We set
\begin{equation}\label{eq:W2-def}
W_2(r):=1-U^2(r).
\end{equation}
Then \eqref{Uaexp0} and \eqref{Uaexpinfty} imply
\begin{equation}\label{eq:W2-asymp}
W_2(r)=1+O(r^2),\qquad r\to0,
\qquad\text{and}\qquad
W_2(r)=O(r^{-1/2}e^{-r}),\qquad r\to\infty.
\end{equation}
Hence \eqref{L2evalue} is equivalent to
\begin{equation}\label{eq:H2-shifted}
-y''(r)-\frac{1}{4r^2}y(r)-W_2(r)y(r)=k^2 y(r).
\end{equation}

\subsubsection{Fundamental solutions at zero energy}
We look for $\Phi_2^{(0)}$ solving $\mathcal{H}_2 \Phi_2^{(0)} = \Phi_2^{(0)}$, and we expect that $\Phi_2^{(0)}(r) \approx r^{1/2}$ for small $r$. The equation is approximated by
\[
-y''(r)-\frac{1}{4r^2}y(r)=0,
\]
for which a fundamental system is given by
\begin{align}\label{fsys_h2_inf_app}
y_1(r)=r^{1/2}, \qquad y_2(r)=r^{1/2}\log(r).
\end{align}
We choose the normalization so that $W[y_1,y_2](r)=1$.

\begin{lem}\label{lem_p2_t2}
There exists a fundamental system of real-valued solutions of $\mathcal{H}_2f=f$ with the following asymptotic behavior:
\begin{align}\label{Phi2_0}
\Phi_{2}^{(0)} (r) & = \begin{cases}
r^{1/2}+O(r^{5/2}), & r\lesssim 1,
\\ \widetilde{c}_{1,*}r^{1/2}\log(r)+\widetilde{c}_{2,*}r^{1/2}+O(e^{-r}\log(r)),  &  r \gtrsim 1,
\end{cases}
\\ \Theta_{2}^{(0)} (r) & = \begin{cases}
- r^{1/2}\log(r) + O(r^{5/2}\log(r)), & r\lesssim 1,
\\ \widetilde{c}_{3,*}r^{1/2}\log(r)+\widetilde{c}_{4,*}r^{1/2}+O(e^{-r}\log(r)), &   r \gtrsim 1.
\end{cases} \label{Theta2_0}
\end{align}
Here $\widetilde{c}_{1,*}\widetilde{c}_{4,*}-\widetilde{c}_{2,*}\widetilde{c}_{3,*}=1$ and $\widetilde{c}_{1,*}\neq0$. Moreover, $\Phi_2^{(0)}$ and $\Theta_2^{(0)}$ are normalized so that $W[\Theta_{2}^{(0)},\Phi_{2}^{(0)}]=1$.
\end{lem}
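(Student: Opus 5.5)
We construct $\Phi_2^{(0)}$ and $\Theta_2^{(0)}$ directly from the equation, written as in \eqref{eq:H2-shifted} at $k=0$:
\[
-y''-\tfrac{1}{4r^2}y=W_2(r)\,y .
\]
Near $r=0$ the potential $W_2$ is smooth with $W_2=1+O(r^2)$. We use the model pair \eqref{fsys_h2_inf_app}, whose Volterra kernel is
\[
K(r,s)=\sqrt{rs}\,\log(r/s)\qquad (s<r),
\]
and set up
\[
\Phi_2^{(0)}(r)=r^{1/2}-\int_0^r \sqrt{rs}\,\log(r/s)\,W_2(s)\,\Phi_2^{(0)}(s)\,ds .
\]
On $(0,1]$ one has $|K(r,s)W_2(s)s^{1/2}|\lesssim r^{1/2}s\log(r/s)$, whose integral is $O(r^{5/2})$. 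The Volterra iteration on the space with weight $r^{1/2}$ therefore converges and gives $\Phi_2^{(0)}=r^{1/2}+O(r^{5/2})$. Likewise we define $\Theta_2^{(0)}$ from $-r^{1/2}\log r$. The correction is $O(r^{5/2}|\log r|)$, since $\int_0^r \sqrt{rs}\log(r/s)\,s^{1/2}|\log s|\,ds\lesssim r^{5/2}|\log r|$. Both functions are real because all data are real. The Wronskian is constant, so it can be evaluated as $r\to0$: $W[-r^{1/2}\log r,\,r^{1/2}]=1$ up to $o(1)$ terms, hence exactly $1$.

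For large $r$ we use that $W_2(r)=O(r^{-1/2}e^{-r})$ by \eqref{eq:W2-asymp}. Any real solution $y$ is written via variation of parameters against the model pair at infinity:
\[
y(r)=A\,r^{1/2}\log r+B\,r^{1/2}+\int_r^\infty \sqrt{rs}\,\log(s/r)\,W_2(s)\,y(s)\,ds .
\]
A Volterra iteration from infinity, in the space weighted by $r^{1/2}\langle\log r\rangle$, converges because $\int_r^\infty s\log s\,e^{-s}s^{-1/2}\,ds\lesssim e^{-r}\log r$. It shows that the solutions with prescribed $(A,B)$ form a two-dimensional space, with remainder $O(e^{-r}\log r)$ (in fact better). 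Matching on $[1,2]$ then yields constants $\widetilde c_{j,*}$ for $\Phi_2^{(0)}$ and $\Theta_2^{(0)}$ respectively. Since the Wronskian is conserved and $W[r^{1/2}\log r,\,r^{1/2}]=-1$, the normalization $W[\Theta_2^{(0)},\Phi_2^{(0)}]=1$ translates into $\widetilde c_{1,*}\widetilde c_{4,*}-\widetilde c_{2,*}\widetilde c_{3,*}=1$, after fixing the sign convention in the lemma's ordering.

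The main obstacle is $\widetilde c_{1,*}\neq0$. This is precisely the statement that $\mathcal H_2$ has no threshold resonance. Indeed, $\widetilde c_{1,*}=0$ would make $r^{-1/2}\Phi_2^{(0)}$ a bounded solution of $(-\Delta+U^2-1)f=0$ on $\R^2$, regular at the origin, i.e.\ a zero-energy resonance of $\calL_2-1$. I would derive a contradiction from the threshold regularity of $\mathcal H_2$ established in \cite{LPPSS1PartI}, which the introduction states as: the edge of the continuous spectrum is regular for both $\calH_1$ and $\calH_2$.

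As an alternative route, the supersymmetric relation $\calB\calB^*$ versus $\calB^*\calB$ maps a threshold resonance of $\calL_2$ to one of $\bfL$. Concretely, $\calB^*$ applied to $(0,r^{-1/2}\Phi_2^{(0)})$ produces a bounded threshold solution of $\bfL$; if it were zero, the pair would lie in $\ker\calB^*$, which one checks is impossible by the explicit first-order system. Either way the contradiction comes from Theorem~\ref{maintheo1}(i), which says the threshold $1$ is neither an eigenvalue nor a resonance.

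A possible direct argument is that $-\Delta+U^2-1$ has a negative eigenvalue, namely $\lambda^2-1$, with ground state $\psi$. However, Sturm comparison with $\psi$ does not directly rule out a bounded zero-energy solution, so I would rely on the cited spectral input rather than attempt a self-contained positivity argument.
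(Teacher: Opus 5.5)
Your proposal is correct and follows essentially the same route as the paper: Volterra constructions off the model pair $r^{1/2}$, $r^{1/2}\log r$ at both ends, matching, and conservation of the Wronskian to obtain $\widetilde c_{1,*}\widetilde c_{4,*}-\widetilde c_{2,*}\widetilde c_{3,*}=1$. Like the paper, you import $\widetilde c_{1,*}\neq0$ from Part~I (the paper cites \cite[Lemma~6.3]{LPPSS1PartI}), and your identification of $\widetilde c_{1,*}=0$ with an s-wave threshold resonance is exactly the right reading of that input; the only slip is a harmless sign, since the Volterra term from infinity should be $-\int_r^\infty\sqrt{rs}\,\log(s/r)\,W_2(s)\,y(s)\,ds$, as in the paper, which does not affect any of your estimates.
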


\begin{proof}
At zero energy, \eqref{eq:H2-shifted} becomes
\begin{equation}\label{L2_eqrewri}
-y''(r)-\frac{1}{4r^2}y(r)=W_2(r)y(r).
\end{equation}
By \eqref{eq:W2-asymp}, $W_2(r)=O(r^{-1/2}e^{-r})$ for $r\gtrsim1$. Using the comparison system \eqref{fsys_h2_inf_app} and the identity $W[y_1,y_2]=1$, we define
\begin{align*}
\Phi_{2,\infty}^{(0)}(r) & = r^{1/2} +\int_r^\infty \Bigl(r^{1/2}\log(r)s^{1/2}-r^{1/2}s^{1/2}\log(s)\Bigr)W_2(s)\Phi_{2,\infty}^{(0)}(s)\,ds,
\\
\Theta_{2,\infty}^{(0)}(r)& = r^{1/2}\log(r)+\int_r^\infty \Bigl(r^{1/2}\log(r)s^{1/2}-r^{1/2}s^{1/2}\log(s)\Bigr)W_2(s)\Theta_{2,\infty}^{(0)}(s)\,ds.
\end{align*}
Standard Volterra theory gives a fundamental system of real-valued solutions satisfying
\begin{align}\label{FS0infty_H2}
\Phi_{2,\infty}^{(0)}(r) & = r^{1/2}+O(e^{-r}),
\\
\Theta_{2,\infty}^{(0)}(r) & = r^{1/2}\log(r)+O(e^{-r}\log r).
\end{align}
Near $r=0$, \eqref{L2_eqrewri} is a bounded perturbation of
\[
-y''(r)-\frac{1}{4r^2}y(r)=0,
\]
whose fundamental solutions are $r^{1/2}$ and $r^{1/2}\log r$. Since $\mathcal H_2$ is the radial form of the selfadjoint operator $-\Delta+U^2$ on $\mathbb R^2$, the admissible Weyl solution at the origin is the one behaving like $r^{1/2}$; the logarithmic branch is excluded by the domain condition. Thus there exists a real-valued solution $\Phi_2^{(0)}$ with
\[
\Phi_2^{(0)}(r)=r^{1/2}+O(r^{5/2}),\qquad r\to0.
\]
Extending this solution to $(0,\infty)$ and comparing with \eqref{FS0infty_H2}, we obtain
\begin{align}\label{asymp_phi_2_0_H2}
\Phi_2^{(0)}(r)=\begin{cases}
r^{1/2}+O(r^{5/2}), & r\ll1,
\\ \widetilde{c}_{1,*}r^{1/2}\log(r)+\widetilde{c}_{2,*}r^{1/2}+O\big( e^{-r}\log(r)\big), & r\gg 1.
\end{cases}
\end{align}
The fact that $\widetilde{c}_{1,*}\neq0$ is exactly \cite[Lemma~6.3]{LPPSS1PartI}.

Finally, define $\Theta_2^{(0)}$ by the condition $W[\Theta_2^{(0)},\Phi_2^{(0)}]=1$. Near $r=0$ this gives
\[
\Theta_2^{(0)}(r)=r^{1/2}\log r+O(r^{5/2}\log r),
\]
while at infinity $\Theta_2^{(0)}$ is a real linear combination of the two solutions in \eqref{FS0infty_H2}, with coefficients $\widetilde c_{3,*},\widetilde c_{4,*}$ satisfying
\[
\widetilde{c}_{1,*}\widetilde{c}_{4,*}-\widetilde{c}_{2,*}\widetilde{c}_{3,*}=1.
\]
This proves the lemma.
\end{proof}

\begin{rem}
As noted, $\calH_2$ is limit-circle at $0$, so one cannot select the Weyl solution there by imposing an $L^2$ condition alone.  Instead, the selfadjointness of $\calL_2$ on $\mathbb R^2$ selects the branch $\Phi_2^{(0)}(r)\sim r^{1/2}$; the logarithmic branch is not in the operator domain.  This is the analogue, for $\mathcal H_2$, of the regular boundary condition at the origin.
\end{rem}

\subsubsection{Solving from $r=0$}
Using the fundamental system at zero energy, we first construct the regular solution $\Phi_2(r,k)$ in the regime $kr\lesssim1$.

\begin{lem}\label{lemFou2}
For all $r>0$ and $k \geq 0$ we have the expansion
\begin{align}\label{Fou21}
\Phi_2(r,k) = \Phi_2^{(0)}(r) + \sqrt{r} \sum_{j\geq 1} (k r)^{2j} \Phi_{2,j}(r),
\end{align}
which converges absolutely. The expansion converges uniformly if $k r \leq 1$, and $\Phi_{2,j}$ are smooth and satisfy, for some absolute $C>0$ and all $j\geq 1$,
\begin{align}\label{Fou22}
\begin{split}
& | \Phi_{2,j}(u) | \leq \frac{C^j}{j! }, \quad u \lesssim 1,
\\
& | \Phi_{2,j}(u) | \leq \frac{C^j}{j!} \log(u), \quad u \gtrsim 1.
\end{split}
\end{align}
Moreover,
\begin{align}\label{Fou23}
\Phi_{2,1}(u) = \left\{
\begin{array}{ll}
-\dfrac{1}{4} + O(u^2), &  u \lesssim  1,
\\[1ex]
-\dfrac{1}{4}\Big(\widetilde{c}_{1,*}\log(u)-\widetilde{c}_{1,*}+\widetilde{c}_{2,*}\Big) + O(u^{-2} \,\log u), &  u \gtrsim 1,
\end{array}
\right.
\end{align}
and in \eqref{Fou22}--\eqref{Fou23} we also have consistent symbol-type bounds for the derivatives, namely
\begin{align}\label{Fou24}
 | (u\partial_u)^m\Phi_{2,j}(u) | \leq \frac{C_m^j}{j! } u \langle u\rangle^{-1},
\end{align}
for all $m\ge1$.
\end{lem}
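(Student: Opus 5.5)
We follow the scheme of Lemma~\ref{Phi1Op1_1}, with $\Phi_2^{(0)},\Theta_2^{(0)}$ from Lemma~\ref{lem_p2_t2} in place of $\Phi_1^{(0)},\Theta_1^{(0)}$.

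\textbf{Recursion.} Make the ansatz $\Phi_2(r,k)=\sqrt r\sum_{j\ge0}k^{2j}f_j(r)$ with $f_0=r^{-1/2}\Phi_2^{(0)}$. Substituting into \eqref{L2evalue} gives $(\calH_2-1)(\sqrt r f_j)=\sqrt r f_{j-1}$. Solving with the backward Green's function $\Phi_2^{(0)}(r)\Theta_2^{(0)}(s)-\Theta_2^{(0)}(r)\Phi_2^{(0)}(s)$ yields
\[
f_j(r)=-\int_0^r K_2(r,s)f_{j-1}(s)\,ds,\qquad K_2(r,s):=\Big(\frac sr\Big)^{1/2}\bigl(\Theta_2^{(0)}(r)\Phi_2^{(0)}(s)-\Phi_2^{(0)}(r)\Theta_2^{(0)}(s)\bigr),
\]
up to the sign convention fixed by $W[\Theta_2^{(0)},\Phi_2^{(0)}]=1$. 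Then set $\Phi_{2,j}=r^{-2j}f_j$. The integral converges at $s=0$ because $s^{1/2}\Phi_2^{(0)}(s)\sim s$ and $s^{1/2}\Theta_2^{(0)}(s)\sim s\log s$.

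\textbf{Kernel asymptotics.} By \eqref{Phi2_0} and \eqref{Theta2_0}:
\begin{itemize}
\item For $s<r\lesssim1$, $K_2(r,s)=s\log(r/s)\,(1+O(r^2))$, since $r^{1/2}\log r\cdot s^{1/2}-r^{1/2}s^{1/2}\log s$ divided by $r^{1/2}$ and multiplied by $s^{1/2}$ gives this.
\item For $1\lesssim s<r$, the Wronskian identity $\tilde c_{1,*}\tilde c_{4,*}-\tilde c_{2,*}\tilde c_{3,*}=1$ gives the exact leading form $K_2(r,s)=s\log(r/s)+O(e^{-s}\log s\,\log r)$.
\item For $s\le1\le r$, $|K_2|\lesssim s\jap{\log s}\log r$.
\end{itemize}

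\textbf{The case $j=1$, giving \eqref{Fou23}.} For $r\lesssim1$, $f_1(r)=-\int_0^r s\log(r/s)\,ds\,(1+O(r^2))=-\tfrac14 r^2+O(r^4)$. For $r\gg1$, insert $f_0(s)=\tilde c_{1,*}\log s+\tilde c_{2,*}+O(s^{-1/2}e^{-s}\log s)$ and use
\[
\int_1^r s\log(r/s)\log s\,ds=\tfrac14 r^2\log r-\tfrac14 r^2+O(\log r),\qquad \int_1^r s\log(r/s)\,ds=\tfrac14 r^2+O(\log r).
\]
This gives $f_1=-\tfrac14 r^2(\tilde c_{1,*}\log r-\tilde c_{1,*}+\tilde c_{2,*})+O(\log r)$, which is exactly \eqref{Fou23}. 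The $s\le1$ piece contributes $O(\log r)$.

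\textbf{Induction for \eqref{Fou22}.} The inductive hypothesis is $|f_m(r)|\le \frac{C^m}{m!}r^{2m}\max(1,\log r)$. The key integral is $\int_0^r s^{2n-1}\log(r/s)\,ds=r^{2n}/(4n^2)$. This produces the $1/n!$ gain, even with a spare $1/n$ that absorbs the additional $\log r$ factor from the hypothesis, since $\log s\le\log r$. The error terms of $K_2$ are handled as in Lemma~\ref{Phi1Op1_1}: the exponential weight $e^{-s}$ kills the polynomial growth up to constants, and the region $s\le1$ contributes $O(C^{n-1}\log r/n!)$. Absolute convergence then follows from $\sum (kr)^{2j}C^j\log\jap r/j!$, and smoothness follows inductively as before.

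\textbf{Derivative bounds \eqref{Fou24}.} Rescale as in \eqref{Phi1Op1_15}:
\[
\Phi_{2,j}(\lambda r)=-\int_0^1 \frac{K_2(\lambda r,\lambda ru)}{\lambda r}\,u^{2j-1}\Phi_{2,j-1}(\lambda ru)\,du.
\]
The leading kernel $u\log(1/u)$ is independent of $\lambda$. Hence $\partial_\lambda$ falls only on error terms, which carry $O(r^2)$ for small $r$ and are exponentially decaying for large $r$, or on $\Phi_{2,j-1}$. Double induction in $(j,m)$ then gives the bounds.

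\textbf{Main obstacle.} The delicate point is twofold. First, one must verify the large-$r$ kernel structure exactly: the cancellation via the Wronskian condition must produce $s\log(r/s)$ with no leftover $\log r\log s$ growth. Second, the claimed factor $u\jap u^{-1}$ in \eqref{Fou24} must be reconciled with \eqref{Fou23}. For $u\gtrsim1$, $(u\partial_u)\Phi_{2,1}\approx-\tfrac14\tilde c_{1,*}$ is bounded, consistent with the claim. For small $u$, the derivative of the leading constant $-\tfrac14$ vanishes, and the $O(u^2)$ correction gives a bound $\lesssim u$. So the leading $\lambda$-independence of the kernel is what one must exploit carefully.
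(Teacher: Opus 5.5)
Your proposal is correct and follows the paper's proof essentially step for step. It uses the same variation-of-constants recursion with the backward kernel built from $\Phi_2^{(0)},\Theta_2^{(0)}$, the same kernel asymptotics $s\log(r/s)$ (with $\widetilde c_{1,*}\widetilde c_{4,*}-\widetilde c_{2,*}\widetilde c_{3,*}=1$ normalizing the large-$r$ kernel), the same explicit computation of $f_1$, the same factorial induction, and the same rescaling argument of Lemma~\ref{Phi1Op1_1} for \eqref{Fou24}. One cosmetic slip: in your rescaled formula the power should be $u^{2j-2}$ if the leading kernel is $u\log(1/u)$ (equivalently, $u^{2j-1}$ with leading kernel $\log(1/u)$).
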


\begin{proof}
We seek $\Phi_2(r,k)$ solving $\mathcal H_2\Phi_2(r,k)=(1+k^2)\Phi_2(r,k)$ with the regular behavior at $r=0$.  We use the ansatz
\[
\Phi_2(r,k) = \sqrt{r}\sum_{j\ge0} k^{2j} f_j(r), \qquad f_0(r):=r^{-1/2}\Phi_2^{(0)}(r).
\]
Substituting into the equation yields the recursion
\begin{align}\label{Fou2rec}
(\mathcal{H}_2-1) \big(\sqrt{r}\, f_j(r)\big) = \sqrt{r}\, f_{j-1}(r), \quad j\geq 0, \qquad f_{-1}(r)=0.
\end{align}
Comparing with \eqref{Fou21} gives $f_j(r)=r^{2j}\Phi_{2,j}(r)$.

Let
\[
K_0(r,s):=\Theta_2^{(0)}(s)\Phi_2^{(0)}(r)-\Theta_2^{(0)}(r)\Phi_2^{(0)}(s), \qquad 0<s\le r.
\]
Since $W[\Theta_2^{(0)},\Phi_2^{(0)}]=1$, variation of constants applied to \eqref{Fou2rec} yields
\begin{equation}\label{eq:Fou2-rec-int}
\sqrt r\,f_j(r)=-\int_0^r K_0(r,s)\sqrt s\, f_{j-1}(s)\,ds, \qquad j\ge1.
\end{equation}

We first analyze $f_1$.  For $r\lesssim1$, Lemma~\ref{lem_p2_t2} gives
\[
K_0(r,s)=\sqrt{rs}\log\Big(\frac{r}{s}\Big)\Bigl(1+O(r^2+s^2)\Bigr),
\]
and therefore
\begin{align*}
f_1(r)
&=-\int_0^r s\log\Big(\frac{r}{s}\Big)\Bigl(1+O(r^2+s^2)\Bigr)\,ds
= -\frac14 r^2+O(r^4).
\end{align*}
Since $f_1(r)=r^2\Phi_{2,1}(r)$, this proves the first asymptotic in \eqref{Fou23}.

For $r\gtrsim1$, we use \eqref{Phi2_0}--\eqref{Theta2_0} and the identity
\[
\widetilde c_{1,*}\widetilde c_{4,*}-\widetilde c_{2,*}\widetilde c_{3,*}=1
\]
to obtain
\[
\frac{1}{\sqrt r}K_0(r,s)=s\log\Big(\frac{r}{s}\Big)+O\big(\sqrt{s}\,e^{-s}\log(r)\log(s)\big), \qquad 1\le s\le r.
\]
Since
\[
f_0(s)=\widetilde c_{1,*}\log s+\widetilde c_{2,*}+O(s^{-1/2}e^{-s}\log s),
\]
we infer from \eqref{eq:Fou2-rec-int} that
\begin{align*}
f_1(r)
&=-\int_1^r s\log\Big(\frac{r}{s}\Big)\Bigl(\widetilde c_{1,*}\log s+\widetilde c_{2,*}\Bigr)\,ds
+O(\log r)
\\
&=-\frac{r^2}{4}\Bigl(\widetilde c_{1,*}\log(r)-\widetilde c_{1,*}+\widetilde c_{2,*}\Bigr)+O(\log r),
\end{align*}
which yields the second asymptotic in \eqref{Fou23}.

We next prove \eqref{Fou22} by induction.  For $r\lesssim1$, assume
\[
|f_m(r)|\le \frac{C^m}{m!} r^{2m}, \qquad m\le j-1.
\]
Using again the kernel expansion near $0$ and \eqref{eq:Fou2-rec-int}, we obtain
\begin{align*}
|f_j(r)|
&\lesssim \int_0^r s\log\Big(\frac{r}{s}\Big)\frac{C^{j-1}}{(j-1)!}s^{2j-2}\,ds
\lesssim \frac{C^j}{j!}r^{2j},
\end{align*}
for $C$ large enough.  Dividing by $r^{2j}$ gives the first line in \eqref{Fou22}.

For $r\gtrsim1$, assume
\[
|f_m(r)|\le \frac{C^m}{m!} r^{2m}\log r, \qquad m\le j-1.
\]
Using the large-$r$ kernel asymptotic displayed above, \eqref{eq:Fou2-rec-int} implies
\begin{align*}
|f_j(r)|
&\lesssim \int_1^r s\log\Big(\frac{r}{s}\Big)\frac{C^{j-1}}{(j-1)!}s^{2j-2}\log s\,ds
+O(1)
\\
&\lesssim \frac{C^j}{j!} r^{2j}\log r.
\end{align*}
Dividing by $r^{2j}$ proves the second line in \eqref{Fou22}.  Absolute convergence of \eqref{Fou21} follows immediately from the factorial bounds.

Finally, the derivative estimates \eqref{Fou24} follow by differentiating the recursion \eqref{eq:Fou2-rec-int} and arguing exactly as in the proof of Lemma~\ref{Phi1Op1_1}; we omit the routine details.
\end{proof}

For the high-energy connection problem we also need a second representation of the regular solution, now built from $J_0$. In analogy with our analysis of~$\calH_1$, for $k\ge1$ we define 
\begin{equation}\label{eq:Phi20-J0}
\Phi_{2,0}(r,k):=\sqrt r\,J_0(kr)
\end{equation}
and the Green kernel
\begin{equation}\label{eq:Gk0-reg}
G_{k,0}(r,s):=\frac{\pi}{2}\sqrt{rs}\,\Bigl[J_0(kr)Y_0(ks)-Y_0(kr)J_0(ks)\Bigr],
\qquad 0<s\le r\le1.
\end{equation}
Then the Weyl solution at zero satisfies on $(0,1]$ the Volterra equation
\begin{equation}\label{eq:Phi2-high-k-Volterra}
\Phi_2(r,k)=\Phi_{2,0}(r,k)+\int_0^r G_{k,0}(r,s)W_2(s)\Phi_2(s,k)\,ds.
\end{equation}

\begin{lem}\label{lem:Phi2-high-k-J0}
One has
\begin{align}
&\sup_{0<r\le1}\omega(kr)^{-1}\Bigl|\Phi_2(r,k)-\Phi_{2,0}(r,k)\Bigr|\lesssim k^{-3/2},\label{eq:Phi2-high-k-bd1}\\
&\sup_{0<r\le1}\Bigl|\partial_r\Phi_2(r,k)-\partial_r\Phi_{2,0}(r,k)\Bigr|\lesssim k^{-1/2}\label{eq:Phi2-high-k-bd1-der},
\end{align}
with
\[
\omega(x):=
\begin{cases}
x^{1/2}, & 0<x\le1,
\\
1, & x\ge1. 
\end{cases}
\]
\end{lem}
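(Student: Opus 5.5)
The plan is to follow the proof of Lemma~\ref{lem:Phi1-high-k-J2}, with the order-two Bessel model replaced by the order-zero one. First I will check that $\sqrt r\,J_0(kr)$ and $\sqrt r\,Y_0(kr)$ solve $(-\partial_r^2-\frac1{4r^2})f=k^2f$ and have Wronskian $\frac{2}{\pi}$. Then $G_{k,0}$ is the forward Green kernel, and \eqref{eq:Phi2-high-k-Volterra} follows by variation of constants from \eqref{eq:H2-shifted}. The normalization is consistent with $\Phi_2\sim r^{1/2}$ at $r=0$, because $J_0(0)=1$ and the integral term is $o(r^{1/2})$; this is exactly the branch selected in Lemma~\ref{lem_p2_t2}.

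Next I rescale, setting $x=kr$ and $\phi(x,k):=k^{1/2}\Phi_2(x/k,k)$ for $0<x\le k$. The equation becomes
\[
\phi(x,k)=\sqrt x\,J_0(x)+k^{-2}\int_0^x\mathcal G_0(x,y)W_2(y/k)\phi(y,k)\,dy,
\qquad
\mathcal G_0(x,y)=\tfrac\pi2\sqrt{xy}\,[J_0(x)Y_0(y)-Y_0(x)J_0(y)].
\]
From $|J_0(z)|\lesssim\min(1,z^{-1/2})$, $|Y_0(z)|\lesssim \jap{\log z}$ for $z\le1$ and $\lesssim z^{-1/2}$ for $z\ge1$, I obtain
\[
|\mathcal G_0(x,y)|\lesssim
\begin{cases}
\sqrt{xy}\,\log(x/y), & y\le x\le1,\\
\sqrt y\,\jap{\log y}, & y\le1\le x,\\
1, & 1\le y\le x,
\end{cases}
\]
and similarly for $\partial_x\mathcal G_0$, with one fewer power of $x$ in the first case. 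With the weight $\omega$ from the statement, one has $|\sqrt x J_0(x)|\lesssim\omega(x)$, and
\[
\omega(x)^{-1}\int_0^x|\mathcal G_0(x,y)|\omega(y)\,dy\lesssim \jap{x}\le k .
\]
Since $W_2$ is bounded, the Volterra operator has norm $O(k^{-1})$ on the weighted sup space $X$. Iterating gives $\|\phi-\sqrt{\cdot}\,J_0\|_X\lesssim k^{-1}$. Rescaling back multiplies by $k^{-1/2}$, and $\omega$ evaluated at $x=kr$ gives exactly \eqref{eq:Phi2-high-k-bd1}.

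For the derivative bound, I differentiate the scaled Volterra equation. The boundary term vanishes since $\mathcal G_0(x,x)=0$. I then use the $\partial_x\mathcal G_0$ bounds together with $|\phi|\lesssim\omega$, which gives $\sup_x|\partial_x\phi-\partial_x(\sqrt xJ_0)|\lesssim k^{-1}$. Converting via $\partial_r\Phi_2=k^{1/2}\partial_x\phi$ then gives the $k^{-1/2}$ bound in \eqref{eq:Phi2-high-k-bd1-der}.

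The main obstacle is the bound on $\int\mathcal G_0\omega$ near $x\lesssim1$, where the $\log$ from $Y_0$ must be absorbed. For $y\le x\le1$ this is $\int_0^x \sqrt{xy}\,\log(x/y)\,\sqrt y\,dy\lesssim x^{5/2}$, which is harmless. For $x\ge1$ the portion $y\le1$ contributes $O(1)$ because $\sqrt y\,\jap{\log y}$ is integrable, and the portion $y\ge1$ contributes $O(x)$. The derivative near $x\to0$ needs the $\partial_x\mathcal G_0$ bound $\lesssim x^{-1/2}\sqrt y$. This gives $\int_0^x x^{-1/2}y\,dy\lesssim x^{3/2}$, which stays bounded, so the bound is uniform.
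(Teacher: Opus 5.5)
Your proposal is correct and follows the paper's proof essentially step for step: the same $J_0$ comparison solution and Volterra equation, the rescaling $x=kr$, $\phi=k^{1/2}\Phi_2(x/k,k)$, and the weighted sup space with weight $\omega$, on which the Volterra operator has norm $O(k^{-1})$ because $\omega(x)^{-1}\int_0^x|\mathcal G_0(x,y)|\omega(y)\,dy\lesssim 1+x\lesssim k$; as in the paper, the derivative bound comes from differentiating the scaled equation. The only slip is harmless. For $y\le x\le 1$, since $|\partial_x\mathcal G_0(x,x)|=1$, the correct bound is $|\partial_x\mathcal G_0(x,y)|\lesssim x^{-1/2}\sqrt y\,\bigl(1+\log(x/y)\bigr)$, not $x^{-1/2}\sqrt y\log(x/y)$ or $x^{-1/2}\sqrt y$; this still integrates against $\omega(y)=\sqrt y$ to $O(x^{3/2})$, so your conclusion is unaffected.
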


\begin{proof}
The functions $\sqrt r\,J_0(kr)$ and $\sqrt r\,Y_0(kr)$ solve
\[
\Bigl(-\partial_r^2-\frac{1}{4r^2}\Bigr)f=k^2f,
\]
with Wronskian
\[
W\bigl(\sqrt r\,J_0(kr),\sqrt r\,Y_0(kr)\bigr)=\frac{2}{\pi}.
\]
Hence \eqref{eq:Gk0-reg} is the backwards Green's kernel and \eqref{eq:Phi2-high-k-Volterra} holds.
To estimate the Volterra equation, set $x:=kr$ and
\[
\phi(x,k):=k^{1/2}\Phi_2(x/k,k), \qquad 0<x\le k.
\]
Then \eqref{eq:Phi2-high-k-Volterra} becomes
\begin{equation}\label{eq:phi2-scaled-Volterra}
\phi(x,k)=\sqrt x\,J_0(x)+k^{-2}\int_0^x \mathcal G_0(x,y)W_2(y/k)\phi(y,k)\,dy,
\end{equation}
where
\[
\mathcal G_0(x,y):=\frac{\pi}{2}\sqrt{xy}\,\Bigl[J_0(x)Y_0(y)-Y_0(x)J_0(y)\Bigr],
\qquad 0<y\le x\le k.
\]
Using the standard bounds
\[
|J_0(z)|\lesssim
\begin{cases}
1,&0<z\le1,
\\
z^{-1/2},&z\ge1,
\end{cases}
\qquad
|Y_0(z)|\lesssim
\begin{cases}
1+|\log z|,&0<z\le1,
\\
z^{-1/2},&z\ge1,
\end{cases}
\]
we obtain
\[
|\mathcal G_0(x,y)|\lesssim
\begin{cases}
\sqrt{xy}\,\log(x/y),&0<y\le x\le1,
\\
\sqrt y\,(1+|\log y|),&0<y\le1\le x,
\\
1,&1\le y\le x.
\end{cases}
\]
Since $W_2$ is bounded on $[0,1]$, these kernel bounds imply
\[
\omega(x)^{-1}\int_0^x |\mathcal G_0(x,y)|\,\omega(y)\,dy\lesssim 1+x\lesssim k,
\qquad 0<x\le k.
\]
Therefore the Volterra operator on the right-hand side of \eqref{eq:phi2-scaled-Volterra} has norm $O(k^{-1})$ on
\[
X:=\Bigl\{f\in C((0,k]):\ \|f\|_X:=\sup_{0<x\le k}\omega(x)^{-1}|f(x)|<\infty\Bigr\},
\]
and hence
\[
\|\phi(\cdot,k)-\sqrt{\cdot}\,J_0\|_X\lesssim k^{-1}.
\]
Rescaling back to $r=x/k$ gives \eqref{eq:Phi2-high-k-bd1}.
 Stability under $k\partial_k$ and \eqref{eq:Phi2-high-k-bd1-der} follow by differentiating \eqref{eq:phi2-scaled-Volterra} and using similar bounds for $\partial_x\calG_0$.
\end{proof}
As for $\Theta_1$ in \eqref{eq:Theta1varpar1}, we let
\begin{equation}\label{eq:Theta2varpar1}
    \Theta_2(r,k):= \Phi_2(r,k)\int_r^{1/k}\frac{\ud s}{(\Phi_2(s,k))^2},
\end{equation}
where the normalization is chosen so that $W(\Theta_2,\Phi_2)=1$. Note that $\Theta_2(r,k)$ also satisfies $(\calH_2-\jap{k}^2)\Theta_2(r,k)=0$.
\subsubsection{Solving from $r=\infty$}
We now discuss the outgoing solution at infinity. Let $0<k\le1$ and define
\begin{equation}\label{eq:Psi20-def}
\Psi_{2,0}(r,k):=e^{i\pi/4}\sqrt{\frac{\pi r}{2}}\,H_0^{(1)}(kr).
\end{equation}
Set
\begin{equation}\label{eq:Rk-H2-small}
R_k:=k^{-1}|\log k|^{-2}.
\end{equation}
Then, for $r\ge R_k$, the Weyl solution at infinity satisfies the Volterra equation
\begin{equation}\label{eq:Psi2-low-k-Volterra}
\Psi_2(r,k)=\Psi_{2,0}(r,k)-\int_r^\infty G_{k,0}^+(r,s)W_2(s)\Psi_2(s,k)\,ds,
\end{equation}
where
\begin{align*}
G_{k,0}^+(r,s) &:=\frac{\Psi_{2,0}(r,k)\overline{\Psi_{2,0}(s,k)}-\overline{\Psi_{2,0}(r,k)}\Psi_{2,0}(s,k)}{-2i}\\
&=\frac{\pi}{2}\sqrt{rs}\,\Bigl[J_0(kr)Y_0(ks)-Y_0(kr)J_0(ks)\Bigr].
\end{align*}

\begin{lem}\label{lem:Weyl2-low-k-H0}
As $k\to0$, uniformly for $r\ge R_k$,
\begin{align}
\Psi_2(r,k)&=\Psi_{2,0}(r,k)+O\!\big(e^{-r/2}\sqrt r\,\langle\log(kr)\rangle\big),\label{eq:Psi2-low-k-H0-1}
\\
\Psi_2'(r,k)&=\Psi_{2,0}'(r,k)+O\!\big(e^{-r/2}r^{-1/2}\langle\log(kr)\rangle\big),\label{eq:Psi2-low-k-H0-2}
\end{align}
and the $O$-terms are stable under finitely many $k\partial_k$ derivatives.  In particular, at $r=R_k$,
\begin{align}
\Psi_2(R_k,k)&=\Psi_{2,0}(R_k,k)\bigl(1+O(e^{-R_k/2})\bigr),\label{eq:Psi2-low-k-atRk-1}
\\
\Psi_2'(R_k,k)&=\Psi_{2,0}'(R_k,k)\bigl(1+O(e^{-R_k/2})\bigr).\label{eq:Psi2-low-k-atRk-2}
\end{align}
\end{lem}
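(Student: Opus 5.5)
We solve the Volterra equation \eqref{eq:Psi2-low-k-Volterra} on $[R_k,\infty)$ by iteration in a weighted sup-norm adapted to the model solution $\Psi_{2,0}$. The first step is to record sizes of the model objects for $r\ge R_k$, using the small- and large-argument asymptotics of $J_0,Y_0,H_0^{(1)}$:
\[
|\Psi_{2,0}(r,k)|\lesssim \sqrt r\,\langle\log(kr)\rangle \min\{1,(kr)^{-1/2}\},\qquad |\Psi_{2,0}'(r,k)|\lesssim r^{-1/2}\langle\log(kr)\rangle,
\]
which follow because $H_0^{(1)}(z)\sim \frac{2i}{\pi}\log z$ for $z\ll1$ and $|H_0^{(1)}(z)|\simeq z^{-1/2}$ for $z\gtrsim1$. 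For $R_k\le r\le s$ we also have
\[
|G_{k,0}^+(r,s)|\lesssim \sqrt{rs}\,\langle\log(kr)\rangle\langle\log(ks)\rangle,\qquad |\partial_rG_{k,0}^+(r,s)|\lesssim r^{-1/2}\sqrt s\,\langle\log(kr)\rangle\langle\log(ks)\rangle.
\]
In the oscillatory region these bounds are crude, since there $|G^+_{k,0}|\lesssim k^{-1}$, but they suffice.

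With the weight $w(r):=\sqrt r\,\langle\log(kr)\rangle$, we iterate in $X:=\{f:\sup_{r\ge R_k}w(r)^{-1}|f(r)|<\infty\}$. The Volterra operator $Tf(r)=\int_r^\infty G^+_{k,0}(r,s)W_2(s)f(s)\,ds$ satisfies
\[
w(r)^{-1}|Tf(r)|\lesssim \|f\|_X\int_r^\infty s\,\langle\log(ks)\rangle^2 |W_2(s)|\,ds\lesssim \|f\|_X\, e^{-r}\,r^{1/2}\langle\log(kr)\rangle^2,
\]
using \eqref{eq:W2-asymp}, namely $|W_2(s)|\lesssim s^{-1/2}e^{-s}$ for $s\ge1$. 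Since $R_k=k^{-1}|\log k|^{-2}\to\infty$, we have $\langle\log(kr)\rangle\lesssim |\log k|\lesssim \log R_k$ on $[R_k,\infty)$. So the integral bound is $\lesssim e^{-r/2}$ for $k$ small, all polynomial and log factors being absorbed into $e^{-r/2}$. The Neumann series therefore converges, with $\|T^n\|$ summable (the standard Volterra bound gives $1/n!$ factors). This yields $\Psi_2-\Psi_{2,0}=\sum_{n\ge1}T^n\Psi_{2,0}$, with $|\Psi_2-\Psi_{2,0}|\lesssim w(r)e^{-r/2}$, which is \eqref{eq:Psi2-low-k-H0-1}. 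Differentiating the equation and using $G^+_{k,0}(r,r)=0$ gives
\[
\Psi_2'-\Psi_{2,0}'=-\int_r^\infty\partial_rG^+_{k,0}W_2\Psi_2\,ds,
\]
and the bound on $\partial_rG^+$ gives \eqref{eq:Psi2-low-k-H0-2}.

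For $k\partial_k$-stability, note that $k\partial_k$ acting on $\Psi_{2,0}$ or $G^+_{k,0}$ is $r\partial_r$ on the Bessel argument. It therefore produces the same type of bounds, up to at most an extra $\langle\log(kr)\rangle$ factor, which is absorbed by $e^{-r/2}$ as before. We differentiate the Volterra equation and reiterate.

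The multiplicative form at $r=R_k$ needs lower bounds $|\Psi_{2,0}(R_k,k)|\gtrsim \sqrt{R_k}\,|\log(kR_k)|$ and $|\Psi_{2,0}'(R_k,k)|\gtrsim R_k^{-1/2}|\log (kR_k)|$. These follow from $H_0^{(1)}(z)=\frac{2i}{\pi}\log z+O(1)$, with $z=kR_k=|\log k|^{-2}\to0$. Dividing the additive errors by these gives a relative error $O(e^{-R_k/2})$ up to absorbed polynomial factors, which is \eqref{eq:Psi2-low-k-atRk-1}--\eqref{eq:Psi2-low-k-atRk-2}. The derivative needs care: $\partial_r(\sqrt r H_0^{(1)}(kr))=\frac{1}{2\sqrt r}H_0^{(1)}+\sqrt r\,kH_0^{(1)\prime}$. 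The second term is $O(r^{-1/2})$, which is dominated by the log term, so the lower bound holds.

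The main obstacle is the uniformity of the kernel bounds across the transition $kr\simeq1$ together with the logarithmic growth near $kR_k\to0$. This is handled by the exponential decay of $W_2$ and the fact that $R_k\to\infty$, which makes all logarithmic losses harmless.
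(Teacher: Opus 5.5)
Your proposal is correct and follows the paper's argument. Both use the Volterra equation \eqref{eq:Psi2-low-k-Volterra} with the same bounds $|G_{k,0}^+(r,s)|\lesssim\sqrt{rs}\,\langle\log(kr)\rangle\langle\log(ks)\rangle$ and its $\partial_r$-analogue, let the $e^{-s}$ decay of $W_2$ absorb all losses because $R_k\to\infty$, and then divide by the model sizes $|\Psi_{2,0}(R_k,k)|\simeq\sqrt{R_k}\,|\log(kR_k)|$ and $|\Psi_{2,0}'(R_k,k)|\simeq R_k^{-1/2}|\log(kR_k)|$, where $kR_k=|\log k|^{-2}\to0$; your weighted space with $w(r)=\sqrt r\,\langle\log(kr)\rangle$ just makes the paper's ``standard Volterra theory'' explicit.

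A few side claims are inaccurate but harmless. On $[R_k,\infty)$ you only have $\langle\log(kr)\rangle\lesssim\log r$, not $\lesssim|\log k|$ once $r\gg k^{-1}$. Your upper bound for $\Psi_{2,0}'$ fails for $kr\gg1$, where $|\Psi_{2,0}'|\simeq k^{1/2}$, but you never use it away from $r=R_k$. In the oscillatory region $k\partial_k$ costs a polynomial factor $\lesssim\langle ks\rangle\lesssim s$ rather than a logarithm. All three are still absorbed by the spare exponential decay $e^{-r/2}$.
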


\begin{proof}
Since $\sqrt r\,H_0^{(1)}(kr)$ solves
\[
\Bigl(-\partial_r^2-\frac{1}{4r^2}\Bigr)f=k^2f,
\]
variation of constants gives \eqref{eq:Psi2-low-k-Volterra}.  For $r\ge R_k$ and $s\ge r$, the standard bounds for $J_0$ and $Y_0$ imply
\[
|G_{k,0}^+(r,s)|\lesssim \sqrt{rs}\,\langle\log(kr)\rangle \langle\log(ks)\rangle,
\]
and similarly
\[
|\partial_r G_{k,0}^+(r,s)|\lesssim r^{-1/2}s^{1/2}\,\langle\log(kr)\rangle \langle\log(ks)\rangle.
\]
Together with $W_2(s)=O(s^{-1/2}e^{-s})$, this yields
\[
\int_r^\infty |G_{k,0}^+(r,s)|\,|W_2(s)|\,ds \lesssim e^{-r/2}\sqrt r\,\langle\log(kr)\rangle,
\]
and
\[
\int_r^\infty |\partial_r G_{k,0}^+(r,s)|\,|W_2(s)|\,ds \lesssim e^{-r/2}r^{-1/2}\,\langle\log(kr)\rangle.
\]
Standard Volterra theory therefore gives \eqref{eq:Psi2-low-k-H0-1}--\eqref{eq:Psi2-low-k-H0-2}.  Since at $r=R_k$ the model terms satisfy
\[
|\Psi_{2,0}(R_k,k)|\simeq \sqrt{R_k}\,\langle\log(kR_k)\rangle,
\qquad
|\Psi_{2,0}'(R_k,k)|\simeq R_k^{-1/2}\,\langle\log(kR_k)\rangle,
\]
the relative forms \eqref{eq:Psi2-low-k-atRk-1}, \eqref{eq:Psi2-low-k-atRk-2} follow.  Stability under $k\partial_k$ is obtained by differentiating \eqref{eq:Psi2-low-k-Volterra}.
\end{proof}

Finally, for the high-energy connection problem we solve from infinity using the same Hankel model, now at the fixed matching point $r=1$.   

\begin{lem}\label{lem:Weyl2-high-k-H0}
Let $\Psi_{2,0}$ be defined by \eqref{eq:Psi20-def}. As $k\to\infty$,
\begin{align}
\sup_{r\ge1}\bigl|\Psi_2(r,k)-\Psi_{2,0}(r,k)\bigr|&\lesssim k^{-3/2},\label{eq:Psi2-high-k-H0-1}
\\
\sup_{r\ge1}\bigl|\Psi_2'(r,k)-\Psi_{2,0}'(r,k)\bigr|&\lesssim k^{-1/2},\label{eq:Psi2-high-k-H0-2}
\end{align}
and the $O$-terms are stable under finitely many $k\partial_k$ and $r\partial_r$ derivatives.
\end{lem}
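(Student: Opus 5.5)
We will argue exactly as in the low-energy case (Lemma~\ref{lem:Weyl2-low-k-H0}) and as in the high-energy analysis for $\calH_1$ (Lemma~\ref{lem:Weyl1-high-k-H2}), using the Volterra equation \eqref{eq:Psi2-low-k-Volterra}, now posed on $[1,\infty)$ for $k\ge1$:
\[
\Psi_2(r,k)=\Psi_{2,0}(r,k)-\int_r^\infty G_{k,0}^+(r,s)W_2(s)\Psi_2(s,k)\,ds,\qquad r\ge1.
\]
This equation holds because $\sqrt r\,H_0^{(1)}(kr)$ solves $(-\partial_r^2-\frac1{4r^2})f=k^2f$, and $W_2$ decays exponentially by \eqref{eq:W2-asymp}. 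The outgoing normalization $W(\overline{\Psi_2},\Psi_2)=2i$ is then inherited from $\Psi_{2,0}$, since the integral term and its $r$-derivative vanish as $r\to\infty$.

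The first step is to record the size of the model quantities for $r,s\ge1$ and $k\ge1$. Here $kr\ge k\ge1$, so the large-argument asymptotics of $J_0,Y_0,H_0^{(1)}$ apply and give
\[
|\Psi_{2,0}(r,k)|\lesssim k^{-1/2},\qquad |\Psi_{2,0}'(r,k)|\lesssim k^{1/2}.
\]
For $1\le r\le s$ we also get
\[
|G_{k,0}^+(r,s)|\lesssim k^{-1},\qquad |\partial_rG_{k,0}^+(r,s)|\lesssim 1.
\]
The kernel bound follows from $\sqrt{rs}\,|J_0(kr)Y_0(ks)-Y_0(kr)J_0(ks)|\lesssim \sqrt{rs}(kr)^{-1/2}(ks)^{-1/2}=k^{-1}$. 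The derivative bound uses in addition $|\partial_r(\sqrt r J_0(kr))|\lesssim k^{1/2}$ and the analogous bound for $Y_0$.

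The second step is a standard Volterra iteration in the space $L^\infty([1,\infty))$. Since $\int_1^\infty|W_2(s)|\,ds\lesssim1$ and the kernel has size $k^{-1}$, the iteration converges and gives $\sup_{r\ge1}|\Psi_2|\lesssim k^{-1/2}$. Inserting this back into the equation yields
\[
\sup_{r\ge1}|\Psi_2-\Psi_{2,0}|\lesssim k^{-1}\cdot k^{-1/2}=k^{-3/2}.
\]
This is \eqref{eq:Psi2-high-k-H0-1}. For the derivative we differentiate the Volterra equation; the boundary term drops out because $G^+_{k,0}(r,r)=0$. Using $|\partial_rG^+_{k,0}|\lesssim1$ we then obtain
\[
|\Psi_2'-\Psi_{2,0}'|\lesssim \int_r^\infty |W_2|\,|\Psi_2|\,ds\lesssim k^{-1/2},
\]
which is \eqref{eq:Psi2-high-k-H0-2}.

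The last step, and the one requiring the most care, is stability under $k\partial_k$ and $r\partial_r$. The plan is to write $G^+_{k,0}$ and $\Psi_{2,0}$ as $e^{\pm ik(r-s)}$, respectively $e^{ikr}$, times symbols in $kr$ and $ks$. Applying $k\partial_k$ to an oscillatory factor produces factors $k(s-r)$ or $kr$. These are not bounded uniformly, but they grow only polynomially in $s$, and the exponential decay of $W_2(s)$ absorbs them once the errors are measured relative to the phase. Concretely, we apply $k\partial_k$ to $e^{-ikr}\Psi_2$ and $e^{-ikr}\Psi_{2,0}$ and iterate the differentiated Volterra equation, in which $k\partial_k\Psi_2$ satisfies the same Volterra equation with an exponentially localized source. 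If a cleaner formulation is preferred, one can instead move one $r$-weight onto $W_2$ and bound $r\partial_r$ via the ODE itself. In either version the bounds $k^{-3/2}$ and $k^{-1/2}$ persist with constants depending on the number of derivatives.
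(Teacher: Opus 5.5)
Your first two steps coincide with the paper's proof: the same Volterra equation \eqref{eq:Psi2-low-k-Volterra} on $[1,\infty)$, the bounds $|\Psi_{2,0}|\lesssim k^{-1/2}$, $|\Psi_{2,0}'|\lesssim k^{1/2}$, $|G^+_{k,0}|\lesssim k^{-1}$, $|\partial_rG^+_{k,0}|\lesssim 1$, the iteration in $L^\infty([1,\infty))$, and the differentiated equation using $G^+_{k,0}(r,r)=0$. This part is correct. You are also right that the stability clause can only hold after removing the phase, equivalently in the relative form $\Psi_2=\Psi_{2,0}(1+O(k^{-1}))$. Indeed, $\Psi_2-\Psi_{2,0}=\frac{k^{-3/2}}{2i}e^{ikr}\int_r^\infty W_2(s)\,ds+O(k^{-5/2})$. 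Since $\int_1^\infty W_2\,ds>0$, the quantity $k\partial_k(\Psi_2-\Psi_{2,0})$ has size $k^{-1/2}$ near $r=1$. The paper only says ``follows as before'' here, which does not address this point either.

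The gap is in the mechanism you give for the phase-normalized bound. Set $n:=e^{-ikr}\Psi_2$ and $n_0:=e^{-ikr}\Psi_{2,0}=k^{-1/2}\omega(kr)$, where $\omega(x):=e^{i\pi/4}\sqrt{\pi x/2}\,H_0^{(1)}(x)e^{-ix}=1+O(x^{-1})$ is a symbol. The equation becomes $n=n_0-\int_r^\infty\widetilde G\,W_2\,n\,ds$ with
\[
\widetilde G(r,s)=e^{ik(s-r)}G^+_{k,0}(r,s)=\frac{1}{2ik}\Bigl(e^{2ik(s-r)}\,\overline{\omega(kr)}\,\omega(ks)-\omega(kr)\,\overline{\omega(ks)}\Bigr),
\]
and hence
\[
k\partial_k\widetilde G(r,s)=(s-r)\,e^{2ik(s-r)}\,\overline{\omega(kr)}\,\omega(ks)+O(k^{-1}).
\]
The main term has size $s-r$, not $k^{-1}(s-r)$. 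The factor $k(s-r)$ you mention cancels the prefactor $1/(2ik)$, so it costs a full power of $k$, and the exponential decay of $W_2$ only pays for the growth in $s$. As written, your argument therefore yields only $k\partial_k(n-n_0)=O(k^{-1/2})$ instead of $O(k^{-3/2})$.

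The missing idea is to use the oscillation: integrate by parts once in $s$ against $e^{2ik(s-r)}$. The boundary term at $s=r$ vanishes because of the factor $s-r$, and you gain $k^{-1}$. The price is a bound on $\partial_sW_2=-2bU^2$, which decays exponentially by \eqref{eq:Ua_bogo}, and on $\partial_sn=e^{-iks}(\Psi_2'-ik\Psi_2)=O(k^{-1/2})$, which follows from the two estimates you have already proved. This gives a source of size $k^{-3/2}$ in the Volterra equation for $k\partial_k n$, which then closes by the same iteration.

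The same integration by parts is needed for $r\partial_r$. There $\partial_r\widetilde G=-e^{2ik(s-r)}\overline{\omega(kr)}\omega(ks)+O((kr)^{-2})$ is of size one, and the resulting boundary term is already $O(k^{-3/2})$. For $j$ derivatives you need $j$ integrations by parts. Your alternative of moving an $r$-weight onto $W_2$ does not supply this gain.
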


\begin{proof}
For $r\ge1$ and $s\ge r$ one has $kr,ks\ge k$, so the large-argument bounds for Hankel and Bessel functions imply
\[
|\Psi_{2,0}(r,k)|\lesssim k^{-1/2},
\qquad
|\Psi_{2,0}'(r,k)|\lesssim k^{1/2},
\]
\[
|G_{k,0}^+(r,s)|\lesssim k^{-1},
\qquad
|\partial_r G_{k,0}^+(r,s)|\lesssim 1.
\]
Since $W_2\in L^1([1,\infty))$, standard Volterra estimates applied to \eqref{eq:Psi2-low-k-Volterra} yield
\[
\sup_{r\ge1}|\Psi_2(r,k)-\Psi_{2,0}(r,k)|\lesssim
k^{-1}\Bigl(\int_1^\infty |W_2(s)|\,ds\Bigr)\sup_{s\ge1}|\Psi_2(s,k)|
\lesssim k^{-3/2},
\]
and, after differentiating \eqref{eq:Psi2-low-k-Volterra},
\[
\sup_{r\ge1}|\Psi_2'(r,k)-\Psi_{2,0}'(r,k)|\lesssim
\Bigl(\int_1^\infty |W_2(s)|\,ds\Bigr)\sup_{s\ge1}|\Psi_2(s,k)|
\lesssim k^{-1/2}.
\]
The derivative stability follows as before.
\end{proof}

\subsubsection{The spectral measure}\label{sec_specm_2}
We now derive asymptotics for the spectral measure associated to $\mathcal{H}_2$.  Recall that
\[
W\big(\Theta_2(r,k),\Phi_2(r,k)\big)=1
\qquad \text{and} \qquad
W\big(\Psi_2(r,k),\overline{\Psi_2(r,k)}\big)=-2i,
\]
the first identity by definition and the second by the normalization of the Weyl solution at infinity.

\begin{prop}\label{propSM2}
The generalized eigenfunctions associated with $\mathcal{H}_2$ are given by
\begin{align}\label{propSM0}
\Phi_2(r,k) = 2\Re \big( a_2(k) \Psi_2(r,k) \big).
\end{align}
Here $a_2(k)$ is smooth, always non-zero, and satisfies
\begin{align}\label{propSMa}
|a_2(k)| \simeq \left\{
\begin{array}{ll}
\jap{\log(k)}, &  k \lesssim 1,
\\ 1, & k \gtrsim 1.
\end{array} \right.
\end{align}
In fact, we have the sharper statements
\begin{equation}\label{eq:a2-smallk}
a_2(k)=\frac{e^{-i\pi/4}}{\sqrt{2\pi}}
\Biggl[
\widetilde c_{2,*}-\widetilde c_{1,*}\Bigl(\log\frac{k}{2}+\gamma+i\frac{\pi}{2}\Bigr)
\Biggr]
+O\bigl(|\log k|^{-1}\bigr),
\qquad k\to0,
\end{equation}
and
\begin{equation}\label{eq:a2-largek}
a_2(k)=\frac{e^{-i\pi/4}}{\sqrt{2\pi}}\bigl(1+O(k^{-1})\bigr),
\qquad k\to\infty.
\end{equation}
Moreover, for every $n\ge1$,
\begin{align}\label{propSMa_dk}
\vert (k \partial_k)^na_2(k)\vert \lesssim_n 1.
\end{align}
The spectral density with respect to the energy variable
$\lambda=k^2$ is given by
\begin{align}\label{propSMrho}
 \frac{1}{4\pi |a_2(k)|^2} \simeq \left\{
\begin{array}{ll}
\jap{\log(k)}^{-2} , &  k \lesssim 1,
\\ 1, & k \gtrsim 1.
\end{array} \right.
\end{align}
In fact,
\[
\frac{1}{4\pi |a_2(k)|^2}=\frac{1}{2\widetilde c_{1,*}^2|\log k|^2}\Bigl(1+O(|\log k|^{-1})\Bigr),
\qquad k\to0,
\]
and
\[
\frac{1}{4\pi |a_2(k)|^2}=\frac12\bigl(1+O(k^{-1})\bigr),
\qquad k\to\infty.
\]
\end{prop}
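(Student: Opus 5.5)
We follow the template of Proposition~\ref{SMOp1_1}. The identity \eqref{propSM0} and the formula $a_2(k)=\frac{i}{2}W(\Phi_2,\overline{\Psi_2})$ come from the general Weyl--Titchmarsh theory \eqref{def_aj}, using the normalization $W(\overline{\Psi_2},\Psi_2)=2i$. Smoothness of $a_2$ follows from smooth parameter dependence. The argument that $a_2(k)\neq0$ is the same as before: a real $\Phi_2$ proportional to $\overline{\Psi_2}$ would force $\Psi_2$ and $\overline{\Psi_2}$ to be dependent, which is impossible. The spectral density statements \eqref{propSMrho} then follow directly from the asymptotics of $a_2$. So the work is to evaluate the Wronskian at a well-chosen matching radius in the two regimes $k\to0$ and $k\to\infty$, with compactness covering intermediate $k$.

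\emph{Small $k$.} We match at $R_k=k^{-1}|\log k|^{-2}$ from \eqref{eq:Rk-H2-small}. Then $kR_k=|\log k|^{-2}\to0$ while $R_k\to\infty$.

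On the left, Lemma~\ref{lemFou2} gives $\Phi_2(R_k,k)=\Phi_2^{(0)}(R_k)+O\big(\sqrt{R_k}(kR_k)^2\log R_k\big)$. By Lemma~\ref{lem_p2_t2},
\[
\Phi_2(R_k,k)=\sqrt{R_k}\big(\widetilde c_{1,*}\log R_k+\widetilde c_{2,*}\big)+O\big(\sqrt{R_k}|\log k|^{-3}\big),
\]
and similarly
\[
\Phi_2'(R_k,k)=R_k^{-1/2}\Big(\tfrac12\widetilde c_{1,*}\log R_k+\tfrac12\widetilde c_{2,*}+\widetilde c_{1,*}\Big)+\dots .
\]
On the right, Lemma~\ref{lem:Weyl2-low-k-H0} lets us replace $\Psi_2$ by $\Psi_{2,0}$ up to the negligible factor $e^{-R_k/2}$. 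We then expand $H_0^{(2)}(z)=1-\frac{2i}{\pi}\big(\log\frac z2+\gamma\big)+O(z^2|\log z|)$ at $z=kR_k$. The Wronskian is the combination $\sqrt{r}(A\log r+B)$ against $\sqrt r(C\log r+D)$, which equals $AD-BC$ and is independent of $r$. Here
\[
A=\widetilde c_{1,*},\qquad B=\widetilde c_{2,*},\qquad C=-\tfrac{2i}{\pi}\kappa,\qquad D=\kappa\Big(1-\tfrac{2i}{\pi}\big(\log\tfrac k2+\gamma\big)\Big),
\]
with $\kappa=e^{-i\pi/4}\sqrt{\pi/2}$. Multiplying by $\frac i2$ gives exactly \eqref{eq:a2-smallk}. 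The main term has size $|\log k|$ because $\widetilde c_{1,*}\neq0$, so $|a_2|\simeq\jap{\log k}$.

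\emph{Main obstacle.} The errors must be only $O(|\log k|^{-1})$. The power series tail contributes a relative error $(kR_k)^2\log R_k\lesssim|\log k|^{-3}$. The $O(z^2|\log z|)$ Hankel error, multiplied by the $O(\log R_k)$ size of $\Phi_2$, gives $O(|\log k|^{-3})$. The symbol errors $O(R_k^{-2}\log R_k)$ are also small. All of these are comfortably below $|\log k|^{-1}$. The remaining care is to verify that the derivative expansions of the Fou23-type series are consistent, which uses \eqref{Fou24}. This is where I expect the most bookkeeping.

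\emph{Large $k$.} We match at $r=1$, where $k\cdot1\to\infty$. Lemma~\ref{lem:Phi2-high-k-J0} gives $\Phi_2=\sqrt rJ_0(kr)+O(k^{-3/2})$ and $\Phi_2'=\partial_r(\sqrt rJ_0(kr))+O(k^{-1/2})$. Lemma~\ref{lem:Weyl2-high-k-H0} gives the analogous statement for $\Psi_2$ against $\Psi_{2,0}$. Since the model functions have sizes $k^{-1/2}$ and $k^{1/2}$, the Wronskian becomes
\[
W\big(\sqrt rJ_0,\overline{\Psi_{2,0}}\big)+O(k^{-1})=e^{-i\pi/4}\sqrt{\tfrac\pi2}\big(-\tfrac{2i}{\pi}\big)+O(k^{-1}).
\]
Multiplying by $\frac i2$ yields \eqref{eq:a2-largek}.

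\emph{Remaining estimates.} The symbol bounds \eqref{propSMa_dk} follow by applying $k\partial_k$ to the Wronskian representation and using the stated stability of all error terms under $k\partial_k$. In the small-$k$ regime, $k\partial_k$ of the main term is the constant $-\widetilde c_{1,*}\frac{e^{-i\pi/4}}{\sqrt{2\pi}}$, which is bounded. The two-sided bound \eqref{propSMa} on compact $k$-intervals follows from continuity and non-vanishing of $a_2$.
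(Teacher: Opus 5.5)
Your proposal follows the paper's proof essentially step for step. You use the same formula $a_2=\frac{i}{2}W(\Phi_2,\overline{\Psi_2})$. For $k\to0$ you match at $R_k=k^{-1}|\log k|^{-2}$ using Lemmas~\ref{lemFou2}, \ref{lem_p2_t2} and \ref{lem:Weyl2-low-k-H0}. For $k\to\infty$ you match at $r=1$ using Lemmas~\ref{lem:Phi2-high-k-J0} and \ref{lem:Weyl2-high-k-H0}. The symbol bounds are handled the same way.

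The one thing to fix is a sign in the log-Wronskian. In fact $W\bigl(\sqrt r(A\log r+B),\sqrt r(C\log r+D)\bigr)=BC-AD$, not $AD-BC$. You can check this with $A=0$, $B=1$, $C=1$, $D=0$, where $W(\sqrt r,\sqrt r\log r)=+1$. With your $AD-BC$, multiplying by $\frac i2$ gives the negative of \eqref{eq:a2-smallk}. The correct value is $\frac{i}{2}(BC-AD)=\frac{e^{-i\pi/4}}{\sqrt{2\pi}}\bigl[\widetilde c_{2,*}-\widetilde c_{1,*}(\log\frac k2+\gamma+i\frac\pi2)\bigr]$, which agrees with \eqref{eq:Wronskian-log-identity}.
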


\begin{proof}
Since $(\Psi_2,\overline{\Psi_2})$ is a fundamental system at infinity and $\Phi_2$ is real-valued, there exists a unique coefficient $a_2(k)$ such that \eqref{propSM0} holds.  Moreover,
\begin{align}\label{SMW_a2}
a_2(k) := \dfrac{W(\Phi_2(\cdot,k),\overline{\Psi_2(\cdot,k)})}{W(\Psi_2(\cdot,k),\overline{\Psi_2(\cdot,k)})}
=\dfrac{i}{2}W(\Phi_2(\cdot,k),\overline{\Psi_2(\cdot,k)}).
\end{align}

We first consider $k\to0$.  Let $R_k$ be as in \eqref{eq:Rk-H2-small}. Since $kR_k=|\log k|^{-2}\to0$ and $R_k\to\infty$, Lemma~\ref{lemFou2} and \eqref{Phi2_0} give
\begin{align}
\Phi_2(R_k,k)
&=\sqrt{R_k}\Bigl(\widetilde c_{1,*}\log R_k+\widetilde c_{2,*}\Bigr)+O\bigl(\sqrt{R_k}\,|\log k|^{-3}\bigr),\label{eq:Phi2-smallk-match1}
\\
\Phi_2'(R_k,k)
&=R_k^{-1/2}\Bigl(\frac12\widetilde c_{1,*}\log R_k+\widetilde c_{1,*}+\frac12\widetilde c_{2,*}\Bigr)
+O\bigl(R_k^{-1/2}|\log k|^{-3}\bigr).\label{eq:Phi2-smallk-match2}
\end{align}
On the other hand, Lemma~\ref{lem:Weyl2-low-k-H0} together with the standard expansion
\[
H_0^{(2)}(z)=1-\frac{2i}{\pi}\Bigl(\log\frac{z}{2}+\gamma\Bigr)+O(z^2|\log z|),
\qquad z\to0,
\]
yields
\begin{align}
\overline{\Psi_2(R_k,k)}
&=C_0\sqrt{R_k}\Bigl(1-\frac{2i}{\pi}\bigl(\log R_k+\beta_k\bigr)\Bigr)
+O\bigl(\sqrt{R_k}\,|\log k|^{-4}\log|\log k|\bigr),\label{eq:Psi2-smallk-match1}
\\
\partial_r\overline{\Psi_2(R_k,k)}
&=C_0R_k^{-1/2}\Bigl(\frac12-\frac{i}{\pi}\bigl(\log R_k+\beta_k\bigr)-\frac{2i}{\pi}\Bigr)
+O\bigl(R_k^{-1/2}|\log k|^{-4}\log|\log k|\bigr),\label{eq:Psi2-smallk-match2}
\end{align}
where
\[
C_0:=e^{-i\pi/4}\sqrt{\frac{\pi}{2}},
\qquad
\beta_k:=\log\frac{k}{2}+\gamma.
\]

Set
\[
A:=\widetilde c_{1,*}, \qquad B:=\widetilde c_{2,*}.
\]
A direct calculation shows that
\begin{equation}\label{eq:Wronskian-log-identity}
W\!\left(\sqrt r\,(A\log r+B),\,C_0\sqrt r\Bigl(1-\frac{2i}{\pi}(\log r+\beta)\Bigr)\right)
=
-i\sqrt{\frac{2}{\pi}}\,e^{-i\pi/4}\Bigl(B-A\Bigl(\beta+i\frac{\pi}{2}\Bigr)\Bigr).
\end{equation}
Applying \eqref{eq:Wronskian-log-identity} with $\beta=\beta_k$, and using \eqref{eq:Phi2-smallk-match1}--\eqref{eq:Psi2-smallk-match2}, we obtain
\[
W\bigl(\Phi_2(\cdot,k),\overline{\Psi_2(\cdot,k)}\bigr)
=
-i\sqrt{\frac{2}{\pi}}\,e^{-i\pi/4}
\Biggl[
\widetilde c_{2,*}-\widetilde c_{1,*}\Bigl(\log\frac{k}{2}+\gamma+i\frac{\pi}{2}\Bigr)
\Biggr]
+O(|\log k|^{-1}).
\]
Inserting this into \eqref{SMW_a2} gives \eqref{eq:a2-smallk}.

We next consider $k\to\infty$.  We evaluate the Wronskian at the fixed point $r=1$.  By Lemma~\ref{lem:Phi2-high-k-J0},
\[
\Phi_2(1,k)=\Phi_{2,0}(1,k)+O(k^{-3/2}),
\qquad
\Phi_2'(1,k)=\Phi_{2,0}'(1,k)+O(k^{-1/2}),
\]
while Lemma~\ref{lem:Weyl2-high-k-H0} yields
\[
\Psi_2(1,k)=\Psi_{2,0}(1,k)+O(k^{-3/2}),
\qquad
\Psi_2'(1,k)=\Psi_{2,0}'(1,k)+O(k^{-1/2}).
\]
Since
\[
|\Phi_{2,0}(1,k)|+|\Psi_{2,0}(1,k)|\lesssim k^{-1/2},
\qquad
|\Phi_{2,0}'(1,k)|+|\Psi_{2,0}'(1,k)|\lesssim k^{1/2},
\]
we get
\[
W\bigl(\Phi_2(\cdot,k),\overline{\Psi_2(\cdot,k)}\bigr)
=
W\bigl(\Phi_{2,0}(\cdot,k),\overline{\Psi_{2,0}(\cdot,k)}\bigr)+O(k^{-1}).
\]
Finally,
\[
W\bigl(\sqrt r\,J_0(kr),\sqrt r\,H_0^{(2)}(kr)\bigr)= -\frac{2i}{\pi},
\]
and therefore
\[
W\bigl(\Phi_{2,0}(\cdot,k),\overline{\Psi_{2,0}(\cdot,k)}\bigr)
=
e^{-i\pi/4}\sqrt{\frac{\pi}{2}}\,
W\bigl(\sqrt r\,J_0(kr),\sqrt r\,H_0^{(2)}(kr)\bigr)
=
-i\sqrt{\frac{2}{\pi}}\,e^{-i\pi/4}.
\]
Using \eqref{SMW_a2} once more, we arrive at \eqref{eq:a2-largek}.

The two sharper formulas imply \eqref{propSMa}.  Moreover, $a_2(k)$ cannot vanish: indeed, if $a_2(k_0)=0$ for some $k_0>0$, then \eqref{propSM0} would force $\Phi_2(\cdot,k_0)\equiv0$, which is impossible.  The derivative bounds \eqref{propSMa_dk} follow by differentiating the Volterra and power-series constructions above; on compact $k$-intervals they follow from smooth dependence on parameters.  Finally, \eqref{propSMrho} and the sharper asymptotics for $|a_2|^{-2}$ are immediate from the asymptotics for $a_2$.
\end{proof}
As in the case of $\Psi_1$ in Lemma~\ref{lem:Psi1toTheta1Phi1}, we can express $\Psi_2$ in terms of $\Phi_2$ and $\Theta_2$. This is the content of the next lemma.
\begin{lemma}\label{lem:Psi2toTheta1Phi2}
    The Weyl solution at infinity $\Psi_2$ satisfies
    \begin{equation}\label{eq:Psi2toPhi2Theta2}
        \Psi_2(r,k)=W(\Theta_2,\Psi_2)\Phi_2(r,k)+W(\Psi_2,\Phi_2)\Theta_2(r,k).
    \end{equation}
    Moreover, 
    \begin{equation}\label{eq:Psi2Theta2Wornskian2}
        |W(\Theta_2,\Psi_2)|\simeq \frac{\log(2+k)}{\jap{\log k}}\qquad |W(\Psi_2,\Phi_2)|\simeq \frac{\jap{\log k}}{\log(2+k)}.
    \end{equation}
\end{lemma}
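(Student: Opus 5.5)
The identity \eqref{eq:Psi2toPhi2Theta2} is pure linear algebra. Since $\Phi_2(\cdot,k)$ and $\Theta_2(\cdot,k)$ both solve $(\calH_2-\jap{k}^2)f=0$ and satisfy $W(\Theta_2,\Phi_2)=1$, we can write $\Psi_2=\alpha\Phi_2+\beta\Theta_2$. Taking Wronskians with $\Theta_2$ and with $\Phi_2$ gives $\alpha=W(\Theta_2,\Psi_2)$ and $\beta=W(\Psi_2,\Phi_2)$. The substance of the lemma is therefore the two size estimates \eqref{eq:Psi2Theta2Wornskian2}.

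\emph{The bound for $W(\Psi_2,\Phi_2)$.} By \eqref{SMW_a2}, $W(\Phi_2,\overline{\Psi_2})=-2ia_2(k)$. Because $\Phi_2$ is real, conjugating gives $|W(\Psi_2,\Phi_2)|=2|a_2(k)|$. Proposition~\ref{propSM2} then yields $|a_2(k)|\simeq\jap{\log k}$ for $k\lesssim1$ and $|a_2(k)|\simeq1$ for $k\gtrsim1$. This is exactly $\jap{\log k}/\log(2+k)$ up to constants, since $\log(2+k)\simeq\jap{\log k}$ for large $k$ and $\log(2+k)\simeq1$ for small $k$.

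\emph{The bound for $W(\Theta_2,\Psi_2)$.} Following the proof of Lemma~\ref{lem:Psi1toTheta1Phi1}, I evaluate this Wronskian at $r=1/k$. From \eqref{eq:Theta2varpar1} we have $\Theta_2(1/k,k)=0$ and $\Theta_2'(1/k,k)=\Phi_2(1/k,k)^{-1}$. Hence
\[
|W(\Theta_2,\Psi_2)|=\frac{|\Psi_2(1/k,k)|}{|\Phi_2(1/k,k)|},
\]
and it remains to size numerator and denominator in the two regimes.

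\emph{Case $k\lesssim1$.} Since $1/k\ge R_k$, Lemma~\ref{lem:Weyl2-low-k-H0} gives $\Psi_2(1/k,k)=e^{i\pi/4}\sqrt{\pi/(2k)}\,H_0^{(1)}(1)+O(e^{-1/(2k)}k^{-1/2})$. Because $H_0^{(1)}(1)\neq0$, this gives $|\Psi_2(1/k,k)|\simeq k^{-1/2}$. For the denominator, Lemma~\ref{lemFou2} with $r=1/k$ (so $rk=1$) gives
\[
\Phi_2(1/k,k)=k^{-1/2}\Bigl(\widetilde c_{1,*}|\log k|+O(1)\Bigr),
\]
using \eqref{Phi2_0} and the bounds \eqref{Fou22} on the series $\sum_j\Phi_{2,j}(1/k)$. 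The main obstacle is here: \eqref{Fou22} only bounds each $\Phi_{2,j}$ by $C^j\log(1/k)/j!$, which is of the same order as the leading term. So I need the sharper fact that the series actually sums to $k^{-1/2}\bigl(\widetilde c_{1,*}\log(1/k)\,c+O(1)\bigr)$ with $c\neq0$.

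I would get this by iterating the recursion \eqref{eq:Fou2-rec-int} with the leading large-$r$ kernel $s\log(r/s)$. This shows that $f_j(r)=\widetilde c_{1,*}\,r^{2j}\log r\,\bigl((-1)^j/(4^j(j!)^2)\bigr)+O(r^{2j})$, i.e. the Taylor coefficients of $J_0$. The sum at $rk=1$ is then $\widetilde c_{1,*}\log(1/k)\,J_0(1)+O(1)$ up to sign, and $J_0(1)\neq0$. If this cancellation check proved too delicate, the fallback is to evaluate the Wronskian at $R_k$ instead, using \eqref{eq:Phi2-smallk-match1}, and to integrate $\Phi_2^{-2}$ from $R_k$ to $1/k$ via the same series. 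Either way, $|W(\Theta_2,\Psi_2)|\simeq1/|\log k|$, as claimed.

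\emph{Case $k\gtrsim1$.} Now $1/k\le1$. Lemma~\ref{lem:Phi2-high-k-J0} gives $\Phi_2(1/k,k)=k^{-1/2}J_0(1)+O(k^{-3/2})\simeq k^{-1/2}$. For $\Psi_2$ at $r=1/k<1$, Lemma~\ref{lem:Weyl2-high-k-H0} does not apply directly, since it is stated for $r\ge1$. I would therefore continue the Volterra equation \eqref{eq:Psi2-low-k-Volterra} down to $r=1/k$. The same kernel bounds there give $|G^+_{k,0}(r,s)|\lesssim k^{-1}$ up to the logarithm, and $W_2$ is bounded, so $\Psi_2(1/k,k)=\Psi_{2,0}(1/k,k)+O(k^{-3/2}\log k)$. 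Hence $|\Psi_2(1/k,k)|\simeq k^{-1/2}|H_0^{(1)}(1)|$, and the ratio is $\simeq1$, matching $\log(2+k)/\jap{\log k}\simeq1$.

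Finally, intermediate $k$ are handled by continuity. The only additional point needed is that $\Phi_2(1/k,k)$ and $\Psi_2(1/k,k)$ do not vanish. $\Psi_2$ never vanishes because $W(\Psi_2,\overline{\Psi_2})\neq0$. For $\Phi_2$, nonvanishing is implicit in the definition \eqref{eq:Theta2varpar1} of $\Theta_2$.
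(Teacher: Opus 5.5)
Your proposal is correct and follows the paper's own argument: the decomposition by linear algebra, $|W(\Psi_2,\Phi_2)|=2|a_2(k)|$ via Proposition~\ref{propSM2}, and $|W(\Theta_2,\Psi_2)|=|\Psi_2(1/k,k)|/|\Phi_2(1/k,k)|$ from \eqref{eq:Theta2varpar1}. You also usefully make explicit the step the paper leaves implicit, namely that for small $k$ one has $\Phi_2(1/k,k)=k^{-1/2}\bigl(\widetilde c_{1,*}J_0(1)\log(1/k)+O(1)\bigr)$ with $J_0(1)\neq0$; your first display should carry the factor $J_0(1)$, the induction remainders should be tracked as $O(C^j r^{2j}/j!)$ so that they sum, and the sign is actually $\Theta_2'(1/k,k)=-\Phi_2(1/k,k)^{-1}$, which is harmless once absolute values are taken.
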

\begin{proof}
    The identity \eqref{eq:Psi2toPhi2Theta2} follows immediately from the facts that $\Phi_2$ and $\Theta_2$ form a fundamental system for $\calH_2-\jap{k}^2$ and  $W(\Theta_2,\Phi_2)=1$. The estimates \eqref{eq:Psi2Theta2Wornskian2} on the Wronskians are proved as in Proposition~\ref{propSM2} and using also the relation~\eqref{eq:Theta2varpar1}. Indeed the new estimate is for $W(\Theta_2,\Psi_2)$ where~\eqref{eq:Theta2varpar1} implies $|W(\Theta_2,\Psi_2)(k)|=|\Psi_2(k^{-1},k)||\Phi_2(k^{-1},k)|^{-1}$. The desired estimate follows from this relation.  
\end{proof}

\subsection{Distorted Fourier transform for the matrix operator \texorpdfstring{$\mathbf{H}$}{H}}\label{sec:spectral_fourier_bfH}

The purpose of this section is to pull back the distorted Fourier transforms for $\mathcal{H}_1$ and $\mathcal{H}_2$ in order to derive a matrix distorted Fourier transform for $\mathbf{H}$. Recall that the conjugated operators are
\begin{equation}
    \begin{aligned}
        \bfH &:= r^{\frac12} \cdot \bfM \cdot r^{-\frac12}, \quad & & \quad \bfL= \mathcal{B}^*\mathcal{B}, \\ 
        \calH &:= r^{\frac12} \cdot \calL \cdot r^{-\frac12}, \quad & & \quad  \calL= \mathcal{B}\mathcal{B}^*,
    \end{aligned}
\end{equation}
and that $\bfH$ is a $4\times 4$ matrix Schr\"odinger operator, while $\bfL$, $\calH$, and $\calL$ are all $2\times 2$ operators. Since $\bfH$ is self-adjoint, Stone's formula gives (formally)
\begin{equation}\label{eq:StonebfHlambda1}
 e^{it\bfH} P_c \bmf = \frac{1}{2\pi i} \int_0^\infty e^{it(1+\lambda)} \Bigl( \bigl( \bfH - (1+\lambda+i0) \bigr)^{-1} - \bigl( \bfH - (1+\lambda-i0) \bigr)^{-1} \Bigr) P_c \bmf \, \ud \lambda.
\end{equation}
Below we determine the integral kernel of the resolvent of $\bfH$ and compute the jump of the resolvent across the continuous spectrum. This leads to an oscillatory integral representation of the evolution $e^{it\bfH} P_c$ in terms of the distorted Fourier transform associated with $\bfH$.

At least formally, we expect that this representation can be obtained from the corresponding representations for the scalar operators $\calH_j$, $j=1,2$, in view of the intertwining relation
\begin{equation} \label{equ:relation_bfH_calH}
    \bigl( r^{\frac12} \cdot \calB \cdot r^{-\frac12} \bigr) \, \bfH  = \calH \, \bigl( r^{\frac12} \cdot \calB \cdot r^{-\frac12} \bigr).
\end{equation}
More precisely, suppose the distorted Fourier transform and its inverse associated with $\calH_j$ are
\begin{align*}
\begin{split}
\wtilcalF_j[f](\lambda) = \int_0^\infty \phi_j(r,\lambda) f(r)\, \ud r
\qquad \text{and} \qquad
\wtilcalF_j^{-1}[g](r) = \int_0^\infty \phi_j(r,\lambda) g(\lambda) \, \rho_j(\lambda) \, \ud \lambda,
\end{split}
\end{align*}
where
\[
  \rho_j(\lambda)=
  \frac{1}{4\pi|a_j(\sqrt\lambda)|^2},
  \qquad
  \phi_j(r,\lambda)=\Phi_j(r,\sqrt\lambda),
\]
and
\[
  \mathcal H_j\phi_j(\cdot,\lambda)
  =(1+\lambda)\phi_j(\cdot,\lambda),
  \qquad \lambda\geq0.
\]
Using \eqref{equ:relation_bfH_calH} and the representations of the scalar evolutions $e^{it\calH_j}$ in terms of the corresponding scalar distorted Fourier transforms (here 
$P_c^{\calH_j}$ denotes the projection onto the continuous spectrum of $\calH_j$),
\begin{equation}
    e^{it\calH_j} P_c^{\calH_j} f = \int_0^\infty e^{it(1+\lambda)} \phi_j(r,\lambda) \wtilcalF_j[f](\lambda) \rho_j(\lambda) \, \ud \lambda,
\end{equation}
we formally expect that the evolution $e^{it\bfH} P_c^\bfH \bmf$, where $P_c^\bfH$ denotes the projection to the continuous spectrum of $\bfH$, can be represented as
\begin{align*}
\begin{split}
 \small{e^{it\bfH} P_c^\bfH \bmf = \bigl( r^{\frac12} \cdot \calB^{-1} \cdot r^{-\frac{1}{2}} \bigr) \int_0^\infty e^{it(1+\lambda)} \pmat{\phi_1(r,\lambda) \rho_1(\lambda) \wtilcalF_1 & 0 \\ 0 & \phi_2(r,\lambda) \rho_2(\lambda) \wtilcalF_2} \bigl( r^{\frac12} \cdot \calB \cdot r^{-\frac12} \bigr) P_c^\bfH \bmf \, \ud \lambda.}
\end{split}
\end{align*}
Instead of justifying and unwinding this expression directly, we develop the full setup of the distorted Fourier transform for the matrix operator $\bfH$. In doing so we will still borrow from the corresponding analysis for $\calH_j$.

\subsubsection{Weyl solutions near zero for $\bfH$} \label{subsec:bfH_weyl_solutions_near_zero}

Using the identity 
\begin{equation}
    \bfH \bigl( r^{\frac12} \cdot \calB^\ast \cdot r^{-\frac12} \bigr) = \bigl( r^{\frac12} \cdot \calB^\ast \cdot r^{-\frac12} \bigr) \calH,
\end{equation}
we obtain from the Weyl solutions $\Phi_j(r,z)$, $1 \leq j \leq 2$, for the scalar operators $\calH_j$ that two Weyl solutions for the operator $\bfH$ are given by \begin{equation}\begin{aligned}\label{green7}
\upi_1&:=(\upi_{1,1},\upi_{1,2})^t=r^{1/2}\mathcal{B}^*\big( r^{-1/2}\Phi_1,\, 0\big)^t=\big(-\partial_r\Phi_1-\tfrac{1}{2r}\Phi_1-b\Phi_1,\, U\Phi_1\big)^t,
\\ \uptta_1&:=(\uptta_{1,1},\uptta_{1,2})^t=r^{1/2}\mathcal{B}^*\big( r^{-1/2}\Theta_1,\, 0\big)^t=\big(-\partial_r\Theta_1-\tfrac{1}{2r}\Theta_1-b\Theta_1,\, U\Theta_1\big)^t,
\\ \upsi_1&:=(\upsi_{1,1},\upsi_{1,2})^t=r^{1/2}\mathcal{B}^*\big( r^{-1/2}\Psi_1,\, 0\big)^t=\big(-\partial_r\Psi_1-\tfrac{1}{2r}\Psi_1-b\Psi_1,\, U\Psi_1\big)^t,
\\ 
\upi_2&:=(\upi_{2,1},\upi_{2,2})^t=r^{1/2}\mathcal{B}^*\big(0, \, r^{-1/2}\Phi_2\big)^t=\big(U\Phi_2,\, -\partial_r\Phi_2+\tfrac{1}{2r}\Phi_2\big)^t,
\\
\uptta_2&:=(\uptta_{2,1},\uptta_{2,2})^t=r^{1/2}\mathcal{B}^*\big(0, \, r^{-1/2}\Theta_2\big)^t=\big(U\Theta_2,\, -\partial_r\Theta_2+\tfrac{1}{2r}\Theta_2\big)^t,
\\ 
\upsi_2&:=(\upsi_{2,1},\upsi_{2,2})^t=r^{1/2}\mathcal{B}^*\big(0, \, r^{-1/2}\Psi_2\big)^t=\big(U\Psi_2,\, -\partial_r\Psi_2+\tfrac{1}{2r}\Psi_2\big)^t.
\end{aligned}
\end{equation}
They satisfy
\[
\bfH(\upi_{i,1},\upi_{i,2})^t=(1+k^2)(\upi_{i,1},\upi_{i,2})^t, \quad \hbox{and}\quad \bfH(\upsi_{i,1},\upsi_{i,2})^t=(1+k^2)(\upsi_{i,1},\upsi_{i,2})^t.
\]
We also define the $2\times2$ matrix solutions \begin{align*}
\upi:=\big(\upi_1 \, \, \, \upi_2\big), \qquad \uptta:=\big(\uptta_1 \, \, \, \uptta_2\big), \qquad  \upsi_+:=\big(\upsi_1 \, \, \, \upsi_2\big), \qquad \hbox{and}\qquad \upsi_-:=\big(\overline{\upsi}_1 \, \, \, \overline{\upsi}_2\big).
\end{align*}
From the asymptotics for $\Phi_j(r,z)$, $1 \leq j \leq 2$, and the asymptotics for $U(r)$, $a_\theta(r)$, we obtain that near $r=0$,
\begin{equation}
    \begin{aligned}
    \upi_1(r,z) &= \begin{pmatrix} -4 r^{\frac32} \bigl( 1 + \calO(|z|^2 r^{2}) \bigr) \\ d_1 r^{\frac72} \bigl( 1 + \calO((1+|z|^2) r^{2}) \bigr)  \end{pmatrix}, & &  \uptta_1(r,z) = \begin{pmatrix} \calO\bigl( (1+|z|^2) r^{-\frac12} \bigr) \\ \frac14 d_1 r^{-\frac12} \bigl( 1 + \calO\bigl( (1+|z|^2) r^2 \bigr) \bigr) \end{pmatrix}
    \\ \upi_2(r,z) &= \begin{pmatrix} d_1 r^{\frac32} \bigl( 1 + \calO\bigl( (1+|z|^2) r^2 \bigr) \bigr) \\ \calO\bigl(|z|^2 r^{\frac32} \bigr) \end{pmatrix}, & & 
    \uptta_2(r,z) = \begin{pmatrix} -d_1 r^{\frac32} \log(r) \bigl( 1 + \calO\bigl( (1+|z|^2) r^2 \bigr) \bigr)  \\ -r^{-\frac12} \bigl( 1 + \calO( r^2 |z|^2 ) \bigr) \end{pmatrix}.
    \end{aligned}
\end{equation}
Note that $\upi$ is the subordinate solution (near $r=0$) to $\bfH \upi = (1+z^2) \upi$ with $\upi(\cdot,z) \in L^2$ near $r=0$. $\upi$ is unique up to invertible linear combinations of the columns. 
Instead, $\uptta_1(\cdot,z)$, $\uptta_2(\cdot,z)$ are not in $L^2$ near $r=0$.
Importantly, by considering the asymptotics as $r \to 0$, it follows that $\upi_1(r,z)$, $\upi_2(r,z)$, $\uptta_1(r,z)$, $\uptta_2(r,z)$ are linearly independent vector-valued functions on the half-line.
By the same reasoning as above, we obtain the Weyl solution near infinity for $\bfH$ (unique up to multiplication by an invertible matrix) 
\begin{equation} \label{equ:asymptotics_Psi_pm}
    \upsi_{\pm}(r,z) \sim \begin{pmatrix} 
                            (\mp i) z^{\frac12} e^{\pm i zr} & z^{-\frac12} e^{\pm i zr} \\
                            z^{-\frac12} e^{\pm i zr} & (\mp i) z^{\frac12} e^{\pm i zr}
                         \end{pmatrix}.
\end{equation}  

\subsubsection{Green's kernel}

In order to solve $(\bfH -1-z^2) \bmf = \bmg$ for $\pm \Im(z) > 0$, we define the Green's functions
\begin{equation}\label{eq:G+}
 \calG_\pm(r,s; z) := \upsi_\pm(r,z) \bfS_{\pm}(s,z) \mathds{1}_{[0 < s \leq r]} + \upi(r,z) \bfT_{\pm}(s,z) \mathds{1}_{[r \leq s < \infty]}.
\end{equation}
Here the matrices $\bfS_{\pm}(r,z)$ and $\bfT_{\pm}(r,z)$ are required to satisfy
\begin{equation}\nonumber
 \begin{aligned}
  \upsi_\pm(r,z) \bfS_{\pm}(r,z) - \upi(r,z) \bfT_{\pm}(r,z) &= 0, \\
  -(\partial_r \upsi_\pm)(r,z) \bfS_{\pm}(r,z) + (\partial_r \upi)(r,z) \bfT_{\pm}(r,z) &= \Id.
 \end{aligned}
\end{equation}
Then a solution to $(\bfH -1-z^2) \bmf = \bmg$ for $\pm \Im(z) > 0$ is given by
\begin{equation}\nonumber
 \bmf(r) := \int_0^\infty \calG_\pm(r,s;z) \bmg(s) \, \ud s.
\end{equation}
Equivalently, $\bfS_\pm(r,z)$ and $\bfT_\pm(r,z)$ satisfy
\begin{equation} \label{equ:4times4system_Green_ST}
 \begin{aligned}
  \begin{bmatrix} \upsi_\pm(r,z) & - \upi(r,z) \\ (\partial_r \upsi_\pm)(r,z) & -(\partial_r \upi)(r,z) \end{bmatrix} \begin{bmatrix} \bfS_{\pm}(r,z) \\ \bfT_{\pm}(r,z) \end{bmatrix} = \begin{bmatrix} 0 \\ -\Id \end{bmatrix}.
 \end{aligned}
\end{equation}
A decisive question is the invertibility of the $4\times4$ matrix on the left-hand side of \eqref{equ:4times4system_Green_ST}.

\subsubsection{Matrix Wronskians}
Define the (real) vector inner product
\begin{equation}\nonumber
 \langle \bm{v}, \bm{w} \rangle := v_1 w_1 + v_2 w_2, \quad \bm{v} = \begin{bmatrix} v_1 \\ v_2 \end{bmatrix}, \quad \bm{w} = \begin{bmatrix} w_1 \\ w_2 \end{bmatrix}.
\end{equation}
We introduce the Wronskian
\begin{equation}\nonumber
 W[\bma, \bmb] := \langle \bm{a}, \bm{b}' \rangle - \langle \bm{a}', \bm{b} \rangle, \quad \bma(r) = \begin{bmatrix} a_1(r) \\ a_2(r) \end{bmatrix}, \quad \bmb(r) = \begin{bmatrix} b_1(r) \\ b_2(r) \end{bmatrix}.
\end{equation}
We also introduce the following shorthand notation for our linearized operator:
\begin{equation}\nonumber
 \begin{aligned}
  \bfH =  (-\partial_r^2) \mathbb{I} + \calV(r), \quad \mathbb{I} := \begin{bmatrix} 1 & 0 \\ 0 & 1 \end{bmatrix}, \quad \calV(r) := (1+\frac{3}{4r^2})\mathbb{I}+\bfV_0.
 \end{aligned}
\end{equation}

\begin{lemma} \label{lem:constant_wronskian_vectorial}
Let $z \in \bbC$. Suppose $(\bfH -1-z^2) \bma = (\bfH -1-z^2) \bmb = 0$. Then
\begin{equation}\nonumber
  \frac{\ud}{\ud r} W[\bma, \bmb] = 0.
\end{equation}
\end{lemma}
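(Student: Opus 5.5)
The plan is to differentiate the Wronskian directly and use the equation together with the symmetry of the potential matrix. By definition,
\[
W[\bma,\bmb] = \langle \bma, \bmb'\rangle - \langle \bma', \bmb\rangle,
\]
so the product rule gives
\[
\frac{\ud}{\ud r} W[\bma,\bmb] = \langle \bma', \bmb'\rangle + \langle \bma, \bmb''\rangle - \langle \bma'', \bmb\rangle - \langle \bma', \bmb'\rangle = \langle \bma, \bmb''\rangle - \langle \bma'', \bmb\rangle.
\]
The mixed first-order terms cancel because the pairing $\langle\cdot,\cdot\rangle$ is the real bilinear (not sesquilinear) form, hence symmetric. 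This is also what allows complex $z$ and complex-valued solutions.

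Next we use the equations. Since $\bfH = -\partial_r^2\,\mathbb{I} + \calV(r)$, the hypotheses $(\bfH-1-z^2)\bma = (\bfH-1-z^2)\bmb = 0$ give
\[
\bma'' = \bigl(\calV - (1+z^2)\mathbb{I}\bigr)\bma, \qquad \bmb'' = \bigl(\calV - (1+z^2)\mathbb{I}\bigr)\bmb.
\]
Substituting these, the $(1+z^2)$ contributions cancel because they are scalar multiples of the identity. We are left with
\[
\frac{\ud}{\ud r} W[\bma,\bmb] = \langle \bma, \calV\bmb\rangle - \langle \calV\bma, \bmb\rangle.
\]

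It remains to check that $\calV(r)$ is a real symmetric matrix for each $r$, so that $\langle \bma, \calV\bmb\rangle = \langle \calV^t\bma, \bmb\rangle = \langle \calV\bma,\bmb\rangle$ and the derivative vanishes. This is the only point needing care. From \eqref{eq:Lzerodefintro1}, $\bfV_0$ has equal real off-diagonal entries $-\tfrac{2(1-a_\theta)U}{r}$, and $(1+\tfrac{3}{4r^2})\mathbb{I}$ is diagonal. So $\calV$ is symmetric, as reflects the self-adjointness of $\bfL$ with real coefficients.

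One should also confirm that the conjugation by $r^{1/2}$ (under which the first-order part of the radial Laplacian is replaced by the scalar $-\tfrac{1}{4r^2}$ shift) leaves no first-order matrix terms. This holds because the coupling $-2U'$ in $\bfL$ is a zeroth-order multiplication operator, so $\bfH$ really has the stated form $-\partial_r^2\,\mathbb{I}+\calV$. For the $4\times 4$ operator the identical argument applies blockwise. I expect no genuine obstacle; the computation is valid pointwise on $(0,\infty)$, where all coefficients are smooth.
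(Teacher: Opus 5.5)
Your argument is correct and is the paper's own: the paper says only that the claim follows by direct computation using that $\calV(r)$ is symmetric, and you have written that computation out in full. One small correction to your justification: the $\langle \bma',\bmb'\rangle$ terms cancel because they are literally the same term, so no symmetry of the pairing is needed there, and what the bilinearity of $\langle\cdot,\cdot\rangle$ actually gives you is the cancellation of the $(1+z^2)$ contributions for nonreal $z$, which would fail for a sesquilinear pairing.
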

\begin{proof}
Using that $\calV(r)$ is a symmetric matrix, the claim follows by direct computation.
\end{proof}

Next, we introduce the matrix Wronskian
\begin{equation}\nonumber
 \begin{aligned}
  \calW[A,B] := A^t B' - A'^t B.
 \end{aligned}
\end{equation}
for $2\times2$ matrices $A(r)$ and $B(r)$.

\begin{lemma} \label{lem:constant_wronskian_matrix}
 Let $z \in \bbC$. Suppose $(\bfH -1-z^2)A = (\bfH -1-z^2)B = 0$ for  $2\times2$ matrices $A(r)$ and $B(r)$. Then we have
 \begin{equation}\nonumber
  \frac{\ud}{\ud r} \calW[A,B] = 0.
 \end{equation}
\end{lemma}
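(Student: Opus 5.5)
The plan is to differentiate the matrix Wronskian directly and use the equations satisfied by $A$ and $B$, exactly as in Lemma~\ref{lem:constant_wronskian_vectorial}. The only structural inputs are that $\bfH=(-\partial_r^2)\mathbb{I}+\calV(r)$ with $\calV(r)$ a \emph{real symmetric} $2\times2$ matrix, and that the spectral parameter $1+z^2$ enters as a scalar multiple of the identity.

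\emph{Step 1 (the ODE).} The hypothesis $(\bfH-1-z^2)A=0$ means $A''=(\calV-(1+z^2)\mathbb{I})A$, applied column by column. Likewise $B''=(\calV-(1+z^2)\mathbb{I})B$.

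\emph{Step 2 (differentiate).} By the product rule,
\[
\frac{\ud}{\ud r}\calW[A,B]=A'^tB'+A^tB''-A''^tB-A'^tB'=A^tB''-(A'')^tB .
\]

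\emph{Step 3 (substitute).} Insert the two equations from Step 1 and use $(MA)^t=A^tM^t$:
\[
A^t\bigl(\calV-(1+z^2)\mathbb{I}\bigr)B-A^t\bigl(\calV-(1+z^2)\mathbb{I}\bigr)^tB .
\]
This vanishes because $\calV^t=\calV$ and $\mathbb{I}^t=\mathbb{I}$. Note that transposition, not conjugate transposition, is used, so no reality of $z$ is needed.

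\emph{Step 4 (check symmetry of $\calV$).} One only has to confirm that $\calV=(1+\tfrac{3}{4r^2})\mathbb{I}+\bfV_0$ is symmetric. This is visible from \eqref{eq:Lzerodefintro1}, since the off-diagonal entries of $\bfV_0$ are both $-\tfrac{2(1-a_\theta)U}{r}$. The same holds after the conjugation by $r^{1/2}$ and in the block form of $\bfM$.

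There is no real obstacle here. The one point requiring care is that the convention $A^tB'-A'^tB$ uses the plain transpose, which is precisely what makes Step 3 close for complex $z$. Equivalently, the $(i,j)$ entry of $\calW[A,B]$ is the vector Wronskian $W[\text{col}_i A,\text{col}_j B]$, so the claim also follows entrywise from Lemma~\ref{lem:constant_wronskian_vectorial}.
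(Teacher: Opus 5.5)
Your proof is correct and is essentially the paper's proof. The paper observes that the $(i,j)$ entry of $\calW[A,B]$ is the vector Wronskian of the $i$-th column of $A$ with the $j$-th column of $B$, and then invokes Lemma~\ref{lem:constant_wronskian_vectorial}, whose proof is exactly your Steps 2--3: it uses only the symmetry of $\calV(r)$ and the plain (non-conjugate) transpose, so complex $z$ is allowed.
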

\begin{proof}
 Denote by $\bma_1(r)$ and $\bma_2(r)$ the columns of $A(r)$, and by $\bmb_1(r)$ and $\bmb_2(r)$ the columns of $B(r)$. Then we have
 \begin{equation}\nonumber
    \calW[A,B] = \begin{bmatrix} W[\bma_1, \bmb_1] & W[\bma_1, \bmb_2] \\ W[\bma_2, \bmb_1] & W[\bma_2, \bmb_2] \end{bmatrix},
 \end{equation}
 and the assertion is a direct consequence of Lemma~\ref{lem:constant_wronskian_vectorial}.
\end{proof}

Next, we point out the following inversion identity
\begin{lemma} \label{lem:inverse_4times4}
 Let $A(r)$ and $B(r)$ be two ($r$-dependent) $2\times2$ matrices. Suppose that $\calW[A,A] = \calW[B,B] = 0$. Moreover, suppose that $D := \calW[A,B]$ is invertible. Then we have
 \begin{equation}\nonumber
 \begin{aligned}
  \begin{bmatrix} A & B \\ A' & B' \end{bmatrix}^{-1} = \begin{bmatrix} D^{-t} & 0 \\ 0 & D^{-1} \end{bmatrix} \begin{bmatrix} 0 & \mathbb{I} \\- \mathbb{I} & 0 \end{bmatrix} \begin{bmatrix} A^t & A'^t \\ B^t & B'^t \end{bmatrix} \begin{bmatrix} 0 & -\mathbb{I} \\ \mathbb{I} & 0 \end{bmatrix}.
 \end{aligned}
\end{equation}
\end{lemma}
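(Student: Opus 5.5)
The plan is to verify the identity directly: multiply the proposed inverse by the block matrix $\begin{bmatrix} A & B \\ A' & B' \end{bmatrix}$ and check that the product is the $4\times4$ identity. Since both matrices are square, a one-sided inverse is automatically two-sided, so one multiplication suffices.

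Concretely, I would multiply on the right and compute from right to left.

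First,
\[
\begin{bmatrix} 0 & -\mathbb{I} \\ \mathbb{I} & 0 \end{bmatrix}\begin{bmatrix} A & B \\ A' & B' \end{bmatrix} = \begin{bmatrix} -A' & -B' \\ A & B \end{bmatrix}.
\]

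Multiplying this on the left by $\begin{bmatrix} A^t & A'^t \\ B^t & B'^t \end{bmatrix}$ gives
\[
\begin{bmatrix} -A^tA' + A'^tA & -A^tB' + A'^tB \\ -B^tA' + B'^tA & -B^tB' + B'^tB \end{bmatrix} = \begin{bmatrix} -\calW[A,A] & -\calW[A,B] \\ -\calW[B,A] & -\calW[B,B] \end{bmatrix}.
\]

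By hypothesis, $\calW[A,A]=\calW[B,B]=0$ and $\calW[A,B]=D$. It remains to identify the off-diagonal entry $\calW[B,A]$. Transposing the definition gives
\[
\calW[A,B]^t = B'^tA - B^tA' = -\calW[B,A],
\]
so $\calW[B,A] = -D^t$. The product therefore equals $\begin{bmatrix} 0 & -D \\ D^t & 0 \end{bmatrix}$.

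Next, applying $\begin{bmatrix} 0 & \mathbb{I} \\ -\mathbb{I} & 0 \end{bmatrix}$ on the left gives $\begin{bmatrix} D^t & 0 \\ 0 & D \end{bmatrix}$. Finally, multiplying by $\mathrm{diag}(D^{-t}, D^{-1})$ yields the identity. Invertibility of $D^t$ follows from that of $D$.

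There is no real obstacle here. The only point needing care is the sign and transpose bookkeeping, in particular the relation $\calW[B,A] = -\calW[A,B]^t$. The argument is purely algebraic and pointwise in $r$, so it needs neither the $r$-independence of the Wronskians nor the differential equation itself. The hypotheses $\calW[A,A]=\calW[B,B]=0$ are used only to kill the diagonal blocks.
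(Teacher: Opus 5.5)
Your proposal is correct and is essentially the paper's proof. The paper likewise multiplies the three right-hand factors against $\begin{bmatrix} A & B \\ A' & B' \end{bmatrix}$, uses $\calW[A,A]=\calW[B,B]=0$ to obtain $\mathrm{diag}(-\calW[B,A],\calW[A,B])$, and concludes from $D^t=-\calW[B,A]$. Your explicit intermediate steps and the remark that a one-sided inverse of a square matrix is two-sided simply spell out what the paper leaves implicit.
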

\begin{proof}
 Under the assumption that $\calW[A,A] = \calW[B,B] = 0$, we have that
 \begin{equation} \label{equ:4times4_inversion_observation}
 \begin{aligned}
 \begin{bmatrix} 0 & \mathbb{I} \\ -\mathbb{I} & 0 \end{bmatrix} \begin{bmatrix} A^t & A'^t \\ B^t & B'^t \end{bmatrix} \begin{bmatrix} 0 & -\mathbb{I} \\ \mathbb{I} & 0 \end{bmatrix} \begin{bmatrix} A & B \\ A' & B' \end{bmatrix} = \begin{bmatrix} - \calW[B,A] & 0 \\ 0 & \calW[A,B] \end{bmatrix}.
 \end{aligned}
 \end{equation}
 Noting that $D^t = -\calW[B,A]$, the assertion follows from the preceding identity \eqref{equ:4times4_inversion_observation}.
\end{proof}

Let us now return to the question of the invertibility of the $4\times4$ matrix on the left-hand side of \eqref{equ:4times4system_Green_ST}.
Here we have $(\bfH -1-z^2) \upsi_\pm(\cdot,z) = (\bfH -1-z^2) \upi(\cdot,z) = 0$.
Thus, by Lemma~\ref{lem:constant_wronskian_matrix} the matrix Wronskians
\begin{equation}
 \calW[\upsi_\pm(\cdot,z), \upsi_\pm(\cdot, z)], \quad \calW[\upi(\cdot,z), \upi(\cdot, z)], \quad \calW[\upsi_\pm(\cdot,z), \upi(\cdot, z)]
\end{equation}
are constant in $r$. Therefore, based on the asymptotics for $\upsi_\pm(r,z)$ and $\upi(r,z)$ from the previous sections, we have
\begin{align*}
\begin{split}
 \calW[\upsi_+(\cdot,z), \upsi_+(\cdot, z)]= \calW[\upsi_-(\cdot,z), \upsi_-(\cdot, z)]= \calW[\upi(\cdot,z), \upi(\cdot, z)]=0.
\end{split}
\end{align*}
Let
\begin{align}\label{green5}
\begin{split}
D_{\pm}(z) := \calW[\upsi_\pm(\cdot,z),-\upi(\cdot,z)]=-\calW[\upsi_\pm(\cdot,z), \upi(\cdot,z)].
\end{split}
\end{align}
The invertibility of the matrix $D_\pm(z)$ will be established in Lemma~\ref{green9} below. Assuming it for the moment, it follows from \eqref{equ:4times4system_Green_ST} and Lemma~\ref{lem:inverse_4times4} that
\begin{align*}
    \begin{split}
        \bfS_{\pm}(r,z) = - D_{\pm}^{-t}(z) \upi^t(r,z), \qquad \bfT_{\pm}(z) = - D_{\pm}^{-1}(z) \upsi_\pm^t(r,z).
    \end{split}
\end{align*}

\subsubsection{Solving for the Green's kernel}
By the preceding considerations, for $\pm \Im(z) > 0$ the Green's function for $\bfH-1-z^2$ is given by
\begin{equation}\label{eq:calGpmsummary1}
 \calG_\pm(r,s; z) := - \upsi_\pm(r,z) D_\pm^{-t}(z)\upi^{t}(s,z) \mathds{1}_{[0 < s \leq r]} - \upi(r,z) D_{\pm}^{-1}(z)\upsi_\pm^t(s,z) \mathds{1}_{[r \leq s < \infty]}.
\end{equation}
In what follows, for $k \geq 0$ we use the shorthand notation
\begin{equation}
    \upsi_\pm(\cdot,k) \equiv \upsi_\pm(\cdot, k \pm i 0), \quad \upi(\cdot,k) \equiv \upi(\cdot, k \pm i 0).
\end{equation}
Thus, for $k \geq 0$ we obtain
\begin{equation} \label{equ:calG_jump_definition}
\begin{aligned}
\calG(r,s,k) &:= \calG_{+}(r,s,k+i0)-\calG_{-}(r,s,k-i0)\\
    &= - \Big(\upsi_+(r,k) D_{+}^{-t}(k) \upi^t(s,k) - \upsi_-(r,k) D_{-}^{-t}(k) \upi^t(s,k) \Big) \mathds{1}_{[0 < s \leq r]} 
    \\
    &\quad \, - \Big( \upi(r,k) D_{+}^{-1}(k) \upsi_{+}^t(s,k) - \upi(r,k) D_{-}^{-1}(k) \upsi_{-}^{t}(s,k) \Big) \mathds{1}_{[r \leq s < \infty]} 
    \\
    &= - \Big(\upsi_+(r,k) D_{+}^{-t}(k) - \upsi_-(r,k) D_{-}^{-t}(k) \Big) \upi^t(s,k) \mathds{1}_{[0 < s \leq r]} 
    \\
    &\quad \, - \upi(r,k) \Big( D_{+}^{-1}(k) \upsi_{+}^t(s,k) - D_{-}^{-1}(k) \upsi_{-}^{t}(s,k) \Big) \mathds{1}_{[r \leq s < \infty]}.
\end{aligned}
\end{equation}
Note that $\calG^t(r,s,k) = \calG(s,r,k)$. Now since the columns of $\upi(r,k)$ and $\uptta(r,k)$ form a fundamental system for $\bfH-1-k^2$, see the end of Subsection~\ref{subsec:bfH_weyl_solutions_near_zero} for the justification of their linear independence, we can find $2 \times 2$-matrices $M_\pm(k)$ and $N_\pm(k)$ such that
\begin{equation} \label{equ:Psipm_linear_combination}
    \upsi_\pm(r,k) =  \upi(r,k) M_\pm(k) + \uptta(r,k) N_\pm(k).
\end{equation}
Set 
\begin{equation*}
    D(k) := \calW\bigl[ \uptta(\cdot, k),\upi(\cdot,k) \bigr].
\end{equation*}
Using that $\calW[ \upi  M_\pm, \upi ] = M_\pm^t \calW[\upi , \upi] = 0$, we find
\begin{align*}
\begin{split}
    D_\pm(k) &= -\calW[\upsi_\pm(\cdot,k), \upi(\cdot,k)] = -\calW[\uptta(\cdot,k) N_\pm(k), \upi(\cdot,k)] \\ 
    &= - N_\pm^t(k) \calW[\uptta(\cdot,k), \upi(\cdot,k)] = - N_\pm^t(k) D(k).
\end{split}
\end{align*} 
Assuming also that $D(k)$ is invertible, the assumed invertibility of $D_\pm(k)$ implies that $N_\pm(k)$ is invertible as well. Therefore,
\begin{equation} \label{equ:Dpminvtranspose_formula}
    D_\pm^{-t}(k) = - N_\pm^{-1}(k) D^{-t}(k).
\end{equation}
Inserting \eqref{equ:Psipm_linear_combination} and \eqref{equ:Dpminvtranspose_formula} back into \eqref{equ:calG_jump_definition}, we find that in the regime $0 < s \leq r$, the terms with only one factor of $\upi$ cancel out.
It follows that for suitable $\calM_>(s,k)$ and $\calM_<(s,k)$,
\begin{align*}
\begin{split}
    \calG(r,s,k) = \upi(r,k) \calM_<(s,k) \mathds{1}_{[0 < s \leq r]} + \upi(r,k) \calM_>(s,k) \mathds{1}_{[r \leq s < \infty]}.
\end{split}
\end{align*}
But then by continuity at $r=s$ and by invertibility of $\upi(r,k)$, we must have $\calM_<(r,k)=\calM_>(r,k)$, so for a suitable continuous $\calM(s,k)$,
\begin{align*}
\begin{split}
    \calG(r,s,k)=\upi(r,k)\calM(s,k).
\end{split}
\end{align*}
Recalling the relation $\calG^t(r,s,k)=\calG(s,r,k)$, we get
\begin{align*}
\begin{split}
    \upi^{-1}(r,k)\calM^t(r,k)=\calM(s,k) \upi^{-t}(s,k) =: C(k) \quad   \Rightarrow \quad \calM(s,k) = C(k) \upi^t(s,k),
\end{split}
\end{align*}
where $C(k)$ satisfies $C^t(k) = C(k)$. The above relation implies the identity
\begin{align*}
\begin{split}
    \calG(r,s,k) = \upi(r,k) C(k) \upi^t(s,k).
\end{split}
\end{align*}
On the other hand, recall that for $s \leq r$ we can also write
\begin{align*}
\begin{split}
    \calG(r,s,k) = - \Bigl(\upsi_{+}(r,k) D_{+}^{-t}(k) - \upsi_{-}(r,k) D_{-}^{-t}(k) \Bigr) \upi^t(s,k).
\end{split}
\end{align*}
Comparing the two expressions gives
\begin{align*}
\begin{split}
    \upi(r,k) C(k) = - \Bigl( \upsi_{+}(r,k) D_{+}^{-t}(k) - \upsi_{-}(r,k) D_{-}^{-t}(k) \Bigr).
\end{split}
\end{align*}
Taking the Wronskian with $\upsi_{+}(\cdot,k)$, and noting that $\calW[\upsi_+ D_{+}^{-t}, \upsi_{+}] = D_+^{-1} \calW[ \upsi_+, \upsi_{+}] = 0$, we conclude
\begin{equation}
    C(k) \calW\bigl[ \upi(\cdot,k), \upsi_+(\cdot,k) \bigr] = D_-^{-1}(k) \calW\bigl[ \upsi_-(\cdot,k), \upsi_+(\cdot,k) \bigr],
\end{equation}
whence 
\begin{equation}
    C(k) = D_{-}^{-1}(k) \calW[\upsi_{-}(\cdot, k), \upsi_{+}(\cdot, k)] D_{+}^{-t}(k).
\end{equation}
Next, we observe that by the asymptotics \eqref{equ:asymptotics_Psi_pm}, we have 
\begin{equation}
    \begin{aligned}
        \calW\bigl[\upsi_-(\cdot,k), \upsi_+(\cdot,k)\bigr] = \kappa(k) \bbI, \quad \kappa(k) = 2i \bigl(1+k^2\bigr).
    \end{aligned}
\end{equation}
It follows that 
\begin{equation}
    C(k) = \kappa(k) D_{-}^{-1}(k) D_{+}^{-t}(k),
\end{equation}
whence
\begin{align*}
\begin{split}
    \calG(r,s,k) = \kappa(k) \upi(r,k) D_{-}^{-1}(k) D_{+}^{-t}(k) \upi^t(s,k).
\end{split}
\end{align*}
Finally, we recall that for $k \geq0$ we have by construction $\upsi_{-}(r,k) = \overline{\upsi_{+}(r,k)}$ and that $\upi(r,k)$ is real-valued, which implies $D_{-}(k) = \overline{D_{+}(k)}$.
Putting everything together, and changing variables to $\lambda=k^2$  in \eqref{eq:StonebfHlambda1}, we find that 
\begin{equation}
    \begin{aligned}
        &\bigl\langle e^{it\bfH} P_c^\bfH \bmf, \bmg \bigr\rangle
        = \Bigl\langle \frac{1}{\pi i} \int_0^\infty e^{it(1+k^2)}
        \, \upi(r,k) D_{-}^{-1}(k)
        \biggl( \int_0^\infty D_{+}^{-t}(k) \upi^t(s,k) \bmf(s) \, \ud s \biggr)
        \, \kappa(k)k \, \ud k, \bmg \Bigr\rangle \\
        & = \int_0^\infty e^{it(1+k^2)}
        \biggl( \int_0^\infty D_{+}^{-t}(k) \upi^t(s,k) \bmf(s) \, \ud s \biggr)
        \overline{\biggl( \int_0^\infty D_{+}^{-t}(k) \upi^t(r,k) \bmg(r) \, \ud r \biggr)}
        \, \frac{2(1+k^2)}{\pi} \, k \, \ud k.
    \end{aligned}
\end{equation}
where we used $\langle \bma, \bmb \rangle = a_1 \overline{b_1} + a_2 \overline{b_2}$ as the definition of the vector inner product.
Alternatively, we can write
\begin{equation}
    \begin{aligned}
        \bigl(e^{it\bfH} P_c^\bfH \bmf\bigr)(r) = \int_0^\infty e^{it(1+k^2)} \upi(r,k) D_{-}^{-1}(k) \biggl( \int_0^\infty D_{+}^{-t}(k) \upi^t(s,k) \bmf(s) \, \ud s \biggr) \, \frac{2(1+k^2)}{\pi} \, k \, \ud k.
    \end{aligned}
\end{equation}
We introduce
\begin{equation}\label{green6}
    E^\bfH(r,k) := \sqrt{2k} \sqrt{\frac{1+k^2}{\pi}} \upi(r,k) \overline{D_+(k)}^{-1} \equiv \bigl[ E_1^\bfH(r,k) \, E_2^\bfH(r,k) \bigr],
\end{equation}
where $E_1^\bfH(r,k)$ and $E_2^\bfH(r,k)$ denote the two column vectors of the $2 \times 2$ matrix $E^\bfH(r,k)$. From the preceding discussion we obtain the following representation formula for the linear Klein--Gordon evolution:
\begin{equation}
    \begin{aligned}
        \bigl(e^{it\sqrt{\bfH}} P_c^\bfH \bmf\bigr)(r) &= \int_0^\infty e^{it\sqrt{1+k^2}} \biggl( \int_0^\infty \overline{E_1^\bfH(s,k)} \cdot \bmf(s) \, \ud s \biggr) E_1^\bfH(r,k) \, \ud k \\
        &\quad + \int_0^\infty e^{it\sqrt{1+k^2}} \biggl( \int_0^\infty \overline{E_2^\bfH(s,k)} \cdot \bmf(s) \, \ud s \biggr) E_2^\bfH(r,k) \, \ud k.
    \end{aligned}
\end{equation}

\begin{lem}\label{green9}
Under our current hypotheses and definitions,
\begin{align}\label{green10}
D_+(k)=\left[  \begin{matrix}
 2i(1+k^2)\overline{a}_1(k) & 0 \\ 0 &  2i(1+k^2)\overline{a}_2(k)
\end{matrix}  \right].
\end{align}
\end{lem}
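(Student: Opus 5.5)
The plan is to reduce the matrix Wronskian $D_+(k)=-\calW[\upsi_+(\cdot,k),\upi(\cdot,k)]$ to the scalar Wronskians $W(\Psi_j,\Phi_j)$ by means of a supersymmetric Wronskian identity for the intertwining operator $\calA:=r^{1/2}\calB^*r^{-1/2}$. By \eqref{green7}, every column of $\upi$ and $\upsi_+$ has the form $\calA F$, where $F=(\Phi_j,0)^t$ or $(0,\Phi_j)^t$ (respectively $\Psi_j$ in place of $\Phi_j$) solves $\calH F=(1+k^2)F$. So the $(i,j)$ entry of $D_+$ is $-W[\calA F_i,\calA G_j]$, with $F_i$ built from $\Psi_i$ and $G_j$ built from $\Phi_j$.

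\textbf{Step 1 (structure of $\calA$).} From \eqref{eq:calB_intro} one computes
\[
\calA=-\partial_r+P(r),\qquad P=\begin{pmatrix}-\frac{1}{2r}-b & U\\ U & \frac1{2r}\end{pmatrix},
\]
with $P$ symmetric. The formal transpose is $\widetilde\calA=\partial_r+P^t=\partial_r+P$, and the conjugated partner operator factors as $\calH=\widetilde\calA\calA$, i.e.\ $r^{1/2}\calB\calB^*r^{-1/2}$. I will double-check this ordering against \eqref{eq:partner_intro} and \eqref{equ:relation_bfH_calH}.

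\textbf{Step 2 (Wronskian identity).} Let $F,G$ solve $\calH F=(1+k^2)F$ and $\calH G=(1+k^2)G$. Then $(\calA G)'=(\widetilde\calA-P)\calA G=(1+k^2)G-P\calA G$, and likewise for $F$. Substituting into $W[\calA F,\calA G]=\langle\calA F,(\calA G)'\rangle-\langle(\calA F)',\calA G\rangle$, the $P$-terms cancel because $P$ is symmetric, leaving
\[
W[\calA F,\calA G]=(1+k^2)\bigl(\langle \calA F,G\rangle-\langle F,\calA G\rangle\bigr).
\]
Expanding $\calA=-\partial_r+P$ once more, and using $P=P^t$ again, gives
\[
W[\calA F,\calA G]=(1+k^2)\,W[F,G].
\]

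\textbf{Step 3 (evaluate entries).} When $F$ and $G$ are supported in different components, $W[F,G]=0$, so $D_+$ is diagonal. The diagonal entries are
\[
(D_+)_{jj}=-(1+k^2)\,W(\Psi_j,\Phi_j).
\]
From \eqref{def_aj} (equivalently \eqref{SMOp1_15} and \eqref{SMW_a2}), $a_j=\tfrac{i}{2}W(\Phi_j,\overline{\Psi_j})$. Since $\Phi_j$ is real, conjugating gives $\overline{a_j}=-\tfrac i2 W(\Phi_j,\Psi_j)$, hence $W(\Psi_j,\Phi_j)=-2i\,\overline{a_j}$. This yields $(D_+)_{jj}=2i(1+k^2)\overline{a_j}(k)$, which is \eqref{green10}. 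Invertibility of $D_+$ then follows from $a_j\neq0$ (Propositions~\ref{SMOp1_1} and~\ref{propSM2}), which closes the gap left open earlier in the Green's kernel construction.

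\textbf{Main obstacle.} The main obstacle is bookkeeping rather than analysis. One must make sure that the Wronskian normalization $W(\overline{\Psi_j},\Psi_j)=2i$ used for $a_j$ matches the boundary-value convention $\upsi_+(\cdot,k)=\upsi_+(\cdot,k+i0)$, and that $P$ is genuinely symmetric with the same sign convention as $\calH=\widetilde\calA\calA$. A sign error in the $\frac1{2r}$ terms, or in the ordering of the factorization, would spoil the cancellation in Step 2. As a sanity check, I will compare the result with the large-$r$ asymptotics \eqref{equ:asymptotics_Psi_pm} and with $\kappa(k)=2i(1+k^2)$, which comes from the same identity applied to $\upsi_-$ and $\upsi_+$.
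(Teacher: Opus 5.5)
Your proof is correct, but it follows a different route from the paper's. The paper evaluates $\calW[\upsi_+,\upi]$ as $r\to\infty$. It replaces the columns of $\upsi_+$ and $\upi$ by their leading parts, e.g.\ $(-\Psi_1',\Psi_1)^t$ and $(\Psi_2,-\Psi_2')^t$, up to $O(r^{-1})$. It then writes out the four entries $w_{i,j}$, which involve $\Psi_j''$ and $\Phi_j''$. Finally, it uses the large-$r$ asymptotics of $\Psi_j$ together with $\Phi_j=a_j\Psi_j+\overline{a}_j\overline{\Psi}_j$ to read off $w_{j,j}=-2i(1+k^2)\overline{a}_j+O(r^{-1})$ and $w_{1,2},w_{2,1}=O(r^{-1})$.

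You replace this limiting computation by the exact intertwining identity $W[\mathcal{A}F,\mathcal{A}G]=(1+k^2)W[F,G]$. It uses only the factorization $\calH=\widetilde{\mathcal{A}}\mathcal{A}$ and the symmetry of $P$ under the bilinear pairing $\langle\cdot,\cdot\rangle$ that defines the vector Wronskian. The bookkeeping you flagged checks out:
\begin{itemize}
\item your $\mathcal{A}=-\partial_r+P$ reproduces \eqref{green7};
\item $(\partial_r+P)(-\partial_r+P)=-\partial_r^2+P'+P^2$ with $P'+P^2=\mathrm{diag}\bigl(V_1-\tfrac{1}{4r^2},\,U^2-\tfrac{1}{4r^2}\bigr)$, where the off-diagonal entry $U'-bU$ vanishes by \eqref{eq:Ua_bogo};
\item the conversion $W(\Psi_j,\Phi_j)=-2i\,\overline{a}_j$, obtained from \eqref{SMOp1_15}, \eqref{SMW_a2} and the reality of $\Phi_j$, is correct.
\end{itemize}

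Your route buys exactness at every $r$. The off-diagonal entries vanish identically rather than only in the limit, and you need no remainder estimates or expansions of second derivatives of the Weyl solutions. The reduction $D_+=-(1+z^2)\,\mathrm{diag}\bigl(W(\Psi_j,\Phi_j)\bigr)$ also holds verbatim off the real axis, and the same identity gives $\kappa(k)=2i(1+k^2)$ for free. The paper's computation instead ties the formula directly to the asymptotic normalization of $\Psi_j$ at infinity, at the cost of tracking the $O(r^{-1})$ terms.
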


Before getting into the proof, note that substituting \eqref{green10} into \eqref{green6} yields
\begin{align}\label{dFT_EbfH}
E^\bfH(r,k)  =  \dfrac{i\sqrt{k}}{\sqrt{2\pi}\langle k\rangle}\left[  \begin{matrix}
a_1^{-1}\Big(-\partial_r\Phi_1-\tfrac{1}{2r}\Phi_1-b\Phi_1\Big) & a_2^{-1}U\Phi_2 \\ a_1^{-1}U\Phi_1 & a_2^{-1}\Big(-\partial_r\Phi_2+\tfrac{1}{2r}\Phi_2\Big)
\end{matrix} \right].
\end{align}
\begin{proof}[Proof of Lemma~\ref{green9}]
From Sections~\ref{subsec:calH1} and~\ref{subsec:calH2} we know that the scalar Weyl solutions satisfy
\begin{align*}
\Psi_1(r,k)&= k^{-1/2}e^{ik r}\Big(1+\dfrac{3i}{8k r}+O\big((k r)^{-2}\big)\Big),
\\ \Psi_2(r,k)&=k^{-1/2}e^{ik r}\Big(1-\dfrac{i}{8k r}+O\big((k r)^{-2}\big)\Big),
\\ \Phi_i(r,k) &= a_i(k) \Psi_i(r,k)+\overline{a}_i(k)\overline{\Psi}_i(r,k),
\end{align*}
and that, from \eqref{green5} and \eqref{green6},
\begin{align*}
D_\pm(k)=-\mathcal W[\upsi_\pm(\cdot,k),\upi(\cdot,k)],
\qquad
E^{\bfH}(r,k)=\sqrt{\frac{2k}{\pi}}\langle k\rangle
\upi(r,k)\overline{D_+(k)}^{-1}.
\end{align*}
Using Lemma~\ref{lem:constant_wronskian_matrix} we compute the Wronskian at $r=+\infty$. Using \eqref{green7}, we write
\[
\upsi_\pm= \left[  \begin{matrix}
-\Psi_{1,\pm}' & \Psi_{2,\pm}
\\ \Psi_{1,\pm} & -\Psi_{2,\pm}'
\end{matrix} \right]+O(r^{-1}), \qquad \upi=\left[  \begin{matrix}
-\Phi_1' & \Phi_2
\\ \Phi_1 & -\Phi_2'
\end{matrix} \right]+O(r^{-1}).
\]
Then, by definition,
\begin{align*}
\calW[\upsi_+,\upi]&=\upsi^t_+\upi'-\upsi'^t_+\upi = \left[ \begin{matrix}  w_{1,1} & w_{1,2} \\ w_{2,1} & w_{2,2}  \end{matrix} \right] +O(r^{-1}),
\end{align*}
where
\begin{align*}
w_{1,1}& :=\Psi_{1,+}'\Phi_1''-\Psi_{1,+}''\Phi_1' + \Psi_{1,+}\Phi_1'-\Psi_{1,+}'\Phi_1, & & w_{1,2} := \Psi_{1,+}''\Phi_2-\Psi_{1,+}\Phi_2'',
\\ w_{2,2}&:=\Psi_{2,+}'\Phi_2''-\Psi_{2,+}''\Phi_2'+\Psi_{2,+}\Phi_2'-\Psi_{2,+}'\Phi_2, & & w_{2,1}:=\Psi_{2,+}''\Phi_1-\Psi_{2,+}\Phi_1''.
\end{align*}
Then, for $rk\gg 1$ we have
\begin{align*}
w_{1,1}&=-2i(1+k^2)\overline{a}_1(k)+O(r^{-1}), & &   w_{1,2}=O(r^{-1}),
\\ w_{2,2}&=-2i(1+k^2)\overline{a}_2(k)+O(r^{-1}),  & & w_{2,1}=O(r^{-1}).
\end{align*}
Taking the limit $r\to\infty$ concludes the proof.
\end{proof}
\subsection{Proof of Theorem~\ref{propdFT}} Theorem~\ref{propdFT} can be proved using what was developed in the previous few subsections. 
\begin{proof}[Proof of Theorem~\ref{propdFT}]
We begin by relating the matrix $E$ in the statement of Theorem~\ref{propdFT} and $E^\bfH$ constructed in the previous subsection. Since $\bfH$ and $\bfM$ are related by
\begin{align*}
    \bfH(r^{\frac{1}{2}}\bmf)=r^{\frac{1}{2}}\bfM\bmf,
\end{align*}
and since the measures used in the statement of Theorem~\ref{propdFT} are $r\ud r$ and $k\ud k$, we see that $E(r,k)=(rk)^{-\frac{1}{2}}E^\bfH(r,k)$. Then, in view of \eqref{dFT_EbfH},
\begingroup
\small
\setlength{\arraycolsep}{3pt}
\begin{equation}\label{dFT_E}
E(r,k)= \frac{i}{\sqrt{2\pi r}\langle k\rangle}
\begin{bmatrix}
 a_1^{-1}(k)\Bigl(\begin{aligned}[t]
 -\partial_r\Phi_1(r,k)-\tfrac{1}{2r}\Phi_1(r,k)\\
 -b(r)\Phi_1(r,k)\Bigr)
\end{aligned}
 & a_2^{-1}(k)U(r)\Phi_2(r,k) \\
 a_1^{-1}(k)U\Phi_1(r,k)
 & a_2^{-1}(k)\Bigl(-\partial_r\Phi_2(r,k)+\tfrac{1}{2r}\Phi_2(r,k)\Bigr)
\end{bmatrix}.
\end{equation}
\endgroup
The first three statements (i)--
(iii) are direct consequences of the Stone formula in the previous subsection. The asymptotics follow from formula~\eqref{dFT_E} and the corresponding asymptotics for $\Phi_j$ and $\Psi_j$, $j=1,2$, in Sections~\ref{subsec:calH1} and~\ref{subsec:calH2}.
We only provide details for the lower-right entries in \eqref{eq:Eklein} and \eqref{eq:Egross}. In view of Lemma~\ref{lemFou2} in the region where $rk\leq1$ and $r$ is small we have 
\begin{equation}\label{eq:Phi2ord}
-\partial_r\Phi_2(r,k)+\frac{1}{2r}\Phi_2(r,k)
=\frac{1+k^2}{2}r^{3/2}+O\bigl((1+k^4)r^{7/2}\bigr)
\qquad (r\to0).
\end{equation}
The desired asymptotics for $E_{2,2}$ in the region where $rk\leq 1$ and $r\leq 1$ follow from this expansion and \eqref{eq:a2-smallk} and \eqref{eq:a2-largek}. For $rk\ge1$ and $r\leq 1$, we use the high-energy $J_0$ comparison solution $\Phi_{2,0}(r,k)=\sqrt r\,J_0(kr)$ from Lemma~\ref{lem:Phi2-high-k-J0}. It satisfies 
\[
-\partial_r\Phi_{2,0}(r,k)+\frac1{2r}\Phi_{2,0}(r,k)=k\sqrt r\,J_1(kr).
\]
By Lemma~\ref{lem:Phi2-high-k-J0} and $|J_1(z)|\lesssim z^{-1/2}$ for $z\ge1$,
\[
\Bigl| -\partial_r\Phi_2(r,k)+\frac1{2r}\Phi_2(r,k)\Bigr|\lesssim k^{1/2},
\qquad rk\ge1,
\]
and therefore
\[
|E_{2,2}(r,k)|
\lesssim \frac{k^{1/2}}{\sqrt r\,\jap{k}}
\sim (rk)^{-1/2},
\qquad rk\ge1.
\]
For $r\ge1$ and $rk\le1$, write $\Phi_2(r,k)=\sqrt r\,G(r,k)$. Then
\[
-\partial_r\Phi_2(r,k)+\frac1{2r}\Phi_2(r,k)=-\sqrt r\,\partial_r G(r,k).
\]
Using the large-$r$ expansion from Lemma~\ref{lemFou2},
\[
G(r,k)=\widetilde c_{1,*}\log r+\widetilde c_{2,*}
+\sum_{j\ge1}(rk)^{2j}\Phi_{2,j}(r)+O(e^{-r}\jap{\log r}),
\]
with $|r\partial_r\Phi_{2,j}(r)|\lesssim1$, we get
\[
\partial_r G(r,k)
=\widetilde c_{1,*}r^{-1}+O\bigl(k^2r\jap{\log r}\bigr)+O\bigl(r^{-1}e^{-r}\jap{\log r}\bigr),
\]
whence
\[
|E_{2,2}(r,k)|
\lesssim \frac{1+(rk)^2\jap{\log r}}{r\,\jap{\log k}},
\qquad r\ge1,\ rk\le1.
\]
For $r\geq1$ and $rk\geq1$ the claimed asymptotics follow from similar considerations using also \eqref{propSM0} as well as the estimates in Lemmas~\ref{lem:Weyl2-low-k-H0} and~\ref{lem:Weyl2-high-k-H0}.
\end{proof}
In a few instances we will need more precise asymptotics for $E(r,k)$ in the region where $rk\geq1$. Before stating these asymptotics in the next lemma we introduce some notation that will be used later as well. Note that \eqref{eq:a-smallk} implies that 
\begin{equation}\label{eq:a1a2inv-small}
a_1(k)^{-1}=c_*^{-1}\sqrt{\frac{\pi}{2}}\,e^{3\pi i/4}\,k\bigl(1+O(k|\log k|)\bigr),
\qquad k\to0,
\end{equation}
while \eqref{eq:a2-smallk} yields
\begin{equation}\label{eq:a2inv-small}
a_2(k)^{-1}
=
\sqrt{2\pi}\,e^{i\pi/4}\,L_2(k)^{-1}\bigl(1+O(\jap{\log k}^{-2})\bigr),
\qquad k\to0,
\end{equation}
where
\[
L_2(k):=\widetilde c_{2,*}-\widetilde c_{1,*}\Bigl(\log\frac{k}{2}+\gamma+i\frac{\pi}{2}\Bigr).
\]
We set
\begin{equation}\label{eq:eta12-def}
\eta_j(k):=\frac{\overline{a_j(k)}}{a_j(k)},\qquad j=1,2.
\end{equation}
Then $|\eta_j(k)|=1$ for every $k>0$.
\begin{lemma}\label{lem:Erkgeq1_1}
   In the region $\{rk\geq1\}$, the entries $E_{i,j}$ of the matrix $E$ in Theorem~\ref{propdFT} satisfy
\begin{align}
E_{1,1}(r,k)
&=
\frac{k^{1/2}}{\sqrt{2\pi}\,\jap{k}\,r^{1/2}}
\Bigl(e^{ikr}-\eta_1(k)e^{-ikr}\Bigr)
+O\!\bigl(r^{-3/2}k^{-1/2}\jap{k}^{-1}\bigr),
\label{eq:E11grenz}
\\
E_{1,2}(r,k)
&=
\frac{i}{\sqrt{2\pi}\,\jap{k}\,k^{1/2}r^{1/2}}
\Bigl(e^{ikr}+\eta_1(k)e^{-ikr}\Bigr)
+O\!\bigl(r^{-3/2}k^{-3/2}\jap{k}^{-1}\bigr),
\label{eq:E21grenz}
\\
E_{2,1}(r,k)
&=
\frac{i}{\sqrt{2\pi}\,\jap{k}\,k^{1/2}r^{1/2}}
\Bigl(e^{ikr}+\eta_2(k)e^{-ikr}\Bigr)
+O\!\bigl(r^{-3/2}k^{-3/2}\jap{k}^{-1}\bigr),
\label{eq:E12grenz}
\\
E_{2,2}(r,k)
&=
\frac{k^{1/2}}{\sqrt{2\pi}\,\jap{k}\,r^{1/2}}
\Bigl(e^{ikr}-\eta_2(k)e^{-ikr}\Bigr)
+O\!\bigl(r^{-3/2}k^{-1/2}\jap{k}^{-1}\bigr).
\label{eq:E22grenz}
\end{align}
Moreover,
\begin{equation}\label{eq:eta12-small}
|i+\eta_1(k)|\lesssim k\,\jap{\log k},
\qquad
|1+i\eta_2(k)|\lesssim \jap{\log k}^{-1},
\qquad 0<k\le1.
\end{equation}
\end{lemma}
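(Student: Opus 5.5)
The plan is to start from the explicit formula \eqref{dFT_E} and the representation $\Phi_j=a_j\Psi_j+\overline{a_j}\,\overline{\Psi_j}$ of Propositions~\ref{SMOp1_1} and~\ref{propSM2}. This gives
\[
a_j^{-1}\Phi_j=\Psi_j+\eta_j\overline{\Psi_j},\qquad j=1,2,
\]
so every entry of $E$ becomes a first-order expression (involving $\partial_r$, $\tfrac1{2r}$, $b$ and $U$) applied to $\Psi_j+\eta_j\overline{\Psi_j}$.

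\textbf{Step 1: oscillatory asymptotics of $\Psi_j$.} In the region $rk\ge1$ I will use
\[
\Psi_j(r,k)=k^{-1/2}e^{ikr}\bigl(1+O((kr)^{-1})\bigr),\qquad \partial_r\Psi_j(r,k)=ik^{1/2}e^{ikr}\bigl(1+O((kr)^{-1})\bigr).
\]
For $\Psi_1$ this follows from Lemmas~\ref{lemWeyl1} and~\ref{lem:Weyl1-high-k-H2}. For $\Psi_2$ it follows from Lemmas~\ref{lem:Weyl2-low-k-H0} and~\ref{lem:Weyl2-high-k-H0}. In each case one combines the lemma with the large-argument expansions $\sqrt{\pi z/2}\,H_\nu^{(1)}(z)=e^{i(z-\nu\pi/2-\pi/4)}\bigl(1+O(z^{-1})\bigr)$, whose phases are exactly compensated by the prefactors $e^{3i\pi/4}$, $e^{5i\pi/4}$, $e^{i\pi/4}$ in the definitions of $\Psi_{j,0}$. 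The corrections coming from the potentials are $O(e^{-r/2})$ or $O(k^{-1})$ relative errors. On $rk\ge1$ both are dominated by $O((kr)^{-1})$, apart from the $r\lesssim1$ subcase discussed in the last paragraph.

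\textbf{Step 2: diagonal entries.} For $E_{1,1}$ the leading part is
\[
-\partial_r(\Psi_1+\eta_1\overline{\Psi_1})=-ik^{1/2}\bigl(e^{ikr}-\eta_1e^{-ikr}\bigr)+O(k^{-1/2}r^{-1}).
\]
Multiplying by the prefactor $i(2\pi r)^{-1/2}\jap{k}^{-1}$ produces the main term in \eqref{eq:E11grenz}. The remaining terms $\bigl(\tfrac1{2r}+b\bigr)(\Psi_1+\eta_1\overline{\Psi_1})$ are $O(r^{-1}k^{-1/2})$, using $|b|\lesssim r^{-1}$ for $r\ge1$ and $|b|\lesssim1\le r^{-1}$ for $r\le1$. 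They therefore contribute $O(r^{-3/2}k^{-1/2}\jap{k}^{-1})$. The entry $E_{2,2}$ in \eqref{eq:E22grenz} is handled identically, with $-\partial_r+\tfrac1{2r}$ in place of $-\partial_r-\tfrac1{2r}-b$.

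\textbf{Step 3: off-diagonal entries.} For $E_{1,2}$ and $E_{2,1}$ I write $U=1-(1-U)$. The part with $1$ gives $k^{-1/2}\bigl(e^{ikr}+\eta_je^{-ikr}\bigr)\bigl(1+O((kr)^{-1})\bigr)$, which produces the main terms and errors in \eqref{eq:E21grenz}--\eqref{eq:E12grenz}. The part with $1-U$ is exponentially small for $r\ge1$ and is absorbed into the stated error there.

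\textbf{Step 4: the phase bounds \eqref{eq:eta12-small}.} From \eqref{eq:a-smallk}, $\eta_1=e^{3\pi i/2}\bigl(1+O(k|\log k|)\bigr)=-i+O(k\jap{\log k})$. From \eqref{eq:a2-smallk} and \eqref{eq:a2inv-small},
\[
\eta_2=e^{i\pi/2}\,\frac{\overline{L_2}}{L_2}\bigl(1+O(\jap{\log k}^{-2})\bigr).
\]
Since $L_2=-\widetilde c_{1,*}\log k+O(1)$ with $\widetilde c_{1,*}\neq0$ real, we have $\overline{L_2}/L_2=1+O(\jap{\log k}^{-1})$. Hence $\eta_2=i+O(\jap{\log k}^{-1})$, which gives $|1+i\eta_2|\lesssim\jap{\log k}^{-1}$.

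\textbf{Main obstacle: the subregion $r\le1$, $k\ge r^{-1}$.} Here Step~1 must use the high-energy lemmas with $R_k=k^{-1/2}$. For $r<R_k$ I would instead pass through $\Phi_j$ via Lemmas~\ref{lem:Phi1-high-k-J2} and~\ref{lem:Phi2-high-k-J0}, using the large-argument expansions of $J_2$ and $J_0$. The delicate point is the off-diagonal term $(1-U)(\Psi_j+\eta_j\overline{\Psi_j})$. It is of size $k^{-1/2}$, which is not obviously $O(r^{-1}k^{-3/2})$ when $rk\gg1$ and $r\le1$. I would first try to remove it by also including this term, with its small prefactor $\jap{k}^{-1}$, in the error. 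If that fails, I would keep $U$ in the leading coefficient and check how the applications later in the paper use these entries, restricting the sharp form to $r\gtrsim1$ if necessary.
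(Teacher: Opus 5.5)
Your Steps 1--4 are the paper's proof. The paper also rewrites \eqref{dFT_E} using $a_j^{-1}\Phi_j=\Psi_j+\eta_j\overline{\Psi_j}$, inserts $\Psi_j=k^{-1/2}e^{ikr}(1+O((kr)^{-1}))$ and $\Psi_j'=ik^{1/2}e^{ikr}(1+O((kr)^{-1}))$ together with $U=1+O(e^{-r})$ and $b=O(e^{-r})$, and derives \eqref{eq:eta12-small} from \eqref{eq:a-smallk} and \eqref{eq:a2-smallk} exactly as you do. However, the paper states these inputs only as $r\to\infty$, so it effectively proves the lemma only for large $r$.

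\textbf{The obstacle you isolate.} It is genuine, and no argument can remove it, because the off-diagonal formulas \eqref{eq:E21grenz} and \eqref{eq:E12grenz} are false wherever $(1-U(r))\,rk$ is large.
\begin{itemize}
\item Up to the stated error, $E_{1,2}$ equals $U(r)$ times the displayed main term. So it misses that main term by $(1-U(r))\,\jap{k}^{-1}k^{-1/2}r^{-1/2}\,|e^{ikr}+\eta_1e^{-ikr}|$.
\item At $r=1$ and $k\to\infty$ one has $\eta_1\to i$, so this discrepancy is of size $k^{-3/2}$ for a positive proportion of $k$. The allowed error there is only $O(k^{-5/2})$.
\item The same happens for $E_{2,1}$. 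This is also why \eqref{eq:Egross} carries the factor $r^{1/2}\jap{r}^{-1}$ in the off-diagonal entries.
\end{itemize}
Your two fallbacks do not fix this. Absorbing the term into the error using the prefactor $\jap{k}^{-1}$ cannot work, since that prefactor is already part of the stated error. Restricting to $r\gtrsim1$ is not enough when $k\gg1$. The correct statement is one of the following:
\begin{itemize}
\item the formulas hold on $\{rk\ge1,\ (1-U(r))\,rk\lesssim1\}$, for instance when $k\lesssim1$ or $r\gtrsim\log(2+k)$;
\item or they hold on all of $\{rk\ge1\}$ after adding an error $O\bigl((1-U(r))\,r^{-1/2}k^{-1/2}\jap{k}^{-1}\bigr)$.
\end{itemize}
This restricted form is all the paper needs. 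It only uses the lemma through the limits $r\to\infty$ in Lemma~\ref{lem:bfqbfpbds1}, and for $k\le1$ in the remark that follows the lemma.

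\textbf{Two minor points in Step~1.}
\begin{itemize}
\item You have the domination backwards. The relative $O(k^{-1})$ errors of Lemmas~\ref{lem:Weyl1-high-k-H2} and~\ref{lem:Weyl2-high-k-H0} are bounded by $(kr)^{-1}$ only for $r\lesssim1$. So for $r\gg1$ and $k\ge1$ you must redo those Volterra bounds pointwise in $r$. Keep $\int_r^\infty|W_0|\lesssim r^{-1}$ in the first case and the exponential decay of $W_2$ in the second; this gives relative errors $O(\min\{k^{-1},(kr)^{-1}\})$ and $O(k^{-1}e^{-r})$.
\item Near $r=0$ one has $b\simeq r^{-1}$, not $b$ bounded. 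This does not matter, because $|b|\lesssim r^{-1}$ for all $r>0$ (as $a_\theta$ is bounded), and that is all you use.
\end{itemize}
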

\begin{proof}
Since $U(r)=1+O(e^{-r})$ and $b(r)=O(e^{-r})$, and since
\[
\Psi_j(r,k)=k^{-1/2}e^{ikr}\bigl(1+O((kr)^{-1})\bigr),
\qquad
\Psi_j'(r,k)=ik^{1/2}e^{ikr}\bigl(1+O((kr)^{-1})\bigr)
\qquad (r\to\infty),
\]
using also the relations~\eqref{SMOp1_2} and~\eqref{propSM0} we obtain  the desired asymptotics \eqref{eq:E11grenz}--\eqref{eq:E22grenz}.
For \eqref{eq:eta12-small}, note that \eqref{eq:a-smallk} gives
\[
\eta_1(k)=-i+O\bigl(k\,\jap{\log k}\bigr),
\]
while \eqref{eq:a2-smallk} yields
\[
\eta_2(k)= i\,\frac{\overline{L_2(k)}}{L_2(k)}+O\bigl(\jap{\log k}^{-2}\bigr),
\]
and therefore
\[
1+i\eta_2(k)
=
1-\frac{\overline{L_2(k)}}{L_2(k)}+O\bigl(\jap{\log k}^{-2}\bigr)
=
\frac{L_2(k)-\overline{L_2(k)}}{L_2(k)}+O\bigl(\jap{\log k}^{-2}\bigr)
=
O\bigl(\jap{\log k}^{-1}\bigr).
\]
This proves \eqref{eq:eta12-small}.
\end{proof}

\begin{rem}
    The proof of Lemma~\ref{lem:Erkgeq1_1} in fact shows that in the region where $rk\geq1$ and $k\leq1$
    \begin{align*}
    &E_{1,1}(r,k)=\frac{e^{\frac{i\pi}{4}}}{\jap{k}}kJ_0(rk)+kH_0^{(2)}(rk)\big(\uppsi_{1,1}^1(r,k)+\uppsi_{1,1}^2(k)\big)+H_1^{(2)}(rk)\uppsi_{1,1}^3(r,k)\\
    &\phantom{E_{1,1}(r,k)=}+J_1(rk)\uppsi_{1,1}^4(r),\\
    &E_{1,2}(r,k)=\frac{e^{\frac{i\pi}{4}}}{\jap{k}}J_1(rk)(1+\uppsi_{1,2}^1(r))+H_1^{(2)}(rk)\big(\uppsi_{1,2}^2(r,k)+\uppsi_{1,2}^3(k)\big),\\
    & E_{2,1}(r,k)=\frac{e^{\frac{3i\pi}{4}}}{\jap{k}}J_0(rk)\big(1+\uppsi_{2,1}^1(r)\big)\\
    &\phantom{E_{2,1}=}-\frac{e^{\frac{i\pi}{4}}}{\jap{k}}\frac{\pi \tilde{c}_{1,\ast}}{\tilde{c}_{2,\ast}\log(k/2)}H_0^{(2)}(rk)\big(1+\uppsi_{2,1}^2(k)\big)\big(1+\uppsi_{2,1}^3(r)\big)+\uppsi_{2,1}^4(r,k),\\
      & E_{2,2}(r,k)=\frac{e^{\frac{3i\pi}{4}}}{\jap{k}}kJ_1(rk)-\frac{e^{\frac{i\pi}{4}}}{\jap{k}}\frac{k\pi \tilde{c}_{1,\ast}}{\tilde{c}_{2,\ast}\log(k/2)}H_1^{(2)}(rk)\big(1+\uppsi_{2,2}^1(k)\big)+\uppsi_{2,2}^2(r,k).
\end{align*}
Here the functions $\uppsi_{\ell,m}^j$ satisfy 
\begin{align*} |            &\uppsi_{1,1}^1(r,k)|+|\uppsi_{1,1}^3(r,k)|+|\uppsi_{1,1}^4(r)|\lesssim \sqrt{r}e^{-r/2},\quad |\uppsi_{1,1}^2(k)|\lesssim k\jap{\log k},\\
&|\uppsi_{1,2}^1(r)|+|\uppsi_{1,2}^2(r,k)|\lesssim \sqrt{r}e^{-r/2},\qquad |\uppsi_{1,2}^3(k)|\lesssim k\jap{\log k},\\
&|\uppsi_{2,1}^1(r)|+|\uppsi_{2,1}^3(r)|\lesssim r^{-\frac{1}{2}}e^{-r},\qquad  |\uppsi_{2,1}^4(r,k)|\lesssim \sqrt{r}e^{-r/2}\jap{\log(kr)},\\
&|\uppsi_{2,1}^2(k)|=|\uppsi_{2,2}^1(k)|\lesssim \jap{\log k}^{-2}, \qquad |\uppsi_{2,2}^2(r,k)|\lesssim \sqrt{r}e^{-r/2}\jap{\log(kr)}.
\end{align*}
Moreover these estimates on $\uppsi_{\ell,m}^j$ are stable with respect to differentiation in $\partial_r$ and $k\partial_k$ for any finite number of derivatives. This follows from the relations \eqref{SMOp1_2} and~\eqref{propSM0},  the estimates \eqref{eq:eta12-small}, and Lemmas~\ref{lemWeyl1} and~\ref{lem:Weyl2-low-k-H0}, as well as the relations \eqref{eq:Besselders1} for derivatives of Bessel functions.
\end{rem}

\begin{rem}\label{rem:Bessels1}
    For future reference we record some asymptotics for the Bessel functions $J_n(\rho)$ and $Y_n(\rho)$. The corresponding asymptotics for $H_{n}{(j)}(\rho)$, $j=1,2$, can be read off using the formula
    \begin{align*}
        H_{2}^{(j)}(\rho)=J_n(\rho)+(-1)^{j-1}iY_n(\rho), \qquad j=1,2.
    \end{align*}
    In stating these asymptotics we use the notation $y\sim \sum_{n=m}^\infty \alpha_n$ to mean that $y-\sum_{n=m}^\ell \alpha_n=O(\alpha_{\ell+1})$. For $\rho\leq1$
    \begin{align*}
        &J_n(\rho)=(\rho/2)^n\sum_{j=0}^{n-1}\frac{(-\rho^2/4)^j}{j!\Gamma(n+j+1)},\\
        &Y_n(\rho)=-\frac{(\rho/2)^{-n}}{\pi}\sum_{j=0}^{n-1}\frac{(n-j-1)!}{j!}(\rho^2/4)^{j}+\frac{2}{\pi}\log(\rho/2)J_n(\rho)\\
        &\phantom{Y_n(\rho)=}-\frac{(\rho/2)^{n}}{\pi}\sum_{j=0}^\infty \big(\uppsi(j+1)+\uppsi(n+j+1)\big)\frac{(-\rho^2/4)^j}{j!(n+j)!},
    \end{align*}
    where $\uppsi$ is as in \cite[(6.3.2)]{AbStebook}. See \cite[(9.1.10--9.1.11)]{AbStebook}. For $\rho\geq1$
    \begin{align*}
        &J_n(\rho)=\sqrt{\frac{2}{\pi\rho}}\big(\cos(\rho-\omega_n)P_n(\rho)-\sin(\rho-\omega_n)Q_n(\rho)\big),\\
        &Y_n(\rho)=\sqrt{\frac{2}{\pi\rho}}\big(\cos(\rho-\omega_n)Q_n(\rho)+\sin(\rho-\omega_n)P_n(\rho)\big),
    \end{align*}
    where $\omega_n=\frac{\pi}{4}+\frac{n\pi}{2}$ and with the notation $(n,2j)=\frac{\Gamma(\nu+2j+1/2)}{(2j)!\Gamma(n-2k+1/2)}$,
    \begin{align*}
        P_n(\rho)\sim \sum_{j=0}^\infty(-1)^j (n,2j)(2\rho)^{-2j},\qquad Q_n(\rho)\sim\sum_{j=0}^\infty (-1)^j(n,2j+1)(2\rho)^{-2j-1}.
    \end{align*}
    See \cite[(9.2.5--9.2.10)]{AbStebook}.
\end{rem}





\section{Estimates for the Linear Evolution} \label{sec:linear_decay_estimates}

    


In this section we establish decay and energy estimates for  solutions of 
\begin{align}\label{KGj'}
\partial_t^2\boldsymbol{v}+\mathbf{L}\boldsymbol{v}=\bmF, \qquad \boldsymbol{v}(0)=\boldsymbol{v}_0=P_c^\bfL\boldsymbol{v}_0, \qquad \partial_t\boldsymbol{v}(0)=\boldsymbol{v}_1=P_c^\bfL\boldsymbol{v}_1,\qquad \bmF=P_c^\bfL\bmF.
\end{align}
In view of the diagonal structure of $\bfM$ corresponding estimates for the Klein-Gordon evolution associated with $\bfM$ follow as a corollary. We begin by defining the Littlewood-Paley (L-P) frequency projections with respect to $\bfL$ in Section~\ref{ssecLP}. While the final estimates, to be used in \cite{LPPSS3}, are stated without frequency localizations, the L-P projections play an important role in their proofs. In Section~\ref{ssecKG} we prove a number of dispersive $L^\infty_{r}$ estimates. Section~\ref{subec:localenergy} is devoted to local energy decay estimates, both for fixed time and in the time-integrated sense. Here we also prove a limiting absorption estimate that will be useful in the analysis of the internal mode in the nonlinear paper \cite{LPPSS3}.

\smallskip
\subsection{Littlewood-Paley projections}\label{ssecLP}
Let $\fy,\fy_0:[0,\infty)\to\bbR$ be a smooth non-negative functions such that $\mathrm{supp}\,\fy_0\subseteq [0,2]$, $\mathrm{supp}\, \fy\subseteq[\frac{1}{2},2]$, and $\{\fy_\ell\}_{\ell\geq0}$ form a partition of unity for $[0,\infty)$, where $\fy_\ell(\cdot):=\fy(2^{-\ell}\cdot)$ for any positive integer $\ell>0$. We define the Littlewood--Paley projection $P_\ell^\bfL$, $\ell \in \bbN_{0}$, relative to the operator $\bfL$ by the relation
\begin{equation}
    \wtilcalF_\bfL[P_\ell^\bfL\bmf](k)=\fy_\ell(k)\wtilcalF_\bfL[\bmf](k),
\end{equation}
where $\bmf\in L^2_{r\ud r}$. Note that $P_\ell^\bfL P_{\ell'}^\bfL=0$ if $|\ell-\ell'|>1$ and $P_c^\bfL\bmf=\sum_{\ell\geq0}P_\ell^\bfL \bmf$ almost everywhere. Similarly, for any interval $I\in[0,\infty)$ we denote by $\fy_I$ a smooth positive function supported in $2I\cup2^{-1}I$ and equal to one on $I$. Accordingly we define $P_I^\bfL$ by
\begin{equation}
    \wtilcalF_\bfL[P_I^\bfL\bmg](k)=\fy_I(k)\wtilcalF_\bfL[\bmg](k).
\end{equation}
We can then write
\begin{align}  
        P_\ell^\bfL\bmf(r)=\int_0^\infty \bfK_\ell(r,s)\bmf(s)s\ud s,\qquad P_I^\bfL\bmf(r)=\int_0^\infty \bfK_I(r,s)\bmf(s)s\ud s, 
\end{align}
where
\begin{align}
    \begin{split}
        &\bfK_\ell(r,s)=\int_0^\infty E(r,k)\overline{E(s,k)}^t\fy_\ell(k) k \ud k,\\
        &\bfK_I(r,s)=\int_0^\infty E(r,k)\overline{E(s,k)}^t\fy_I(k) k \ud k.
    \end{split}
\end{align}
The following lemma contains the estimates we need on the frequency projections $P_\ell^\bfL$. As usual, the inspection of the proof shows that similar estimates hold if $P_\ell^\bfL$ is replaced by $P_I^\bfL$ where $I$ is an interval with length of order $2^\ell$. 
\begin{lemma}\label{lem:LPbounds1}
    For any $\ell\geq0$
    \begin{align}
            \|P_\ell^\bfL \bmf\|_{L^1_{r\ud r}}\leq C \|\bmf\|_{L^1_{r\ud r}},\label{eq:Pellboound1}
    \end{align}
    where the constant $C$ is independent of $\ell$.
\end{lemma}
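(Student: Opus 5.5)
By Schur's test (and duality), it suffices to prove the uniform kernel bound
\[
\sup_{s>0}\int_0^\infty |\bfK_\ell(r,s)|\, r\,\ud r\le C,
\]
since $P_\ell^\bfL$ acts on $L^1_{r\ud r}$ with kernel $\bfK_\ell(r,s)$ against $s\,\ud s$. Write $\bfK_\ell(r,s)=\int_0^\infty E(r,k)\overline{E(s,k)}^t\fy_\ell(k)k\,\ud k$ and expand entrywise. The only inputs are the asymptotics of Theorem~\ref{propdFT}: the bounds \eqref{eq:Eklein} for $rk\le1$, the oscillatory representation \eqref{eq:Egross} for $rk\ge1$, and the symbol bounds in $k$ that allow integration by parts.

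\textbf{The case $\ell\ge1$.} Here $k\simeq 2^\ell\ge1$. Rescale $k=2^\ell\kappa$ and split $r$ and $s$ into the regions $r2^\ell\lesssim1$ and $r2^\ell\gtrsim1$, and likewise for $s$.
\begin{itemize}
\item When both $r2^\ell,s2^\ell\lesssim1$: \eqref{eq:Eklein} gives $|E|\lesssim 1$ after accounting for the $\jap{k}$ factors. Then $|\bfK_\ell|\lesssim 2^{2\ell}$, and $\int_{r\lesssim 2^{-\ell}} 2^{2\ell} r\,\ud r\lesssim1$.
\item When $r2^\ell\gtrsim1$: insert \eqref{eq:Egross}, which gives a sum of phases $e^{\pm ikr}e^{\mp iks}$ times symbols in $k$ of size $(rk)^{-1/2}$ (respectively $(sk)^{-1/2}$ or $1$). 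Integrate by parts twice in $k$ using $(k\partial_k)^j$-stability. This gives
\[
|\bfK_\ell(r,s)|\lesssim 2^{2\ell}\,(2^\ell r)^{-1/2}\,\jap{2^\ell s}^{-1/2}\,\jap{2^\ell(r-s)}^{-2},
\]
together with a corresponding bound for the phases $e^{\pm ik(r+s)}$ with $r+s$ in place of $r-s$.
\item Integrate in $r\,\ud r$. Near $r\simeq s$ with $2^\ell s\gtrsim1$, the weight $r(2^\ell r)^{-1/2}(2^\ell s)^{-1/2}\simeq 2^{-\ell}$ cancels $2^{2\ell}\cdot 2^{-\ell}$ from the $\jap{2^\ell(r-s)}^{-2}$ integral. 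Away from the diagonal, the quadratic decay controls the growth of the weight $r$, because the $r^{1/2}$ from $r\,(2^\ell r)^{-1/2}$ is beaten by $\jap{2^\ell r}^{-2}$.
\item The mixed regime $2^\ell s\lesssim1\lesssim 2^\ell r$: only the $r$ variable oscillates, so integrate by parts in $k$ against $e^{ikr}$. Use the $r^{-1}\partial_k$ bounds in \eqref{eq:Eklein} for the $s$-factor, and note that $s^{-1}\ge 2^\ell$ makes $\partial_k$ of the small-$rk$ factor cost at most $s\lesssim r$.
\end{itemize}

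\textbf{The case $\ell=0$.} This is where I expect the main difficulty. Now $k\in(0,2]$, and the off-diagonal entries behave like $\jap{\log k}^{-1}k^{-1/2}$ in \eqref{eq:Egross}, and like $(\log k)^{-1}$ in \eqref{eq:Eklein} for $E_{2,1}$, i.e.\ worse than Bessel functions at small $k$.
\begin{itemize}
\item Decompose $k$ dyadically, $k\simeq2^{-j}$ with $j\ge0$, and repeat the argument above in each piece with the scale $2^{-j}$.
\item For the worst entry, the combination $E_{2,1}(r,k)\overline{E_{2,1}(s,k)}$, each dyadic piece contributes about $2^{-2j}\jap{j}^{-2}\cdot 2^{2j}$. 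This comes from a measure $k\,\ud k\simeq 2^{-2j}$, from $r\,\ud r$ integrated over the oscillatory tail up to $r\sim 2^{j}$, and from the decay $(rk)^{-1/2}(sk)^{-1/2}k^{-1}$.
\item Summing over $j$ then gives $\sum_j\jap{j}^{-2}<\infty$. This is precisely the logarithmic gain from threshold non-resonance recorded in \eqref{eq:eta12-small} and \eqref{propSMrho}. Without it one would get a logarithmically divergent sum.
\item If a crude bound on some piece only yields $\jap{j}^{-1}$, first use the more precise Bessel/Hankel representation of the Remark following Lemma~\ref{lem:Erkgeq1_1}. There, the leading parts $J_0(rk)$ and $J_1(rk)$ give kernels of exact free Littlewood--Paley projections, which are known to be $L^1$-bounded. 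The logarithmically small $H^{(2)}$ corrections carry the extra factor $(\log k)^{-1}$ and are then summable.
\end{itemize}
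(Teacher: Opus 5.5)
Your reduction to $\sup_{s}\int_0^\infty|\bfK_\ell(r,s)|\,r\,\ud r\lesssim1$ and your treatment of $\ell\ge1$ are fine and essentially match the paper. The gap is at $\ell=0$. Splitting $k\in(0,2]$ dyadically and summing bounds for the pieces $\fy(2^jk)$ cannot work, because the low-frequency kernel is bounded only through cancellations \emph{across} scales.

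First, the gain $\jap{j}^{-2}$ per piece is not available. In \eqref{eq:Egross} the off-diagonal entries are $\frac{r^{1/2}}{\jap{r}k^{1/2}\jap{k}}\psi^\pm_{\ell,m}$, with no logarithm. By \eqref{eq:E12grenz} and \eqref{eq:eta12-small}, the leading part of $E_{2,1}$ on $\{rk\ge1\}$ is the free term $\jap{k}^{-1}e^{3\pi i/4}J_0(kr)$; see the Remark after Lemma~\ref{lem:Erkgeq1_1}. On $\{rk\le1\}$, the factor $\jap{\log k}^{-1}$ of $E_{2,1}$ in \eqref{eq:Eklein} comes with $\log(2+r)$, which is $\simeq j$ exactly where the $r\,\ud r$-mass of the $j$-th piece sits. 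Concretely, for $rk,sk\lesssim1$ the product $E_{2,1}(r,k)\overline{E_{2,1}(s,k)}$ is of size $\log(2+r)\log(2+s)\jap{\log k}^{-2}$ and does not oscillate in $k$. So for fixed $s\ge1$ the $j$-th piece contributes $\simeq\jap{j}^{-1}\log(2+s)$ from $r\simeq2^j$, and $\sum_{2^j\gtrsim s}\jap{j}^{-1}=\infty$.

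What saves the true kernel here is a cancellation between $\{rk\lesssim1\}$ and $\{rk\gtrsim1\}$ at fixed $r$. The paper captures it by treating $\bfK_{0,0}+\bfK_{1,0}$ together for $r\ge\max\{1,s\}$. It uses the representation \eqref{eq:Ereplarge1}, which is valid across $rk\simeq1$, and integrates by parts via $(rk)K_0(rk)=r^{-1}\partial_k\bigl((rk)K_1(rk)\bigr)$. The derivatives then fall on $(\log k)^{-1}$ and produce the integrable factor $(k\log^2k)^{-1}$.

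Your fallback in the doubly oscillatory region fails for the same reason. Once the free $J_0,J_1$ parts are removed, the cross terms carry only a single factor $(\log k)^{-1}$. Their dyadic pieces have $L^1_{r\ud r}$ norm $\simeq\jap{j}^{-1}$ for $1\ll2^j\ll s$, coming from $r-s\simeq2^j$, so summing gives $\log\log s$ rather than $O(1)$. The paper instead uses the Hermitian structure of $E(r,k)\overline{E(s,k)}^t$: the two cross terms combine into
\[
(\log(k/2))^{-1}\bigl(H_0^{(2)}(kr)J_0(ks)-\overline{H_0^{(2)}(ks)}J_0(kr)\bigr)=-i(\log(k/2))^{-1}\bigl(Y_0(kr)J_0(ks)+Y_0(ks)J_0(kr)\bigr).
\]
The leading part of the right-hand side contains only the phase $k(r+s)$. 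Only the remainders of size $\jap{\log k}^{-2}$ are then estimated by size, splitting at $k=(r-s)^{-1}$, which gives $\int_{s+1}^{2s}\frac{\ud r}{(r-s)\jap{\log(r-s)}^{2}}\lesssim1$.

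A smaller point: once the cutoffs $\chi_1(rk)\chi_1(sk)$ are inserted, the free part is not an exact free Littlewood--Paley kernel. The paper handles the $\cos(k(r-s))$ term directly by integrating by parts, noting that derivatives of the cutoffs localize to $k\simeq s^{-1}$.
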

The proof of Lemma~\ref{lem:LPbounds1} uses a few facts about the distorted Fourier basis that are established in Section~\ref{sec:working_title_transference}. Their proofs are independent of Lemma~\ref{lem:LPbounds1}. We have decided to state the lemma and its proof here, because it relates more naturally with the material in Section~\ref{sec:linear_decay_estimates}.
\begin{proof}[Proof of Lemma~\ref{lem:LPbounds1}]
    Note that
    \begin{align*}
        P_\ell^\bfL \bmf(r)=\int_0^\infty \bfK(r,s)\bmf(s)s\ud s,
    \end{align*}
    where
    \begin{align*}
        \bfK(r,s)=\int_0^\infty E(r,k)\overline{E(s,k)}^t\fy_\ell(k)k \ud k.
    \end{align*}
    We want to prove that $\sup_{s\geq0}\|\bfK\|_{L^1_{r\ud r}}\lesssim 1$.
    Let $\{\chi_0,\chi_1\}$ be a partition of unity for $[0,\infty)$ with $\chi_0$ supported in $[0,2]$ and $\chi_1$ in $[1,\infty)$. Then we write
    \begin{align*}
        \bfK(r,s)= \sum_{i,j=0}^1 \bfK_{i,j}(r,s),
    \end{align*}
    with 
        \begin{align*}
        \bfK_{i,j}(r,s)=\int_0^\infty E(r,k)\overline{E(s,k)}^t\fy_\ell(k) \chi_i(rk)\chi_j(sk)k \ud k.
    \end{align*}
    We will prove that
    \begin{align*}
        \sup_{s\geq0}\|\bfK_{1,1}\|_{L^1_{r\ud r}}\lesssim 1,\qquad \sup_{s\geq0}\|\bfK_{0,1}\|_{L^1_{r\ud r}}\lesssim 1,\qquad \sup_{s\geq0}\|\bfK_{0,0}+\bfK_{1,0}\|_{L^1_{r\ud r}}\lesssim 1.
    \end{align*}
    Starting with $\bfK_{1,1}$ using Theorem~\ref{propdFT} we write
    \begin{align*}
        \bfK_{1,1}(r,s)=\sum_{\iota_1,\iota_2=\pm}\int_0^\infty e^{i(\iota_1r+\iota_2s)k}\frac{\psi_{\iota_1,\iota_2}(r,s,k)}{\sqrt{rk}\sqrt{sk}}\chi_1(rk)\chi_1(sk)\fy_\ell(k)k \ud k,
    \end{align*}
    where $\psi$ satisfies $|(k\partial_k)^m\psi_{\iota_1,\iota_2}|\lesssim 1$ for $m=0,1,2$. We only consider the case $\iota_1=-\iota_2=+$, the other cases being similar or easier. With $\psi=\psi_{+,-}$ we want to prove that
    \begin{equation}\label{eq:LPboundednesstemp1}
        \int_0^\infty\Big|\int_0^\infty e^{i(r-s)k}\frac{\psi(r,s,k)}{\sqrt{rk}\sqrt{sk}}\chi_1(rk)\chi_1(sk)(rk)\fy_\ell(k) \ud k\Big| \ud r\lesssim 1.
    \end{equation}
    Consider first the case $\ell\geq1$. If $r\geq 2s$ then $r-s\simeq r$ and after two integration by parts in $k$, and with $\bffy_\ell$ a function whose support is contained in that of $\fy_\ell$, we bound the integral by
    \begin{align*}
        \int_{2^{-\ell-1}}^\infty \int_0^\infty \bffy_\ell(k) (rk)^{-\frac{3}{2}}\ud k \ud r\lesssim \int_{2^{-\ell-1}}^\infty(2^\ell r)^{-\frac{3}{2}}2^\ell \ud r\lesssim 1.
    \end{align*}
    For $r\leq 2s$ we first consider the region $[s-2^{-\ell},s+2^{-\ell}]$. Here we simply bound the integrand in \eqref{eq:LPboundednesstemp1} by $\fy_\ell(k)$ which shows that the integral is bounded by direct integration. On the interval $[s+2^{-\ell},2s]$ we integrate by parts in $k$ twice and observe that the result can be bounded by
    \begin{align*}
        \int_{s+2^{-\ell}}^{2s}\int_{\{k\simeq2^{\ell}\}}k^{-2}(r-s)^{-2}\ud k \ud r\lesssim 2^{-\ell}\int_{s+2^{-\ell}}^{2s}\frac{\ud r}{(r-s)^2}\lesssim 1.
    \end{align*}
    The interval $0\leq r \leq s-2^{-\ell}$ is treated similarly. In the case $\ell=0$ we continue to use the notation $\bffy_\ell$ and also let $\bfchi_1$ denote a function supported in $[1,\infty)$. Then for $r\geq 2s$ we again integrate by parts twice and bound \eqref{eq:LPboundednesstemp1} by
      \begin{align*}
        \int_{0}^\infty \int_0^\infty \bffy_0(k)\bfchi_1(rk)\bfchi_1(sk)(sk)^{-\frac{1}{2}} (rk)^{-\frac{3}{2}} \ud r \ud k\lesssim \int_{s^{-1}}^2 s^{-1}k^{-2}\ud k\lesssim 1.
    \end{align*}
    For $s+1\leq r \leq 2s$ we use the more refined structure of the Fourier basis $E(r,k)$ and $E(s,k)$ in the region where $rk\geq 1$, $sk\geq 1$, and $k\leq 1$ from Theorem~\ref{propdFT}. We consider only the most difficult contributions in the product $E(r,k)\overline{E(s,k)}^t$, namely those of $J_0(kr)J_0(ks)$ (which is similar to $J_1(kr)J_1(ks)$) and $(\log(k/2))^{-1}\big(H_0^{(2)}(kr)J_0(ks)-\overline{H_0^{(2)}(ks)}J_0(kr)\big)$ (which is more difficult than $\big(H_1^{(2)}(kr)J_2(ks)+H_1^{(2)}(ks)J_2(kr)\big)\uppsi_{1,2}^3(k)$). Here we have dropped the $\jap{k}^{-1}$ factors as they are smooth with bounded derivatives of all order for small $k$. Starting with $J_0(rk)J_0(sk)$, in view of the large parameter asymptotics of Bessel functions from Remark~\ref{rem:Bessels1} for suitable constant $\upalpha_j$ we write
    \begin{align*}
        k(rs)^{\frac{1}{2}}J_0(kr)J_0(ks)&=\big(\upalpha_0\cos(kr)+\upalpha_1\sin(kr)(kr)^{-1})\big)\big(\upalpha_0\cos(ks)+\upalpha_1\sin(ks)(ks)^{-1}\big)\\
        &\quad+O((kr)^{-2})+O((ks)^{-2})\\
        &=\frac{\upalpha_0^2}{2}\big(\cos(k(r+s))+\cos(k(r-s))\big)+\frac{\upalpha_0\upalpha_1}{2(rk)}\big(\sin(k(r+s))+\sin(k(r-s))\big)\\
        &\quad+\frac{\upalpha_0\upalpha_1}{2(sk)}\big(\sin(k(r+s))-\sin(k(r-s))\big)+O((rk)^{-2})+O((sk)^{-2}).
    \end{align*}
 The trigonometric functions with argument $k(r+s)$ are easier to handle by integration by parts so we ignore those. For the contributions of the $O((rk)^{-2})$ and $O((rsk)^{-2})$ it suffices to observe that
 \begin{align*}
     \int_{s+1}^{2s}\int_{1/s}^1(sk)^{-2}\ud k \ud r+\int_0^{s-1}\int_{1/r}^1\sqrt{\frac{r}{s}}(kr)^{-2}\ud k \ud r\lesssim 1.
 \end{align*}
 For the contribution of $\cos(k(r-s))$ we present the argument on the interval $[s+1,2s]$, the case of $[0,s-1]$ being more of the same. We need to show that the integral
 \begin{equation}
     \int_{s+1}^{2s}\Big|\int_0^\infty \cos(k(r-s))\chi_1(rk)\chi_1(sk)\fy_0(k)\ud k\Big|\ud r
 \end{equation}
 is uniformly bounded for $s\geq1$. In the inner integral we integrate by parts to write
 \begin{align}
     \int_0^\infty \cos(k(r-s))\chi_1(rk)\chi_1(sk)\fy_0(k)\ud k=-\int \frac{\sin(k(r-s))}{r-s}\partial_k\big(\chi_1(rk)\chi_1(sk)\fy_0(k)\big)\ud k.
 \end{align}
 If the derivative falls on $\chi_1(rk)$ or $\chi_1(sk)$ then we in the support of the resulting function $k\simeq s^{-1}$ so we bound $\frac{\sin(k(r-s))}{r-s}$ by $s^{-1}$ which is sufficient. If the derivative falls on $\fy_0(k)$ then we note that in the support of $\partial_k\fy_0(k)$, $k\simeq1$, so we integrate by parts again to write
 \begin{equation}
     \int \frac{\sin(k(r-s))}{r-s}\chi_1(rk)\chi_1(sk)\partial_k\fy_0(k)\ud k=\int \frac{\cos(k(r-s))}{(r-s)^2}\partial_k\big(\chi_1(rk)\chi_1(sk)\partial_k\fy_0(k)\big)\ud k\lesssim (r-s)^{-2},
 \end{equation}
 which is integrable on $[s+1,2s]$. 
 Next, we consider $(\log(k/2))^{-1}\big(H_0^{(2)}(kr)J_0(ks)-\overline{H_0^{(2)}(ks)}J_0(kr)\big)$
 which reduces to analyzing $(\log(k/2))^{-1}\big(Y_0(kr)J_0(ks)+Y_0(ks)J_0(kr)\big)$. Here an inspection of the asymptotics of Bessel functions from Remark~\ref{rem:Bessels1} reveals that up to error of order $O((rk)^{-2})$ and $O(
 (sk)^{-2})$ the resulting trigonometric functions have $k(r+s)$ and not $k(r-s)$ as their arguments. The desired bound then follows as before. For the error terms in the product $E(r,k)\overline{E(s,k)}^t$ where exact trigonometric cancellations may not be present, we consider the most difficult sample error $\chi_1(sk)\chi_1(rk)k^{-1}(rs)^{-\frac{1}{2}}\jap{\log k}^{-2}e^{ik(r-s)}$. Here we split the $k$ integral at $(r-s)^{-1}$. For $k\leq (r-s)^{-1}$ we integrate directly while for $k\geq (r-s)^{-1}$ we integrate by parts twice in $k$. For $k\leq (r-s)^{-1}$ this gives
  \begin{align*}
     \int_{s+1}^{2s}\int_0^1\Big|\chi_{1}(rk)\chi_1(sk)\chi_0((r-s)k)\frac{e^{ik(r-s)}}{\jap{\log k}^{2}}\Big|\ud k\,\ud r\lesssim \int_{s+1}^{2s}\frac{\ud r}{(r-s)\jap{\log(r-s)}^2}\lesssim 1,
 \end{align*}
 where the bound is independent of $s\geq1$. For $k\geq (r-s)^{-1}$ using integration by parts twice in $k$ we observe that
 \begin{align*}
     &\int_{s+1}^{2s}\Big|\int_0^1\chi_{1}(rk)\chi_1(sk)\chi_1((r-s)k)\frac{e^{ik(r-s)}}{\jap{\log k}^{2}}\ud k\Big|\ud r\\
     &\lesssim \int_{s+1}^{2s}\int_{0}^1\Big|\partial_k^2\Big(\chi_{1}(rk)\chi_1(sk)\chi_1((r-s)k)\jap{\log k}^{-2}\Big)(r-s)^{-2} \ud k\,\ud r\\
     &\lesssim \int_{s+1}^{2s}\big((r-s)^{-1}\jap{\log(r-s)}^{-2}+(r-s)^{-2}\big)\ud r,
 \end{align*}
 which is bounded independently of $s\geq1$. This completes the analysis on $[s+1,2s]$, and the analysis on $[0,s-1]$ is similar.

 For the other kernels $\bfK_{i,j}$, $(i,j)\neq(1,1)$, we only present the details for the case $\ell=0$ which is more delicate. For $\bfK_{0,1}$, using the asymptotics in from Theorem~\ref{propdFT} for both $E(r,k)$ and $\overline{E(s,k)}^t$, it suffices to observe that
 \begin{align*}
     \sup_{s\geq0}\int_0^\infty \int_0^1 \chi_0(rk)\chi_1(sk)(sk)^{-\frac{1}{2}}\frac{\log(r+2)}{\log(k/2)}rk \,\ud k\, \ud r\lesssim 1.
 \end{align*}
 For $\bfK_{0,0}+\bfK_{1,0}$ in the region $r\leq \max\{1,s\}$ we again use the asymptotics from Theorem~\ref{propdFT}. For $r\geq \max\{1,s\}$ we use the more refined representations from Lemmas~\ref{lem:Fbasisrepalt1} and~\ref{lem:bfqbfpbds1}, as well as the relations~\eqref{eq:Besselders1} for $E(r,k)$, while for $\overline{E(s,k)}^t$ we use the asymptotics in the region where $sk\leq1$ from Theorem~\ref{propdFT}. We only consider the contribution of the $\bfq$ coefficients in Lemma~\ref{lem:Fbasisrepalt1}, because in view of Lemma~\ref{lem:bfqbfpbds1} the Volterra coefficients $\bfp^Y$ and $\bfp^J$ come with extra $r$-decay that make the estimates easier. Then with $K_n$ representing $J_n$ or $Y_n$ we consider the representative kernel $$\frac{K_0(rk)\chi_0(sk)s\log (2+s)}{\jap{s}\log(k/2)\upalpha_K(k)}\uppsi(s,k),$$ where $\upalpha_J(k)\equiv1$ and $\upalpha_Y(k)=\log(k/2)$, and $|(s^{-1}\partial_k)^{j}\uppsi(s,k)|\lesssim 1$ for $j=0,1,2$. Using $(rk)K_0(rk)=r^{-1}\partial_k\big((rk)K_1(rk)\big)$ and integrating by parts we get
 \begin{align*}
     &\int_{\max\{1,s\}}^\infty\Big|\int_0^1 (rk)K_0(rk)\fy_0(k)\chi_0(sk)\uppsi(s,k)\frac{\log(2+s)}{\log (k/2)\upalpha_K(k)}\ud k \Big| \ud r\\
     &= \int_{\max\{1,s\}}^\infty\Big|\int_0^1 (\chi_0(rk)+\chi_1(rk))kK_1(rk)\partial_k\big(\fy_0(k)\chi_0(sk)\uppsi(s,k)\frac{\log(2+s)}{\log (k/2)\upalpha_K(k)}\big)\ud k \Big| \ud r\\
     &\leq I+II,
 \end{align*}
 with $I$ corresponding to the integral with $\chi_0(rk)$ and $II$ to the integral with $\chi_1(rk)$. For $I$ if $\partial_k$ falls on $\fy_0$ then $k\simeq1$ and hence $r\simeq 1$ and the boundedness of the resulting integral follows. If the derivative falls on $\chi_0(sk)$ then we must have $s\gtrsim 1$ and $r\lesssim s$ for the resulting integral to be nonzero. In this case it suffices to observe that (since $rk\leq1$ for $I$, $K_1=Y_1$ is the more difficult choice if we drop the favorable term $1/\upalpha_K$)
 \begin{equation*}
     \int_s^{2s}\int_{1/s}^{2/s}\frac{\ud k \,\ud r}{rk}\lesssim 1.
 \end{equation*}
 When $\partial_k$ falls on $\uppsi$ or $(\log(k/2))^{-1}$ we observe that
 \begin{equation*}
     \int_{\max\{1,s\}}^\infty\int_0^{1}\chi_0(sk)\chi_0(rk)\Big(\frac{s}{r|\log(k/2)|}+srk^2+\frac{\log(2+s)}{rk|\log(k/2)|^3}+\frac{rk\log(2+s)}{(\log(k/2))^2}\Big)\ud k \,\ud r\lesssim 1.
 \end{equation*}
 For $II$ we integrate by parts another time, by writing $kK_1(rk)=(rk)^{-1}\partial_k\big(k^2 K_2(rk)\big)$. Integrating by parts and considering the different possibilities of where the two factors of $\partial_k$ can fall, it suffices to observe that
 \begin{align*}
     \int_{\max\{1,s\}}^\infty \int_0^1\chi_0(sk)\chi_1(rk)\Big(\frac{s^2k^{\frac{1}{2}}\log(2+s)}{r^{\frac{3}{2}}|\log(k/2)|}+\frac{\log(2+s)}{r^{\frac{3}{2}}k^{\frac{3}{2}}(\log(k/2))^2}\Big)\ud k\, \ud r\lesssim 1
 \end{align*}
 and
 \begin{equation*}
     \int_{\max\{1,s\}}^\infty\int_{1/r}^{2/r}\Big(\frac{k^{\frac{1}{2}}s\log(2+s)}{r^{\frac{1}{2}}|\log(k/2)|}+\frac{k^{\frac{1}{2}}\log(2+s)}{r^{\frac{1}{2}}k(\log(k/2))^2}\Big)\ud k\,\ud r\lesssim 1.\qedhere
 \end{equation*}
\end{proof}
\subsection{Dispersive estimates}\label{ssecKG} The following proposition contains the main $L^\infty_r$ estimate of this paper. We will subsequently derive a few corollaries that are more useful for applications in the nonlinear setting in \cite{LPPSS3}. The imaginary exponent $i\tau$ in the statement of the proposition is included simply for the purpose of later applying complex interpolation.
\begin{prop}\label{propKG1dec}
For any $\sigma, \tau\in\bbR$,
\begin{align*}
\|e^{\pm it\sqrt{\bfL}}P_c^\bfL\bmf\|_{L^\infty_r} \leq  \frac{C_{\sigma,\tau}}{\jap{t}}\sum_{\ell\geq0} 
2^{-\sigma\ell} {\big\| \bfL^{1+\frac{\sigma+i\tau}{2}}P_\ell^{\mathbf{L}} \bmf \big\|}_{L^1_{rdr}}.
\end{align*}
The constant $C_{\sigma,\tau}$ can be chosen independently of $\sigma$ for $\sigma$ in any bounded interval, and grows at most polynomially in $\tau$. 
\end{prop}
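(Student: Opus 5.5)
By the triangle inequality over the Littlewood--Paley decomposition $P_c^\bfL\bmf=\sum_\ell P_\ell^\bfL\bmf$, the estimate reduces to a single dyadic frequency. Concretely, let $\widetilde P_\ell^\bfL$ denote a fattened projection with $\widetilde P_\ell^\bfL P_\ell^\bfL=P_\ell^\bfL$. It suffices to prove
\[
\|e^{\pm it\sqrt{\bfL}}\widetilde P_\ell^\bfL\bfL^{-1-\frac{\sigma+i\tau}{2}}\bmg\|_{L^\infty_r}\lesssim \jap{t}^{-1}2^{-\sigma\ell}\|\bmg\|_{L^1_{r\ud r}},
\qquad \bmg:=\bfL^{1+\frac{\sigma+i\tau}{2}}P_\ell^\bfL\bmf .
\]
On the support of $\fy_\ell$ the multiplier $\jap{k}^{-2-\sigma-i\tau}$ equals $2^{-(2+\sigma)\ell}$ times a symbol of order zero in $k$. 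The $\tau$-dependence enters only through derivatives of $\jap{k}^{-i\tau}$, which produces the polynomial growth in $\tau$. Writing the operator through Theorem~\ref{propdFT}, everything comes down to a uniform pointwise bound on the kernel
\[
\bfK^t_\ell(r,s)=\int_0^\infty e^{\pm it\jap{k}}\jap{k}^{-2-\sigma-i\tau}\widetilde\fy_\ell(k)E(r,k)\overline{E(s,k)}^t\,k\,\ud k,
\]
namely $\sup_{r,s}|\bfK^t_\ell(r,s)|\lesssim\jap{t}^{-1}2^{-\sigma\ell}$. The factor $2^{2\ell}$ gained from $\bfL^{-1}$ is what absorbs the growth of $E$ at high frequency: from \eqref{eq:Egross} one has $|E|^2\,k\lesssim k^{2}\jap{k}^{-2}\cdot k^{-1}$ and similar bounds.

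The trivial bound $|\bfK^t_\ell|\lesssim 2^{-\sigma\ell}$ for $|t|\le1$ comes from the size estimates \eqref{eq:Eklein} and \eqref{eq:Egross}, integrated against $k\,\ud k$ over $k\simeq2^\ell$. At low frequency ($\ell=0$) this is a bounded integral as well, since the worst entries $E_{2,1}$ and $E_{2,2}$ only carry $r\log(2+r)/\jap{r}$-type growth, and this is compensated when $rk\le1$.

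For $|t|\ge1$ I split with cutoffs $\chi_0,\chi_1$ in $rk$ and $sk$, exactly as in the proof of Lemma~\ref{lem:LPbounds1}. In the region $rk,sk\ge1$ I insert \eqref{eq:Egross}, which gives phases $t\jap{k}\pm rk\pm sk$. I then run a stationary phase argument in $k$, using $\partial_k^2\jap{k}=\jap{k}^{-3}$. Near the stationary point the second derivative gives a gain of $(t\jap{k}^{-3})^{-1/2}$, and the amplitude carries $(rk)^{-1/2}(sk)^{-1/2}$. Since stationarity forces $r+s\simeq t k/\jap{k}$, one expects the usual two-dimensional $t^{-1}$: the factor $(rs)^{-1/2}$ combined with $t^{-1/2}$ produces $t^{-1}$ once $r\gtrsim t$. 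The non-stationary region is handled by integrating by parts, using the symbol bounds on $\psi^\pm_{\ell,m}$ from \eqref{eq:Egross}.

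In the regions where $rk\le1$ or $sk\le1$ I will not use the phase of $E$. Instead I use only the bound $|(r^{-1}\partial_k)^j\phi|\lesssim1$ and integrate by parts twice against $e^{it\jap{k}}$. The high-frequency case is then straightforward: there $r,s\lesssim 2^{-\ell}$, and the powers of $k$ lost in the integration by parts are absorbed by the $2^{-2\ell}$ from $\bfL^{-1}$.

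The main obstacle is the low-frequency region $\ell=0$ with $k\to0$, where $\partial_k e^{it\jap{k}}=itk\jap{k}^{-1}e^{it\jap{k}}$ degenerates. There I will follow the standard Klein--Gordon treatment. The region $k\lesssim t^{-1/2}$ is handled directly: the integrand satisfies $|E\overline E^t|k\lesssim k$ (the $E_{2,1}$ entry behaves like $\log(2+r)/\jap{\log k}$, and it is precisely here that the logarithmic factor \eqref{eq:eta12-small} and the threshold non-resonance are needed), so this region contributes $O(t^{-1})$. On $k\gtrsim t^{-1/2}$ I integrate by parts with the operator $(itk)^{-1}\partial_k$. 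For the cross terms in which one factor has $rk\le1$, I will use the refined Bessel representations from the remark after Lemma~\ref{lem:Erkgeq1_1}, as was done in Lemma~\ref{lem:LPbounds1}. The log-singular terms $H_0^{(2)}/\log k$ can be checked by hand against the explicit Bessel-kernel dispersive estimate. Once the uniform kernel bound is established, summing over $\ell$ gives the claim.
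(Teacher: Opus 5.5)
Your overall architecture matches the paper's. You reduce to one dyadic kernel through a fattened projection and aim for $\sup_{r,s}|\mathbf K_\ell|\lesssim 2^{-\sigma\ell}\jap{t}^{-1}$, with polynomial dependence on $\tau$ coming from derivatives of $\jap{k}^{-i\tau}$. You then split according to the sizes of $rk$ and $sk$, use stationary phase where $rk,sk\gtrsim1$, and integrate by parts with $\frac{\jap{k}}{itk}\partial_k$ where $rk,sk\lesssim1$.

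\textbf{The gap.} The mixed region $sk\lesssim1\lesssim rk$ (with $r\ge s$) is not handled; the paper treats it as a separate kernel $\mathbf K_{3,\ell}$. You propose to ignore the phase of $E$ whenever $rk\le1$ \emph{or} $sk\le1$, integrating by parts only against $e^{it\jap{k}}$. You justify the high-frequency case by ``$r,s\lesssim2^{-\ell}$'', but this holds only when both products are $\le1$.

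In the mixed region $E(r,k)$ is still $(rk)^{-1/2}e^{\pm irk}\psi^\pm(r,k)$ by \eqref{eq:Egross}. One of the two pieces combines with $e^{it\jap{k}}$ to the phase $t\jap{k}-rk$, which is stationary at $k_0$ with $k_0/\jap{k_0}=r/t$. Every $k$-derivative landing on $E(r,k)$ costs a factor $r=tk_0/\jap{k_0}$ there, and $r\simeq t$ when $\ell\ge1$. Hence $\frac{\jap{k}}{itk}\partial_k$ gains nothing. This is not a corner case: it is exactly the propagation from a source near the core to the light cone.

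At $\ell=0$, appealing to the Bessel representations ``as in Lemma~\ref{lem:LPbounds1}'' does not repair this. That lemma is a time-independent $L^1_{r\ud r}$ bound with no factor $e^{it\jap{k}}$. The functions $J_\nu(rk)$, $H^{(2)}_\nu(rk)$ oscillate in $rk$, so the same critical point is present. Their $k$-dependent coefficients (such as $1/\log(k/2)$) also prevent you from simply quoting the free Klein--Gordon kernel bound.

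\textbf{What is needed} is a one-sided stationary phase argument.
\begin{itemize}
\item Keep only the piece of $E(r,k)$ producing $t\jap{k}-rk$; the other piece is nonstationary, with derivative of the phase $\gtrsim tk/\jap{k}$.
\item For $E(s,k)$ use only the $sk\le1$ size and symbol bounds from \eqref{eq:Eklein}. These make the amplitude, including the measure, $\lesssim(rk)^{-1/2}k$ with symbol-type $k$-derivatives. Set $S=\jap{k}-\frac{r}{t}k$.
\item For $\ell=0$, the cases $k_0\ge2$ and $k_0r\le\frac12$ are nonstationary: $|\partial_kS|\gtrsim1$, respectively $|\partial_kS|\simeq k$, on the support. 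Otherwise the window $|k-k_0|\lesssim t^{-1/2}$ contributes at most $(rk_0)^{-1/2}k_0t^{-1/2}\simeq t^{-1}$, since $k_0\simeq r/t$. Outside the window, integrate by parts using $|\partial_kS|\simeq|k-k_0|$. The paper instead uses the window $\jap{s}/(s\sqrt t)$ together with the factor $s/\jap{s}$ in the amplitude; either works.
\item For $\ell\ge1$ one has $r\simeq t$. Decompose dyadically in $|k-k_0|$ starting from the scale $2^{q_0}\simeq t^{-1/2}2^{3\ell/2}$, exactly as in the doubly oscillatory region, which gives $2^{2\ell}t^{-1}$.
\end{itemize}

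\textbf{Smaller points.}
\begin{itemize}
\item In the doubly oscillatory region at $\ell=0$, the relevant fact is $rs\gtrsim t$ (from $r\gtrsim tk_0$ and $sk_0\gtrsim1$), not $r\gtrsim t$. The case $sk_0\le\frac12$ must be treated as nonstationary.
\item The factor $2^{2\ell}$ pays for the usual Klein--Gordon loss (shell volume and curvature $\partial_k^2\jap{k}\simeq2^{-3\ell}$), not for growth of $E$, which is bounded.
\item The cancellations \eqref{eq:eta12-small} are not used in this proposition; only the size bounds of Theorem~\ref{propdFT} are.
\end{itemize}
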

Accepting Proposition~\ref{propKG1dec} for the moment, we prove the corollaries that will be used in \cite{LPPSS3}. For the applications in \cite{LPPSS3} it is convenient to work with non-equivariant functions of the form $e^{i\theta}\bmf$, where $\bmf$ is radial. We use the notation $W^{s,p}_x(\bbR^2)$ to denote the usual $L^p$ Sobolev spaces of regularity $s$ for functions on $\bbR^2$. We will also need to use the non-radial analogue of $\bfL$ in a few places. This is defined as 
\begin{equation}
\label{eq:Lnr}
    \bfL_\nr:=(-\Delta+1)\Id_{2\times2}+\bfV_0, 
\end{equation}  where $\bfV_0$, defined in \eqref{eq:Lzerodefintro1} is viewed as a radial potential on $\bbR^2$, and $-\Delta$ denotes the full Laplacian on $\bbR^2$. The two operators are related by the relation $\bfL_\nr e^{\pm i\theta}\bmf=e^{\pm i\theta}\bfL\bmf$.
\begin{cor}\label{cor:Lpdisp1}
For any $p\in(2,\infty)$, $\gamma>0$, and with $q^{-1}+p^{-1}=1$,
\begin{equation}\label{eq:Lpdispersivebound1}
    \|e^{\pm i t\sqrt{\bfL}}P_c^\bfL \bmf\|_{L^p_{r\ud r}}\lesssim \jap{t}^{-1+\frac{2}{p}}\|e^{i\theta}\bmf\|_{W^{2+\gamma-\frac{4}{p},q}_x(\bbR^2)},
\end{equation}
and 
\begin{equation}\label{eq:Linftydispersiveboundnonsharp1}
    \|e^{\pm i t\sqrt{\bfL}}P_c^\bfL \bmf\|_{L^\infty_r}\lesssim \jap{t}^{-1+\frac{2}{p}}\|e^{i\theta}\bmf\|_{W^{2+\gamma-\frac{2}{p},q}_x(\bbR^2)}.
\end{equation}
The implicit constants in these estimates are allowed to depend on $p$ and $\gamma$. Additionally, for $j=0,1$,
\begin{equation}\label{eq:Linftydispersiveboundnonsharp2}
    \|e^{\pm i t\sqrt{\bfL}}\bfL^{-\frac{j}{2}}P_c^\bfL \bmf\|_{L^\infty_r}\lesssim \jap{t}^{-1}\sum_{\ell=0}^{4-j}\big\|\partial_x^\ell\big(e^{i\theta}\bmf\big)\big\|_{L^1_x(\bbR^2)}.
\end{equation}
Moreover, for any $p\in(2,\infty)$ and for $j=0,1$,
    \begin{equation}\label{eq:Linftyweighteddispersivebound1}
        \|e^{\pm it\sqrt{\bfL}}\bfL^{-\frac{j}{2}}P_c^\bfL\bmf\|_{L^\infty_{r}}\lesssim \jap{t}^{-1+\frac{2}{p}}\|\jap{x}\jap{D}^{2-j} (e^{i\theta}\bmf)\|_{L^2_x(\bbR^2)}.
    \end{equation}
\end{cor}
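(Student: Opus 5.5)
We derive all four estimates of Corollary~\ref{cor:Lpdisp1} from Proposition~\ref{propKG1dec}, combined with the trivial $L^2$ bound and the $L^1$ boundedness of the projections in Lemma~\ref{lem:LPbounds1}.

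\emph{The $L^p$ bound \eqref{eq:Lpdispersivebound1}.} Consider the analytic family $T_z:=e^{\pm it\sqrt{\bfL}}\bfL^{-z}P_c^\bfL$.
\begin{itemize}
\item On the line $\Re z=0$, the Plancherel identity of Theorem~\ref{propdFT} gives $\|T_{i\tau}\bmf\|_{L^2_{r\ud r}}\le\|\bmf\|_{L^2_{r\ud r}}$.
\item On the line $\Re z=1+\sigma/2+\gamma'$, Proposition~\ref{propKG1dec} (with $\sigma$ replaced by $\sigma+2\gamma'$, $\gamma'>0$) gives
\[
\|T_z\bmf\|_{L^\infty_r}\lesssim \jap{t}^{-1}\sum_\ell 2^{-2\gamma'\ell}\|P^\bfL_\ell\bmf\|_{L^1_{r\ud r}}\lesssim \jap{t}^{-1}\|\bmf\|_{L^1_{r\ud r}},
\]
using Lemma~\ref{lem:LPbounds1} and summing the geometric series. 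The admissible polynomial growth in $\tau$ of $C_{\sigma,\tau}$ is exactly what Stein interpolation requires.
\end{itemize}
Interpolating at $\theta=1-2/p$ yields
\[
\|e^{\pm it\sqrt{\bfL}}\bfL^{-s}P_c^\bfL\bmf\|_{L^p}\lesssim\jap{t}^{-1+2/p}\|\bmf\|_{L^q},\qquad s=(1-2/p)(1+\gamma'')\ \ (\text{we take }\sigma=0).
\]
It remains to bound $\|\bfL^{s}\bmf\|_{L^q_{r\ud r}}\lesssim\|e^{i\theta}\bmf\|_{W^{2s,q}_x}$, where $2s$ matches $2+\gamma-4/p$ up to renaming $\gamma$. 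For integer powers this follows from $\bfL_\nr e^{i\theta}\bmf=e^{i\theta}\bfL\bmf$, since $\bfV_0$ is smooth and bounded, so $\bfL_\nr$ maps $W^{2,q}\to L^q$. For fractional powers we interpolate once more: $\bfL^{i\tau}$ is bounded on $L^q$ with polynomial growth in $\tau$, which follows from a Mikhlin-type argument built on the kernel bounds of Lemma~\ref{lem:LPbounds1}. This fractional-power step is the technical point, and I expect it to be the main obstacle. If the imaginary-power bound proves hard, the fallback is to lose a further $\gamma$ and use integer powers together with the $\ell$-sum.

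\emph{The $L^\infty$ bounds.} The non-sharp estimate \eqref{eq:Linftydispersiveboundnonsharp1} follows from the $L^p$ bound by Sobolev embedding on $\bbR^2$ applied to $e^{i\theta}e^{\pm it\sqrt{\bfL}}P_c^\bfL\bmf$: $W^{2/p+,p}\subset L^\infty$. Commuting $\jap{D}^{2/p}$ through requires comparing it with $\bfL^{1/p}$, which is handled by the same equivalence of Sobolev norms as above. For \eqref{eq:Linftydispersiveboundnonsharp2}, apply Proposition~\ref{propKG1dec} with $\sigma=1$ for $j=0$ and $\sigma=0$ for $j=1$. Since $2^{-\sigma\ell}\bfL^{1+\sigma/2}$ together with the $\bfL^{-j/2}$ factor is dominated by $\bfL^{2-j/2}$ times summable factors, this reduces to
\[
\sum_\ell \|\bfL^{2-j/2}P_\ell\bmf\|_{L^1}\lesssim \sum_\ell 2^{-\ell}\|P_\ell\bfL^{\lceil\cdot\rceil}\bmf\|_{L^1}.
\]
Lemma~\ref{lem:LPbounds1} and the identity $\bfL^m e^{i\theta}\bmf\leftrightarrow\bfL_\nr^m$ then bound this by $\sum_{\ell\le 4-j}\|\partial^\ell(e^{i\theta}\bmf)\|_{L^1_x}$.

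\emph{The weighted bound \eqref{eq:Linftyweighteddispersivebound1}.} Use \eqref{eq:Linftydispersiveboundnonsharp1} with $q<2$ close to $2$, and apply Hölder: for $q<2$,
\[
\|g\|_{L^q}\lesssim\|\jap{x}^{2/q-1+}g\|_{L^2}\le\|\jap{x}g\|_{L^2}.
\]
The derivative count $2+\gamma-2/p-j$ is at most $2-j$ once $\gamma<2/p$.
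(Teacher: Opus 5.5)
Your skeleton is the paper's: Stein interpolation between Proposition~\ref{propKG1dec} and Plancherel, then Sobolev embedding, a direct use of Proposition~\ref{propKG1dec}, and H\"older. The one genuine variation is in the interpolation. You interpolate the unlocalized family $e^{\pm it\sqrt{\bfL}}\bfL^{-z}P_c^\bfL$ on $0\le\Re z\le 1+\gamma'$ and sum the Littlewood--Paley pieces at the $L^1\to L^\infty$ endpoint using Lemma~\ref{lem:LPbounds1}. The paper instead interpolates, for each fixed $\ell$, the localized family $2^{(1-z+\delta)\ell}\bfL^{-1-\frac{1-z+\delta}{2}}P_\ell^\bfL P_c^\bfL$, which gains $2^{-2\ell}$ on the $L^2$ side; it then takes $\delta=\gamma-\frac6p$ and sums the resulting factors $2^{-\gamma\ell}$ afterwards. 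Both routes give the same order $2-\frac4p+\gamma$. Yours avoids the $\ell$-bookkeeping, while the paper's also yields a frequency-localized $L^q\to L^p$ bound.

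Two points in your execution need repair. The first is the fractional-power step you single out: $\|\bfL^{s}\bmf\|_{L^q_{r\ud r}}\lesssim\|e^{i\theta}\bmf\|_{W^{2s,q}_x}$ for non-integer $s$, together with the converse elliptic bound used in the $L^\infty$ step. This does not follow from Lemma~\ref{lem:LPbounds1}. Uniform $L^1$ bounds on the individual $P_\ell^\bfL$ do not add up to $L^q$ bounds for $\bfL^{i\tau}$; that would need Calder\'on--Zygmund control of the full kernel, which is a separate argument.

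Your fallback does not rescue the statement either. Since the target order $2+\gamma-\frac4p$ lies below $2$, integer powers force $W^{2,q}$, which loses $\frac4p-\gamma$ derivatives, not $\gamma$. The paper writes $\bfL^{s}\bmf=e^{-i\theta}\bfL_\nr^{s}(e^{i\theta}\bmf)$ and invokes elliptic regularity. The tool behind this is the SG pseudodifferential calculus for complex powers of $\bfL_\nr$ recalled in the appendix (\cite{MSS}). There $\bfL_\nr^{s}$ has SG order $(2s,0)$, so it maps $W^{2s,q}\to L^q$, and an elliptic parametrix gives the reverse inequality.

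The second point concerns \eqref{eq:Linftydispersiveboundnonsharp2}. Choose $\sigma$ so that the resulting power is an integer: $\sigma=2$ for $j=0$ and $\sigma=1$ for $j=1$ give $\bfL^{1+\frac\sigma2-\frac j2}=\bfL^2$ and $\bfL$ respectively. Then $\sum_\ell 2^{-\sigma\ell}\|P_\ell^\bfL\bfL^{m}\bmf\|_{L^1_{r\ud r}}\lesssim\|\bfL^m\bmf\|_{L^1_{r\ud r}}$ by Lemma~\ref{lem:LPbounds1}, and $\bfL_\nr^m$ is a differential operator of order $2m$ with bounded coefficients.

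Your choices $\sigma=1$ and $\sigma=0$ leave $\bfL^{3/2}$ and $\bfL^{1/2}$. The second comes with no geometric factor at all, so you would need an unproved bound such as $\|\bfL^{-1/2}P_\ell^\bfL\|_{L^1\to L^1}\lesssim 2^{-\ell}$. Moreover, your reduction to $\bfL^{\lceil 2-j/2\rceil}=\bfL^2$ costs four derivatives when $j=1$, where only three are allowed.

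Finally, in \eqref{eq:Linftyweighteddispersivebound1} you cannot take $q$ close to $2$, since $q$ is fixed by $p$. This is harmless: $\jap{x}^{-1}\in L^m(\bbR^2)$ with $\frac1m=\frac12-\frac1p$ for every $p\in(2,\infty)$. So \eqref{eq:Linftydispersiveboundnonsharp1} with $\gamma=\frac2p$ plus H\"older gives the claim directly, as in the paper.
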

\begin{proof}
    First using interpolation we prove an $L^p_{r\ud r}$ estimate for frequency localized functions. For any $\ell\geq0$ and $\delta\in\bbR$, and for $z\in\{\Re z\in[0,1]\}$ consider (we suppress the dependence on $t$, $\ell$, and $\delta$ from the notation $T^\pm_z$)
    \begin{align*}
        T^{\pm}_z:=e^{\pm it \sqrt{\bfL}}2^{(1-z+\delta)\ell}\bfL^{-1-\frac{1-z+\delta}{2}}P_\ell^\bfL P_c^\bfL.
    \end{align*}
    By Proposition~\ref{propKG1dec} and Lemma~\ref{lem:LPbounds1},
    \begin{align*}
        \|T^\pm_{1+i\tau}\|_{ L^1_{r\ud r}\to L^\infty_r}\leq C(\tau)\jap{t}^{-1},   
    \end{align*}
    where the constant $C(\tau)$ grows at most polynomially with respect to $\tau\in\bbR$. On the other hand by Theorem~\ref{propdFT}, in particular the Plancherel statement,
    \begin{align*}
        \|T^\pm_{i\tau}\|_{ L^2_{r\ud r}\to L^2_{r\ud r}}\leq C 2^{-2\ell},   
    \end{align*}
    with a constant that is independent of $\tau\in\bbR$. By interpolation, cf. \cite[Theorem 1.3.7]{GrafakosCFA},
    \begin{equation}
        \|T^\pm_{1-\frac{2}{p}}\|_{L^q_{r\ud r}\to L^p_{r\ud r}}\lesssim \jap{t}^{-1+\frac{2}{p}}2^{-\frac{4\ell}{p}},\qquad p\in(2,\infty), \quad \frac{1}{p}+\frac{1}{q}=1.
    \end{equation}
    In particular, if for a given $p\in(2,\infty)$ and $\gamma\geq0$ we choose $\delta=-\frac{6}{p}+\gamma$ then we have shown that 
    \begin{equation}
        \|e^{\pm i t\sqrt{\bfL}}P_\ell^\bfL P_c^\bfL \bmf\|_{L^p_{r\ud r}}\lesssim 2^{-\gamma\ell}\jap{t}^{-1+\frac{2}{p}}\|\bfL^{1-\frac{2}{p}+\frac{\gamma}{2}}\bmf\|_{L^q_{r\ud r}}.
    \end{equation}
    If $\gamma>0$ the right-hand side can be summed up in $\ell\geq0$. Since $\bfL^{1-\frac{2}{p}+\frac{\gamma}{2}}\bmf=e^{-i\theta}\bfL_\nr^{1-\frac{2}{p}+\frac{\gamma}{2}}(e^{i\theta}\bmf)$ we conclude that
    \begin{equation}
        \|e^{\pm i t\sqrt{\bfL}}P_c^\bfL \bmf\|_{L^p_{r\ud r}}\lesssim \jap{t}^{-1+\frac{2}{p}}\|e^{i\theta}\bmf\|_{W^{2+\gamma-\frac{4}{p},q}_x(\bbR^2)},\qquad \gamma>0,\quad p\in(2,\infty),\quad \frac{1}{p}+\frac{1}{q}=1.
    \end{equation}
    For the $L^\infty$ bound \eqref{eq:Linftydispersiveboundnonsharp1} we use Sobolev and elliptic regularity as well as \eqref{eq:Lpdispersivebound1} with $\gamma$ replaced by $\frac{\gamma}{2}$. Indeed,
    \begin{align*}
       \|e^{\pm i t\sqrt{\bfL}}P_c^\bfL\bmf\|_{L^\infty_{r}}&\lesssim \|e^{\pm i t\sqrt{\bfL_\nr}}e^{i\theta}P_c^\bfL\bmf\|_{L^\infty_{x}(\bbR^2)}\lesssim \|e^{\pm i t\sqrt{\bfL_\nr}}e^{i\theta}\bmf\|_{W^{\frac{2}{p},p}_{x}(\bbR^2)}\\
       &\lesssim \|\bfL_\nr^{\frac{1}{p}+\frac{\gamma}{4}}e^{\pm i t\sqrt{\bfL_\nr}}e^{i\theta}\bmf\|_{L^p_{x}(\bbR^2)}+\|e^{\pm i t\sqrt{\bfL_\nr}}e^{i\theta}\bmf\|_{L^p_{x}(\bbR^2)}\\
       &\lesssim \jap{t}^{-1+\frac{2}{p}}\big(\|e^{i\theta}\bfL^{\frac{1}{p}+\frac{\gamma}{4}}\bmf\|_{W_x^{2+\frac{\gamma}{2}-\frac{4}{p},q}(\bbR^2)}+\|e^{i\theta}\bmf\|_{W_x^{2+\frac{\gamma}{2}-\frac{4}{p},q}(\bbR^2)}\big)\\
       &\lesssim \jap{t}^{-1+\frac{2}{p}}\big(\|\bfL_\nr^{1+\frac{\gamma}{2}-\frac{1}{p}}(e^{i\theta}\bmf)\|_{L^q_x(\bbR^2)}+\|e^{i\theta}\bmf\|_{L^q_{x}(\bbR^2)}\big)\\
       &\lesssim \jap{t}^{-1+\frac{2}{p}}\|e^{i\theta}\bmf\|_{W_x^{2+\gamma-\frac{2}{p},q}(\bbR^2)}.
    \end{align*}
    Using this estimate with $\gamma=\frac{2}{p}$, applying H\"older with $\frac{1}{2}+\frac{1}{m}=\frac{1}{q}$ (to $\bmf$ and $\bfL^{-\frac{1}{2}}\bmf$) and noting that $\jap{x}^{-1}\in L^m_x(\bbR^2)$ gives \eqref{eq:Linftyweighteddispersivebound1}. Here we also use that $\|e^{i\theta}\bmf\|_{W^{2,q}_x(\bbR^2)}\simeq \|\jap{D}^2(e^{i\theta}\bmf)\|_{L^q_x(\bbR^2)}$ and that $\bfL^{-\frac{j}{2}}_\nr$ is bounded in $L^q_x(\bbR^2)$. Finally, \eqref{eq:Linftydispersiveboundnonsharp2} follows directly from Proposition~\ref{propKG1dec} by choosing $\sigma=1,2$ and summing up dyadically. 
\end{proof}
\begin{cor}\label{cor34KG1dec}
    Suppose $\bmv$ is a solution of \eqref{KGj'}. For any $p\in(2,\infty)$, $\gamma>0$, and with $q^{-1}+p^{-1}=1$, (the implicit constant is allowed to depend on $p$ and $\gamma$)
    \begin{align}
        \sup_{r\geq0}|\bmv(t,r)|&\lesssim \jap{t}^{-1+\frac{2}{p}}\|e^{i\theta}\bmv_0\|_{W^{2+\gamma-\frac{2}{p},q}_x(\bbR^2)}+\jap{t}^{-1+\frac{2}{p}}\|e^{i\theta}\bmv_1\|_{W^{1+\gamma-\frac{2}{p},q}_x(\bbR^2)}\\
        &\quad+\int_0^t\jap{t-s}^{-1+\frac{2}{p}}\|e^{i\theta}\bmF\|_{W^{1+\gamma-\frac{2}{p},q}_x(\bbR^2)}\ud s,
    \end{align}
    and 
        \begin{align}
        \sup_{r\geq0}|\bmv(t,r)|&\lesssim \jap{t}^{-1}\sum_{\ell=0}^4\|\partial_x^\ell(e^{i\theta}\bmv_0)\|_{L^1_x(\bbR^2)}+\jap{t}^{-1}\sum_{\ell=0}^3\|\partial_x^\ell(e^{i\theta}\bmv_1)\|_{L^1_x(\bbR^2)}\\
        &\quad+\sum_{\ell=0}^3\int_0^t\jap{t-s}^{-1}\|\partial_x^\ell(e^{i\theta}\bmF)\|_{L^1_x(\bbR^2)}\ud s.
    \end{align}
\end{cor}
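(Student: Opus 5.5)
The plan is to write the solution of \eqref{KGj'} by Duhamel's formula in terms of the half-wave groups,
\begin{equation*}
\bmv(t)=\cos(t\sqrt{\bfL})\bmv_0+\frac{\sin(t\sqrt{\bfL})}{\sqrt{\bfL}}\bmv_1+\int_0^t\frac{\sin((t-s)\sqrt{\bfL})}{\sqrt{\bfL}}\bmF(s)\,\ud s,
\end{equation*}
which is legitimate since $\bmv_0,\bmv_1,\bmF$ lie in $\mathrm{Ran}\,P_c^\bfL$. Here $\sqrt{\bfL}$ is understood through the distorted Fourier transform of Theorem~\ref{propdFT}, so that $\bfL^{-1/2}P_c^\bfL$ is bounded because $\jap{k}\geq1$. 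Expanding $\cos$ and $\sin$ into $e^{\pm it\sqrt{\bfL}}$, each term becomes of the form $e^{\pm i t\sqrt{\bfL}}\bfL^{-j/2}P_c^\bfL\bmg$ with $j\in\{0,1\}$. The $\bmv_0$ term has $j=0$, while the $\bmv_1$ term and the Duhamel integrand have $j=1$.

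For the first estimate I would bound the $\bmv_0$ term directly by \eqref{eq:Linftydispersiveboundnonsharp1}. For the $j=1$ terms, I would rerun the argument of Corollary~\ref{cor:Lpdisp1} with $\bfL^{-1/2}$ inserted. The cleanest route is to note that $e^{\pm it\sqrt{\bfL}}\bfL^{-1/2}P_c^\bfL\bmg = e^{\pm it\sqrt{\bfL}}P_c^\bfL(\bfL^{-1/2}\bmg)$ and apply \eqref{eq:Linftydispersiveboundnonsharp1} to $\bfL^{-1/2}\bmg$. Then use $e^{i\theta}\bfL^{-1/2}\bmg=\bfL_\nr^{-1/2}(e^{i\theta}\bmg)$ together with the mapping property $\bfL_\nr^{-1/2}:W^{s-1,q}_x\to W^{s,q}_x$ for $1<q<2$. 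That mapping property follows from elliptic regularity for $\bfL_\nr=(-\Delta+1)+\bfV_0$, with $\bfV_0$ a smooth bounded potential, combined with complex interpolation of the powers $\bfL_\nr^{i\tau}$. This gives the $W^{1+\gamma-2/p,q}$ norms for $\bmv_1$ and for $\bmF(s)$ at time $s$ with decay $\jap{t-s}^{-1+2/p}$. Integrating in $s$ then yields the first estimate.

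The second estimate comes in the same way from \eqref{eq:Linftydispersiveboundnonsharp2}. Take $j=0$, which costs four derivatives in $L^1_x$, for $\bmv_0$. Take $j=1$, which costs three derivatives in $L^1_x$, for $\bmv_1$ and for the Duhamel integrand, the latter with decay $\jap{t-s}^{-1}$. For the pointwise bound, $\sup_r|\bmv(t,r)|$ equals the $L^\infty_r$ norm of each contribution, so summing the three pieces finishes the argument.

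The main point to check is the $L^q$ Sobolev mapping of $\bfL_\nr^{-1/2}$ for noninteger orders, and that $P_c^\bfL$ commutes with these powers. The latter is clear from the functional calculus. For the former, I would avoid fractional issues by using the second form, which is already proved with integer derivatives in Corollary~\ref{cor:Lpdisp1}. In the first estimate I would, if needed, absorb the loss into $\gamma$ by bounding $\|\bfL_\nr^{s/2}(e^{i\theta}\bmg)\|_{L^q_x}\lesssim\|e^{i\theta}\bmg\|_{W^{s+\epsilon,q}_x}$ through complex interpolation between integer-order elliptic estimates, exactly as was done in the proof of \eqref{eq:Linftydispersiveboundnonsharp1}.
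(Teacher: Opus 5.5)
Your proposal is correct and follows the paper's proof. Both write Duhamel's formula in terms of $e^{\pm it\sqrt{\bfL}}\bfL^{-j/2}P_c^\bfL$ and then apply \eqref{eq:Linftydispersiveboundnonsharp1} and \eqref{eq:Linftydispersiveboundnonsharp2}, with $j=0$ for $\bmv_0$ and $j=1$ for $\bmv_1$ and the Duhamel integrand. The only difference is presentational: the paper cites just the $L^q_x$-boundedness of $\bfL_\nr^{-1/2}$ for the $j=1$ terms, while you state explicitly the one-derivative gain $\bfL_\nr^{-1/2}:W^{s-1,q}_x\to W^{s,q}_x$ that the first estimate actually uses.
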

\begin{proof}
    The desired result then follows from the usual representation formula for the solution $\bmv$ of \eqref{KGj'}, and the estimates \eqref{eq:Linftydispersiveboundnonsharp1} and \eqref{eq:Linftydispersiveboundnonsharp2}. Here for the initial evolution of the time derivative and the Duhamel term we also use the boundedness of $\bfL_{\nr}^{-\frac{1}{2}}$ in $L^q_x(\bbR^2)$.
\end{proof}

We now turn to the proof of Proposition~\ref{propKG1dec}.

\begin{proof}[Proof of Proposition~\ref{propKG1dec}]
Let $\gamma=1+\frac{\sigma+i\tau}{2}$. In the notation introduced at the beginning of Section~\ref{ssecLP} let $\bfP_\ell^\bfL:=P_{[2^{\ell-1},2^{\ell+1}]}^\bfL$ for $\ell>0$ and $\bfP_0^\bfL:=P_{[0,1]}^\bfL$. Denote the corresponding kernels by $\bffy_\ell:=\fy_{[2^{\ell-1},2^\ell]}$, for $\ell>0$, and $\bffy_0=\fy_{[0,1]}$. Note that $\bfP_\ell^\bfL P_\ell^\bfL=P_\ell^\bfL$, so we can write
\begin{align*}
    e^{\pm i t\sqrt{\bfL}}\bfP_c^\bfL\bmf=\sum_{\ell\geq0}e^{\pm i t\sqrt{\bfL}}P_\ell^\bfL\bfP_c^\bfL\bmf=\sum_{\ell\geq0}e^{\pm i t\sqrt{\bfL}}\bfL^{-\gamma}\bfP_\ell^\bfL \bfP_c^\bfL \bfL^\gamma P_\ell^\bfL\bmf.
\end{align*}
Using the distorted Fourier transform we have
\begin{align*}
    e^{\pm i t\sqrt{\bfL}}\bfL^{-\gamma}\bfP_\ell^\bfL \bfP_c^\bfL \bfL^\gamma P_\ell^\bfL\bmf(r)=\int_0^\infty \bfK^j_{\pm,\ell}(t,r,s)\bfL^\gamma P_\ell^\bfL\bmf(s)s\ud s,\qquad \gamma=1+\frac{\sigma+i\tau}{2},
\end{align*}
where 
\begin{align*}
    \bfK_{\pm,\ell}(t,r,s)=\int_0^\infty \jap{k}^{-2-\sigma-i\tau}e^{\pm it\jap{k}}E(r,k)\overline{E(s,k)}^t\bffy_\ell(k)k\ud k.
\end{align*}
The proposition will follow if we can show that
\begin{align*}
\sup_{r,s\geq 0}\big| \mathbf{K}_{\pm,\ell}(t,r,s) \big| \lesssim 2^{-\sigma\ell} \jap{t}^{-1}. 
\end{align*}
The estimates for the two signs $\pm$ are almost identical so we only consider the $+$ sign. In our proof we will perform up to one integration by parts in $k$. When the derivative falls on $\jap{k}^{-2-\sigma-i\tau}$ this can produce a constant that grows at most linearly in $\sigma$ and $\tau$. This explains the claim about  the constant $C_{\sigma,\tau}$ in the statement of the proposition.
By symmetry we may assume, without loss of generality, that $r\geq s$.
We let $\chi:[0,\infty)\to[0,1]$,  be a smooth cutoff function equal to $1$ in $[0,1]$, decreasing, and vanishing on $[2,\infty)$, and denote $\chi^c := 1-\chi$. We split the kernel into three 
main pieces:
\begin{align*}
\nonumber
\mathbf{K}_{+,\ell}(t,r,s) & = \mathbf{K}_{1,\ell}(t,r,s) + \mathbf{K}_{2,\ell}(t,r,s) + \mathbf{K}_{3,\ell}(t,r,s),
\\
\mathbf{K}_{1,\ell}(t,r,s) & := \int_0^\infty \jap{k}^{-2-\sigma-i\tau}e^{it\jap{k}}E(r,k)\overline{E(s,k)}^t\bffy_\ell(k)\chi^c(sk )k\ud k,
\\
\mathbf{K}_{2,\ell}(t,r,s) & := \int_0^\infty \jap{k}^{-2-\sigma-i\tau}e^{ it\jap{k}}E(r,k)\overline{E(s,k)}^t\bffy_\ell(k) \chi(rk )\chi(sk )k\ud k,
\\
\mathbf{K}_{3,\ell}(t,r,s) & :=  \int_0^\infty \jap{k}^{-2-\sigma-i\tau}e^{ it\jap{k}}E(r,k)\overline{E(s,k)}^t\bffy_\ell(k) \chi^c(rk )\chi(sk )k\ud k.
\end{align*}
We will prove that 
\begin{align}\label{prKG1main}
\sup_{r,s\geq 0} \big|2^{(2+\sigma)\ell} \mathbf{K}_{j,\ell}(r,s) \big| \lesssim 2^{2\ell} \jap{t}^{-1}, \qquad j=1,2,3,\qquad \ell\geq0.   
\end{align}

\noindent
{\it Estimate of $\mathbf{K}_{1,\ell}$.}
Since we have assumed that $r\geq s$, on the support of $\mathbf{K}_{1,\ell}$ we have $rk  \geq sk  \gtrsim 1$. Using the asymptotics for the Fourier basis $E$ from Theorem~\ref{propdFT} we need to estimate (having artificially inserted $2^{(2+\sigma)\ell}$)
\begin{align}\label{prKG16}
\begin{split}
L_{\eps_1,\eps_2}(t,r,s) := 
  \int_0^\infty \frac{1}{\sqrt{rs }}   e^{\eps_1 irk }  \psi_{\eps_1}(r,k) e^{\eps_2 isk }  \psi_{\eps_2}(s,k)   e^{it\sqrt{k^2+1}} \, \bffy_\ell(k) \chi^c(sk ) \frac{2^{(2+\sigma)\ell} }{\jap{k}^{2+\sigma}} \,  dk, 
\end{split}
\end{align}
where $\eps_1,\eps_2 \in \{+,-\}$ and $\psi_{\eps_m}$  satisfy $|k^j\partial_k^j\psi_{\eps_m}|\lesssim 1$ for $j=0,1,2$. 
We only consider the case $(\eps_1,\eps_2) = (-,+)$ since the other cases are similar or easier.
Therefore writing
\begin{align}\label{prKG1L}
\begin{split}
L(t,r,s) & := \int_0^\infty \frac{1}{\sqrt{rk }} \frac{1}{\sqrt{sk }} e^{i t S(r,s,k,t)}  A(r,s,k)    k \, dk ,
\\ S(r,s,k,t) & := \frac{k}{t}(s-r) + \sqrt{k^2+1},
\\ A(r,s,k) & := \big( 1 + \phi_-(r,k)\big) \big( 1 + \phi_{+}(s,k)\big)(2^\ell\jap{k}^{-1})^{2+\sigma}\bffy_\ell(k) \chi^c(sk ),
\end{split}
\end{align}
we aim to prove that
\begin{align}\label{prKG1main1}
\sup_{r,s\geq 0} \big| L(t,r,s) \big| \lesssim 2^{2\ell} \jap{t}^{-1}. 
\end{align}
%
%
First note that we may restrict to $t\geq1$,
because otherwise \eqref{prKG1main1} follows by crudely bounding the integrand in absolute value. 
For $t\geq1$ we observe that according to \eqref{prKG1L} and in the support of the integral,
\begin{align*}
& \partial_k  S(r,s,k,t) = \frac{1}{t}(s-r) + \frac{k}{\sqrt{k^2+1}},
\qquad \partial_k^2 S(r,s,k,t) = (k^2+1)^{-3/2} \approx 2^{-3\ell}.
\end{align*}
In what follows we will often omit some of the arguments $(r,s,k,t)$ for ease of notation,
and when this causes no confusion. The phase $S$ has a unique stationary point, which we denote by $k_0$, whenever $\frac{1}{t}(s-r) \in (-1,0)$. 

First consider the case $\ell=0$.  If $k_0s\leq\frac{1}{2}$ then $|\partial_kS|\simeq k$ for $k$ in the support of $A$ and we can bound $L$ using one integration by parts by
\begin{align*}
   t^{-1}+\frac{1}{t\sqrt{rs}} \int_{1/s}^1\frac{\ud k}{k^2}\lesssim t^{-1}.
\end{align*}
Here we have used that in the support of $A$ we have $sk\geq1$. If $k_0>2$ then $|\partial_kS|$ is bounded below by a constant (independent of $r,s,t$) and we again get the desired bound by integration by parts. We now assume that $\frac{1}{2s}\leq k_0\leq 2$. Then since $s^{-1}\lesssim k_0\simeq t^{-1}(r-s)\leq rt^{-1}$, we have $(rs)^{-1}\lesssim t^{-1}$. It follows that on the interval $[k_0-\frac{1}{\sqrt{rs}},k_0+\frac{1}{\sqrt{rs}}]$ we bound the integral in $L$ crudely by bounding the integrand by $(rs)^{-\frac{1}{2}}$. Outside of this interval we observe that
\begin{align*}
    |\partial_kS(k)|=|\partial_kS(k)-\partial_kS(k_0)|\simeq |k_0-k|.
\end{align*}
After integration by parts, and observing that the boundary terms are bounded by $t^{-1}$, we can bound $L$ in the interval $[\frac{1}{s},k_0-\frac{1}{\sqrt{rs}}]$ (if non-empty) by
\begin{align*}
    t^{-1}+ \frac{1}{t\sqrt{st}}\int_{1/s}^{k_0/2}\frac{\ud k}{k^2}+\frac{1}{t\sqrt{rs}}\int_{k_0/2}^{k_0-\frac{1}{\sqrt{rs}}}\frac{\ud k}{(k_0-k)^2}\lesssim t^{-1}.
\end{align*}
A similar argument can be used in the region $[k_0+\frac{1}{\sqrt{rs}},1]$ by dividing the integral into the regions $[k_0+\frac{1}{\sqrt{rs}},2k_0]$ and $[2k_0,1]$.  


Next we consider $\ell\geq1$. In what follows we will assume that the stationary point $k_0$ is in the support of the integral, otherwise the desired bound can be obtained more easily. In particular, we have $k_0 \approx 2^\ell$ and $r \gtrsim t$.
Let $q_0$ be the smallest integer such that $2^{q_0} \geq t^{-1/2}2^{3\ell/2}$. 
We decompose the support of the integral according to the distance of $k$ to $k_0$ as follows:
\begin{align}\label{prKGmain1q}
\begin{split}
& L(t,r,s) = \sum_{q =q_0}^{\ell+10} L_{q}(t,r,s), 
\\
& L_{q}(t,r,s) := \int_0^\infty \frac{1}{\sqrt{rk }} \frac{1}{\sqrt{sk }} e^{i t S} A(r,s,k)
  \varphi_q^{(q_0)}(k-k_0)  k \, dk,
\end{split}
\end{align}
where $\fy_q^{(q_0)}(k-k_0)$ is a cutoff to $\{|k-k_0|\lesssim 2^{q_0}\}$ when $q=q_0$ and to $\{|k-k_0|\simeq 2^q\}$ if $q>q_0$. Here the summation is understood to have only the term $q=q_0$ if $\ell+10\leq q_0$. Note that the estimate for the term with $q=q_0$ follows by direct integration, using  that $r^{-\frac{1}{2}}\lesssim t^{-\frac{1}{2}} $,  $sk\geq1$, and $2^{q_0}\approx t^{-1/2}2^{3\ell/2}$, giving
\begin{align*}
 \vert L_{q_0}\vert \lesssim t^{-1/2} 
  2^{\ell/2} 
  2^{q_0} \lesssim t^{-1}2^{2\ell}.
\end{align*}
For $q>q_0$ observe that on the support of $L_q$ we have
\begin{align}\label{prKG1main1S}
| \partial_k  S| \approx |k-k_0| \langle k\rangle^{-3} \approx 2^q 2^{-3\ell} \gtrsim t^{-1/2}2^{-3\ell/2}. 
\end{align}
Integrating by parts in $k$ gives
\begin{align*}
L_{q}(t,r,s) = \frac{i}{t} \int_0^\infty e^{i t S} \partial_k  \Big[ \frac{1}{\partial_k  S}
	 \frac{1}{\sqrt{rs}} \,  A \, \varphi_q^{(q_0)}(k-k_0) \,   \Big] \, dk ,
\end{align*}
so that (in what follows we will sometimes  omit the variable $t$ for lighter notation)
\begin{align}\label{prKG1main1q}
\begin{split}
| L_{q} | &\lesssim t^{-1} (I + II + III), \qquad
\\ I(t,r,s) & = \int_0^\infty 
  \frac{|\partial_k^2 S|}{(\partial_k  S)^2}
	\frac{1}{\sqrt{rs}}  \, | A | \,\varphi_q^{(q_0)}(k-k_0)  \, dk ,	
\\
II(t,r,s) & = \int_0^\infty \frac{1}{|\partial_k  S|}
	\frac{1}{\sqrt{rs}} \, | A |
	\,  \partial_k  \varphi_q^{(q_0)}(k-k_0) \, dk ,
\\
III(t,r,s) & = \int_0^\infty \frac{1}{|\partial_k  S|}
	\frac{1}{\sqrt{rs}}  \, \big| \partial_k   A
	 \big| \varphi_q^{(q_0)}(k-k_0) \, dk .
\end{split}
\end{align}
Since $r \gtrsim t$ and $sk  \gtrsim 1$, using also \eqref{prKG1main1S} we obtain
\begin{align*}
| I | \lesssim \int_0^\infty \langle k\rangle^3 2^{-2q}
	\frac{1}{\sqrt{t }}  \frac{1}{\sqrt{s}} \chi^c(sk ) \varphi_q(k-k_0) \, \bffy_\ell(k) \, dk \lesssim 2^{\frac{7\ell}{2}}2^{-q}t^{-\frac{1}{2}}. 
\end{align*}
Then, summing over $q\geq q_0$ gives a bound by $2^{2\ell}t^{-1}$. 
Similarly, we can estimate the second term in \eqref{prKG1main1q}, 
using $r \gtrsim t$ and $sk  \gtrsim 1$, by
\begin{align*}
| II | \lesssim \int_0^\infty \langle k\rangle^3 2^{-q}
	\frac{1}{\sqrt{t }}  \frac{1}{\sqrt{s}}  2^{-q} \varphi_{[2^{q-2},2^{q+2}]}(k-k_0) \, \bffy_\ell(k) \, dk  \lesssim 2^{\frac{7\ell}{2}}2^{-q}t^{-\frac{1}{2}},
\end{align*}
which suffices. For the third term in \eqref{prKG1main1q} we have 
\begin{align*}
| III | \lesssim \int_0^\infty \langle k\rangle^3 2^{-q}
	\frac{1}{\sqrt{t }}  \frac{1}{\sqrt{s}} \varphi_q^{(q_0)}(k-k_0) \, 2^{-\ell} \varphi_{[2^{\ell-2},2^{\ell+2}]}(k) \, dk  \lesssim 2^{\frac{5\ell}{2}}t^{-\frac{1}{2}},
\end{align*}
which is sufficient since $\ell \geq q-10$ and the above can be bounded by $2^{10}2^{\frac{7\ell}{2}}2^{-q}t^{-\frac{1}{2}}$.
This concludes the proof of \eqref{prKG1main1} and therefore 
the proof of \eqref{prKG1main} for $j=1$. 

\medskip
\noindent
{\it Estimate of $\mathbf{K}_{2,\ell}$.}
We use the $rk\leq1$ and $sk\leq 1$ asymptotics in Theorem~\ref{propdFT}, and for each column of $E$ we consider the largest possible contribution. Then it suffices to prove that for $j=1,2$,
\begin{equation*}
   \sup_{r,s\geq0}|\bfK_{2,\ell}^j(r,s)|\lesssim 2^{2\ell}\jap{t}^{-1}, 
\end{equation*}
where
\begin{align*}
    &\bfK_{2,\ell}^1(r,s)=\int_0^\infty e^{it\jap{k}}(rk)(sk)\phi_1(r,s,k)\chi(sk)\chi(rk)\bffy_\ell(k)k\ud k,\\
    &\bfK_{2,\ell}^2(r,s)=\int_0^\infty e^{it\jap{k}}\frac{rs}{\jap{r}\jap{s}}\log(2+r)\log(2+s)\jap{k}^2\frac{(\log(2+k))^2}{\jap{\log k}^2}\phi_2(r,s,k)\chi(sk)\chi(rk)\bffy_\ell(k)k\ud k.
\end{align*}
Here $\phi_j(r,s,k)$ satisfy $|\partial_k^m\phi_j(r,s,k)|\lesssim \max\{\jap{r}^m,\jap{s}^m\}$ for $m=0,1$. When $t\leq1$ direct integration shows that $|\bfK_{2,\ell}^j|\lesssim 2^{\ell}$ for $j=1,2$. For $t\geq1$ we write $e^{it\jap{k}}=\frac{\jap{k}}{itk}\partial_k e^{it\jap{k}}$ and integrate by parts. The resulting integral can then be bounded by $t^{-1}2^{2\ell}$ by inspection. Here we only discuss the most delicate case corresponding to $\ell=0$ and $r,s\geq2$ in $\bfK_{2,\ell}^2$. Here depending on whether $\partial_k$ falls on $\jap{\log k}^{-2}$ or $\phi_2(r,s,k)$ we need to observe that
\begin{align*}
    \log(2+r)\log(2+s)\int_0^{\min\{1/r,1/s\}}\frac{\ud k}{k|\log k|^3}\lesssim 1,
\end{align*}
and
\begin{align*}
    \log(2+r)\log(2+s)\max\{r,s\}\int_0^{\min\{1/r,1/s\}}\frac{\ud k}{(\log k)^2}\lesssim 1.
\end{align*}
\noindent
{\it Estimate of $\mathbf{K}_{3,\ell}$.}
Here in the support of the integral $sk  \lesssim 1 \lesssim rk $. We use the estimates for $E(r,k)$ and $E(s,k)$ from Theorem~\ref{propdFT}. For $E(r,k)$ we will only consider the contribution of $e^{irk}$ in the asymptotics in the region $rk\geq1$, because the contribution of $e^{-irk}$ leads to a non-vanishing phase which is easier to estimate. It then suffices to prove that for $j=1,2$,
\begin{align*}
    \sup_{r,s\geq0}|\bfK_{3,\ell}^j(t,r,s)|\lesssim 2^{2\ell}\jap{t}^{-1},
\end{align*}
where with $S(r,k,t):=\jap{k}-\frac{r}{t}k$,
\begin{align*}
    &\bfK_{3,\ell}^1(t,r,s)=\int_0^\infty e^{it S(r,k,t)}(rk)^{-\frac{1}{2}}(sk)A_1(r,s,k)k\ud k,\\
    &\bfK_{3,\ell}^2(t,r,s)=\int_0^\infty e^{it S(r,k,t)}(rk)^{-\frac{1}{2}}\frac{s\log(2+s)\log(2+k)}{\jap{s}\jap{\log k}}\jap{k}A_2(r,s,k)k\ud k.
\end{align*}
Here $A_j$ is of the form
\begin{align*}
    A_j(r,s,k)=\chi^c(rk)\chi(sk)\bffy_\ell(k)\phi_j(s,k)\psi_j(r,k)
\end{align*}
where $\phi_j$ and $\psi_j$ satisfy $|(k\partial_k)^m\phi_j|\lesssim 1$ and $|(k\partial_k)^m\psi_j|\lesssim 1$ for $m=0,1$. As usual the desired bound follows by direct integration if $t\leq1$, so we assume that $t\geq1$. Let $k_0$ denote the critical point $\frac{k_0}{\jap{k_0}}=\frac{r}{t}$ where $\partial_kS$ vanishes. We start with $\ell=0$. If $k_0\geq 2$ then the phase is non-stationary and the result follows by integration by parts. Similarly if $k_0r\leq \frac{1}{2}$ the $|\partial_kS|\simeq k$ and the result again follows by integration by parts. So we assume that $\frac{1}{2r}\leq k_0\leq 2$. 
The integrand in $\bfK_{3,0}^j$ by $r^{-\frac{1}{2}}$, for both $j=1,2$, is bounded by $(rk)^{-\frac{1}{2}}\frac{s}{\jap{s}}k$. Therefore since $k_0\simeq \frac{r}{t}$, the integral on $[k_0-\frac{\jap{s}}{s\sqrt{t}},k_0+\frac{\jap{s}}{s\sqrt{t}}]$ is bounded by
\begin{align*}
    (rk_0)^{-\frac{1}{2}}t^{-\frac{1}{2}}k_0\simeq t^{-1}.
\end{align*}
On $[r^{-1},k_0-\frac{\jap{s}}{s\sqrt{t}}]$ we integrate by parts and bound the result by
\begin{align*}
    t^{-1}+r^{-\frac{1}{2}}t^{-1}\int_{1/r}^{k_0/2}\frac{\ud k}{k^{\frac{3}{2}}} +t^{-1}(rk_0)^{-\frac{1}{2}}\frac{s}{\jap{s}}k_0\int_{k_0/2}^{k_0-\frac{\jap{s}}{s\sqrt{t}}}\frac{\ud k}{(k_0-k)^2}\lesssim t^{-1}.
\end{align*}
A similar estimate, where we divide the region of integration into $[k_0+\frac{\jap{s}}{s\sqrt{t}},2k_0]$ and $[2k_0,1]$, shows that the integral on $[k_0+\frac{\jap{s}}{s\sqrt{t}},1]$ is bounded by $t^{-1}$.

Next we turn to $\ell\geq1$ and consider the only the most difficult case where $k_0\simeq 2^\ell$. In particular, $r \simeq t$.  We also restrict attention to $\bfK_{3,\ell}^1$ as $\bfK_{3,\ell}^2$ satisfies more favorable estimates. Similarly to \eqref{prKG1L} and \eqref{prKGmain1q} we let 
\begin{align}\label{prKG1main3q}
\begin{split}
& \bfK^1_{3,\ell}(t,r,s) = \sum_{q = q_0}^{\ell+10} \bfK_{q}^1(t,r,s),  
\\
& \bfK_{q}^1(t,r,s) := \int_0^\infty e^{it S(r,k,t)}(rk)^{-\frac{1}{2}}(sk)\varphi_q^{(q_0)}(k-k_0)A_1(r,s,k)k\ud k,
\end{split}
\end{align}
where $q_0$ is the smallest integer such that $2^{q_0} \geq t^{-1/2}2^{3\ell/2}$.
For $q=q_0$ by direct integration and using $r\simeq t$ we obtain $|\bfK_{q_0}^1| \lesssim t^{-1}2^{2\ell}$. When $q>q_0$ we integrate by parts and use the fact that since $r\simeq t$, in the support of the integral
\begin{align*}
    |\partial_kS|\simeq 2^{-3\ell}2^q,\qquad |\partial_k^2S|\simeq 2^{-3\ell},\qquad |\partial_k^m\big((sk)(rk)^{-\frac{1}{2}}k A_1\big)|\lesssim 2^{(\frac{1}{2}-m)\ell}t^{-\frac{1}{2}},~~m=0,1.
\end{align*}
Since the length of the interval of integration is bounded by $\min\{2^{\ell},2^q\}$, it follows that for $q>q_0$,
\begin{equation*}
    |\bfK_q^1|\lesssim t^{-\frac{3}{2}}2^{-q}2^{\frac{7\ell}{2}}.
\end{equation*}
Summing up in $q$ and using that $2^{q_0}\simeq 2^{\frac{3\ell}{2}}t^{-\frac{1}{2}}$, we conclude that $\sum_{q > q_0}^{\ell+10} |\bfK_{q}^1|\lesssim 2^{2\ell}t^{-1}$. This concludes the proof of \eqref{prKG1main} for $j=3$, and of the proposition.
\end{proof}


\subsection{(Integrated) local energy decay} \label{subec:localenergy} 
In this subsection we prove a number of local energy decay estimates, both pointwise in time and and integrated in time. We continue to use $\{\chi_0,\chi_1\}$ to denote a partition of unity for $[0,\infty)$ with $\chi_0$ supported in $[0,2]$ and $\chi_1$ in $[1,\infty)$. We also write $\bfI$ for the identity operator.
\begin{lem}\label{locdec_lem2}
Let $k_0>0$ be fixed. The following inequality holds
\begin{align}\label{locdec_3}
\big\Vert \langle r\rangle^{-1}e^{it\sqrt{\textbf{L}}}\chi_1\big(\sqrt{\mathbf{L}-\bfI}/k_0\big)P_c^\bfL\mathbf{f}\big\Vert_{L^2_{r\ud r}}&\lesssim_{k_0} \dfrac{1}{t}\Vert \langle r\rangle \mathbf{f}\Vert_{L^2_{r\ud r}},  \quad t\geq1.
\end{align}
\end{lem}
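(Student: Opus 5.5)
We work through the distorted Fourier representation. Since $\chi_1(\sqrt{\bfL-\bfI}/k_0)$ acts on the transform side as multiplication by $\chi_1(k/k_0)$, we have
\[
e^{it\sqrt{\bfL}}\chi_1\big(\sqrt{\bfL-\bfI}/k_0\big)P_c^\bfL\bmf(r)=\int_0^\infty \bfK(t,r,s)\bmf(s)\,s\,\ud s,
\]
with kernel
\[
\bfK(t,r,s)=\int_0^\infty e^{it\jap{k}}\chi_1(k/k_0)E(r,k)\overline{E(s,k)}^t\,k\,\ud k .
\]
The cutoff keeps $k\gtrsim k_0$, so the delicate low-frequency logarithmic behaviour of $E$ never enters; the constants may depend on $k_0$.

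The decay comes from integrating by parts in $k$, using $e^{it\jap{k}}=\frac{\jap{k}}{itk}\partial_k e^{it\jap{k}}$. Since $k\geq k_0$, the factor $\jap{k}/k$ is harmless, and this gains $t^{-1}$. The derivative can fall on three things.

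First, it can fall on the symbols in Theorem~\ref{propdFT}. In the region $rk\leq1$ we have $|(r^{-1}\partial_k)^j\phi|\lesssim1$, and in the region $rk\geq1$ we have $|(k\partial_k)^j\psi|\lesssim1$. In both cases this costs at most one factor of $\jap{r}$ or $\jap{s}$.

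Second, it can fall on the oscillatory factors $e^{\pm ikr}$ and $e^{\pm iks}$ in \eqref{eq:Egross}. This produces factors $r$ and $s$, that is, exactly one power of $\jap{r}$ and one power of $\jap{s}$.

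Third, it can fall on the cutoffs $\chi(rk)$ or $\chi(sk)$, if we split into regions as in the proof of Proposition~\ref{propKG1dec}, or on $\chi_1(k/k_0)$. These terms are harmless.

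As a result, $t\,\bfK$ becomes a sum of oscillatory integrals whose amplitudes are bounded by $\jap{r}\jap{s}\,|E(r,k)||E(s,k)|$ together with the $k$-derivative of the symbols. Because at most one derivative hits $E(r,k)$ and at most one hits $E(s,k)$, the weight needed is at most $\jap{r}$ on one side and $\jap{s}$ on the other, which is precisely what the weights $\jap{r}^{-1}$ and $\jap{s}$ in \eqref{locdec_3} absorb.

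To conclude we need an $L^2$ bound, not a pointwise one, so we do not estimate the kernel crudely. Instead we write the result after integration by parts as
\[
\frac{1}{it}\,\mathcal{T}_1^{*}\Big[\partial_k\Big(\tfrac{\jap{k}}{k}\,\cdots\Big)\Big],
\]
that is, as a composition of bounded operators on the Fourier side. Concretely, $\partial_k$ applied to $\widetilde{\bmf}(k)=\int\overline{E(s,k)}^t\bmf(s)s\,\ud s$ gives the transform of $\bmf$ against the kernel $\partial_k\overline{E(s,k)}$. Using the asymptotics above, $\partial_k E(s,k)=s\,\widetilde E(s,k)$, where $\widetilde E$ is a modified basis whose associated transform is bounded $L^2_{s\ud s}\to L^2_{k\ud k}$ on $\{k\gtrsim k_0\}$. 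We would check this boundedness by $TT^*$ together with the same oscillatory kernel bounds as in Lemma~\ref{lem:LPbounds1}. The result is $\|\partial_k\widetilde{\bmf}\|_{L^2_{k\ud k}(k\geq k_0)}\lesssim\|\jap{s}\bmf\|_{L^2}$.

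We then need the dual statement: for $h$ supported in $k\geq k_0$, the operator $h\mapsto\jap{r}^{-1}\int E(r,k)\,h(k)\,k\,\ud k$ is bounded on $L^2$. This follows from Plancherel, so that $\jap{r}^{-1}$ is not even needed. The term in which $\partial_k$ falls on the physical-side $E(r,k)$ after moving it is handled the same way, with the factor $r$ absorbed by $\jap{r}^{-1}$. Finally, the boundary terms vanish because of the cutoff $\chi_1(k/k_0)$ and the decay at large $k$, provided $\bmf$ is regular; for general $\bmf$ we argue by density.

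The main obstacle is the rigorous $L^2$ boundedness of the modified transforms with kernels $r^{-1}\partial_kE(r,k)$ and $\jap{s}^{-1}\partial_kE(s,k)$ in the region $rk\geq1$. There the leading part is $(rk)^{-1/2}e^{\pm irk}$ times symbols, which behaves like a Hankel or Fourier transform on the half-line and is $L^2$-bounded. The remainders gain extra powers of $(rk)^{-1}$ and are Hilbert--Schmidt when $k\geq k_0$. For $rk\leq1$ with $k\geq k_0$ we have $r\lesssim 1/k_0$, so this is a compact region where Schur's test suffices.
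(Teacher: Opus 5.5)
Your proposal is correct and is essentially the paper's proof. Like the paper, you integrate by parts once in $k$ via $e^{it\jap{k}}=\frac{\jap{k}}{itk}\partial_k e^{it\jap{k}}$, treat the term where $\partial_k$ hits $\widetilde{\bmf}$ by Plancherel together with $\|\chi_1(k/k_0)\partial_k\widetilde{\bmf}\|_{L^2_{k\ud k}}\lesssim\|\jap{r}\bmf\|_{L^2_{r\ud r}}$, and treat the term where $\partial_k$ hits $E(r,k)$ by the Cotlar--Stein criterion for $rk\ge1$, with the factor $r$ absorbed by $\jap{r}^{-1}$. (The paper takes the $\partial_k\widetilde{\bmf}$ bound from Lemma~\ref{lem:dkbounds1}, whose $k\gtrsim1$ part is exactly your Plancherel plus Cotlar--Stein argument.)

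One minor correction: the region $\{rk\le1,\ k\ge k_0\}$ is not compact, since $k$ is unbounded, and Schur's test with constant weights fails there. However, the bound $|\partial_kE(r,k)|\lesssim r\lesssim k^{-1}$ makes that piece Hilbert--Schmidt; the paper instead uses Cauchy--Schwarz in $k$, which leaves a logarithmic factor in $r$ that is integrable on $r\lesssim 1/k_0$.
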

\begin{proof}
We omit the dependence of constant on $k_0$ in the proof. For $\bmf$ such that $\widetilde\bmf:=\widetilde\calF[\bmf]$ decays sufficiently fast as $k\to\infty$,
\begin{align*}   e^{it\sqrt{\textbf{L}}}\chi_1\big(\sqrt{\mathbf{L}-\bfI}/k_0\big)P_c^\bfL\mathbf{f}(r)&=\int_0^\infty e^{it\jap{k}}E(r,k)\chi_1(k/k_0)\widetilde\bmf(k)k\ud k\\
&=\frac{i}{t}\int_0^\infty E(r,k)\frac{1}{k}\partial_k\big(\jap{k}\chi_1(k/k_0)\widetilde\bmf(k)\big)k \ud k\\
&\quad+\frac{i}{t}\int_0^\infty (\partial_kE(r,k))\frac{\jap{k}}{k}\chi_1(k/k_0)\widetilde\bmf(k)k \ud k\\
&=:I+II.
\end{align*}
For $I$ we simply observe that $\|I\|_{L^2_{r\ud r}}\lesssim t^{-1}\|\jap{r}\bmf\|_{L^2_{r\ud r}}$ by Plancherel in Theorem~\ref{propdFT} and Lemma~\ref{lem:dkbounds1} (which is more refined near $k=0$ than we need here, and whose proof is independent of the material in this section). For $II$ we write
\begin{align*}
    \|t\jap{r}^{-1}II\|_{L^2_{r\ud r}}^2&\lesssim \int_0^{1/k_0}\Big(\int_0^\infty \chi_0(rk)|\partial_k E(r,k)|\chi_1(k/k_0)|\widetilde\bmf(k)|k\ud k\Big)^2r\ud r\\
    &\quad+\int_0^\infty\jap{r}^{-2}\Big(\int_0^\infty \chi_1(rk)\partial_k E(r,k)\chi_1(k/k_0)\frac{\jap{k}}{k}e^{it\jap{k}}\widetilde\bmf(k)k\ud k\Big)^2r\ud r\\
    &=:II_1+II_2.
\end{align*}
For $II_1$ using the asymptotics in the region $\{rk\leq1\}$ from Theorem~\ref{propdFT} we bound $|\partial_kE(r,k)|\lesssim k^{-1}$ and apply Cauchy-Schwarz in the $k$ integral. For $II_2$ the result follows from the asymptotics of $E(r,k)$ in the region $\{rk\geq1\}$ from Theorem~\ref{propdFT} and the Cotlar--Stein criterion: if
\begin{equation}\label{eq:oscKriterium10}
\calK(r,k)=\chi_{\geq1}(rk)(rk)^{-1/2}e^{\pm irk}\mathfrak m(r,k)
\end{equation}
with
\begin{equation}\label{eq:oscKriterium20}
|(r\partial_r)^a(k\partial_k)^b\mathfrak m(r,k)|\lesssim 1,
\qquad 0\le a,b\le2,
\end{equation}
then the operator $T_{\calK}g(r):=\int_0^\infty \calK(r,k)g(k)k\,\ud k$ is bounded from $L^2_{k\ud k}$ to $L^2_{r\ud r}$; see, for instance, \cite[Lemma~6.3]{LSS1}. 
\end{proof}


In the next lemma we incorporate small frequencies as well. 

\begin{lem}\label{locdec_lem3}
For any $\kappa>0$,
\begin{align}\label{locdec_9}
\big\Vert \langle r\rangle^{-(1+\kappa)}e^{\pm it\sqrt{\textbf{L}}}P_c^\bfL\mathbf{f}\big\Vert_{L^2_{r\ud r}}&\lesssim_\kappa \dfrac{1}{t}\Vert \langle r\rangle^{1+\kappa} \mathbf{f}\Vert_{L^2_{r\ud r}},  \quad t\geq 1. 
\end{align}
\end{lem}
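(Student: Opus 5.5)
Since Lemma~\ref{locdec_lem2} already gives the high-frequency part (with the better weight $\langle r\rangle^{\pm1}$), the plan is to split
\[
e^{\pm it\sqrt{\bfL}}P_c^\bfL\bmf=e^{\pm it\sqrt{\bfL}}\chi_1\big(\sqrt{\bfL-\bfI}/k_0\big)P_c^\bfL\bmf+e^{\pm it\sqrt{\bfL}}\chi_0\big(\sqrt{\bfL-\bfI}/k_0\big)P_c^\bfL\bmf
\]
with a fixed small $k_0$. The first piece is handled directly by Lemma~\ref{locdec_lem2}, because $\langle r\rangle^{-(1+\kappa)}\le\langle r\rangle^{-1}$ and $\langle r\rangle\le\langle r\rangle^{1+\kappa}$. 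It therefore remains to treat the low-frequency piece $k\lesssim k_0$.

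For the low-frequency piece I would proceed as in Lemma~\ref{locdec_lem2}. Write it as $\int_0^\infty e^{\pm it\jap{k}}E(r,k)\chi_0(k/k_0)\widetilde\bmf(k)k\,\ud k$, use $e^{it\jap{k}}=\frac{\jap{k}}{itk}\partial_ke^{it\jap{k}}$, and integrate by parts once in $k$. The factor $1/k$ cancels against the measure $k\,\ud k$, and there is no boundary term at $k=0$. This produces two terms, $I_0=\frac{i}{t}\int E\,\partial_k(\jap{k}\chi_0\widetilde\bmf)\,\ud k$ and $II_0=\frac{i}{t}\int(\partial_kE)\jap{k}\chi_0\widetilde\bmf\,\ud k$.

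For $I_0$, write $\partial_k\widetilde\bmf\,\ud k=\frac1k\partial_k\widetilde\bmf\,k\,\ud k$. By Plancherel, what is needed is $\|k^{-1}\partial_k\widetilde\bmf\|_{L^2_{k\ud k}(k\lesssim1)}$, with a weight that is not too singular, bounded by $\|\langle r\rangle^{1+\kappa}\bmf\|$. Here I would invoke the low-frequency bound of Lemma~\ref{lem:dkbounds1}, which the proof of Lemma~\ref{locdec_lem2} notes is refined near $k=0$, though I have not checked its exact statement. The concern is that the $1/k$ could be too singular. If it is, I would not apply Plancherel to $I_0$ at all. Instead I would estimate the operator $g\mapsto\int E(r,k)g(k)\,\ud k$ directly from $L^2_{\ud k}$ into $\langle r\rangle^{1+\kappa}L^2_{r\ud r}$, using the $rk\le1$ and $rk\ge1$ asymptotics of Theorem~\ref{propdFT}.

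For $II_0$, $\partial_k$ applied to $\widetilde\bmf=\int\overline{E(s,k)}^t\bmf(s)s\,\ud s$ produces factors like $s$ or $k^{-1}$. Weights in $r$ have to be traded against these. On $\{rk\le1\}$, Theorem~\ref{propdFT} gives $|\partial_kE|\lesssim r$, or $r/(k|\log k|^2)$ from the $\jap{\log k}^{-1}$ factor in the second column. On $\{rk\ge1\}$, $\partial_kE$ is $rE$ plus lower order terms. The extra factor of $r$ is absorbed by the weight $\langle r\rangle^{-(1+\kappa)}$, leaving $\langle r\rangle^{-\kappa}$. That leftover decay gives square-integrability in $r$ of the kernel bounds, for example $\int_0^\infty\langle r\rangle^{-2\kappa}\chi_0(rk)\,\ud r\lesssim k^{-1+2\kappa}$. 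I would then close with Cauchy--Schwarz in $k$, together with the bound $|\widetilde\bmf(k)|\lesssim\sup_{s}|E(s,k)|\cdot\|\langle s\rangle^{1+\kappa}\bmf\|_{L^2}\cdot\|\langle s\rangle^{-1-\kappa}\|_{L^2_{s\ud s}}$ and the Theorem~\ref{propdFT} size bounds for $E$ at $k\lesssim1$. On $\{rk\ge1\}$ I would use the oscillatory Cotlar--Stein criterion \eqref{eq:oscKriterium10}--\eqref{eq:oscKriterium20}, applied after pulling out $r\langle r\rangle^{-1-\kappa}$.

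\textbf{Main obstacle.} The hardest part is the threshold $k\to0$ in the second channel. There $E_{\cdot,2}$ carries $\log(2+r)/\jap{\log k}$, and $\partial_k$ of $\jap{\log k}^{-1}$ gives $1/(k\jap{\log k}^2)$, which is only barely integrable against $k\,\ud k$. This is precisely where the $\kappa$-loss in the weight has to be spent: the factor $\langle r\rangle^{-\kappa}\log(2+r)$ together with $rk\le1$ turns into a factor $k^{\kappa}$ that controls the logarithmic divergence.
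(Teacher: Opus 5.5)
The splitting, the use of Lemma~\ref{locdec_lem2} for $k\gtrsim k_0$, the single integration by parts with no boundary term at $k=0$, and your treatment of $II_0$ all work. For $II_0$ you use Minkowski in $k$ with the pointwise bound $|\widetilde\bmf(k)|\lesssim\|\jap{r}^{1+\kappa}\bmf\|_{L^2_{r\ud r}}$ for $k\lesssim1$, and Cotlar--Stein on $\{rk\ge1\}$. One correction there: on $\{rk\le1\}$ the term produced by $\partial_k\jap{\log k}^{-1}$ is $\log(2+r)/(k\jap{\log k}^2)$, not $r/(k|\log k|^2)$; with your version the $r$-integral would not close.

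\textbf{The gap is $I_0$.} Both of your options require $g:=\partial_k(\jap{k}\chi_0\widetilde\bmf)\in L^2_{\ud k}$ near $k=0$. Plancherel needs $\|k^{-1}g\|_{L^2_{k\ud k}}=\|g\|_{L^2_{\ud k}}$. Your fallback, an operator bound from $L^2_{\ud k}$, asks for exactly the same norm, so it does not remove the singularity you were worried about. That norm is infinite in general. The second column of $E$ carries $a_2(k)^{-1}\simeq c/\log k$ (see \eqref{dFT_E}, \eqref{eq:a2inv-small}). So for, e.g., compactly supported $\bmf$ one has $\widetilde\bmf_2(k)=\overline{a_2(k)}^{-1}\bigl(B(\bmf)+O(k^2)\bigr)$ with $B(\bmf)\neq0$ generically. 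Hence $\partial_k\widetilde\bmf_2$ behaves like $B(\bmf)/(k\log^2k)$. This lies in $L^2_{k\ud k}$, which is all Lemma~\ref{lem:dkbounds1} gives, but not in $L^2_{\ud k}$. So the threshold difficulty sits in $I_0$, not in $II_0$; in $II_0$ the integral $\int_0^{1/r}\log(2+r)\,k^{-1}\jap{\log k}^{-2}\,\ud k\lesssim1$ converges with no help from $\kappa$.

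\textbf{The fix.} Keep $g$ in $L^2_{k\ud k}$ and let the weight and the vanishing of $E$ at $k=0$ absorb the factor $1/k$. That is, show that $g\mapsto\jap{r}^{-1-\kappa}\int_0^{2k_0}E(r,k)g(k)\,\ud k$ is bounded from $L^2_{k\ud k}$ to $L^2_{r\ud r}$.
\begin{itemize}
\item On $\{rk\ge1\}$: localize to $k\simeq2^{-j}$. Cotlar--Stein bounds the unweighted piece by $\|k^{-1}g_j\|_{L^2_{k\ud k}}\simeq2^{j}\|g_j\|_{L^2_{k\ud k}}$. Since $r\gtrsim2^{j}$ on this region, the weight contributes $2^{-j(1+\kappa)}$, leaving the summable factor $2^{-\kappa j}$. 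This is where $\kappa>0$ is used.
\item On $\{rk\le1\}$: Cauchy--Schwarz against $\ud k/k$ works because the first column of $E$ is $O(kr)$ and the second is $O(\log(2+r)\jap{\log k}^{-1})$. This gives $\int_0^{1/r}\log^2(2+r)\jap{\log k}^{-2}\,\frac{\ud k}{k}\lesssim\log(2+r)$, and $\jap{r}^{-1-\kappa}\log^{1/2}(2+r)\in L^2_{r\ud r}$. Without the $\jap{\log k}^{-1}$ gain coming from the non-resonance of $\calH_2$, this integral would diverge.
\end{itemize}
Together with Lemma~\ref{lem:dkbounds1}, and \eqref{eq:Hardydeg1} to pass from $(\partial_k+k^{-1})\widetilde\bmf_1$ to $\partial_k\widetilde\bmf_1$, this closes $I_0$.

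With this repair your operator-level argument is a valid and shorter alternative to the paper's. The paper instead bounds the Hilbert--Schmidt norm of the doubly weighted low-frequency kernel $\jap{r}^{-1-\kappa}\jap{s}^{-1-\kappa}\int_0^\infty e^{it\jap{k}}E(r,k)\overline{E(s,k)}^t\chi_0(2k)\,k\,\ud k$. It uses stationary phase with a dyadic decomposition around the critical point when $rk,sk\ge1$, and one integration by parts when $rk,sk\le1$. That route never needs derivative bounds on $\widetilde\bmf$.
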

\begin{proof} 
We consider only $e^{it\sqrt{\bfL}}$, the case of $e^{-it\sqrt{\bfL}}$ being more fo the same.  For frequencies $k\geq1/2$ bounded away from zero the result follows from Lemma~\ref{locdec_lem2} with $k_0=1/2$. For small frequencies we write 
\begin{equation}\label{locdec_21}
\langle r\rangle^{-(1+\kappa)} \chi_0(\sqrt{\bfL-\bfI})e^{it\sqrt{\textbf{L}}}P_c^\bfL\mathbf{f}  =  \int_0^\infty \int_0^\infty   \dfrac{e^{it\langle k\rangle}}{\langle r\rangle^{1+\kappa} \langle s\rangle} E(r,k) \overline{E(s,k)}^t \bmf(s)\jap{s} \chi_0(2k)(sk) ds \, dk .
\end{equation}
We are planning to integrate by parts in $k$ in \eqref{locdec_21} and then use Cauchy-Schwarz in $s$. Since the $r$ localization weight is stronger than the $s$ localization weight, we assume that $s\geq r\geq 2$ in the remainder of the proof. We define the kernels
\begin{align*}
\nonumber
\mathbf{K}_{1}(t,r,s) & := 
\int_0^\infty e^{ it\sqrt{k^2+1}} E(r,k)\overline{E(s,k)^t} \chi_0(2k) \chi_1(rk)\chi_1(sk )k\,dk  ,
\\
\mathbf{K}_{2}(t,r,s) & := 
\int_0^\infty e^{ it\sqrt{k^2+1}}E(r,k)\overline{E(s,k)^t} \chi_0(2k) \chi_0(rk ) \chi_0(sk )k\,dk  ,
\\
\mathbf{K}_{3}(t,r,s) & :=  
\int_0^\infty e^{ it\sqrt{k^2+1}}E(r,k)\overline{E(s,k)^t}  
  \chi_0(2k) \chi_0(rk )\chi_1(sk )k\,dk  .
\end{align*}
Our aim is to prove that (entry-wise)
\begin{align}\label{prKG1main2}
\big\Vert \langle r\rangle^{-(1+\kappa)} \langle s\rangle^{-1-\kappa} \mathbf{K}_{j}(t,r,s) \big\Vert_{L^2_{r\ud r}L^2_{s\ud s}} \lesssim t^{-1}, \qquad t\geq1, \qquad j=1,2,3.   
\end{align} 
This last estimate along with \eqref{locdec_21} imply \eqref{locdec_9} by Cauchy-Schwarz. We present the proof of \eqref{prKG1main2} for $j=1,2$ and the case $j=3$ follows from combining the arguments used for $j=1,2$.

\smallskip
\noindent
{\it Estimate of $\mathbf{K}_{1}$.} Let $\phi_\ell(k)$, $\ell\leq-1$, denote a cutoff to the region $\{k\simeq 2^\ell\}$. Then in view of the asymptotics for the Fourier basis from Theorem~\ref{propdFT}, it suffices to prove that
\begin{align}\label{locdec_32}
\Vert K_\ell(t,r,s) \Vert_{L^2_{r\ud r}L^2_{s\ud s}(\{s\geq r\geq2\})} \lesssim t^{-1}2^{\kappa\ell},
\end{align}
where (dropping a factor of $\jap{s}^{-\kappa}$)
\begin{align}\label{locdec_31}
\begin{split}
K_\ell(t,r,s) & :=  \dfrac{1}{r^{\frac{3}{2}+\kappa} s^{\frac{3}{2}}}  \int_0^\infty e^{i t S(r,s,k,t)}  A(r,s,k)  \varphi_\ell(k)\, dk ,
\\ S(r,s,k,t) & := \pm\frac{k}{t}(r\pm s) + \sqrt{k^2+1},
\end{split}
\end{align}
and $A\equiv A(r,s,k)$ is supported in $\{sk\geq rk\geq 1\}$ and satisfies 
\begin{align}\label{locdec_33}
|\partial_k^\alpha A(r,s,k)| \lesssim k^{-\alpha}, \qquad \alpha = 0,1.
\end{align}
We consider only the most difficult choice of signs for $S$, namely
$$S(r,s,k,t)  := \jap{k}-\frac{k}{t}(s-r).$$
%
%
%
%
%
%
To prove \eqref{locdec_32}, first observe that we may restrict to the case $k \gtrsim 1/\sqrt{t}$, otherwise the bound follows from \eqref{locdec_33} and $rk ,sk  \geq 1$. Indeed,   
\begin{align}\label{locdec_16}
\bigg[\int_{1/k}^\infty r^{-2-2\kappa}\ud r\bigg]^{1/2}\lesssim  k^{1/2+\kappa}, \qquad \hbox{and}\qquad \bigg[\int_{1/k}^\infty s^{-2}\ud s\bigg]^{1/2}\lesssim  k^{1/2} .
\end{align}
Then, using Minkowski's inequality the contribution corresponding to the region $\{k\leq t^{-\frac{1}{2}}\}$ is bounded by 
\begin{align}\label{locdec_17}
 \int_0^{ t^{-1/2} } k^{1+\kappa} \varphi_\ell(k)  dk  \lesssim \dfrac{1}{t}2^{\kappa\ell}.
\end{align}
Hence, in the sequel we can add to $A$ a cutoff $\chi_1( \sqrt{t}  k)$. Since
\begin{align*}
& \partial_k  S(r,s,k,t) = \frac{1}{t}(r-s) + \frac{k}{\sqrt{k^2+1}},
\qquad \partial_k^2 S(r,s,k,t) = (k^2+1)^{-3/2},
\end{align*}
the phase $S$ has a unique stationary point, which we denote by $k_0$, whenever $\frac{1}{t}(r-s) \in (-1,0)$. In what follows we may assume that the stationary point is in the support of the integral, otherwise the desired bound can be obtained more easily. In particular, we have $k_0 \approx 2^\ell$ and $s \gtrsim r+t k/\sqrt{k^2+1}\gtrsim tk$.
Let $q_0$ be the smallest integer such that $2^{q_0} \geq t^{-1/2}$. 
We decompose the support of the integral according to the distance of $k$ to $k_0$ as  
\begin{align}\label{prKGmain1q2}
\begin{split}
& K_\ell(t,r,s) = \sum_{q \geq q_0}^{\ell+10} K_{\ell,q}(t,r,s), 
\\
& K_{\ell,q}(t,r,s) := \dfrac{1}{r^{\frac{3}{2}+\kappa}s^{\frac{3}{2}}} \int_0^\infty   e^{i t S} A(r,s,k)
  \varphi_q^{(q_0)}(k-k_0) \varphi_\ell(k) \, dk ,
\end{split}
\end{align}
where $\fy_q^{(q_0)}(k-k_0)$ is a cutoff to $\{|k-k_0|\lesssim 2^{q_0}\}$ when $q=q_0$ and to $\{|k-k_0|\simeq 2^q\}$ if $q>q_0$. Note that the estimate for the term with $q=q_0$ follows by direct integration. Indeed, using that $s \gtrsim  tk$, we see that \begin{align}\label{locdec_34}
\bigg[\int_{tk}^\infty \dfrac{1}{s^2}ds\bigg]^{1/2}\lesssim \dfrac{1}{(tk)^{1/2}}, \quad \hbox{and} \quad \bigg[\int_{1/k}^\infty \dfrac{1}{r^{2+2\kappa}}dr\bigg]^{1/2}\lesssim k^{1/2+\kappa},
\end{align}
and hence using $2^{q_0}\approx t^{-1/2}$,
\[
\Vert K_{\ell,q_0}\Vert_{L^2_{r\ud r}L^2_{s\ud s}} \lesssim \dfrac{1}{t^{1/2}}\int_0^\infty k^{\kappa}\varphi_q^{(q_0)}(k-k_0)dk  \lesssim \dfrac{1}{t}2^{\kappa\ell}.
\]
Next, observe that on the support of $K_{\ell,q}$ we have (recall that $k<1/2$)
\begin{align}\label{prKG1main1S2}
| \partial_k  S| \approx |k-k_0| \langle k\rangle^{-3} \approx 2^q  \gtrsim  t^{-1/2}\ . 
\end{align}
Now, for $q>q_0$, integrating by parts in $k$ gives
\begin{align*}
K_{\ell,q}(t,r,s) = \frac{i}{t} \int_0^\infty e^{i t S} \partial_k  \Big[ \frac{1}{\partial_k  S} \,  A \, \varphi_q^{(q_0)}(k-k_0)\varphi_\ell(k) \Big] \, dk ,
\end{align*}
so that $| K_{\ell,q} | \lesssim t^{-1} (\mathbb{I} + \mathbb{II} + \mathbb{III})$,
\begin{align}\label{prKG1main1q2}
\begin{split}
\mathbb{I}(r,s) & = \dfrac{1}{r^{\frac{3}{2}+\kappa}s^{\frac{3}{2}}} \int_0^\infty 
  \frac{|\partial_k^2 S|}{(\partial_k  S)^2}
 \, | A | \, \varphi_q(k-k_0)  \varphi_\ell(k) \, dk ,	
\\
\mathbb{II}(r,s) & = \dfrac{1}{r^{\frac{3}{2}+\kappa}s^{\frac{3}{2}}} \int_0^\infty \frac{1}{|\partial_k  S|}
	 \, | A | \, \varphi_\ell(k) \partial_k  \varphi_q(k-k_0)  \, dk ,
\\
\mathbb{III}(r,s) & = \dfrac{1}{r^{\frac{3}{2}+\kappa}s^{\frac{3}{2}}} \int_0^\infty \frac{1}{|\partial_k  S|}
	 \, \big| \partial_k  \big( A  \,\varphi_\ell(k)\big)
	 \big| \varphi_q(k-k_0) \, dk .
\end{split}
\end{align}
Since $s \gtrsim t k$, $rk  \gtrsim 1$ and $k\lesssim 1$, using also \eqref{prKG1main1S2}, \eqref{locdec_34} and \eqref{locdec_33} we obtain
\begin{align*}
\Vert \mathbb{I}(r,s) \Vert_{L^2_{r\ud r}L^2_{s\ud s}} \lesssim \dfrac{1}{t^{1/2}}\int_0^\infty 2^{-2q}k^{\kappa} \varphi_q^{(q_0)}(k-k_0)  \varphi_\ell(k) \, dk   \lesssim \dfrac{1}{t^{1/2}} \cdot 2^{-q}2^{\kappa\ell}.
\end{align*}
Then, summing over $q\geq q_0$ gives a bound by $t^{-1}2^{\kappa\ell}$. 
The second term in \eqref{prKG1main1q2} is estimated in a similar way.
Next we use \eqref{locdec_33} to bound the third term in \eqref{prKG1main1q2} by
\begin{align*}
\Vert \mathbb{III}(r,s) \Vert_{L^2_{r\ud r}L^2_{s\ud s}}\lesssim \dfrac{1}{t^{1/2}}\int_0^\infty 2^{-q} k^{\kappa-1}  \varphi_q^{(q_0)}(k-k_0)  \varphi_\ell(k) \, dk   \lesssim \dfrac{1}{t^{1/2}} \cdot 2^{-(1-\kappa)\ell},
\end{align*}
which suffices since $\ell\geq q-10$. 

\smallskip
\noindent
{\it Estimate of $\mathbf{K}_{2}$.} We writer $e^{it\jap{k}}=\frac{\jap{k}}{ikt}\partial_ke^{it\jap{k}}$and integrate by parts in $k$. In view of the asymptotics of the Fourier basis from Theorem~\ref{propdFT} we replace the $E(r,k)\overline{E(s,k)}^t$ by the worst possible contribution $\frac{\log r\log s}{(\log k)^2}A(r,s,k)$ where $A$ satisfies \eqref{locdec_33}. Then after integration by parts in $k$ the result is bounded in $L^2_{r\ud r}L^2_{s\ud s}(\{s\geq r\geq 2\})$ by
\begin{equation*}
   \Big(\int_2^\infty\int_r^\infty r^{-1-2\kappa}s^{-1-2\kappa}\Big(\int_0^{1/s}\frac{\log r \log s}{tk |\log k|^2}\ud k\Big)^2\ud s\ud r\Big)^{\frac{1}{2}}\lesssim  \Big(\int_2^\infty\int_r^\infty (rs)^{-1-2\kappa}\frac{(\log r)^2 }{ t^2}\ud s\ud r\Big)^{\frac{1}{2}}\lesssim t^{-1},
\end{equation*}
proving \eqref{prKG1main2} for $j=2$.
\end{proof}

%
%
%
Next we prove an integrated local energy decay (ILED) estimate and its dual, where the key point in the following proof is to take the Fourier transform in time.
\begin{lem}\label{lem:LEDallfereqs1}
For any $\gamma_1>\frac{1}{2}$ and $\gamma_2\geq0$ with $\gamma_1+\gamma_2\geq1$ and $(\gamma_1,\gamma_2)\neq(1,0)$,
\begin{equation}\begin{aligned}\label{local_smoothing1}
\big\Vert \langle r\rangle^{-\gamma_1} e^{\pm it \sqrt{\mathbf{L}}} \mathbf{L}^{-\frac{\gamma_2}{2}} (\mathbf{L-\bfI})^{\frac{\gamma_2}{2}} \, P_c^\bfL\mathbf{f}\big\Vert_{L^2_t( \bbR;L^2_{r\ud r})}\lesssim_{\gamma_1,\gamma_2}  \Vert  P_c^\bfL\mathbf{f}\Vert_{L^2_{r\ud r}},
\end{aligned}\end{equation}
and  
\begin{equation}\label{eq:ILEDDuhamel1}
    \|\int_0^t e^{\mp is\sqrt{\bfL}}\mathbf{L}^{-\frac{\gamma_2}{2}} (\mathbf{L-\bfI})^{\frac{\gamma_2}{2}}P_c^\bfL\bmF(s,\cdot)\ud s\|_{L^2_{r\ud r}}\lesssim_{\gamma_1,\gamma_2} \|\jap{r}^{\gamma_1}P_c^\bfL\bmF\|_{L^2_s ([0,t];L^2_{r\ud r})}.
\end{equation}
\end{lem}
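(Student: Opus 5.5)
The plan is to use the Fourier transform in time, which turns the integrated local energy decay estimate into a uniform bound on the spectral measure. Concretely, write the evolution through the distorted Fourier transform of Theorem~\ref{propdFT}:
\[
e^{it\sqrt{\bfL}}\bfL^{-\frac{\gamma_2}{2}}(\bfL-\bfI)^{\frac{\gamma_2}{2}}P_c^\bfL\bmf(r)=\int_0^\infty e^{it\jap{k}}E(r,k)\Big(\frac{k}{\jap{k}}\Big)^{\gamma_2}\widetilde\bmf(k)\,k\,\ud k .
\]
Change variables to $\omega=\jap{k}$, so that $k\,\ud k=\omega\,\ud\omega$. The right-hand side is then the inverse time Fourier transform of $\omega\mapsto \mathds{1}_{\omega>1}E(r,k(\omega))(k/\jap{k})^{\gamma_2}\widetilde\bmf(k)\,\omega$. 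By Plancherel in $t$,
\[
\|\jap{r}^{-\gamma_1}e^{it\sqrt{\bfL}}\cdots\|_{L^2_tL^2_{r\ud r}}^2\simeq\int_0^\infty\!\!\int_0^\infty \jap{r}^{-2\gamma_1}\Big|E(r,k)\Big(\frac{k}{\jap k}\Big)^{\gamma_2}\widetilde\bmf(k)\Big|^2\frac{\jap{k}}{k}\,k^2\,\ud k\,r\,\ud r .
\]
By Fubini and the Plancherel statement (iii) of Theorem~\ref{propdFT}, it suffices to prove the uniform pointwise-in-$k$ bound
\[
\sup_{k>0}\ \frac{\jap k}{k}\Big(\frac{k}{\jap k}\Big)^{2\gamma_2}k\int_0^\infty \jap{r}^{-2\gamma_1}|E(r,k)|^2\,r\,\ud r\lesssim 1.
\]
In words, this is a weighted $L^2$ bound on the generalized eigenfunctions, uniform in $k$.

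I would prove this bound by splitting the $r$-integral into $rk\ge1$ and $rk\le1$, using the asymptotics \eqref{eq:Egross} and \eqref{eq:Eklein}.

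For $rk\ge1$, the entries of $E$ satisfy $|E|^2\lesssim (rk)^{-1}\jap{k}^{-2}\max(k^2,r/\jap{r}^2\cdot r\, k^{-2}\cdots)$. The worst entries are the off-diagonal ones, of size $r^{1/2}\jap r^{-1}k^{-1/2}\jap k^{-1}$.
\begin{itemize}
\item The diagonal entries contribute $\int_{1/k}^\infty \jap r^{-2\gamma_1}k\jap k^{-2}\ud r$. Since $\gamma_1>1/2$, this is bounded by $k\jap k^{-2}\min(k^{-1},k^{2\gamma_1-1})$, which is fine against the prefactor.
\item The off-diagonal entries contribute $k^{-1}\jap k^{-2}\int_{1/k}^\infty \jap r^{-2-2\gamma_1}r^2\ud r$. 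At low frequency this is $\lesssim k^{-1}\cdot k^{2\gamma_1-1}$ if $\gamma_1<1/2$. Here, since $\gamma_1>1/2$, it is $\lesssim k^{-1}\min(1,k^{2\gamma_1-1})$. Multiplied by the prefactor $\simeq k^{2\gamma_2}$ for small $k$, this gives $k^{2\gamma_1+2\gamma_2-2}$, which is bounded precisely when $\gamma_1+\gamma_2\ge1$.
\end{itemize}

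The main obstacle is the endpoint $(\gamma_1,\gamma_2)=(1,0)$. There the off-diagonal integral produces $k^{-1}\int_{1/k}^{1}r^{-1}\ud r\cdot k\sim|\log k|$, which is why this case is excluded. For $\gamma_1>1$ the integral converges without loss, and for $\gamma_1<1$ the power $k^{2\gamma_1-2}$ is compensated by $k^{2\gamma_2}$.

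For $rk\le1$, \eqref{eq:Eklein} gives $|E|\lesssim r\log(2+r)\jap r^{-1}\jap{\log k}^{-1}\jap k$ for the large-$r$ part of the second column. This yields $\int_0^{1/k}\jap r^{-2\gamma_1}\log^2(2+r)\,r\,\ud r\,\jap{\log k}^{-2}$. Multiplied by $k\cdot k^{2\gamma_2-1}$ (for small $k$), I expect this to be bounded, using $\gamma_1+\gamma_2\ge 1$ and the logarithmic gain $\jap{\log k}^{-2}$ at the critical case $\gamma_1+\gamma_2=1$. At high $k$ the region $r\le1/k$ is trivial.

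The dual estimate \eqref{eq:ILEDDuhamel1} follows from \eqref{local_smoothing1} by the standard $TT^*$/duality argument. Pair $\int_0^te^{\mp is\sqrt{\bfL}}(\cdots)\bmF\,\ud s$ with $\bmg\in L^2_{r\ud r}$, move the operator onto $\bmg$, and apply Cauchy--Schwarz in $(s,r)$ with the weights $\jap r^{\pm\gamma_1}$. The operator $\bfL^{-\gamma_2/2}(\bfL-\bfI)^{\gamma_2/2}$ is self-adjoint and commutes with the evolution, so the argument goes through directly.
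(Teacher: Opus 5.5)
Your argument is the paper's proof. You apply Plancherel in time after the substitution $\omega=\jap{k}$, reduce to a uniform-in-$k$ weighted $L^2_{r\ud r}$ bound on $E(\cdot,k)$ checked via \eqref{eq:Eklein}--\eqref{eq:Egross} in the regions $k\ge1$, $\{k\le1,\,rk\le1\}$ and $\{k\le1,\,rk\ge1\}$, and get \eqref{eq:ILEDDuhamel1} by duality. One correction to your side remark: at $(\gamma_1,\gamma_2)=(1,0)$ the off-diagonal $rk\ge1$ piece is $k^{-1}\int_{1/k}^\infty r^{-2}\,\ud r=O(1)$, and the $|\log k|$ divergence really comes from $1\le r\le k^{-1}$, where the upper-right entry of $E$ has size $\log r/|\log k|$ and gives $|\log k|^{-2}\int_1^{1/k}r^{-1}\log^2 r\,\ud r\simeq|\log k|$, so the logarithmic gain in that region handles $\gamma_1<1$ on the line $\gamma_1+\gamma_2=1$ but not $\gamma_1=1$, $\gamma_2=0$.
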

\begin{proof}
We consider only $e^{it\sqrt{\bfL}}$ and the treatment of $e^{-it\sqrt{\bfL}}$ is similar. Estimate \eqref{eq:ILEDDuhamel1} follows from \eqref{local_smoothing1} by duality. For \eqref{local_smoothing1} we begin by writing $L^2_tL^2_{r\ud r}$ on the left-hand side as $L^2_{r\ud r}L^2_t$. We consider only $e^{it\sqrt{\bfL}}$ the case of $e^{-it\sqrt{\bfL}}$ begin more of the same. Expressing $e^{it \sqrt{\mathbf{L}}} \mathbf{L}^{-\frac{\gamma_2}{2}} (\mathbf{L-\bfI})^{\frac{\gamma_2}{2}} \, P_c^\bfL\mathbf{f}$ as the inverse distorted Fourier transform of its distorted Fourier transform, changing variables to $\lambda=\jap{k}$, applying Plancherel in time, and changing variable back to $k$, we see that for a suitable constant $C>0$ and with $\widetilde\bmf=\widetilde\calF[\bmf]$,
\begin{equation*}
    \|e^{it \sqrt{\mathbf{L}}} \mathbf{L}^{-\frac{\gamma_2}{2}} (\mathbf{L-\bfI})^{\frac{\gamma_2}{2}} \, P_c^\bfL\mathbf{f}\|_{L^2_t}= C \Big(\int_0^\infty \big(\frac{k}{\jap{k}}\big)^{2\gamma_2}\jap{k}|E(r,k)\widetilde\bmf(k)|^2k\ud k\Big)^{\frac{1}{2}},
\end{equation*}
and hence
\begin{equation*}
    \|\jap{r}^{-\gamma_1}e^{it \sqrt{\mathbf{L}}} \mathbf{L}^{-\frac{\gamma_2}{2}} (\mathbf{L-\bfI})^{\frac{\gamma_2}{2}} \, P_c^\bfL\mathbf{f}\|_{L^2_tL^2_{r\ud r}}^2\lesssim \int_0^\infty \int_0^\infty \jap{r}^{-2\gamma_1}\big(\frac{k}{\jap{k}}\big)^{2\gamma_2}\jap{k}|E(r,k)\widetilde\bmf(k)|^2k r\,\ud k\ud r.
\end{equation*}
When $k\geq1$ we observe that in view of the asymptotics in Theorem~\ref{propdFT}, $|\sqrt{r\jap{k}}E(r,k)|\lesssim 1$, and therefore since $2\gamma_1>1$,
\begin{equation*}
    \int_0^\infty \int_1^\infty \jap{r}^{-2\gamma_1}\big(\frac{k}{\jap{k}}\big)^{2\gamma_2}\jap{k}|E(r,k)\widetilde\bmf(k)|^2k r\,\ud k\ud r\lesssim \|\widetilde\bmf\|_{L^2_{k\ud k}}^2\int_0^\infty\frac{\ud r}{\jap{r}^{2\gamma_1}}\lesssim \|\widetilde\bmf\|_{L^2_{k\ud k}}^2.
\end{equation*}
When $k\leq1$ and $rk\leq1$ then in view of Theorem~\ref{propdFT} we bound $|E(r,k)|\lesssim 1$ and 
\begin{equation*}
    \int_0^1 \int_0^{1/k} \jap{r}^{-2\gamma_1}\big(\frac{k}{\jap{k}}\big)^{2\gamma_2}\jap{k}|E(r,k)\widetilde\bmf(k)|^2k r\,\ud r\ud k\lesssim \int_0^1|\widetilde\bmf(k)|^2k^{2(\gamma_1+\gamma_2)-1}\ud k\lesssim \|\widetilde\bmf\|_{L^2_{k\ud k}}^2,
\end{equation*}
where we have used the assumption that $\gamma_1+\gamma_2\geq1$. When $k\leq1$ and $rk\geq1$ we  use Theorem~\ref{propdFT} to bound $|\sqrt{rk}E(r,k)|\lesssim 1$ and
\begin{equation*}
    \int_0^1 \int_{1/k}^\infty \jap{r}^{-2\gamma_1}\big(\frac{k}{\jap{k}}\big)^{2\gamma_2}\jap{k}|E(r,k)\widetilde\bmf(k)|^2k r\,\ud r\ud k\lesssim \int_0^1|\widetilde\bmf(k)|^2k^{2(\gamma_1+\gamma_2)-1}\ud k\lesssim \|\widetilde\bmf\|_{L^2_{k\ud k}}^2,
\end{equation*}
where we have again used that $\gamma_1+\gamma_2\geq1$.
\end{proof}
The next lemma contains an inhomogeneous ILED estimate.
\begin{lemma}\label{lem:ILEDDuhamel}
    Suppose $\bmv$ satisfies
    \begin{equation*}
        (\partial_t^2+\bfL)P_c^\bfL\bmv=P_c^\bfL\bmF,\qquad P_c^\bfL\bmv(0)=\partial_tP_c^\bfL\bmv(0)=0,\qquad \mathrm{on}~\bbR_+^2.
    \end{equation*}
    Then for any $\kappa>0$ and $0<T\leq \infty$, 
\begin{equation}\label{eq:ILEDDuhamel2}
    \|\jap{r}^{-\frac{1}{2}-\kappa}\big(|\nabla_x(e^{i\theta}P_c^\bfL\bmv)|+|r^{-1}(e^{i\theta}P_c^\bfL\bmv)|\big)\|_{L^2_{t}([0,T];L^2_x(\bbR^2))}
    \lesssim \|\jap{r}^{1+\kappa}e^{i\theta}P_c^\bfL\bmF\|_{L^2_{t}([0,T];L^2_x(\bbR^2))}.
\end{equation}
\end{lemma}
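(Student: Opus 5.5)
Write $\bmv:=P_c^\bfL\bmv$ and $\bmF:=P_c^\bfL\bmF$, and use Duhamel:
\[
\bmv(t)=\int_0^t \frac{\sin((t-s)\sqrt{\bfL})}{\sqrt{\bfL}}\bmF(s)\,\ud s .
\]
The quantity on the left of \eqref{eq:ILEDDuhamel2} is comparable to a weighted norm of $\sqrt{\bfL}\,\bmv$. Indeed, $|\nabla_x(e^{i\theta}\bmv)|+|r^{-1}e^{i\theta}\bmv|\simeq|\partial_r\bmv|+r^{-1}|\bmv|$. By the factorization $\bfL=\calB^*\calB$, the mapping properties of $\calB$ (the potentials $b,U$ are bounded, and $b\simeq r^{-1}$ near $0$), and Hardy's inequality, these first-order quantities should be controlled in weighted $L^2$ by $\|\jap{r}^{-\frac12-\kappa}\sqrt{\bfL}\bmv\|$ modulo lower order terms.

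A more direct route avoids this comparison. Work on the distorted Fourier side and estimate the kernel of $\partial_r\circ\wtilcalF_\bfL^{-1}$, and of $r^{-1}\wtilcalF_\bfL^{-1}$, using Theorem~\ref{propdFT}. Items (iv) and (v) show that $|\partial_rE(r,k)|+r^{-1}|E(r,k)|$ obeys the same bounds as $\jap{k}|E(r,k)|$ times harmless factors: for $rk\ge1$ one gets $k\cdot(rk)^{-1/2}$, and for $rk\le1$ the derivative of the $\phi$-terms via $r^{-1}\partial_k$-type symbol bounds. This yields the first-order analogue of the homogeneous estimate \eqref{local_smoothing1},
\[
\|\jap{r}^{-\frac12-\kappa}\big(|\partial_r e^{it\sqrt{\bfL}}\bmg|+r^{-1}|e^{it\sqrt{\bfL}}\bmg|\big)\|_{L^2_tL^2_{r\ud r}}\lesssim\|\sqrt{\bfL}\bmg\|_{L^2_{r\ud r}} .
\]
The proof repeats that of Lemma~\ref{lem:LEDallfereqs1}: Plancherel in time after changing variables to $\lambda=\jap{k}$, then a split into $k\ge1$, $k\le1$ with $rk\le1$, and $k\le1$ with $rk\ge1$. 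For $k\le1$, $rk\ge1$ the gain $k^{2\cdot(\frac12+\kappa)}$ from the weight combined with $|\partial_rE|\lesssim k(rk)^{-1/2}$ suffices. For $rk\le1$ the weight $\jap{r}^{-1-2\kappa}$ is integrable against $r\,\ud r$ only in the sense of $\int^{1/k}\jap{r}^{-1-2\kappa}r\,\ud r\lesssim k^{-1+2\kappa}$. Here one must check against \eqref{eq:Eklein} that $|\partial_rE|+r^{-1}|E|\lesssim \jap{r}^{-1}\times$(logarithms) in the upper-right entry. That entry gives $r^{-1}(\log r)/\jap{\log k}$, whose logarithmic loss is absorbed by $\kappa>0$.

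For the inhomogeneous estimate I would apply the Christ--Kiselev lemma, or more simply the standard $TT^*$ argument. Splitting $\sin$ into $e^{\pm i(t-s)\sqrt{\bfL}}$, each piece is $e^{\pm it\sqrt{\bfL}}\int_0^t e^{\mp is\sqrt{\bfL}}\bfL^{-1/2}\bmF\,\ud s$. Composing the homogeneous first-order bound above (applied to $\bfL^{-1/2}$ of the inner integral, which cancels the $\sqrt{\bfL}$) with the dual estimate \eqref{eq:ILEDDuhamel1} for $\gamma_1=1+\kappa$, $\gamma_2=0$ (an admissible pair since $(\gamma_1,\gamma_2)\neq(1,0)$) gives the untruncated bound with the weight $\jap{r}^{1+\kappa}$ on $\bmF$. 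Since both sides live in $L^2_t$ (exponents $2\to2$), Christ--Kiselev does not apply directly. I would instead use the time-Fourier/resolvent approach: extend $\bmF$ by zero outside $[0,T]$. Fourier transforming in $t$ reduces the claim to a uniform weighted resolvent bound (limiting absorption) for $\jap{r}^{-\frac12-\kappa}\nabla(\bfL-\tau^2\mp i0)^{-1}\jap{r}^{-1-\kappa}$, uniform in $\tau$, together with a causality argument so that the retarded solution coincides with $\bmv$.

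The main obstacle is exactly this uniform resolvent (or retarded-kernel) bound near the threshold $\tau^2\to1$, i.e.\ $k\to0$, where the weights are unbalanced ($\frac12+\kappa$ versus $1+\kappa$). I would first try to prove it from the kernel $\calG_+$ in \eqref{eq:calGpmsummary1} expressed through $E(r,k)$, via Schur tests based on \eqref{eq:Eklein} and \eqref{eq:Egross}. The threshold non-resonance, visible in the $\jap{\log k}^{-1}$ gain in the second column, is what should make the low-frequency Schur integrals converge with only a $\kappa$ loss.
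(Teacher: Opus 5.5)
Your final route is the paper's. You extend $\bmF$ by zero and use causality (the paper damps with $e^{-\epsilon t}$ and passes to the limit by Fatou) together with Plancherel in $t$. This reduces \eqref{eq:ILEDDuhamel2} to the uniform weighted limiting absorption bound \eqref{eq:ILEDDuhamel2temp2}, which is then proved from the Green's kernel \eqref{eq:calGpmsummary1}. You are right that composing a homogeneous first-order estimate with \eqref{eq:ILEDDuhamel1} gives only the untruncated bound, and that Christ--Kiselev is unavailable for $L^2_t\to L^2_t$, so your first two routes can simply be dropped. After the reduction, however, the whole content of the lemma is the uniform-in-$\tau$ kernel estimate. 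You leave this as ``the main obstacle'', and the tools you propose for it are not enough.

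The gap is this. \eqref{eq:Eklein} and \eqref{eq:Egross} describe only the regular factor: $\upi(\cdot,k)D_+^{-1}(k)$ is just $E$, up to the normalization in \eqref{green6}, the factor $(rk)^{1/2}$, and unimodular phases. The other factor of $\calG_+$ is the outgoing solution $\upsi_+(\cdot,k)$. On $\{rk\lesssim1\}$ it is governed by the branch that is singular at $r=0$, i.e.\ by $\uptta N_+$ in \eqref{equ:Psipm_linear_combination}; for instance $|\Psi_1(r,k)|\simeq k^{-1}r^{-1/2}$ for $k^{-1/2}\le r\le k^{-1}$ by \eqref{eq:H1-model-size}. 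Theorem~\ref{propdFT} says nothing about this branch, so Schur tests built on $E$ cannot bound $\calG_+$ on $\{s\le r\lesssim k^{-1}\}$ or $\{r\le s\lesssim k^{-1}\}$. The missing ingredient is the second solution $\Theta_j$ of \eqref{eq:Theta1varpar1}, \eqref{eq:Theta2varpar1}, together with the connection formulas \eqref{eq:Psi1toPhi1Theta1}, \eqref{eq:Psi2toPhi2Theta2} and the Wronskian sizes \eqref{eq:Psi1Theta1Wornskian1}, \eqref{eq:Psi2Theta2Wornskian2}. By \eqref{def_aj} and the normalization $W(\overline{\Psi_j},\Psi_j)=2i$ one has $W(\Psi_j,\Phi_j)=-2i\overline{a_j}$. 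With \eqref{green10}, this turns \eqref{eq:calGpmsummary1}, for $s\le r$, into
\[
\calG_+(r,s;k)=\frac{1}{1+k^2}\Bigl(\uptta(r,k)+\upi(r,k)\,\mathrm{diag}\bigl(m_1(k),m_2(k)\bigr)\Bigr)\upi^t(s,k),\qquad m_j:=\frac{W(\Theta_j,\Psi_j)}{W(\Psi_j,\Phi_j)},
\]
and $\calG_+(r,s;k)=\calG_+(s,r;k)^t$ for $r\le s$. Here $|m_1|\simeq k^2\jap{k}^2$ and $|m_2|\simeq\log^2(2+k)\jap{\log k}^{-2}$.

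This splitting, plus the bounds on $\uptta$ coming from the construction of $\Theta_j$, gives the entrywise kernel bounds that the paper feeds into the Hilbert--Schmidt estimate \eqref{eq:ILEDDuhamel2temp4}. That estimate uses Cauchy--Schwarz in $s$ rather than Schur, which handles the unbalanced weights $\jap{r}^{-\frac12-\kappa}$ and $\jap{s}^{-1-\kappa}$ directly.

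One further point on uniformity in $\tau$: it must also cover $|\tau|<1$. There $k$ is imaginary and Theorem~\ref{propdFT} gives nothing. Moreover $\calG_+$ is the full resolvent kernel, with a rank-two pole at $\tau^2=\lambda^2$. That pole must be removed, which is the role of $P_c^\bfL$, before any uniform kernel bound can hold.
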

\begin{proof}
    We prove the estimate for $T=\infty$. The estimate for finite $T$ follows from this case by extending $\bmF$ by zero outside of $[0,T]$. Note that it suffices to replace \eqref{eq:ILEDDuhamel2} by
    \begin{align}
\begin{split}\label{eq:ILEDDuhamel2temp1}
    &\|\jap{r}^{-\frac{1}{2}-\kappa}(|\partial_rP_c^\bfL\bmv|+|r^{-1}P_c^\bfL\bmv|)\|_{L^2_{t}(\bbR;L^2_{r\ud r})}\lesssim \|\jap{r}^{1+\kappa} P_c^\bfL\bmF\|_{L^2_{t}(\bbR;L^2_{r\ud r})}.
    \end{split}
\end{align}
Estimate \eqref{eq:ILEDDuhamel2temp1} in turn follows from the limiting absorption estimate 
\begin{equation}\label{eq:ILEDDuhamel2temp2}
    \|\jap{r}^{-\frac{1}{2}-\kappa}\big(|\partial_r(\bfL-(\tau+i0)^2)^{-1}P_c^\bfL\bmv|+|r^{-1}(\bfL-(\tau+i0)^2)^{-1}P_c^\bfL\bmv|\big)\|_{L^2_{r\ud r}}\lesssim \|\jap{r}^{1+\kappa}P_c^\bfL\bmv\|_{L^2_{r\ud r}},
\end{equation}
uniformly in $\tau\in\bbR$. To see this suppose we know \eqref{eq:ILEDDuhamel2temp2}. To prove \eqref{eq:ILEDDuhamel2temp1} we extend $F$ to be identically zero for negative times and the domain of the time variable to $\bbR$. By Fatou's lemma it then suffices replace $\bmv$ and $\bfF$ by $e^{-\epsilon t}\bmv$ and $e^{-\epsilon t}\bmF$, $\epsilon>0$, in \eqref{eq:ILEDDuhamel2temp1} and take the limit as $\epsilon\to0^+$. The resulting estimate follows by applying Plancherel in time and using \eqref{eq:ILEDDuhamel2temp2}. To prove \eqref{eq:ILEDDuhamel2temp2} we first conjugate the measure to $\ud r$. More precisely, with $\bfLambda:=r^{\frac{1}{2}}\cdot\bfL\cdot r^{-\frac{1}{2}}$, estimate \eqref{eq:ILEDDuhamel2temp2} is equivalent to  
\begin{equation}\label{eq:ILEDDuhamel2temp3}
    \|\jap{r}^{-\frac{1}{2}-\kappa}\big(|\partial_r(\bfLambda-(\tau+i0)^2)^{-1}P_c^\bfLambda\bmv|+|r^{-1}(\bfLambda-(\tau+i0)^2)^{-1}P_c^\bfLambda\bmv|\big)\|_{L^2_{\ud r}}\lesssim \|\jap{r}^{1+\kappa}P_c^\bfLambda\bmv\|_{L^2_{\ud r}},
\end{equation}
where $P_c^\bfLambda$ is the orthogonal projection onto the continuous spectrum of $\bfLambda$. Let $k$ be defined by the relation $k^2+1=\tau^2$, $\Im \,k\geq0$.
Then in view of \eqref{eq:calGpmsummary1},
\begin{equation*}
    (\bfLambda-(\tau+i0)^2)^{-1}P_c^\bfLambda\bmv(r)=\int_0^\infty\calG_\bfLambda(r,s,k)P_c^\bfLambda\bmv(s)\ud s,
\end{equation*}
where the Green's function $\calG_\bfLambda(r,s,k)$ is given by either of the identical upper or lower two by two blocks in the diagonal of $\calG_+$ in \eqref{eq:calGpmsummary1}. By Cauchy-Schwarz, \eqref{eq:ILEDDuhamel2temp3} follows if we can show that
\begin{equation}\label{eq:ILEDDuhamel2temp4}
    \|\jap{r}^{-\frac{1}{2}-\kappa}\jap{s}^{-1-\kappa}\partial_r\calG_\bfLambda(r,s,k)\|_{L^2_{\ud r}L^2_{\ud s}}+\|\jap{r}^{-\frac{1}{2}-\kappa}\jap{s}^{-1-\kappa}r^{-1}\calG_\bfLambda(r,s,k)\|_{L^2_{\ud r}L^2_{\ud s}}\lesssim 1,
\end{equation}
uniformly in $k$.  Since the estimates in the various regions are similar, we only provide the details for $\tau\geq1$ and in two cases: {\bf{(i)}} $s\leq r\leq k^{-1}$ and the contribution of $\Phi_j$ to $\Psi_j$, $j=1,2$, in \eqref{eq:Psi1toPhi1Theta1} and \eqref{eq:Psi2toPhi2Theta2}. {\bf{(ii)}}  $r\leq s\leq k^{-1}$ and the contribution of $\Phi_j$ to $\Psi_j$, $j=1,2$, in \eqref{eq:Psi1toPhi1Theta1} and \eqref{eq:Psi2toPhi2Theta2}. We will use the formulas \eqref{eq:Theta1varpar1}, \eqref{SMOp1_2}, \eqref{eq:Psi1toPhi1Theta1}, \eqref{eq:Psi1Theta1Wornskian1}, \eqref{eq:Theta2varpar1}, \eqref{propSMa}, \eqref{eq:Psi2toPhi2Theta2}, \eqref{eq:Psi2Theta2Wornskian2}, \eqref{eq:calGpmsummary1}, \eqref{green10} and the asymptotics for $\Phi_j$ and $\Psi_j$ proved in Sections~\ref{subsec:calH1} and~\ref{subsec:calH2}. For {\bf{(i)}}, arguing as in the proof of equation~\ref{eq:Eklein} in Theorem~\ref{propdFT}, for $s\leq r\leq k^{-1}$ we can bound (the matrix estimates here are entry-wise)
{\renewcommand{\arraystretch}{1.2}
\begin{align*}
    |\partial_r\calG_\bfLambda(r,s,k)|&\lesssim \pmat{\dfrac{\sqrt{r}k\jap{k}}{\jap{r}}&\dfrac{\sqrt{r}\log(2+r)}{\jap{r}}\\[3ex]\dfrac{r^{\frac{5}{2}}k\jap{k}}{\jap{r}^2}&\dfrac{\sqrt{r}\log(2+r)\jap{k}^2}{\jap{r}^2}}\cdot\\&\qquad \qquad\qquad\pmat{\dfrac{s^{\frac{3}{2}}k}{\jap{s}\jap{k}}&\dfrac{s^{\frac{7}{2}}k}{\jap{s}^2\jap{k}}\\[3ex]
    \dfrac{s^{\frac{3}{2}}\log(2+s)\log(2+k)}{\jap{s}\jap{k}^2\jap{\log k}}&\dfrac{s^{\frac{3}{2}}\log(2+s)\log(2+k)}{\jap{s}^2\jap{\log k}}}.
\end{align*}}
The same estimate holds with the left-hand side replaced by $|r^{-1}\calG_\bfLambda(r,s,k)|$. Estimate \eqref{eq:ILEDDuhamel2temp4} then follows by carrying out the matrix multiplication.  Similarly for {\bf{(ii)}},  $|\partial_r\calG_\bfLambda(r,s,k)|$
and $|r^{-1}\calG_\bfLambda(r,s,k)|$ are bounded by
{\renewcommand{\arraystretch}{1.2}
\begin{align*}
    \pmat{\dfrac{\sqrt{r}k}{\jap{r}\jap{k}}&\dfrac{\sqrt{r}\log(2+r)\log(2+k)}{\jap{r}\jap{k}^2\jap{\log k}^2}\\[3ex]
    \dfrac{r^{\frac{5}{2}}k}{\jap{r}^2\jap{k}}&\dfrac{\sqrt{r}\log(2+r)\log(2+k)}{\jap{r}^2\jap{\log k}^2}}\pmat{\dfrac{s^{\frac{3}{2}}k\jap{k}}{\jap{s}}&\dfrac{s^{\frac{7}{2}}k\jap{k}}{\jap{s}^2}\\[3ex]\dfrac{s^{\frac{3}{2}}\log(2+s)\jap{\log k}}{\jap{s}}&\dfrac{s^{\frac{3}{2}}\log(2+s)\jap{k}^2\jap{\log k}}{\jap{s}^2}}.
\end{align*}}
The desired estimate  \eqref{eq:ILEDDuhamel2temp4} again follows by carrying out the matrix multiplication.
\end{proof}
We end this section with a local decay estimate which includes a singular multiplier on the distorted Fourier side.
\begin{lemma}\label{lem:LAP1}
    Let $(\sqrt{\bfL}-2\lambda+i0^+)^{-1}\bmf:=\lim_{\epsilon\to 0^+}(\sqrt{\bfL}-2\lambda+i\epsilon)^{-1}\bmf$ for $\bmf$ satisfying $\jap{r}^2\bmf\in L^2_{r\ud r}$. Then for $t\geq0$
    \begin{equation*}
        \sum_{q=0,1}\|\jap{r}^{-2}\partial_r^qe^{it\sqrt{\bfL}}(\sqrt{\bfL}-2\lambda+i0^+)^{-1}P_c^\bfL\bmf\|_{L^2_{r\ud r}}\lesssim \frac{1}{\jap{t}}\|\jap{r}^2P_c^\bfL\bmf\|_{L^2_{r\ud r}}.
    \end{equation*}
\end{lemma}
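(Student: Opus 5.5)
We work entirely on the distorted Fourier side. Write $\widetilde\bmf=\wtilcalF_\bfL[\bmf]$, so that
\[
e^{it\sqrt{\bfL}}(\sqrt{\bfL}-2\lambda+i0^+)^{-1}P_c^\bfL\bmf(r)=\lim_{\epsilon\to0^+}\int_0^\infty \frac{e^{it\jap{k}}}{\jap{k}-2\lambda+i\epsilon}E(r,k)\widetilde\bmf(k)\,k\,\ud k .
\]
The key point is that $2\lambda>1$ when $\lambda^2>1/4$. We expect this to hold since $\lambda^2\approx0.7774$, so $2\lambda\approx1.76$. The singular point $k_\ast:=\sqrt{4\lambda^2-1}>0$ then lies strictly inside the continuous spectrum and away from $k=0$.

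We split with a smooth partition of unity in $k$ into a piece localized near $k_\ast$, say $|k-k_\ast|<k_\ast/2$, and its complement.

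On the complement the multiplier $(\jap{k}-2\lambda)^{-1}$ is a smooth symbol, bounded and with bounded $k$-derivatives, decaying like $\jap{k}^{-1}$. For these frequencies we argue exactly as in Lemma~\ref{locdec_lem3}, with the stronger weights $\jap{r}^{\pm2}$. Low frequencies are handled through the kernels $\bfK_1,\bfK_2,\bfK_3$. High frequencies are handled as in Lemma~\ref{locdec_lem2}: one integration by parts using $e^{it\jap k}=\frac{\jap k}{itk}\partial_k e^{it\jap k}$, followed by Plancherel and Lemma~\ref{lem:dkbounds1} to control $\partial_k\widetilde\bmf$ by $\jap{r}\bmf$. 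The term in which $\partial_k$ falls on $E$ is controlled by the $rk\le1$ and $rk\ge1$ asymptotics of Theorem~\ref{propdFT} together with the Cotlar--Stein criterion. The $\partial_r$ case costs one power of $k$, since $\partial_rE$ obeys the same asymptotics with an extra factor $\jap{k}$ (and a factor $r^{-1}$ in the small-$rk$ region). This is harmless here because the multiplier decays like $\jap{k}^{-1}$. The $\jap{t}^{-1}$ bound for $t\le1$ follows directly from Plancherel.

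The main obstacle is the piece near $k_\ast$. We first use the identity
\[
\frac{e^{it\jap{k}}}{\jap{k}-2\lambda+i0}= e^{2i\lambda t}\,\frac{e^{it(\jap{k}-2\lambda)}}{\jap{k}-2\lambda+i0},
\]
and decompose $\frac{e^{it\mu}}{\mu+i0}$, with $\mu=\jap{k}-2\lambda$, as
\[
\frac{e^{it\mu}-\chi(t\mu)}{\mu}\;+\;\frac{\chi(t\mu)}{\mu+i0},
\]
where $\chi$ is a smooth bump equal to $1$ near $0$. Two representations are then available. The first is $\frac{e^{it\mu}}{\mu+i0}=-i\int_{-\infty}^{t}e^{is\mu}\,\ud s$ after removing a $t$-independent outgoing part. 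The second, which we plan to use, is to integrate by parts once in $\mu$ (equivalently in $k$, using $\partial_k\mu=k/\jap{k}\simeq1$ near $k_\ast$). For $t\ge1$ the second representation gives
\[
\int \frac{e^{it\mu}}{\mu+i0}\,g(\mu)\,\ud\mu=\frac{i}{t}\int e^{it\mu}\,\partial_\mu\Big(\frac{g(\mu)}{\mu+i0}\Big)\ud\mu,
\]
but this creates a $(\mu+i0)^{-2}$ singularity. To avoid it, we instead write
\[
\frac{1}{\mu+i0}=-i\int_0^\infty e^{i\sigma\mu}\,\ud\sigma ,
\]
so that the total phase becomes $e^{i(t+\sigma)\mu}$ with $t+\sigma\ge t$. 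For each $\sigma$ we then apply the argument of Lemma~\ref{locdec_lem2} at time $t+\sigma$, now with two integrations by parts in $k$. These consume two powers of the spatial weight, which is why $\jap{r}^{2}$ appears. Two derivatives in $k$ are permitted on $E$ by the $j\le2$ symbol bounds in Theorem~\ref{propdFT}, and on $\widetilde\bmf$ by Plancherel with $\jap{r}^2\bmf$. This yields a bound $\lesssim (t+\sigma)^{-2}\|\jap{r}^2\bmf\|$, and integrating in $\sigma\in[0,\infty)$ gives the claimed $t^{-1}$.

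The delicate part is the contribution of the $e^{-irk}$ branch of $E$ combined with the outgoing phase. Here a stationary point $r\simeq (t+\sigma)k_\ast/\jap{k_\ast}$ can occur. We handle it exactly as in $\bfK_{3,\ell}$ of Proposition~\ref{propKG1dec}: the weight $\jap{r}^{-2}$ at $r\simeq t+\sigma$ supplies the missing decay $(t+\sigma)^{-2}$ directly. The $\partial_r$ case near $k_\ast$ is identical, since $k\simeq k_\ast\simeq1$ there.
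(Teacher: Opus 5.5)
Your proposal is correct and is essentially the paper's proof: isolate a neighborhood of the resonant frequency $\jap{k}=2\lambda$ (away from $k=0$ since $2\lambda>1$), treat the complement by the arguments of the local decay lemmas, and near it use $(\mu+i\epsilon)^{-1}=-i\int_0^\infty e^{i\sigma(\mu+i\epsilon)}\,\ud\sigma$, integrate by parts twice in $k$ against $e^{i(t+\sigma)\jap{k}}$ to gain $(t+\sigma)^{-2}$, and integrate in $\sigma$. The only deviations are minor: for $q=1$ the paper avoids $r$-derivative bounds on $E$ (not recorded in Theorem~\ref{propdFT}, though obtainable from the Volterra constructions) by commuting $\partial_r$ past the weight and replacing it by $\bfL_\nr^{1/2}$ via elliptic regularity and Lemma~\ref{lem:Lnrxcomm1}; and your stationary-point remark is superfluous, since the powers of $r$ produced when $\partial_k$ falls on $e^{\pm ikr}$ are already absorbed by $\jap{r}^{-2}$.
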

\begin{proof}
    For $t\leq1$ the estimate can be proved  using a similar argument as in the proof of \eqref{eq:ILEDDuhamel2temp2}. We therefore assume that $t\geq1$.  Starting with $q=0$ we write, for $\epsilon>0$, and a small constant $0<\delta\ll1$,
    \begin{align*}
        \jap{r}^{-2}e^{it\sqrt{\bfL}}(\sqrt{\bfL}-2\lambda+i\epsilon)^{-1}P_c^\bfL\bmf&=\jap{r}^{-2}\chi_0(\sqrt{\bfL-2\lambda\bfI}/\delta)e^{it\sqrt{\bfL}}(\sqrt{\bfL}-2\lambda+i\epsilon)^{-1}P_c^\bfL\bmf\\
        &\quad+\jap{r}^{-2}\chi_1(\sqrt{\bfL-2\lambda\bfI}/\delta)e^{it\sqrt{\bfL}}(\sqrt{\bfL}-2\lambda+i\epsilon)^{-1}P_c^\bfL\bmf\\
        &=:I+II.
    \end{align*}
    The term $II$ can be estimated using similar arguments to the ones in Lemmas~\ref{locdec_lem3} and~\ref{lem:LEDallfereqs1}, so we focus on $I$. With $\chi_\lambda(k):=\chi_0(\sqrt{k^2-2\lambda+1}/\delta)$, this can be written as
    \begin{align*}
        I&=\int_0^\infty\frac{\chi_\lambda(k)}{\jap{k}-2\lambda+i\epsilon}e^{it\jap{k}}\jap{r}^{-2} E(r,k)\widetilde\bmf(k) k\,\ud k\\
        &=-i \int_t^\infty \int_0^\infty e^{i\tau(\jap{k}-2\lambda+i\epsilon)}\chi_\lambda(k)e^{it(2\lambda-i\epsilon)}\jap{r}^{-2} E(r,k)\widetilde\bmf(k) k \,\ud k\, \ud \tau.
    \end{align*}
    Noting that the phase is non-singular in the support of $\chi_\lambda$ we integrate by parts twice in $k$ and use the notation $\calL^\ast\widetilde\bmg:=\partial_k\big(\frac{i\jap{k}}{ k}\widetilde\bmg\big)$ to get
    \begin{align*}
        \|I\|_{L^2_{r\ud r}}\lesssim \int_t^\infty \frac{1}{\tau^2}\Big\|\int_0^\infty e^{i\tau\jap{k}}(\calL^\ast)^2\big(\chi_\lambda(k)\jap{r}^{-2} E(r,k)\widetilde\bmf(k) k\big) \,\ud k\Big\|_{L^2_{r\ud r}} \ud \tau.
    \end{align*}
    Since $k$ is bounded away from zero in the supported of $\chi_\lambda$ using similar arguments to those in the proof of Lemma~\ref{locdec_lem2} we conclude that
    \begin{equation*}
        \|I\|_{L^2_{r\ud r}}\lesssim \int_t^\infty \tau^{-2}\sum_{j=0}^2\|\bfchi_\lambda(k)\partial_k^j\widetilde\bmf\|_{L^2_{k\ud k}}\ud \tau\lesssim t^{-1}\sum_{j=0}^2\|\bfchi_\lambda(k)\partial_k^j\widetilde\bmf\|_{L^2_{k\ud k}},
    \end{equation*}
    where $\bfchi_\lambda$ is a compactly supported cutoff to a region bounded away from zero. It remains to show that $\|\bfchi_\lambda(k)\partial_k^j\widetilde\bmf\|_{L^2_{k\ud k}}\lesssim \|\jap{r}^jP_c^\bfL\bmf\|_{L^2_{r\ud r}}$. But this again follows using the asymptotics of the Fourier basis from Theorem~\ref{propdFT} and writing
    \begin{equation*}
        \bfchi_\lambda(k)\widetilde\bmf(k)=\int_0^\infty\bfchi_\lambda(k)\overline{E(r,k)}^t\bmf(r)r\ud r
    \end{equation*}
    and differentiating in $k$. We refer to the proof of Lemma~\ref{lem:dkbounds1} for details in a more delicate setup where $k$ is not bounded away from zero (and hence one has to distinguish between $\partial_k$ and $\partial_k+\frac{1}{k}$). 

    The case $q=1$ is similar. Indeed, we first commute $\partial_r$ and the weight $\jap{r}^{-2}$, the commutator being bounded by what we already proved. Then by elliptic regularity and introducing $e^{i\theta}$ to view the functions as defined on $\bbR^2$, we replace $\partial_r$ by $\bfL_{\nr}^{\frac{1}{2}}$. See \eqref{eq:Lnr}. Finally we commute $\bfL_{\nr}^{\frac{1}{2}}$ and the weight using Lemma~\ref{lem:Lnrxcomm1} in the appendix. From this point the argument follows exactly as before.
\end{proof}

\section{Transference Relations} \label{sec:working_title_transference}

The goal of this section is to relate the physical Lorentz boosts
\[
\Omega_n^\pm:=r\partial_t+t\Bigl(\partial_r\pm\frac{n}{r}\Bigr)
\]
to the corresponding operators on the distorted Fourier side (note that $\widetilde\Omega_0^+=\widetilde\Omega_0^-$),
\[
\widetilde\Omega_n^\pm:=t k+\partial_t\Bigl(\partial_k\pm\frac{n}{k}\Bigr).
\]
Specifically, we will prove Theorem~\ref{thm:transference1} starting with a number of more basic lemmas. We will always work with the $2\times2$ operator $\bfL$, and the corresponding statements for
$\bfM=\pmat{\bfL&0\\0&\bfL}$ are obtained component-wise.

 We begin with introducing some notation. For a vector-valued radial function $\bff=(f_1,f_2)^t$ we write
\[
\Omega_1^\pm\bff:=(\Omega_1^\pm f_1,\Omega_1^\pm f_2)^t.
\]
 On the distorted-Fourier side we will sometimes need to apply $\widetilde\Omega_n^{\pm}$ for different choices of $n$ to the different components of a vector-valued function. For this we introduce the notation
 \[
 \widetilde{\bfOmega}_{n,m}^{\kappa_1,\kappa_2}:=\pmat{\kappa_1\widetilde\Omega_n^{\kappa_1}&0\\0&\kappa_2\widetilde\Omega_m^{\kappa_2}}.
 \]
 In the proof of Theorem~\ref{thm:transference1} we will also need the non-equivariant forms of the Lorentz boosts. For this we use the notation
\begin{equation}
    \begin{aligned}
        \Omega &:= x\partial_t+t\nabla_x, \quad x \in \bbR^2, \\
        \whatOmega &:= \partial_t\nabla_\xi+t\xi, \quad \xi \in \bbR^2.
    \end{aligned}
\end{equation}
Note that if $\psi(t,r)$ is a time-dependent radially symmetric function then
\begin{equation} \label{equ:equivboost1}
    \Omega\big(e^{in\theta}\psi(t,r)\big) = \frac{1}{2} \begin{pmatrix} e^{i(n-1)\theta}\Omega_n^{+}\psi(t,r)+e^{i(n+1)\theta}\Omega_n^{-}\psi(t,r) \\ ie^{i(n-1)\theta}\Omega_{n}^{+}\psi(t,r)-ie^{i(n+1)\theta}\Omega_n^{-} \psi(t,r) \end{pmatrix}.
\end{equation}
 In our error analysis, we will also have occasion to use the operator
 \begin{equation}\label{eq:tildecalMdef1}
     \widetilde\calM_k:=\pmat{k^{-1}&0\\0&(k\log k)^{-1}}.
 \end{equation}
 In applications in \cite{LPPSS3}, the transference relations are most relevant in the region $r\geq1$. We therefore work with the decomposition of $\bfL$ from \eqref{eq:Linfdefintro1} which we recall:
\begin{align*}
\bfL=\Linf+\Vinf,
\end{align*}
where
\begin{align*}
\Linf:=\pmat{-\Delta+1&0\\0&-\Delta+\frac{1}{r^2}+1},
\qquad
\Vinf:=\pmat{\tfrac{(1-a)^2}{r^2}-\tfrac{3}{2}(1-U^2(r))&-\tfrac{2(1-a)U}{r}\\-\tfrac{2(1-a)U}{r}&U^2-1}.
\end{align*}
The potential $\Vinf$ is singular at the origin but satisfies $\Vinf(r)=O(r^{-1/2}e^{-r})$ as $r\to\infty$. 

Let $E(r,k)$ denote the matrix-valued generalized eigenfunction kernel for $\bfL$ from Theorem~\ref{propdFT}:
\begin{equation*} 
\bfL E(r,k)=\jap{k}^2 E(r,k),
\end{equation*}
and
\[
\wtilcalF_\bfL[\bff](k)=\int_0^\infty \overline{E(r,k)}^{\,t}\bff(r)\,r\,\ud r,
\qquad
\wtilcalF_\bfL^{-1}[\widetilde\bff](r)=\int_0^\infty E(r,k)\widetilde\bff(k)\,k\,\ud k.
\]
We also introduce the matrix-valued Bessel functions
\begin{align}\label{eq:GJYdef1}
\bfJ_{n,m}(\tau):=\pmat{J_n(\tau)&0\\0&J_m(\tau)},
\qquad
\bfY_{n,m}(\tau):=\pmat{Y_n(\tau)&0\\0&Y_m(\tau)}.
\end{align}
When the specific choice is irrelevant, we write $\bfK_{n,m}$, $\bfK\in\{\bfJ,\bfY\}$. All matrix inequalities below are understood component-wise, and $\mathbf 1$ denotes the $2\times2$ matrix all of whose entries are equal to~$1$. Throughout this section we use $\chi_{\leq1}$ and $\chi_{\geq1}$ to denote smooth nonnegative radial cutoffs on $[0,\infty)$ supported in $[0,2]$ and $[\frac{1}{2},\infty)$, respectively, with $\chi_{\leq1}+\chi_{\geq1}=1$.

\begin{lemma}\label{lem:Fbasisrepalt1}
For every $k>0$ the Fourier basis $E(\cdot,k)$ admits the representation
\begin{align}\label{eq:Ereplarge1}
E(r,k)=\bfJ_{0,1}(k r)\,S^J(r,k)+\bfY_{0,1}(k r)\,S^Y(r,k),
\end{align}
where
\begin{align*}
S^J(r,k)
&=\bfq^J(k)+\frac{\pi}{2}\int_r^\infty\bfY_{0,1}(k s)\Vinf(s)E(s,k)\,s\,\ud s
=:\bfq^J(k)+\bfp^J(r,k),
\\
S^Y(r,k)
&=\bfq^Y(k)-\frac{\pi}{2}\int_r^\infty\bfJ_{0,1}(k s)\Vinf(s)E(s,k)\,s\,\ud s
=:\bfq^Y(k)+\bfp^Y(r,k).
\end{align*}
\end{lemma}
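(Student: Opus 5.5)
Since $\bfL E(\cdot,k)=\jap{k}^2E(\cdot,k)$ and $\bfL=\Linf+\Vinf$, each column of $E(\cdot,k)$ solves the inhomogeneous equation
\[
(\Linf-\jap{k}^2)E(\cdot,k)=-\Vinf E(\cdot,k).
\]
The operator $\Linf-\jap{k}^2$ is diagonal. Its first entry is the radial operator $-\partial_r^2-\frac1r\partial_r-k^2$, with fundamental system $J_0(kr),Y_0(kr)$. Its second entry is $-\partial_r^2-\frac1r\partial_r+\frac1{r^2}-k^2$, with fundamental system $J_1(kr),Y_1(kr)$. Thus $\bfJ_{0,1}(kr)$ and $\bfY_{0,1}(kr)$ are diagonal matrix solutions of the homogeneous problem. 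The plan is to apply variation of constants from $r=\infty$, componentwise for each column of $E$.

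Key step 1 (Wronskian). In each scalar channel of order $n\in\{0,1\}$,
\[
J_n(kr)\,\partial_r Y_n(kr)-\partial_rJ_n(kr)\,Y_n(kr)=\frac{2}{\pi r}.
\]
For the equation $-(rf')'/r+\ldots=g$, variation of parameters therefore gives the particular solution
\[
f(r)=\frac{\pi}{2}\int_r^\infty\bigl[J_n(kr)Y_n(ks)-Y_n(kr)J_n(ks)\bigr]g(s)\,s\,\ud s
\]
(up to sign conventions). With $g=-(\Vinf E)$ restricted to that row, this produces exactly the integral terms $\bfp^J$ and $\bfp^Y$. Because $\bfJ_{0,1}$ and $\bfY_{0,1}$ are diagonal, the matrix form acts row by row, matching the left multiplication in \eqref{eq:Ereplarge1}. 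I would check directly that
\[
E_{\rm part}:=\bfJ_{0,1}(kr)\,\bfp^J+\bfY_{0,1}(kr)\,\bfp^Y
\]
satisfies the equation. The first-derivative cross terms cancel because $\bfJ_{0,1}(kr)\bfY_{0,1}(kr)\Vinf E-\bfY_{0,1}(kr)\bfJ_{0,1}(kr)\Vinf E=0$, which holds since diagonal matrices commute. The second-derivative terms then reproduce the inhomogeneity through the Wronskian identity. In other words, $S^J$ and $S^Y$ satisfy the usual variation-of-parameters constraint $\bfJ_{0,1}\,\partial_rS^J+\bfY_{0,1}\,\partial_rS^Y=0$.

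Key step 2 (convergence). The integrals must converge for every fixed $k>0$. Theorem~\ref{propdFT}\,(v) gives $|E(s,k)|\lesssim_k s^{-1/2}$ for $sk\ge1$, and the Bessel functions satisfy $|J_n(ks)|+|Y_n(ks)|\lesssim (ks)^{-1/2}$. Since $\Vinf(s)=O(s^{-1/2}e^{-s})$, the integrands are integrable at infinity. The singularity of $\Vinf$ at $r=0$ is irrelevant, because we integrate only over $[r,\infty)$ with $r>0$ fixed.

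Key step 3 (constants). The difference $E-E_{\rm part}$ solves the homogeneous diagonal system. Each of its columns is therefore, row by row, a combination of $J_n(kr)$ and $Y_n(kr)$ with $r$-independent coefficients, i.e. $E-E_{\rm part}=\bfJ_{0,1}(kr)\bfq^J(k)+\bfY_{0,1}(kr)\bfq^Y(k)$ with $2\times2$ matrices $\bfq^J,\bfq^Y$. Since $\bfp^J,\bfp^Y\to0$ as $r\to\infty$, these constants are identified as the limits
\[
\bfq^J=\lim_{r\to\infty}S^J,\qquad \bfq^Y=\lim_{r\to\infty}S^Y,
\]
which can be computed via Wronskians of $E$ against $\bfJ_{0,1}$ and $\bfY_{0,1}$.

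The only delicate point is the sign and normalization bookkeeping in step 1, namely matching the factor $\pi/2$ and the signs of $\bfp^J$ and $\bfp^Y$. I would verify this by differentiating the ansatz once, imposing $\bfJ_{0,1}\partial_rS^J+\bfY_{0,1}\partial_rS^Y=0$, and checking the resulting $2\times2$ linear system against the Wronskian identity.
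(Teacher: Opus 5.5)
Your proposal is correct and is essentially the paper's argument: variation of constants against the diagonal homogeneous solutions $\bfJ_{0,1}(kr)$, $\bfY_{0,1}(kr)$ of $\Linf-\jap{k}^2$, using $W_r(J_\nu(kr),Y_\nu(kr))=\frac{2}{\pi r}$, taking the integrals over $[r,\infty)$ (convergent since $\Vinf(s)=O(s^{-1/2}e^{-s})$), and calling the leftover integration constants $\bfq^J,\bfq^Y$. On the sign you hedged: for $-f''-\frac{1}{r}f'+\frac{n^2}{r^2}f-k^2f=g$ the correct particular solution is $-\frac{\pi}{2}\int_r^\infty[J_n(kr)Y_n(ks)-Y_n(kr)J_n(ks)]g(s)\,s\,\ud s$ (opposite to the sign you displayed), so with $g=-\Vinf E$ you obtain exactly $\bfJ_{0,1}(kr)\bfp^J+\bfY_{0,1}(kr)\bfp^Y$ with the signs stated.
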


\begin{proof}
Since
\[
(\Linf-\jap{k}^2)E(r,k)=-\Vinf(r)E(r,k)
\]
and $\bfJ_{0,1}(k r)$, $\bfY_{0,1}(k r)$ solve the homogeneous equation
\[
(\Linf-\jap{k}^2)\bfK_{0,1}(k r)=0,
\qquad \bfK\in\{\bfJ,\bfY\},
\]
variation of constants gives
\[
E(r,k)=\bfJ_{0,1}(k r)c_J(r,k)+\bfY_{0,1}(k r)c_Y(r,k)
\]
with
\[
\partial_r c_J(r,k)=-\frac{\pi}{2}\bfY_{0,1}(k r)\Vinf(r)E(r,k)\,r,
\qquad
\partial_r c_Y(r,k)=\frac{\pi}{2}\bfJ_{0,1}(k r)\Vinf(r)E(r,k)\,r.
\]
%
The coefficients are now chosen so that the variation--of--constants integrals vanish at $+\infty$, which yields the formulas for $\bfp^J$ and $\bfp^Y$ and leaves the corresponding integration constants as $\bfq^J(k)$ and $\bfq^Y(k)$.
\end{proof}

The next lemma contains the bounds on the matrix coefficients in Lemma~\ref{lem:Fbasisrepalt1}. 
%
%
\begin{lemma}\label{lem:bfqbfpbds1}
For $r\geq1$ the coefficient matrices from Lemma~\ref{lem:Fbasisrepalt1} satisfy
\begingroup\small
\begin{align}
\label{eq:qJgross}
\bfq^J(k)
&=
\left(\begin{matrix}
 k\jap{k}^{-1}q^J_{11} & \jap{k}^{-1}q^J_{21}\\[1ex]
 \jap{k}^{-1}q^J_{12} & k\jap{k}^{-1}q^J_{22}
\end{matrix}\right),
\\ \\
\label{eq:qYgross}
\bfq^Y(k)
&=
\left(\begin{matrix}
 \min\{k^2\jap{\log k},1\}q^Y_{11} & \min\{\jap{\log k}^{-1},k^{-1}\}q^Y_{21}\\[1ex]
 \min\{k\jap{\log k},k^{-1}\} q^Y_{12} & \min\{k\jap{\log k}^{-1},1\}q^Y_{22}
\end{matrix}\right),
\\ \\
\bfp^J(r,k)
&=
\left\{\begin{array}{ll}
\pmat{(k\jap{\log k} r^{3/2}e^{-r/2}+k^{\frac{1}{2}}\jap{k}^{-1}e^{-k^{-1}})p^{J,0}_{11}&(\jap{\log r} r^{\frac{1}{2}}e^{-r/2}\\& +k^{-1}\jap{k}^{-1}e^{-k^{-1}})p^{J,0}_{21}\\[1.5ex](r^{\frac{1}{2}}e^{-r/2}+k^{-1}\jap{k}^{-1}e^{-k^{-1}})p^{J,0}_{12}&(k^{-1}\jap{\log k}^{-1}\jap{\log r}r^{-\frac{1}{2}}e^{-r/2}\\& +k^{\frac{1}{2}}\jap{k}^{-1}e^{-k^{-1}})p^{J,0}_{22}},
& rk\leq 1 \\ & \\
k^{-1}r^{-1/2}e^{-r/2}\,\chi_{\geq1}(rk)\pmat{p^{J,1}_{11}&p^{J,1}_{21}\\[1ex]p^{J,1}_{12}&p^{J,1}_{22}},
& rk\geq 1,
\end{array}\right.
\label{eq:pJgross}\\[0.5ex]
\bfp^Y(r,k)
&=
\left\{\begin{array}{ll}
\pmat{(k r^{3/2}e^{-r/2}+k^{\frac{1}{2}}\jap{k}^{-1}e^{-k^{-1}})p^{Y,0}_{11}&(\jap{\log k}^{-1}\jap{\log r} r^{\frac{1}{2}}e^{-r/2}\\& +k^{-1}\jap{k}^{-1}e^{-k^{-1}})p^{Y,0}_{21}\\[1.5ex](k^2r^{\frac{5}{2}}e^{-r/2}+k^{-1}\jap{k}^{-1}e^{-k^{-1}})p^{Y,0}_{12}&(k\jap{\log k}^{-1}\jap{\log r}r^{\frac{3}{2}}e^{-r/2}\\ & +k^{\frac{1}{2}}\jap{k}^{-1}e^{-k^{-1}})p^{Y,0}_{22}},
& rk\leq 1,\\ & \\ 
k^{-1}r^{-1/2}e^{-r/2}\,\chi_{\geq1}(rk)\pmat{p^{Y,1}_{11}&p^{Y,1}_{21}\\[1ex] p^{Y,1}_{12}&p^{Y,1}_{22}},
& rk\geq 1.
\end{array}\right.
\label{eq:pYgross}
\end{align}
\endgroup
Here the coefficients $q^K_{ij}(k)$ and $p^{K,m}_{ij}(r,k)$, for $K=J,Y$, satisfy $|(k\partial_k)^\ell q^K_{ij}|\lesssim 1$ when $k\geq1$ for $\ell=0,1,2$. For $k<1$ the coefficients are uniformly bounded. The coefficients multiplying leading order terms with no logarithms have bounded derivatives, while the ones multiplying logarithmic factors produce a factor of $k^{-1}\jap{\log k}$ upon differentiation. For instance $|\partial_k^\ell q^Y_{21}|\lesssim k^{-\ell}\jap{\log k}^{-1}$, for $\ell=1,2$.
\end{lemma}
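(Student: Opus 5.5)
The plan is to treat the $\bfp$-coefficients first, directly from their Volterra-type definitions in Lemma~\ref{lem:Fbasisrepalt1}, and then to recover $\bfq^J,\bfq^Y$ by evaluating the representation \eqref{eq:Ereplarge1} at a convenient radius.

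\textbf{The $\bfp$-coefficients.} In the integrals defining $\bfp^J(r,k)$ and $\bfp^Y(r,k)$, insert the pointwise bounds on $E(s,k)$ from Theorem~\ref{propdFT}. Use \eqref{eq:Eklein} for $sk\le1$ and \eqref{eq:Egross} for $sk\ge1$, together with the small- and large-argument bounds for $J_0,J_1,Y_0,Y_1$ recorded in Remark~\ref{rem:Bessels1}. Since $\Vinf(s)=O(s^{-1/2}e^{-s})$ for $s\ge1$, the integrand is exponentially small. This gives two regimes.
\begin{itemize}
\item When $rk\ge1$, only $s\ge r\ge1/k$ contributes. There $|\bfK_{0,1}(ks)|\lesssim(ks)^{-1/2}$ and $|E(s,k)|\lesssim (sk)^{-1/2}\jap{k}^{-1}\max\{k,k^{-1}\}^{\cdots}$, so integrating $e^{-s}$ from $r$ gives the claimed bound $k^{-1}r^{-1/2}e^{-r/2}$.
\item When $rk\le1$, split $\int_r^\infty=\int_r^{1/k}+\int_{1/k}^\infty$. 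The piece with $s\le1/k$ uses the small-argument Bessel behaviour ($J_0\sim1$, $Y_0\sim\log(ks)$, $J_1\sim ks$, $Y_1\sim(ks)^{-1}$) multiplied by the entries of \eqref{eq:Eklein}. This produces the terms of the form $k^a\jap{\log k}^{\pm1}r^{b}\jap{\log r}e^{-r/2}$ entry by entry. The tail piece with $s\ge1/k$ is bounded by $e^{-1/k}$ times powers of $k$, which gives the $e^{-k^{-1}}$ terms.
\end{itemize}
The derivative statements follow by differentiating under the integral, using the symbol bounds for $\phi_{\ell,m},\psi^\pm_{\ell,m}$ in Theorem~\ref{propdFT} and $\partial_k K_n(ks)=sK_n'(ks)$. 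The factor $s$ this produces is absorbed by $e^{-s/2}$.

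\textbf{The $\bfq$-coefficients.} Use the Wronskian identity for the scalar Bessel pair $W(J_n(k\cdot),Y_n(k\cdot))=\frac{2}{\pi r}$. This lets me solve \eqref{eq:Ereplarge1} together with its $r$-derivative (the derivative of the variation-of-constants ansatz reduces to $E'=\bfJ' S^J+\bfY' S^Y$):
\[
\bfq^J=\tfrac{\pi r}{2}\bigl(\bfY_{0,1}'(kr)\,k\,E-\bfY_{0,1}(kr)\,\partial_rE\bigr)-\bfp^J,
\]
with the analogous formula for $\bfq^Y$. The left side is independent of $r$, so I may evaluate it wherever convenient.
\begin{itemize}
\item For $k\ge1$, evaluate at $r=1$, or equivalently let $r\to\infty$. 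Compare with the $rk\ge1$ asymptotics of $E$ in Lemma~\ref{lem:Erkgeq1_1}, matching $e^{\pm ikr}$ against the Hankel asymptotics. The prefactors $k\jap{k}^{-1}$ and $\jap{k}^{-1}$ are read off from \eqref{eq:E11grenz}--\eqref{eq:E22grenz}.
\item For $k\le1$, the sizes of $\bfq^J$ also come from letting $r\to\infty$ in Lemma~\ref{lem:Erkgeq1_1}. In the diagonal $J$-entries the coefficients are $k^{1/2}\cdot k^{1/2}\sim k$, and in the off-diagonal entries they are $k^{-1/2}\cdot k^{1/2}\sim1$.
\end{itemize}

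\textbf{Expected main obstacle.} I expect the hardest step to be the small-$k$ sizes of $\bfq^Y$ (the minima in \eqref{eq:qYgross}). Large-$r$ matching alone only gives $|q^Y|\lesssim$ the $J$-size. The improved sizes reflect cancellations, namely $Y$-components that are absent to leading order. They come from the phase relations \eqref{eq:eta12-small}: the combination $e^{ikr}-\eta e^{-ikr}$ with $\eta\approx -i$ (respectively $\eta_2\approx i+O(\jap{\log k}^{-1})$) is exactly the $J$-phase up to a small multiple of the $Y$-phase.

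So I would compute $\bfq^Y=\lim_{r\to\infty}$ of the Wronskian expression with the refined representation from the remark after Lemma~\ref{lem:Erkgeq1_1}. There $E_{1,1}$ and $E_{1,2}$ carry $Y$-parts weighted by $\uppsi^2_{1,1}(k),\uppsi^3_{1,2}(k)=O(k\jap{\log k})$, and $E_{2,1},E_{2,2}$ carry $Y$-parts with weight $(\log k)^{-1}$. Multiplying by the prefactors $k$, $1$, $k^{-1}$ yields the entries $k^2\jap{\log k}$, $\jap{\log k}^{-1}$, $k\jap{\log k}$, $k\jap{\log k}^{-1}$.

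Differentiating in $k$ hits $\log k$ and the $\uppsi(k)$ factors, which costs the stated $k^{-1}\jap{\log k}$. This uses the derivative stability asserted in that remark, which in turn rests on \eqref{SMOp1_13} and \eqref{propSMa_dk}.
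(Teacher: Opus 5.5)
Your proposal is correct and follows essentially the paper's route. You bound the Volterra remainders $\bfp^J,\bfp^Y$ directly from their integral formulas, splitting at $s=k^{-1}$ when $rk\le1$. You then obtain $\bfq^J,\bfq^Y$ as the $r\to\infty$ limits of the Bessel Wronskian expressions matched against Lemma~\ref{lem:Erkgeq1_1}, with the improved small-$k$ sizes of $\bfq^Y$ coming from the factors $i+\eta_1$ and $1+i\eta_2$ controlled by \eqref{eq:eta12-small}. The only difference is cosmetic: the paper writes $\bfq^Y$ explicitly in terms of $\eta_j$ instead of going through the refined representation in the remark after Lemma~\ref{lem:Erkgeq1_1}, which is itself derived from the same phase estimates (and in your list of prefactors the correct weights are $k,1,1,k$ rather than $k,1,k^{-1}$).
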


\begin{proof}
In this proof for the entries of a $2\times2$ matrix $M$ we use the convention $M=\pmat{M_{11}&M_{21}\\M_{12}&M_{22}}$. We treat the end point coefficients $\bfq^K$ and the Volterra remainders $\bfp^K$, $K=J,Y$, separately.
\smallskip
\noindent
{\it 1. The endpoint coefficients.}
Since $\Vinf(r)=O(r^{-1/2}e^{-r})$, the Volterra remainders in \eqref{eq:Ereplarge1} satisfy
\[
\bfp^J(r,k),\ \bfp^Y(r,k)\longrightarrow 0
\qquad (r\to\infty),
\]
component-wise. Therefore, the endpoint coefficients can be obtained from the Bessel Wronskians. With $\bfJ_{0,1}(\tau)=\diag(J_0(\tau),J_1(\tau))$, $\bfY_{0,1}(\tau)=\diag(Y_0(\tau),Y_1(\tau))$, and with $\bfJ_{0,1}'$, $\bfY_{0,1}'$ denoting derivatives with respect to the scalar variable $\tau$, one has
\begin{equation}\label{eq:qJ01formel}
\bfq^J(k)
=
\frac{\pi}{2}\lim_{r\to\infty}
\Bigl[
 rk\bfY_{0,1}'(rk)E(r,k)-r\bfY_{0,1}(rk)\partial_r E(r,k)
\Bigr],
\end{equation}
\begin{equation}\label{eq:qY01formel}
\bfq^Y(k)
=
\frac{\pi}{2}\lim_{r\to\infty}
\Bigl[
 r\bfJ_{0,1}(rk)\partial_r E(r,k)-rk\bfJ_{0,1}'(rk)E(r,k)
\Bigr].
\end{equation}
These matrix Wronskian identities follow immediately from the scalar relations
\[
W_r\bigl(J_\nu(kr),Y_\nu(kr)\bigr)=\frac{2}{\pi r},
\qquad \nu=0,1.
\]
To evaluate \eqref{eq:qJ01formel}, \eqref{eq:qY01formel}, let
\[
\delta_0:=\frac{\pi}{4},
\qquad
\delta_1:=\frac{3\pi}{4}.
\]
If
\[
f(r)=r^{-1/2}\Bigl(c_+(k)e^{ikr}+c_-(k)e^{-ikr}\Bigr)+O(r^{-3/2}),
\]
then the standard large--argument asymptotics of $J_\nu$ and $Y_\nu$ imply
\begin{align}
\frac{\pi}{2}\lim_{r\to\infty}
\Bigl[rkY_\nu'(kr)f(r)-rY_\nu(kr)f'(r)\Bigr]
&=
\frac{\sqrt{2\pi k}}{2}
\Bigl(e^{i\delta_\nu}c_+(k)+e^{-i\delta_\nu}c_-(k)\Bigr),
\label{eq:qJallggrenz}
\\
\frac{\pi}{2}\lim_{r\to\infty}
\Bigl[rJ_\nu(kr)f'(r)-rkJ_\nu'(kr)f(r)\Bigr]
&=
\frac{i\sqrt{2\pi k}}{2}
\Bigl(e^{i\delta_\nu}c_+(k)-e^{-i\delta_\nu}c_-(k)\Bigr),
\label{eq:qYallggrenz}
\end{align}
for $\nu=0,1$. Here, the $O(r^{-3/2})$ remainder and its derivative contribute $o(1)$ to the limits, because
\[
rJ_\nu(kr)=O\bigl(r^{1/2}k^{-1/2}\bigr),
\qquad
rkJ_\nu'(kr)=O\bigl(r^{1/2}k^{1/2}\bigr),
\]
and similarly for $Y_\nu$.
Applying \eqref{eq:E11grenz}, \eqref{eq:E21grenz}, \eqref{eq:E12grenz}, \eqref{eq:E22grenz}, \eqref{eq:qJallggrenz}, \eqref{eq:qYallggrenz} to \eqref{eq:qJ01formel}, \eqref{eq:qY01formel}
 row by row now yields
\begin{equation}\label{eq:qJ01klargrenz}
\bfq^J(k)
=
\frac{1}{2\,\jap{k}}
\begin{pmatrix}
 k\bigl(e^{i\pi/4}-e^{-i\pi/4}\eta_1(k)\bigr)
 &
 i\bigl(e^{i\pi/4}+e^{-i\pi/4}\eta_2(k)\bigr)
\\[1ex]
 i\bigl(e^{i3\pi/4}+e^{-i3\pi/4}\eta_1(k)\bigr)
 &
 k\bigl(e^{i3\pi/4}-e^{-i3\pi/4}\eta_2(k)\bigr)
\end{pmatrix},
\end{equation}
\begin{equation}\label{eq:qY01klargrenz}
\bfq^Y(k)
=
\frac{1}{2\,\jap{k}}
\begin{pmatrix}
 k e^{i\pi/4}\bigl(i+\eta_1(k)\bigr)
 &
 e^{-i3\pi/4}\bigl(1+i\eta_2(k)\bigr)
\\[1ex]
 e^{-i3\pi/4}\bigl(i+\eta_1(k)\bigr)
 &
 k e^{-i3\pi/4}\bigl(1+i\eta_2(k)\bigr)
\end{pmatrix}.
\end{equation}
For example, the $(2,1)$ entry comes from the first row with $\nu=0$ and
\[
c_+(k)=\frac{i}{\sqrt{2\pi}\,\jap{k}\,k^{1/2}},
\qquad
c_-(k)=\frac{i\eta_2(k)}{\sqrt{2\pi}\,\jap{k}\,k^{1/2}},
\]
so that \eqref{eq:qYallggrenz} gives
\[
(\bfq^Y(k))_{21}
=
\frac{i\sqrt{2\pi k}}{2}
\Bigl(e^{i\pi/4}c_+(k)-e^{-i\pi/4}c_-(k)\Bigr)
=
\frac{1}{2\,\jap{k}}e^{-3\pi i/4}\bigl(1+i\eta_2(k)\bigr).
\]
The other three entries are obtained analogously.
In particular,
\begin{equation}\label{eq:qJ01schranke}
|\bfq^J(k)|
\lesssim
\pmat{k\jap{k}^{-1} & \jap{k}^{-1}\\ \jap{k}^{-1} & k\jap{k}^{-1}},
\end{equation}
while
\begin{equation}\label{eq:qY01schranke}
|\bfq^Y(k)|
\lesssim
\frac{1}{\jap{k}}
\pmat{k|i+\eta_1(k)| & |1+i\eta_2(k)|\\ |i+\eta_1(k)| & k|1+i\eta_2(k)|}.
\end{equation}
Thus, \eqref{eq:qJgross} is immediate from $|\eta_j(k)|=1$. For \eqref{eq:qYgross}, first note that \eqref{eq:qY01schranke} gives, for $0<k\le1$,
\[
|(\bfq^Y(k))_{11}|
\lesssim k\,|i+\eta_1(k)|,
\qquad
|(\bfq^Y(k))_{12}|
\lesssim |i+\eta_1(k)|,
\]
and
\[
|(\bfq^Y(k))_{21}|
\lesssim |1+i\eta_2(k)|,
\qquad
|(\bfq^Y(k))_{22}|
\lesssim k\,|1+i\eta_2(k)|.
\]
Using \eqref{eq:eta12-small} we therefore obtain the sharper low-energy bounds
\[
|(\bfq^Y(k))_{11}|
\lesssim k^2\jap{\log k},
\qquad
|(\bfq^Y(k))_{12}|
\lesssim k\jap{\log k},
\]
and
\[
|(\bfq^Y(k))_{21}|
\lesssim \jap{\log k}^{-1},
\qquad
|(\bfq^Y(k))_{22}|
\lesssim k\,\jap{\log k}^{-1}.
\]
For $k\ge1$, we simply use $|\eta_j(k)|=1$ in \eqref{eq:qY01schranke}, which gives
\[
|(\bfq^Y(k))_{11}|+|(\bfq^Y(k))_{22}|\lesssim1,
\qquad
|(\bfq^Y(k))_{12}|+|(\bfq^Y(k))_{21}|\lesssim k^{-1}.
\]
Hence,
\[
|\bfq^Y(k)|
\lesssim
\pmat{\min\{k^2\jap{\log k},1\} & \min\{\jap{\log k}^{-1},k^{-1}\}\\ \min\{k\jap{\log k},k^{-1}\} & \min\{k\jap{\log k}^{-1},1\}},
\]
which proves \eqref{eq:qYgross}.

\smallskip
\noindent
{\it 2. The  Volterra remainders.}
Finally, to prove \eqref{eq:pJgross} and \eqref{eq:pYgross}, we return to the Volterra formulas in Lemma~\ref{lem:Fbasisrepalt1}. Since
\[
\Vinf(s)=O(s^{-1/2}e^{-s}),
\]
it is enough to combine the standard small-- and large--argument bounds for $J_0,J_1,Y_0,Y_1$ with \eqref{eq:Egross}. Assume first that $rk\le1$. We split the integral at $s=k^{-1}$.
On the interval $r\le s\le k^{-1}$ we use
\[
|J_0(k s)|\lesssim1,\qquad |J_1(k s)|\lesssim k s,
\qquad
|Y_0(k s)|\lesssim \jap{\log(k s)},\qquad |Y_1(k s)|\lesssim (k s)^{-1},
\]
together with the first line of \eqref{eq:Egross}. A direct inspection of the matrix product then gives the desired bound.
On the interval $s\ge k^{-1}$ one has $k s\ge1$, so both $\bfJ_{0,1}(k s)$ and $\bfY_{0,1}(k s)$ are $O((k s)^{-1/2})$. Then we get the desired bound by using the second line of \eqref{eq:Egross} and inspecting the matrix product. 
If $rk\ge1$, then $k s\ge1$ for every $s\ge r$, and we use the large--argument bounds for $\bfJ_{0,1}$ and $\bfY_{0,1}$ together with the second line of \eqref{eq:Egross} throughout:
\[
|\bfJ_{0,1}(k s)\Vinf(s)E(s,k)s|
+
|\bfY_{0,1}(k s)\Vinf(s)E(s,k)s|
\lesssim
k^{-1}s^{-1/2}e^{-s}\mathbf 1.
\]
Integrating from $r$ to $\infty$ gives the desired bound. The differentiated bounds follow in the same way, since $(k\partial_k)^j\bfJ_{0,1}(k)$ and $(k\partial_k)^j\bfY_{0,1}(k)$ satisfy the same small-- and large--argument estimates, and the corresponding differentiated bounds for $E$ were already noted above.
\end{proof}

Before stating the main transference identity in the next lemma, we recall the standard Bessel identities
\begin{align}\label{eq:Besselders1}
\tau^n\frac{\ud}{\ud\tau}(\tau^{-n}K_n(\tau))=-K_{n+1}(\tau),
\qquad
\tau^{-(n+1)}\frac{\ud}{\ud\tau}(\tau^{n+1}K_{n+1}(\tau))=K_n(\tau),
\end{align}
valid for $K_n\in\{J_n,Y_n\}$ and all integers $n\ge0$.  
The transference identity \eqref{eq:trans1} in Lemma~\ref{lem:trans1} relates the action of $\Omega_1^+$ to that of $\widetilde{\bfOmega}_{n,m}^{\kappa_1,\kappa_2}$ in the region $\{r\geq1\}$. A similar identity could be derived for $\Omega_{1}^{-}$, but in our applications in \cite{LPPSS3} we can simply write 
$$\Omega_1^-=\Omega_1^+-\frac{2t}{r},$$ and treat  $\frac{2t}{r}$ as an error, using the decay of $r^{-1}$.
\begin{lemma}\label{lem:trans1}
For $K=J,Y$, let $S^K_\diag$ and $S^K_\off$ denote the diagonal and off diagonal parts of $S^K$ respectively, that is, 
\[
S^K_\diag=\pmat{S^K_{1,1}&0\\0&S^K_{2,2}},\quad S^K_\off=\pmat{0&S^K_{2,1}\\S^K_{1,2}&0},\qquad \mathrm{where}\qquad S^K=\pmat{S^K_{1,1}&S^K_{2,1}\\S^K_{1,2}&S^K_{2,2}}.
\]
For $\ast\in\{\diag,\off\}$ define the operators
\begingroup
\allowdisplaybreaks
\begin{align*}
&\calT_\ast[\widetilde\bff](t,r):=\int_0^\infty \chi_{\geq1}(r)\big(\bfJ_{1,0}(rk)S^J_{\ast}(r,k)+\bfY_{1,0}(rk)S^Y_{\ast}(r,k)\big)\widetilde\bff(t,k)\,k\,\ud k,
\\
&\calR_{\ast}[\widetilde\bff](t,r):=\int_0^\infty \chi_{\geq1}(r)\big(\bfJ_{1,0}(rk)\partial_k S^J_{\ast}(r,k)+\bfY_{1,0}(rk)\partial_k S^Y_{\ast}(r,k)\big)\widetilde\bff(t,k)\,k\,\ud k.
\end{align*}
\endgroup
For functions $\bff=(f_1,f_2)^t$ so that $\widetilde\bff:=\wtilcalF_\bfL[\bff]\in C_c^2((0,\infty);\bbC^2)$,  and with $\sigma_3=\pmat{1&0\\0&-1}$,
\begin{align}\label{eq:trans1}
\begin{split}
\chi_{\geq1}(r)\Omega_1^+\bff(t,r)
&=\calT_{\off}[\widetilde\bfOmega_{1,0}^{+,-}\widetilde\bff](t,r)+\calT_{\diag}[\widetilde\bfOmega_{0,1}^{-,+}\widetilde\bff](t,r)-\sigma_3\calR_{\off}[\partial_t\widetilde\bff](t,r)-\sigma_3\calR_{\diag}[\partial_t\widetilde\bff](t,r)\\
&\quad+\chi_{\geq1}(r)\frac{t}{r}(f_1(t,r),0)^t.
\end{split}
\end{align}
\end{lemma}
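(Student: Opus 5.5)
We prove \eqref{eq:trans1} by a direct computation. We insert the representation of Lemma~\ref{lem:Fbasisrepalt1} into the inversion formula
\[
\bff(t,r)=\int_0^\infty E(r,k)\widetilde\bff(t,k)\,k\,\ud k ,
\]
which is legitimate since $\widetilde\bff\in C_c^2((0,\infty))$, and then let $\Omega_1^+=r\partial_t+t(\partial_r+\frac1r)$ act under the integral. The model is the one-dimensional computation in the introduction. The exponentials $e^{\pm ikx}$ are replaced by $\bfJ_{0,1}(rk)$ and $\bfY_{0,1}(rk)$. The two relations $\partial_x e^{ixk}=ike^{ixk}$ and $xe^{ixk}=-i\partial_ke^{ixk}$ are replaced by the Bessel identities \eqref{eq:Besselders1}, which raise or lower the order by one. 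This shift in order is why $\bfJ_{1,0},\bfY_{1,0}$ appear in $\calT_\ast,\calR_\ast$.

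\textbf{Step 1: the time-derivative part $r\partial_t$.} We rewrite $r\,\bfK_{0,1}(rk)$, for $\bfK\in\{\bfJ,\bfY\}$, as a $k$-derivative. In the first component we use $rK_0(rk)=\frac1k\partial_k\big(kK_1(rk)\big)=(\partial_k+\frac1k)K_1(rk)$. In the second component we use $rK_1(rk)=-\partial_k K_0(rk)$. Hence
\[
r\,\bfK_{0,1}(rk)=\pmat{(\partial_k+\tfrac1k)&0\\0&-\partial_k}\bfK_{1,0}(rk).
\]
We then integrate by parts in $k$ against $S^K(r,k)\,\partial_t\widetilde\bff\,k$; the boundary terms vanish because of the compact support. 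The adjoint of $\partial_k+\frac1k$ with respect to $k\,\ud k$ is $-\partial_k$, and the adjoint of $-\partial_k$ is $\partial_k+\frac1k$. Row $i$ of $S^K$ is hit by the operator attached to row $i$. When the derivative falls on $S^K$ we obtain the $\calR_\ast$ terms, with the sign pattern $-\sigma_3$. When it falls on $\partial_t\widetilde\bff$ we obtain the operators $-\partial_t\partial_k$ and $\partial_t(\partial_k+\frac1k)$ acting on the column components of $\widetilde\bff$. Splitting $S^K=S^K_\diag+S^K_\off$ decides which operator acts on which component. The diagonal part gives $\mathrm{diag}(-\partial_k,\ \partial_k+\frac1k)$, matching $\widetilde\bfOmega^{-,+}_{0,1}$. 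The off-diagonal part gives $\mathrm{diag}(\partial_k+\frac1k,\ -\partial_k)$, matching $\widetilde\bfOmega^{+,-}_{1,0}$.

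\textbf{Step 2: the spatial part $t(\partial_r+\frac1r)$.} We differentiate $\bfK_{0,1}(rk)S^K(r,k)$ in $r$. The Bessel factors give $(\partial_r+\frac1r)K_0(rk)=-kK_1(rk)+\frac1rK_0(rk)$ and $(\partial_r+\frac1r)K_1(rk)=kK_0(rk)$. The term $\frac1rK_0$ in the first row, after summing over $K$, reconstructs $\frac tr f_1$; this is the explicit last term of \eqref{eq:trans1}. The remaining terms are $tk\,\mathrm{diag}(-1,1)\,\bfK_{1,0}(rk)$, which supply the $tk$ parts of $\kappa\widetilde\Omega$, again with signs $\kappa_1,\kappa_2$ determined by rows. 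There are also terms where $\partial_r$ falls on $S^K$. By the variation-of-constants relations in the proof of Lemma~\ref{lem:Fbasisrepalt1} we have
\[
\bfJ_{0,1}\partial_rS^J+\bfY_{0,1}\partial_rS^Y=0 .
\]
This identity cancels them, exactly as $e^{ixk}\partial_xs_++e^{-ixk}\partial_xs_-=0$ does in the one-dimensional model.

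\textbf{Main obstacle.} The delicate point is the sign and row/column bookkeeping, not any estimate. The adjoint operators land on the entries of $\widetilde\bff$ indexed by the columns of $S^K$, while the sign comes from the row. We must check carefully that splitting into diagonal and off-diagonal parts produces precisely $\widetilde\bfOmega^{+,-}_{1,0}$ on $\calT_\off$ and $\widetilde\bfOmega^{-,+}_{0,1}$ on $\calT_\diag$. We also have to verify that the $\frac1r K_0$ contribution is not also absorbed elsewhere. Convergence is not an issue, since $k$ ranges over a compact subset of $(0,\infty)$ and $r\ge\frac12$.
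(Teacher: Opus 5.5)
Your proposal is correct and follows the paper's proof. You insert the representation of Lemma~\ref{lem:Fbasisrepalt1} into the inversion formula, use \eqref{eq:Besselders1} to convert $r$ and $\partial_r+\frac1r$ into $k$-operations on $\bfJ_{1,0},\bfY_{1,0}$, integrate by parts in $k$ (your adjoint formulation is equivalent to the paper's identities for $rkK_0S$ and $rkK_1S$), and remove the $\partial_rS^K$ terms with $\bfJ_{0,1}\partial_rS^J+\bfY_{0,1}\partial_rS^Y=0$. Your bookkeeping also checks out: row $1$ carries $-\widetilde\Omega_0$ and row $2$ carries $\widetilde\Omega_1^+$, each acting on the component indexed by the column, with $-\sigma_3$ on the $\partial_kS^K$ terms, and this is exactly what gives $\widetilde{\bfOmega}^{-,+}_{0,1}$ on the diagonal part and $\widetilde{\bfOmega}^{+,-}_{1,0}$ on the off-diagonal part.
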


\begin{proof}
 Write
\begin{align}
\chi_{\geq1}(r)\Omega_1^+\bff(t,r)
&=
\chi_{\geq1}(r)\Omega_1^+\int_0^\infty E(r,k)\widetilde\bff(t,k)\,k\,\ud k\\
&=\sum_{\ast\in\{\diag,\off\}}\chi_{\geq1}(r)\Omega_1^+\int_0^\infty\big(\bfJ_{0,1}(k r)\,S^J_\ast(r,k)+\bfY_{0,1}(k r)\,S^Y_\ast(r,k)\big)\widetilde\bff(t,k)\,k\,\ud k\\
 &=:I_\diag+I_\off.
\end{align}
For $K_0,K_1\in\{J_0,Y_0\}\times\{J_1,Y_1\}$ and any scalar $S=S(r,k)$, \eqref{eq:Besselders1} yields
\begin{align}
(\partial_r+\tfrac1r)\bigl(K_0(k r)S\bigr)
&=-k K_1(k r)S+\frac1r K_0(k r)S+K_0(k r)\partial_rS,\label{eq:transgross1}\\
(\partial_r+\tfrac1r)\bigl(K_1(k r)S\bigr)
&=k K_0(k r)S+K_1(k r)\partial_rS,\label{eq:transgross3}\\
rk K_0(k r)S
&=\partial_k\bigl(K_1(k r)S\,k\bigr)-k K_1(k r)\partial_k S,\label{eq:transgross5}\\
rk K_1(k r)S
&=-(\partial_k-\tfrac1k)\bigl(K_0(k r)S\,k\bigr)
+k K_0(k r)\partial_k S.\label{eq:transgross6}\\
\end{align}
The Volterra formulas for $\bfp^J$ and $\bfp^Y$ imply
\[
\bfJ_{0,1}(k r)\partial_r S^J(r,k)+\bfY_{0,1}(k r)\partial_r S^Y(r,k)=0.
\]
Applying \eqref{eq:transgross1}, \eqref{eq:transgross3}, \eqref{eq:transgross5}, and \eqref{eq:transgross6}  to the two diagonal entries separately, and again integrating by parts in $k$ with no boundary terms because of the compact support of $\widetilde\bff$, we find
\begin{align*}
I_\diag
&=
\chi_{\geq1}(r)\int_0^\infty
\bigl(\bfJ_{1,0}(rk)S^J(r,k)+\bfY_{1,0}(rk)S^Y(r,k)\bigr)
\widetilde{\bfOmega}_{0,1}^{-,+}\widetilde\bff(t,k)\,k\,\ud k
\\
&\quad+
\chi_{\geq1}(r)\int_0^\infty
\bigl(\bfJ_{1,0}(rk)\partial_k S^J(r,k)+\bfY_{1,0}(rk)\partial_k S^Y(r,k)\bigr)
\wtilcalF_\bfL[\partial_t\bff](t,k)\,k\,\ud k
\\
&\quad+\chi_{\geq1}(r)\frac{t}{r}(f_1(t,r),0)^t
\\
&=\calT_{\diag}[\widetilde{\bfOmega}_{0,1}^{-,+}\widetilde\bff](t,r)-\sigma_3\calR_{\diag}[\partial_t\widetilde\bff](t,r)+\chi_{\geq1}(r)\frac{t}{r}(f_1(t,r),0)^t.
\end{align*}
The computation for $I_\off$ is similar.
\end{proof}

The next lemma gives the $L^2$ bounds for the operators in Lemma~\ref{lem:trans1}. We will use the notation $\widetilde\calM_k$ introduced in \eqref{eq:tildecalMdef1}, and write $\chi_{\leq\frac{1}{2}}$ for a smooth cutoff to frequencies $k\leq \frac{1}{2}$.

\begin{lemma}\label{lem:calTcalRbounds1}
The operators from Lemma~\ref{lem:trans1} satisfy
\begin{align}\label{eq:calTcalRbounds1}
\begin{split}
&\|\calT_{\off}[\widetilde{\bfOmega}_{1,0}^{+,-}\widetilde\bff]-\sigma_3\calR_{\off}[\partial_t\widetilde\bff]\|_{L^2_{r\ud r}}\lesssim \|\widetilde{\bfOmega}_{1,0}^{+,-}\widetilde\bff\|_{L^2_{k\ud k}}+\|\chi_{\leq \frac{1}{2}}\widetilde\calM_k\partial_t\widetilde\bff\|_{L^2_{k\ud k}}+\|\partial_t\widetilde\bff\|_{L^2_{k\ud k}},
\\
&\|\calT_{\diag}[\widetilde{\bfOmega}_{0,1}^{-,+}\widetilde\bff]-\sigma_3\calR_{\diag}[\partial_t\widetilde\bff]\|_{L^2_{r\ud r}}\lesssim \|\widetilde{\bfOmega}_{1,0}^{+,-}\widetilde\bff\|_{L^2_{k\ud k}}+\|\chi_{\leq \frac{1}{2}}\widetilde\calM_k\partial_t\widetilde\bff\|_{L^2_{k\ud k}}+\|\partial_t\widetilde\bff\|_{L^2_{k\ud k}}.
\end{split}
\end{align}
\end{lemma}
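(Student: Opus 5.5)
We split $\calT_\ast$ and $\calR_\ast$ according to the decomposition $S^K=\bfq^K(k)+\bfp^K(r,k)$ of Lemma~\ref{lem:Fbasisrepalt1}. We also localize in $rk\le1$ versus $rk\ge1$ with $\chi_{\leq1}(rk)+\chi_{\geq1}(rk)=1$, and in frequency with $\chi_{\leq\frac12}(k)$ versus its complement. Each resulting piece is treated as an operator $L^2_{k\ud k}\to L^2_{r\ud r}$ and bounded using the coefficient bounds of Lemma~\ref{lem:bfqbfpbds1}.

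\textbf{Step 1: the $\bfp^K$ contributions.} These come with factors $e^{-r/2}$ (or $e^{-k^{-1}}$) and are localized to $r\ge1$. We bound them by Schur's test or Hilbert--Schmidt after inserting the small/large argument Bessel bounds. For $\partial_k\bfp^K$ the differentiated bounds cost at most $k^{-1}\jap{\log k}$. At low frequency this is absorbed either by the exponential spatial decay in $r$, which compensates for $k^{-1}$ in $L^2_{k\ud k}$ after using $\chi_{\leq\frac12}\widetilde\calM_k$, or directly by the $\|\chi_{\leq\frac12}\widetilde\calM_k\partial_t\widetilde\bff\|$ term.

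\textbf{Step 2: the $\bfq^K$ part of $\calT$ in the region $rk\ge1$.} Here $\bfJ_{1,0}(rk)\bfq^J+\bfY_{1,0}(rk)\bfq^Y$ equals $(rk)^{-1/2}e^{\pm irk}\mathfrak m(r,k)$ with symbol bounds \eqref{eq:oscKriterium20}. The $k$-dependence enters through $\eta_j$ and $\jap{k}^{-1}$, both symbol-type by \eqref{SMOp1_13} and \eqref{propSMa_dk}. Since $\bfq^J,\bfq^Y$ are $O(1)$ by \eqref{eq:qJgross} and \eqref{eq:qYgross}, the Cotlar--Stein criterion cited in Lemma~\ref{locdec_lem2} gives $L^2$ boundedness by $\|\widetilde{\bfOmega}\widetilde\bff\|_{L^2_{k\ud k}}$.

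\textbf{Step 3: the $\bfq^K$ part of $\calT$ in the region $rk\le1$, $r\ge1$.} This is the main obstacle. The entries of $\bfY_{1,0}(rk)\bfq^Y$ contain, for instance, $Y_1(rk)(\bfq^Y)_{12}\sim (rk)^{-1}k\jap{\log k}$, which is harmless, and $Y_0(rk)(\bfq^Y)_{21}\sim\jap{\log (rk)}\jap{\log k}^{-1}$. The dangerous kernels are of the model type $\chi_{\geq2}(r)\chi_{\leq1}(rk)\frac{1}{rk\jap{\log k}}$ mentioned in the introduction. This is precisely where the phase cancellations \eqref{eq:eta12-small} are used: without them one would have $\bfq^Y_{21}\sim1$ and a logarithmic divergence. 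For such kernels I would change variables $r=e^\tau$, $k=e^{-\sigma}$, so that $L^2_{r\ud r}$ and $L^2_{k\ud k}$ become weighted $L^2$ spaces in $\tau,\sigma$. After conjugating the weights, the operator becomes $g\mapsto\int_\tau^\infty g(\sigma)\sigma^{-1}\ud\sigma$ restricted to $\sigma\ge\tau\ge0$, which is bounded on $L^2$ by Hardy's inequality. Non-endpoint kernels such as $J_0(rk)\bfq^J_{21}$, which is bounded with $rk\le1$, are handled by Schur or Hilbert--Schmidt with the power gain $(rk)$.

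\textbf{Step 4: $\calR_\ast$.} Here $\partial_k\bfq^K$ loses $k^{-1}$, and $k^{-1}\jap{\log k}$ for the logarithmic coefficients, as specified at the end of Lemma~\ref{lem:bfqbfpbds1}. At high frequency this is a gain, and Step 2 applies with $\|\partial_t\widetilde\bff\|$. At low frequency we factor out $\widetilde\calM_k$: the first column loses $k^{-1}$ and the second loses $(k\log k)^{-1}$, which matches exactly the derivative of $\jap{\log k}^{-1}$. The remaining kernels are then again of Cotlar--Stein type ($rk\ge1$) or of Hardy/Schur type ($rk\le1$), as in Step 3.

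The diagonal estimate is identical, with the roles of the entries permuted. The hardest point is verifying that each low-frequency $rk\le1$ entry is either summable or exactly of borderline Hardy form with the $\jap{\log k}^{-1}$ gain. I would check this entry by entry using the precise forms \eqref{eq:qJ01klargrenz} and \eqref{eq:qY01klargrenz}.
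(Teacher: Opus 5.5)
\textbf{The gap is in Step~1.} Your treatment of the $\bfq^K$-terms matches the paper's proof: Cotlar--Stein for $rk\ge1$, and for $rk\le1$ the Hardy inequality, the logarithmic Hardy operator $g\mapsto\int_\tau^\infty g(\sigma)\sigma^{-1}\,\ud\sigma$, and factoring out $\widetilde\calM_k$ in $\calR_\off$. The problem is Step~1 together with your claim that the diagonal estimate is the off-diagonal one with entries permuted.

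You bound $\calT_\diag[\widetilde\bfOmega_{0,1}^{-,+}\widetilde\bff]$ and $\calR_\diag[\partial_t\widetilde\bff]$ separately. For the Volterra entry $(\bfp^J)_{22}$ in $\{rk\le1\}$, neither piece is controlled by the right-hand side.

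\emph{In $\calR_\diag$.} By Lemma~\ref{lem:bfqbfpbds1}, $(\bfp^J)_{22}$ has size $c(r)(k\jap{\log k})^{-1}$ with $c(r)=\jap{\log r}r^{-1/2}e^{-r/2}$, and it does not vanish as $k\to0$. Hence $\partial_k(\bfp^J)_{22}$ has size $c(r)(k^2\jap{\log k})^{-1}$. In $\calR_\diag$ this multiplies $J_0(rk)\partial_t\widetilde\bff_2$. After you extract $(k\log k)^{-1}\partial_t\widetilde\bff_2$ via $\widetilde\calM_k$, the remaining operator is essentially $g\mapsto c(r)\int_0^{1/r}g(k)\,\ud k$. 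This is unbounded on $L^2_{k\ud k}$, because $\int_0 k^{-1}\,\ud k=\infty$. The exponential decay in $r$ does not help: for each fixed $r$ the singularity sits at $k\to0$. That decay only helps in $\{rk\ge1\}$, where $k^{-1}\le r$.

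\emph{In $\calT_\diag$.} The second component of $\widetilde\bfOmega_{0,1}^{-,+}\widetilde\bff$ is $\widetilde\Omega_1^+\widetilde\bff_2=\widetilde\Omega_0^-\widetilde\bff_2+k^{-1}\partial_t\widetilde\bff_2$. The right-hand side only controls $\widetilde\Omega_0^-\widetilde\bff_2$. The extra piece $(\bfp^J)_{22}\,k^{-1}\partial_t\widetilde\bff_2$ produces the same non-integrable kernel.

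\emph{The separate estimates are false, not just lossy.} The leading coefficient of $(\bfp^J)_{22}$ as $k\to0$ is nonzero for large $r$, since $\widetilde c_{1,*}\neq0$. Take $\widetilde\bff_1\equiv0$ and, at the given time, $\widetilde\bff_2=0$ and $\partial_t\widetilde\bff_2$ a smooth cutoff to $[\epsilon,\tfrac12]$. Then every term on the right-hand side stays bounded, while each of the two pieces has size about $\log\log(1/\epsilon)$. This is why the lemma is stated for the combinations $\calT_\ast-\sigma_3\calR_\ast$.

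\textbf{The missing idea is the cancellation between $\calT_\diag$ and $\calR_\diag$.} In the second component, $(\bfp^J)_{22}$ enters as $S\,\widetilde\Omega_1^++(\partial_kS)\,\partial_t$ with $S=(\bfp^J)_{22}\approx c(r)(k\log k)^{-1}$. One has
\[
\frac{1}{k\log k}\,\widetilde\Omega_1^++\partial_k\Big(\frac{1}{k\log k}\Big)\partial_t=\frac{1}{k\log k}\,\widetilde\Omega_0^--\frac{1}{(k\log k)^2}\,\partial_t ,
\]
so the $k^{-2}$ singularities cancel exactly. Both remaining terms are bounded. The kernel $c(r)\chi_{\leq1}(rk)(k\log k)^{-1}$ maps $L^2_{k\ud k}$ to $L^2_{r\ud r}$ by Cauchy--Schwarz, because $\int_0^{1/2}\frac{\ud k}{k(\log k)^2}<\infty$. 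It acts on $\widetilde\Omega_0^-\widetilde\bff_2$ and on $(k\log k)^{-1}\partial_t\widetilde\bff_2$, and both appear on the right-hand side.

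\textbf{Related omission in Step~2.} You claim a bound by $\|\widetilde\bfOmega\widetilde\bff\|$ without saying which $\widetilde\bfOmega$. For $\calT_\diag$ you must pass from $\widetilde\bfOmega_{0,1}^{-,+}$ to $\widetilde\bfOmega_{1,0}^{+,-}$, which costs $k^{-1}\partial_t\widetilde\bff$. For the $\bfq^K$-terms this cost is paid by the vanishing of the diagonal entries:
\[
\bfq^J_{11},\ \bfq^J_{22}=O(k),\qquad \bfq^Y_{11}=O(k^2\jap{\log k}),\qquad \bfq^Y_{22}=O(k\jap{\log k}^{-1}).
\]
This vanishing is exactly what $(\bfp^J)_{22}$ lacks, and you should state the conversion explicitly.

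\textbf{Minor.} In $\calT_\off$ the pairings are $Y_1(rk)(\bfq^Y)_{21}$ and $Y_0(rk)(\bfq^Y)_{12}$, not the ones you list. The model kernel $\frac{1}{rk\jap{\log k}}$ comes from $Y_1(rk)(\bfq^Y)_{21}$.
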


\begin{proof}
For the most part the estimates for $\calT_\ast$ and $\calR_\ast$ are proved independently. 
We repeatedly use a few elementary devices:\\
\noindent {\bf{(i)}} In the region $rk\le1$ we use the one--dimensional Hardy inequality 
\begin{equation*}
    \int_0^{\sigma_0} |f(\sigma)|^2\sigma^{-3}\ud \sigma\lesssim \int_0^{\sigma_0}|f'(\sigma)|^2\sigma^{-1}\ud\sigma,
\end{equation*}
for and $\sigma_0>0$ and functions satisfying $\lim_{\sigma\to0}(\sigma^{-1}f(\sigma))=0$. This follows by writing $\sigma^{-3}=-\frac{1}{2}\frac{\ud}{\ud \sigma}\sigma^{-2}$ and integrating by parts.\\
\noindent {\bf{(ii)}} In the region where $rk\leq1$ we use that the operator $\calT f(r):=\int_0^\infty \calK(r,k)f(k)k \ud k$, $\calK(r,k):=\chi_{\geq2}(r)\chi_{\leq1}(rk)\frac{1}{rk\jap{\log k}}$, is bounded from $L^2_{k \ud k}$ to $L^2_{r\ud r}$. Here $\chi_{\geq2}$ is a smooth cutoff to the region $\{r\geq2\}$. This can be seen by changing variables to $r=e^{\tau}$ and $k=e^{-\sigma}$. Then the desired boundedness property is equivalent to the estimate
\begin{equation*}
    \Big(\int_{\log 2}^\infty\Big(\int_{\tau}^\infty \frac{g(\sigma)}{\sigma}\ud \sigma\Big)^2\ud \tau\Big)^{\frac{1}{2}}\lesssim\Big(\int_{\log 2}^\infty g^2(\sigma)\ud \sigma\Big)^{\frac{1}{2}}.
\end{equation*}
This estimate follows by changing variables to $u=\tau^{-1}\sigma$, applying Minkowski, and then changing variable to $\sigma=\tau u$.
\\ 
\noindent {\bf{(iii)}} In the oscillatory region $rk\ge1$ we use the standard Cotlar--Stein criterion: if
\begin{equation}\label{eq:oscKriterium1}
\calK(r,k)=\chi_{\geq1}(rk)(rk)^{-1/2}e^{\pm irk}\mathfrak m(r,k)
\end{equation}
with
\begin{equation}\label{eq:oscKriterium2}
|(r\partial_r)^a(k\partial_k)^b\mathfrak m(r,k)|\lesssim 1,
\qquad 0\le a,b\le2,
\end{equation}
then the operator $T_{\calK}g(r):=\int_0^\infty \calK(r,k)g(k)k\,\ud k$ is bounded from $L^2_{k\ud k}$ to $L^2_{r\ud r}$; see, for instance, \cite[Lemma~6.3]{LSS1}. We will verify \eqref{eq:oscKriterium1}, \eqref{eq:oscKriterium2} explicitly for the oscillatory $\bfq$--terms below.\\ 

Turning to \eqref{eq:calTcalRbounds1} we first treat the $\bfq$-terms in the region $rk\le1$.
For $\calT_{\off}$ in the region $rk\le1$, the relevant model kernels are
\[
\calK_1(r,k):=\chi_{\geq1}(r)\chi_{\leq1}(rk),
\qquad
\calK_2(r,k):=\chi_{\geq1}(r)\chi_{\leq1}(rk)\frac{1}{rk\jap{\log k}}.
\]
For the kernel $\calK_1$ if
\[
T_{\calK_1}g(r):=\chi_{\geq1}(r)\int_0^{1/r} g(k)\,k\,\ud k,
\]
then with $\sigma=r^{-1}$ and $H(\sigma):=\int_0^\sigma g(k)k\,\ud k$, we can apply the one-dimensional Hardy inequality from (i) above. 
\[
\|T_{\calK_1}g\|_{L^2_{r\ud r}}^2
=
\int_0^1 |H(\sigma)|^2 \sigma^{-3}\,\ud \sigma
\lesssim
\int_0^1 |H'(\sigma)|^2 \sigma^{-1}\,\ud \sigma
=
\int_0^1 |g(\sigma)|^2 \sigma\,\ud \sigma,
\]
For $\calK_2$ we use the argument from item (ii). For $\calR_{\off}$ the relevant kernel is $$\chi_{\leq1}(rk)\chi_{\geq1}(r)\frac{1}{rk\jap{\log k}}\frac{1}{k\jap{\log k}}.$$
Using the same argument as for $\calK_2$ above we then bound $\|\calR_\off[\chi_{\leq1}(rk)\partial_t\widetilde\bmf]\|_{L^2_{r\ud r}}$ by $\|\chi_{\leq1}(k)(k\jap{\log k})^{-1}\partial_t\widetilde\bmf\|_{L^2_{k\ud k}}$. 
In the case of $\calT_{\diag}$ the representative kernels are
\begin{equation*}
    \calK_1'(r,k):=\chi_{\geq1}(r)\chi_{\leq1}(rk)k,\qquad \calK_2'(r,k):=\chi_{\geq1}(r)\chi_{\leq1}(rk)\frac{k\log k}{r}.
\end{equation*}
The extra vanishing at $k=0$ in these kernels allows us to bound the contribution of $\calT_\diag$ by
\begin{align*}
    \|\chi_{\leq1}(k)k\widetilde\bfOmega_{0,1}^{-,+}\widetilde\bmf\|_{L^2_{k\ud k}}\lesssim \|\widetilde\bfOmega_{1,0}^{+,-}\widetilde\bmf\|_{L^2_{k\ud k}}+\|\partial_t\widetilde\bmf\|_{L^2_{k\ud k}}.
\end{align*}
Similarly, in view of the extra vanishing at $k=0$ the contribution of $\calR_\diag$ is simply bounded by~$\|\partial_t\widetilde\bmf\|_{L^2_{k\ud k}}$.

Next we consider the oscillatory region $rk\ge1$.
For $u\ge1$ and $\nu\in\{0,1\}$, the standard large--argument expansions of the Bessel functions can be written in the form
\[
K_\nu(u)=u^{-1/2}e^{iu}m_{\nu,+}(u)+u^{-1/2}e^{-iu}m_{\nu,-}(u),
\qquad K_\nu\in\{J_\nu,Y_\nu\},
\]
where
\[
|(u\partial_u)^\ell m_{\nu,\pm}(u)|\lesssim1,
\qquad 0\le \ell\le2.
\]
Combining this with Lemma~\ref{lem:bfqbfpbds1}, every scalar kernel arising from a $\bfq$--term for $\calT_\off$ in the region $rk\ge1$ is therefore a finite sum of terms of the form
\[
\calK(r,k)=\chi_{\geq1}(rk)(rk)^{-1/2}e^{\pm irk}\mathfrak m(r,k),
\]
where $\mathfrak m$ is smooth and satisfies
\[
|(r\partial_r)^a(k\partial_k)^b\mathfrak m(r,k)|\lesssim1,
\qquad 0\le a,b\le2.
\]
Hence every oscillatory $\bfq$--kernel for $\calT_\off$ satisfies \eqref{eq:oscKriterium1}, \eqref{eq:oscKriterium2}, and the Cotlar--Stein criterion applies. This proves the bounds for $\calT_\off$. For $\calR_\off$ we observe that the kernel estimates are worst by at worst a factor of $k^{-1}\jap{\log k}^{-1}$ in the region $k\leq1$, and therefore the desired estimate follows by the same arguments.
%
%
For $\calT_\diag$ the extra vanishing at $k=0$ allows us to bound this contribution by
\begin{align*}
    \|k\jap{k}^{-1}\widetilde\bfOmega_{0,1}^{-,+}\widetilde\bmf\|_{L^2_{k\ud k}}\lesssim \|\widetilde\bfOmega_{1,0}^{+,-}\widetilde\bmf\|_{L^2_{k\ud k}}+\|\partial_t\widetilde\bmf\|_{L^2_{k\ud k}}.
\end{align*}
Similarly the contribution of $\calR_\diag$ is simply bounded by $\|\partial_t\widetilde\bmf\|_{L^2_{k\ud k}}$.

It remains to treat the contribution of the Volterra terms $\bfp^J,\bfp^Y$. For these, in view of Lemma~\ref{lem:bfqbfpbds1}, except for the contributions $(\bfp^J)_{22}$, $(\bfp^J)_{11}$, and $(\bfp^Y)_{11}$ in the region $rk\leq 1$, all other contributions are more favorable compared with the corresponding ones for $\bfq^J$ and $\bfq^Y$. For these exceptional entries, all of which appear in $\calT_\diag$ and $\calR_\diag$, we need to observe a leading order in $k$ cancellation in $\calT_\diag[\widetilde\bfOmega_{0,1}^{-,+}\widetilde\bmf]-\sigma_3\calR_\diag[\partial_t\widetilde\bmf]$ to turn $\widetilde\bfOmega_{0,1}^{-,+}$ into $\widetilde\bfOmega_{1,0}^{+,-}$. Indeed, concentrating on the leading order contribution in $k$ and suppressing the decaying in $r$ factors from the Volterra integrals, it suffices to make the following observations. For  $Y_1(rk)\big((\bfp^Y)_{11}\widetilde\Omega_0+\partial_k(\bfp^Y)_{11}\partial_t\big)$ note that
\begin{align*}
   \frac{1}{rk}\big(k\widetilde{\Omega}_0 +(\partial_kk)\partial_t\big)=\frac{1}{r}\widetilde{\Omega}_1.
\end{align*}
For $J_1(rk)\big((\bfp^J)_{11}\widetilde\Omega_0+\partial_k(\bfp^J)_{11}\partial_t\big)$ note that
\begin{align*}
    (rk)\big(k\log k \widetilde\Omega_0+\partial_k(k\log k )\partial_t\big)=(rk)\big(k\log k \,\widetilde\Omega_1+\partial_t\big).
\end{align*}
For $J_0(rk)\big((\bfP^J)_{22}\widetilde\Omega_1+\partial_k(\bfp^J)_{22}\partial_t\big)$ note that
\begin{align*}
    \frac{1}{k\log k}\widetilde\Omega_1+\partial_k\big(\frac{1}{k\log k}\big)\partial_t=\frac{1}{k\log k}\widetilde\Omega_0-\frac{1}{k\log k}\frac{1}{k\log k}\partial_t.
\end{align*}
Combining these observations with the $r$ decay from the Volterra integrals and the arguments used to treat the contribution of the $\bfq$-terms
gives the desired estimates.
\end{proof}

We next prove some weighted estimates that are useful for estimating the contributions of $\widetilde{\calM}_k$ and $\widetilde{\bfOmega}_{1,0}^{+,-}$.

\begin{lemma}\label{lem:dkbounds1}
For $\widetilde\bmf=(\widetilde\bmf_1,\widetilde\bmf_2)^t=\widetilde\calF_\bfL[\bmf]$, 
\begin{equation}\label{eq:dkbounds1}
\|((\partial_k+k^{-1}))\widetilde\bmf_1,\partial_k\widetilde\bmf_2)^t\|_{L^2_{k\ud k}}\lesssim \|\jap{r}\bmf\|_{L^2_{r\ud r}},
\end{equation}
and
\begin{equation}\label{eq:dkbounds2}
    \|\chi_{\leq\frac{1}{2}}\widetilde\calM_k\widetilde\bmf\|_{L^2_{k\ud k}}\lesssim \|\jap{r}\bmf\|_{L^2_{r\ud r}}.
\end{equation}
\end{lemma}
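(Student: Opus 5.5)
We would argue by duality, writing $\widetilde\bmf(k)=\int_0^\infty \overline{E(r,k)}^t\bmf(r)\,r\,\ud r$ and differentiating under the integral. Since $\wtilcalF_\bfL$ is an isometry on $\mathrm{Ran}\,P^\bfL_c$, the estimates reduce to $L^2_{r\ud r}\to L^2_{k\ud k}$ bounds for the operators with kernels $(\partial_k+k^{-1})\overline{E_{\cdot,1}}$, $\partial_k\overline{E_{\cdot,2}}$ (columns of $E$), and $\chi_{\le1/2}(k)\widetilde\calM_k\overline{E}^t$, after multiplying by $\jap{r}^{-1}$. We split each kernel with $\chi_{\le1}(rk)+\chi_{\ge1}(rk)$ and treat the nonoscillatory and oscillatory regions separately. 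This is equivalent to bounding the adjoint operators $L^2_{k\ud k}\to L^2_{r\ud r}$, and that is the form in which we would apply the devices (i)--(iii) from the proof of Lemma~\ref{lem:calTcalRbounds1}.

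\emph{Region $rk\ge1$.} Here we use \eqref{eq:Egross}, and \eqref{eq:E11grenz}--\eqref{eq:E22grenz} for the precise leading terms. A $k$-derivative of $e^{\pm ikr}\psi^\pm$ produces $\pm ir\,e^{\pm ikr}\psi^\pm$ plus $k^{-1}(k\partial_k\psi^\pm)$. The factor $r$ is absorbed by $\jap{r}^{-1}$, leaving a kernel of the type \eqref{eq:oscKriterium1}--\eqref{eq:oscKriterium2}, so Cotlar--Stein applies. The term with $k^{-1}$ is bounded by $(rk)^{-1}$ times a kernel of the same type, and since $rk\ge1$ it is harmless. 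In the same way, for $k\le1/2$ the factor $k^{-1}$ in $\widetilde\calM_k$ becomes $(rk)^{-1}\cdot r$, and the factor $r$ is absorbed by the weight. The off-diagonal entries $E_{1,2},E_{2,1}$ carry an extra $k^{-1}$ relative to the diagonal ones, but they also come with an extra $(kr)^{-1}$ relative decay, so the same argument applies. All entries are bounded in this way.

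\emph{Region $rk\le1$.} Here we use \eqref{eq:Eklein} and its $r^{-1}\partial_k$ symbol bounds. For the first column, $E_{1,1}\sim kr\jap{r}^{-1}\phi$ and $E_{1,2}\sim kr^3\jap{r}^{-2}\phi$. The operator $\partial_k+k^{-1}$ does not destroy the vanishing in $k$, and its action is $O(r\jap{r}^{-1})$ or $O(r^2)$, i.e.\ at most $\jap{r}$ times a bounded kernel. After the weight $\jap{r}^{-1}$, the resulting kernel is $\chi_{\le1}(rk)\times O(1)$, and it is bounded by the Hardy argument (i), exactly as was done for $T_{\calK_1}$. The multiplier $k^{-1}$ from $\widetilde\calM_k$ cancels the factor $k$ in this column, leaving the same model kernel.

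\emph{Second column at low frequency; the main obstacle.} Here $E_{2,1}\sim r\log(2+r)\jap{r}^{-1}\jap{\log k}^{-1}$ and $E_{2,2}\sim r\jap{r}^{-2}\jap{\log k}^{-1}(1+(rk)^2\log(2+r))$, and we need both $\partial_k$ and $(k\log k)^{-1}$. Applied to the $\jap{\log k}^{-1}$ factor, $\partial_k$ gives $k^{-1}\jap{\log k}^{-2}$; applied to $\phi$, it gives $O(r)$, which is absorbed by the weight. After dividing by $\jap{r}$, the worst kernel is $\chi_{\ge2}(r)\chi_{\le1}(rk)\log r\,(rk)^{-1}\jap{\log k}^{-2}$. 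Since $\log r\le|\log k|$ when $rk\le1$, this is dominated by $\calK_2$ from item (ii), i.e.\ the logarithmic Hardy operator. The same reduction applies to $\chi_{\le1/2}(k)(k\log k)^{-1}E_{\cdot,2}$. Without the gain $\jap{\log k}^{-1}$ coming from \eqref{propSMrho}/\eqref{eq:a2inv-small}, this step would diverge logarithmically, so it is the delicate point and must be tracked carefully: the precise size of $a_2^{-1}$ has to be used, not merely its absolute-value bound. The region $r\le2$ is trivial there. Finally, for \eqref{eq:dkbounds1} at $k\ge1$ everything is bounded by the high-frequency asymptotics, and the $k^{-1}$ terms are harmless.
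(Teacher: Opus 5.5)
Your proof of \eqref{eq:dkbounds1} is the same as the paper's. You use the same representation of $\widetilde\bmf$ against $\overline{E}^t$, Cotlar--Stein in $rk\ge1$ after absorbing the factor $r$ from $\partial_k e^{\pm ikr}$ into the weight, and Cauchy--Schwarz for bounded $r$. For $r\ge2$, $rk\le1$ you use the same model kernels. The decisive one is $\chi_{\le1}(rk)(rk)^{-1}\jap{\log k}^{-1}$, which comes from $\partial_k$ hitting the $\jap{\log k}^{-1}$ factor of $E_{2,1}$ and is handled by the logarithmic Hardy operator (ii) after using $\log r\le|\log k|$.

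You genuinely differ from the paper on \eqref{eq:dkbounds2}. You bound the kernels $\chi_{\le1/2}(k)\widetilde\calM_k\overline{E}^t\jap{r}^{-1}$ directly with the same devices, and this is correct. For example, for $r\ge2$, $rk\le1$ one has $(k|\log k|)^{-1}|E_{2,1}|\jap{r}^{-1}\lesssim(rk)^{-1}\jap{\log k}^{-1}$. For $r\le2$ all the kernels are Hilbert--Schmidt, since $\int_0^{1/2}k^{-1}\jap{\log k}^{-4}\,\ud k<\infty$.

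The paper instead derives \eqref{eq:dkbounds2} from \eqref{eq:dkbounds1} purely on the Fourier side. The Hardy inequality \eqref{eq:Hardydeg1} passes from $(\partial_k+k^{-1})\widetilde\bmf_1$ to $k^{-1}\widetilde\bmf_1$; this needs $\widetilde\bmf_1(0)=0$, which comes from the vanishing of the first column of $E$ at $k=0$. The logarithmic Hardy inequality \eqref{eq:Hardydeg0}, together with Plancherel, passes from $\partial_k\widetilde\bmf_2$ to $(k\log k)^{-1}\widetilde\bmf_2$.

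The paper's route is shorter and shows that $\widetilde\calM_k$ is exactly the low-frequency weight controlled by one $k$-derivative. Your route needs neither the vanishing at $k=0$ nor Plancherel, at the cost of repeating the region-by-region kernel analysis. The $\jap{\log k}^{-1}$ gain enters at the same place in both, through (ii).

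Three side statements in your sketch should be fixed, though none affects the argument.
\begin{enumerate}
\item In $rk\ge1$ the off-diagonal entries have no extra $(kr)^{-1}$ decay. By \eqref{eq:E21grenz} and \eqref{eq:E12grenz} they are $(rk)^{-1/2}\jap{k}^{-1}e^{\pm ikr}$ times a symbol, which is already of the form \eqref{eq:oscKriterium1}. It is the diagonal entries that carry the extra bounded factor $k\jap{k}^{-1}$.
\item In $rk\le1$ one has $(\partial_k+k^{-1})E_{1,2}=O(r^3\jap{r}^{-2})$, using $kr\le1$ on the term $k\partial_k\phi_{1,2}$. It is not $O(r^2)$, and only the former bound is $\lesssim\jap{r}$.
\item This lemma uses only $|a_2^{-1}|\lesssim\jap{\log k}^{-1}$ and $|\partial_k a_2^{-1}|\lesssim k^{-1}\jap{\log k}^{-2}$, which follow from \eqref{propSMa} and \eqref{propSMa_dk}. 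The phase information in $\eta_2$ matters for $\bfq^Y$ in Lemma~\ref{lem:bfqbfpbds1}, not here.
\end{enumerate}
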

\begin{proof}
For \eqref{eq:dkbounds1} we begin by writing
\begin{align*}
    \widetilde\bmf(k) = \int_0^\infty \overline{E(r,k)}^t\bmf(r) r \ud r,
\end{align*}
so that with $\bmf=(\bmf_1,\bmf_2)^t$,
\begin{align*}
    &\widetilde\bmf_1(k)=\int_0^\infty \overline{E_{1,1}}(r,k)\bmf_1(r) r \ud r+\int_0^\infty \overline{E_{1,2}}(r,k)\bmf_2(r) r \ud r=: \widetilde{\bmg}_1(k)+\widetilde{\bmh}_1(k),\\
    &\widetilde\bmf_2(k)=\int_0^\infty \overline{E_{2,1}}(r,k)\bmf_1(r) r \ud r+\int_0^\infty \overline{E_{2,2}}(r,k)\bmf_2(r) r \ud r=: \widetilde{\bmg}_2(k)+\widetilde{\bmh}_2(k).
\end{align*}
In the region $k\geq\frac{1}{2}$ the factor $k^{-1}$ is bounded and the effect of $\partial_k$ falling on the Fourier basis is at worst to produce a factor of $r$ in the region where $rk\geq1$. Therefore the desired estimate in the region $k\geq\frac{1}{2}$ follow from Plancherel in Theorem~\ref{propdFT} and the Cotlar-Stein argument from item (iii) (which is symmetric with respect to $r$ and $k$) described in the beginning of the proof of Lemma~\ref{lem:calTcalRbounds1}. The same considerations also apply in the region $rk\geq1$ and $k\leq\frac{1}{2}$, as $k^{-1}$ is bounded by $r$ in this region. Therefore, we  focus entirely on the region where $k\leq\frac{1}{2}$ and $rk\leq 1$. In the region where $r\leq1$ and $k\leq\frac{1}{2}$ since $|E(r,k)|$ is uniformly bounded in view of \eqref{eq:Eklein} and \eqref{eq:Egross}, we simply use Cauchy-Schwarz to bound
\begin{align*}
    \int_0^{\frac{1}{2}}\Big|\int_0^1\overline{E_{\ell,m}}(r,k)\bmf_j(r)r\ud r\Big|^2k \ud k\lesssim \int_0^1(\bmf_j(r))^2r \ud r.
\end{align*}
When $r\geq1$ we start with the main terms which are the off-diagonal ones $\widetilde\bmh_1$ and $\widetilde\bmg_2$. In view of \eqref{eq:Eklein} and \eqref{eq:Egross}, for $\widetilde\bmh_1$ we need to prove that
\begin{align*}
    \int_0^{\frac{1}{2}}\Big|\int_2^\infty\chi_{\leq1}(rk)r\bmf_2(r) r \ud r\Big|^2 k \ud k\lesssim \int_2^\infty(r\bmf_2(r))^2r\ud r.
\end{align*}
This follows from the one-dimensional Hardy inequality form item (i) in the beginning of the proof of Lemma~\ref{lem:calTcalRbounds1}. See the treatment of the kernel $\calK_1$ in that proof. For $\widetilde\bmg_2$, in view of \eqref{eq:Eklein} and \eqref{eq:Egross}, we need to prove that (here we use the boundedness of $(\log r)|\log k|^{-1}$ when $rk\leq1$)
\begin{align*}
    \Big(\int_0^{\frac{1}{2}}\Big|\int_2^\infty \frac{\chi_{\leq1}(rk)}{rk|\log k|}r\bmf_1(r) r\ud r\Big|^2 k \ud k\Big)^{\frac{1}{2}}\lesssim \Big(\int_2^\infty (r\bmf_1(r))^2r\ud r\Big)^{\frac{1}{2}}.
\end{align*}
This follows from a variant of the argument in item (iii) in the beginning of the proof of Lemma~\ref{lem:calTcalRbounds1}. Indeed, changing variables to $r=e^{\tau}$ and $k=e^{-\sigma}$ we need to prove that (here $F(\tau)=r^2\bmf_1(r)\vert_{r=e^{\tau}}$)
\begin{align*}
    \Big(\int_{\log 2}^\infty\Big|\int_{\log 2}^\sigma F(\tau)\frac{\ud \tau}{\sigma}\Big|^2 \ud\sigma\Big)^{\frac{1}{2}}\lesssim \Big(\int_{\log 2}^\infty F^2(\tau)\ud \tau\Big)^{\frac{1}{2}}.
\end{align*}
This estimate follows by changing variables to $u=\frac{\tau}{\sigma}$ in the inner integral, applying Minkowski, and again changing variables to $\tau=u\sigma$ in the inner integral. Finally, we consider the diagonal terms $\widetilde{\bmg}_1$ and $\widetilde\bmh_2$. Here by \eqref{eq:Eklein} and \eqref{eq:Egross} the diagonal entries of $E(r,k)$ have better $r$ decay in the region $rk\leq1$. It follows that for $\widetilde\bmg_1$ it suffices to prove that 
\begin{align*}
    \int_0^{\frac{1}{2}}\Big|\int_2^\infty\chi_{\leq1}(rk)\bmf_1(r) r \ud r\Big|^2 k \ud k\lesssim \int_2^\infty \bmf_1^2(r)r\ud r,
\end{align*}
and for $\widetilde\bmh_2$ it suffices to prove that
\begin{align*}
    \Big(\int_0^{\frac{1}{2}}\Big|\int_2^\infty \chi_{\leq1}(rk)\big((rk)^{-1}(\log k)^{-2}+rk\big)\bmf_2(r) r\ud r\Big|^2 k \ud k\Big)^{\frac{1}{2}}\lesssim \Big(\int_2^\infty \bmf_2^2(r)r\ud r\Big)^{\frac{1}{2}}.
\end{align*}
These estimates follow from similar arguments as before.
This proves \eqref{eq:dkbounds1}. Estimate \eqref{eq:dkbounds2} follows from \eqref{eq:dkbounds1} and the following Hardy inequalities in the region where $k\leq\frac{1}{2}$:
\begin{equation}\label{eq:Hardydeg1}
    \int_0^{k_0} |k^{-1}f(k)|^2k \ud k\lesssim \int_0^{k_0} |(\partial_k+k^{-1})f(k)|^2k\ud k,
\end{equation}
 for any $k_0>0$ and $f$ satisfying $f(0)=0$, and
 \begin{equation}\label{eq:Hardydeg0}
    \int_0^1 |\chi_{\leq1}(k)k^{-1}(\log k)^{-1} f(k)|^2 k \ud k\lesssim \int_0^{1} |\partial_kf(k)|^2k \ud k+\int_0^{1} |f(k)|^2k \ud k.
\end{equation}
Estimate \eqref{eq:Hardydeg1} follows by writing $k^{-1}f^2(k)=-\frac{1}{2}(kf(k))^2\frac{\ud}{\ud k}k^{-2}$ and integrating by parts. Estimate \eqref{eq:Hardydeg0} follows by writing $(\chi_{\leq1}(k))^2k^{-1}(\log k)^{-2} f^2(k)=-(\chi_{\leq1}(k))^2f^2(k) \partial_k (\log k)^{-1}$ and integrating by parts. Note that to apply \eqref{eq:Hardydeg1} we need $\widetilde\bmf_1(k)$ to vanish at $k=0$. This is guaranteed by the vanishing of $\overline{E_{1,1}}(r,0)$ and $\overline{E_{1,2}}(r,0)$ which can be seen from \eqref{eq:Eklein}.
\end{proof}

We now have all the necessary ingredients to prove Theorem~\ref{thm:transference1}. Inn the proof we will use the notation introduced at the beginning of this section regarding Lorentz boosts.

\begin{proof}[Proof of Theorem~\ref{thm:transference1}]
We prove the theorem with $\bfM$ replaced by $\bfL$. The result for $\bfM$ follows from the block diagonal structure of $\bfM$. We present the derivation of the weighted energy estimate for $\widehat{\bmg}_{\real}(t,\xi)$, the case of $\widehat{\bmg}_{\imag}(t,\xi)$ proceeding analogously. Throughout the proof we consider times $0 \leq t \leq T$. We will use the notation $\widehat\calF$ for the standard Fourier transform on $\bbR^2$.
    Using the relation
    \begin{align} 
        i\jap{\xi} \Bigl( \nabla_\xi - it \frac{\xi}{\jap{\xi}} \Bigr) = \widehat{\Omega}-\nabla_\xi(\partial_t-i\jap{\xi})-\frac{i\xi}{\jap{\xi}},
    \end{align}
    we obtain by direct computation that 
    \begin{equation} \label{equ:weighted_energy_g_jxi_nablaxi_comp}
        \begin{aligned}
            \jxi^2 \nabla_\xi \widehat{\bmg}_\real(t,\xi) &= e^{-it\jxi} \jxi^2 \, \Bigl( \nabla_\xi - it \frac{\xi}{\jap{\xi}} \Bigr) \, \Bigl( (2i\jxi)^{-1} (\pt + i\jxi) \whatcalF\bigl[ \Re\bigl( e^{i\theta} P_c^{\bfL} \bmv(t) \bigr) \bigr](\xi) \Bigr) \\ 
            &= -\frac{1}{2i} e^{-it\jxi} \frac{\xi}{\jxi} (\pt + i \jxi) \whatcalF\bigl[ \Re\bigl( e^{i\theta} P_c^{\bfL} \bmv(t) \bigr) \bigr](\xi) \\ 
            &\quad + \frac12 e^{-it\jxi} \nabla_\xi (\pt^2 + \jxi^2) \whatcalF\bigl[ \Re\bigl( e^{i\theta} P_c^{\bfL} \bmv(t) \bigr) \bigr](\xi) \\ 
            &\quad - \frac12 e^{-it\jxi} (\pt+i\jxi) \, \whatOmega \, \whatcalF\bigl[ \Re\bigl( e^{i\theta} P_c^{\bfL} \bmv(t) \bigr) \bigr](\xi) \\
            &=: I(t,\xi) + II(t,\xi) + III(t,\xi).
        \end{aligned}
    \end{equation}
    The first term $I(t,\xi)$ can be rewritten as $I(t,\xi) = -\xi \, \widehat{\bmg}_\real(t,\xi)$, whence 
    \begin{equation} \label{equ:weighted_energy_black_box_termI_final_bound}
        \bigl\|I(t,\xi)\bigr\|_{L^2_\xi} \lesssim \bigl\| \bmg_\real(t)\bigr\|_{H^1_x}.
    \end{equation}
    For the second term $II(t,\xi)$ on the right-hand side of \eqref{equ:weighted_energy_g_jxi_nablaxi_comp} we recall the operator $\bfL_\nr:=-\Delta+1+\bfV_0$ which was introduced in Section~\ref{ssecKG}. Here $\Delta$ denotes the full Laplacian on $\bbR^2$ and $\bfV_0$ defined in~\eqref{eq:Lzerodefintro1} is viewed as a radial function on $\bbR^2$, so that $e^{\pm i\theta}\bfL \bfP_c\bmv=\bfL_\nr(e^{\pm i\theta}\bfP_c\bmv)$. 
    It follows that
    \begin{equation}
        \begin{aligned}
            \bigl\|II(t,\xi)\bigr\|_{L^2_\xi} &\lesssim \bigl\| \jx \bigl( \pt^2 - \Delta + 1 \bigr) \Re\bigl( e^{i\theta} P_c^{\bfL} \bmv(t) \bigr) \bigr\|_{L^2_x} \\ 
            &\lesssim \bigl\| \jx \bigl( \pt^2 + \bfL_{\mathrm{nr}} \bigr) \Re\bigl( e^{i\theta} P_c^{\bfL} \bmv(t) \bigr) \bigr\|_{L^2_x} + \bigl\| \jx \bfV_0 \Re\bigl( e^{i\theta} P_c^{\bfL} \bmv(t) \bigr) \bigr\|_{L^2_x} \\ 
            &\lesssim \bigl\| \jap{r} P_c^{\bfL} \bmF(t) \bigr\|_{L^2_{r\ud r}} + \bigl\| \jap{r}^{-1} P_c^{\bfL} \bmv(t) \bigr\|_{L^2_{r\ud r}},            
        \end{aligned}
    \end{equation}
    where we invoked the spatial decay estimate $|\bfV_0(r)| \lesssim \jap{r}^{-2}$ and used that
    \begin{equation}
        \begin{aligned}
            ( \pt^2 + \bfL_{\mathrm{nr}} ) \Re\bigl( e^{i\theta} P_c^{\bfL} \bmv(t) \bigr) = \Re \bigl( e^{i\theta} (\pt^2 + \bfL) P_c^{\bfL} \bmv(t) \bigr) = \Re \bigl( e^{i\theta} P_c^{\bfL} \bmF(t) \bigr).
        \end{aligned}
    \end{equation}
    Finally, using H\"older's inequality and recalling the definition of the profiles \eqref{equ:weighted_estimate_black_box_def_profile}, we obtain 
    \begin{equation} \label{equ:weighted_energy_black_box_termII_final_bound}
        \begin{aligned}
            \bigl\|II(t,\xi)\bigr\|_{L^2_\xi} \lesssim \bigl\| \jap{r} \bmF(t) \bigr\|_{L^2_{r\ud r}} + \bigl\| P_c^{\bfL} \bmv(t) \bigr\|_{L^{\infty-}_{r\ud r}} 
            \lesssim \bigl\| \jap{r} \bmF(t) \bigr\|_{L^2_{r\ud r}} + \sum_{\ast \in \{\real,\imag\}} \bigl\| e^{it\jD} \bmg_\ast \bigr\|_{L^{\infty-}_x}.
        \end{aligned}
    \end{equation}
    We now turn to estimating the main third term $III(t,\xi)$ on the right-hand side of \eqref{equ:weighted_energy_g_jxi_nablaxi_comp}. Let $\bfP_\nr^+$ denote the projection onto the orthogonal complement of the span of any potential eigenfunctions of $\bfL_\nr$ with negative eigenvalue.  Using the elliptic regularity estimate $\|\jD f\|_{L^2_x} \lesssim \|f\|_{L^2_x} + \|\bfL_{\mathrm{nr}}^{\frac12}\bfP_\nr^+ f\|_{L^2_x}$ we have 
    \begin{equation} \label{equ:weighted_energy_g_jxi_term3_decomp}
        \begin{aligned}
            \bigl\|III(t,\xi)\bigr\|_{L^2_\xi} &\leq \bigl\| (\pt + i\jD) \, \Omega \, \Re\bigl( e^{i\theta} P_c^{\bfL} \bmv(t) \bigr) \bigr\|_{L^2_x} \\
            &\lesssim \bigl\| \Omega \, \Re\bigl( e^{i\theta} P_c^{\bfL} \bmv(t) \bigr) \bigr\|_{L^2_x} + \bigl\| \pt \, \Omega \, \Re\bigl( e^{i\theta} P_c^{\bfL} \bmv(t) \bigr) \bigr\|_{L^2_x} \\
            &\quad + \Bigl\| \Omega \, \bfL_{\mathrm{nr}}^{\frac12} \Re\bigl( e^{i\theta} P_c^{\bfL} \bmv(t) \bigr) \Bigr\|_{L^2_x} + \Bigl\| \bigl[ \bfL_{\mathrm{nr}}^{\frac12}\bfP_\nr^+, \Omega \bigr] \, \Re\bigl( e^{i\theta} P_c^{\bfL} \bmv(t) \bigr) \Bigr\|_{L^2_x}+\jap{t}\|e^{it\jap{D}}\bmg_\real(t)\|_{L_x^{\infty-}} \\ 
            &=: III_1(t) + III_2(t) + III_3(t) + III_4(t)+\jap{t}\|e^{it\jap{D}}\bmg_\real(t)\|_{L_x^{\infty-}}.
        \end{aligned}
    \end{equation}
    Here we have used that $\bfP_\nr^+(e^{\pm i\theta}\bmv(t,r))=e^{\pm i\theta}\bmv(t,r)$, which follows from the positivity of $\bfL$. The last term in the estimate above is an acceptable error in \eqref{equ:weighted_estimate_black_box_bound}. It appears because we have replaced $\Omega \, \bfL_{\mathrm{nr}}^{\frac12} \Re\bigl( e^{i\theta} P_c^{\bfL} \bfP_\nr^c\bmv(t) \bigr)$ by $\Omega \, \bfL_{\mathrm{nr}}^{\frac12} \Re\bigl( e^{i\theta} P_c^{\bfL} \bmv(t) \bigr)$. For the remaining terms we begin with estimating the first term $III_1(t)$ on the right-hand side of \eqref{equ:weighted_energy_g_jxi_term3_decomp}.
    Here we distinguish the regions $r \lesssim 1$ and $r \gtrsim 1$. Using the identity \eqref{equ:equivboost1} for the latter, we find 
    \begin{equation} \label{equ:weighted_energy_g_jxi_term3_1_decomp}
        \begin{aligned}
            &III_1(t) = \bigl\| \Omega \, \Re\bigl( e^{i\theta} P_c^{\bfL} \bmv(t) \bigr) \bigr\|_{L^2_x} \\
            &\lesssim \bigl\| \chi_{< 1}(r) \, (x \pt + t \nabla_x) \, \Re\bigl( e^{i\theta} P_c^{\bfL} \bmv(t) \bigr) \bigr\|_{L^2_x} + \bigl\| \chi_{\geq 1}(r) \, \Omega \, \Re\bigl( e^{i\theta} P_c^{\bfL} \bmv(t) \bigr) \bigr\|_{L^2_x} \\ 
            &\lesssim \bigl\| \chi_{< 1}(r) \, (x \pt + t \nabla_x) \, \Re\bigl( e^{i\theta} P_c^{\bfL} \bmv(t) \bigr) \bigr\|_{L^2_x} + \bigl\| \chi_{\geq 1}(r) \Omega_1^- P_c^{\bfL} \bmv(t) \bigr\|_{L^2_{r\ud r}} \\
            &\quad + \bigl\| \chi_{\geq 1}(r) \Omega_1^+ P_c^{\bfL} \bmv(t) \bigr\|_{L^2_{r\ud r}}.
        \end{aligned}
    \end{equation}
    For the first term on the right-hand side of \eqref{equ:weighted_energy_g_jxi_term3_1_decomp}, recalling the definition of the profile $\bmg_\real(t)$ from \eqref{equ:weighted_estimate_black_box_def_profile}, we obtain by H\"older's inequality 
    \begin{equation}
        \begin{aligned}
            &\bigl\| \chi_{< 1}(r) \, (x \pt + t \nabla_x) \, \Re\bigl( e^{i\theta} P_c^{\bfL} \bmv(t) \bigr) \bigr\|_{L^2_x} \lesssim \bigl\| \bmg_\real(t) \bigr\|_{H^1_x} + t \, \bigl\| \jD e^{it\jD} \bmg_\real(t) \bigr\|_{L^{\infty-}_x}.
        \end{aligned}
    \end{equation}
    For the second term on the right-hand side of \eqref{equ:weighted_energy_g_jxi_term3_1_decomp}, we use the simple relation $\Omega_1^- = \Omega_1^+ - 2 t r^{-1}$ to find that
    \begin{equation}
        \begin{aligned}
            \bigl\| \chi_{\geq 1}(r) \Omega_1^- P_c^{\bfL} \bmv(t) \bigr\|_{L^2_{r\ud r}} &\lesssim \bigl\| \chi_{\geq 1}(r) \Omega_1^+ P_c^{\bfL} \bmv(t) \bigr\|_{L^2_{r\ud r}} + t \bigl\| r^{-1} \chi_{\geq 1}(r) P_c^{\bfL} \bmv(t) \bigr\|_{L^2_{r\ud r}}. 
        \end{aligned}
    \end{equation}
    The last term can easily be dealt with using H\"older's inequality. We obtain that
    \begin{equation}
        \begin{aligned}
            &t \, \bigl\| r^{-1} \chi_{\geq 1}(r) P_c^{\bfL} \bmv(t) \bigr\|_{L^2_{r\ud r}} \lesssim t \, \bigl\| P_c^{\bfL} \bmv(t) \bigr\|_{L^{\infty-}_{r\ud r}} \lesssim \sum_{\ast \in \{\real,\imag\}} t \, \bigl\| e^{it\jD} \bmg_\ast(t) \bigr\|_{L^{\infty-}_x}.
        \end{aligned}
    \end{equation}
    We are thus left to estimate the main last term $\bigl\| \chi_{\geq 1}(r) \Omega_1^+ P_c^{\bfL} \bmv(t) \bigr\|_{L^2_{r\ud r}}$ on the right-hand side of \eqref{equ:weighted_energy_g_jxi_term3_1_decomp}. By Lemmas~\ref{lem:trans1} and~\ref{lem:calTcalRbounds1},  Plancherel, and H\"older's inequality, we have 
    \begin{equation}
        \begin{aligned}
            \bigl\| \chi_{\geq 1}(r) \Omega_1^+ P_c^{\bfL} \bmv(t) \bigr\|_{L^2_{r\ud r}} &\leq \bigl\| \calT_{\mathrm{off}}\bigl[\widetilde\bfOmega_{1,0}^{+,-} \widetilde{\bmv}\bigr](t,r) - \sigma_3 \calR_{\mathrm{off}}\bigl[\pt \widetilde{\bmv}\bigr](t,r) \bigr\|_{L^2_{r\ud r}} \\
            &\quad + \bigl\| \calT_{\mathrm{diag}}\bigl[\widetilde\bfOmega_{0,1}^{-,+} \widetilde{\bmv}\bigr](t,r) - \sigma_3 \calR_{\mathrm{diag}}\bigl[\pt \widetilde{\bmv}\bigr](t,r) \bigr\|_{L^2_{r \ud r}} \\
            &\quad + t \, \bigl\| r^{-1} \chi_{\geq 1}(r) P_c^{\bfL} \bmv(t) \bigr\|_{L^2_{r\ud r}} \\ 
            &\lesssim \bigl\|\bigl( \widetilde{\bfOmega}_{1,0}^{+,-}\widetilde{\bmv}\bigr)(t,k)\bigr\|_{L^2_{k\ud k}} + \bigl\|\bigl( \chi_{\leq 1}(k) \widetilde{\calM}_k \partial_t \widetilde{\bmv}\bigr)(t,k)\bigr\|_{L^2_{k\ud k}} \\ 
            &\quad + \bigl\|\P_c^{\bfL} \pt \bmv(t)\bigr\|_{L^2_{r\ud r}} + t \, \bigl\| P_c^{\bfL} \bmv(t) \bigr\|_{L^{\infty-}_{r\ud r}} \\ 
            &\lesssim \bigl\|\bigl( \widetilde{\bfOmega}_{1,0}^{+,-}\widetilde{\bmv}\bigr)(t,k)\bigr\|_{L^2_{k\ud k}} + \bigl\|\bigl( \chi_{\leq 1}(k) \widetilde{\calM}_k \partial_t \widetilde{\bmv}\bigr)(t,k)\bigr\|_{L^2_{k\ud k}} \\ 
            &\quad + \sum_{\ast \in \{\real,\imag\}} \Bigl( \bigl\| \bmg_\ast(t) \bigr\|_{H^1_x} + t \, \bigl\| e^{it\jD} \bmg_\ast(t) \bigr\|_{L^{\infty-}_x} \Bigr).
        \end{aligned}
    \end{equation}
    It thus remains to bound the first two terms on the right-hand side of the preceding estimate.
    To this end we note that by \eqref{equ:weighted_estimate_black_box_evol_equ}, $\widetilde{\bmv}(t)$ satisfies $(\pt^2 + \jap{k}^2) \widetilde{\bmv} = \widetilde{F}$ on the distorted Fourier side and that both $\widetilde{\bfOmega}_{1,0}^{+,-}$ and $\chi_{\leq 1}(k) \widetilde{\calM}_k \partial_t$ commute with $(\pt^2 + \jap{k}^2)$. From the Duhamel formula   
    \begin{equation}
        \begin{aligned}
            \bigl(\widetilde{\bfOmega}_{1,0}^{+,-}\widetilde{\bmv}\bigr)(t,k) &= \cos(t\jap{k}) \bigl( \widetilde{\bfOmega}_{1,0}^{+,-} \widetilde{\bmv}\bigr)(0,k) + \frac{\sin(t\jap{k})}{\jap{k}} \bigl( \widetilde{\bfOmega}_{1,0}^{+,-} \pt \widetilde{\bmv}\bigr)(0,k) \\ 
            &\quad + \int_0^t \frac{\sin\bigl((t-s)\jap{k}\bigr)}{\jap{k}} \bigl( \widetilde{\bfOmega}_{1,0}^{+,-} \widetilde{\bmF}\bigr)(s,k) \, \ud s,
        \end{aligned}
    \end{equation}
    we obtain 
    \begin{equation} \label{equ:weighted_estimate_black_box_wtilbfOmega_duhamel_bound}
        \begin{aligned}
            \bigl\|\bigl(\widetilde{\bfOmega}_{1,0}^{+,-}\widetilde{\bmv}\bigr)(t,k)\bigr\|_{L^2_{k\ud k}} &\lesssim \bigl\| \bigl( \widetilde{\bfOmega}_{1,0}^{+,-} \widetilde{\bmv}\bigr)(0,k) \bigr\|_{L^2_{k\ud k}} + \bigl\|\jap{k}^{-1} \bigl( \pt \widetilde{\bfOmega}_{1,0}^{+,-} \widetilde{\bmv} \bigr)(0,k) \bigr\|_{L^2_{k\ud k}} \\
            &\quad + \biggl\| \int_0^t \frac{\sin\bigl((t-s)\jap{k}\bigr)}{\jap{k}} \bigl( \widetilde{\bfOmega}_{1,0}^{+,-} \widetilde{\bmF}\bigr)(s,k) \, \ud s \biggr\|_{L^2_{k\ud k}}.
        \end{aligned}
    \end{equation}
    By Lemma~\ref{lem:dkbounds1}, Plancherel, and inserting the evolution equation~\eqref{equ:weighted_estimate_black_box_evol_equ}, the first two terms on the right-hand side of \eqref{equ:weighted_estimate_black_box_wtilbfOmega_duhamel_bound} are bounded by
    \begin{equation}
        \begin{aligned}
            &\bigl\| \bigl( \widetilde{\bfOmega}_{1,0}^{+,-} \widetilde{\bmv}\bigr)(0, k) \bigr\|_{L^2_{k\ud k}} + \bigl\|\jap{k}^{-1} \bigl( \pt \widetilde{\bfOmega}_{1,0}^{+,-} \widetilde{\bmv} \bigr)(0, k) \bigr\|_{L^2_{k\ud k}} \\
            &\lesssim \biggl\| \begin{pmatrix} \partial_k + \frac{1}{k} & 0 \\ 0 & \partial_k \end{pmatrix} (\pt \widetilde{\bmv})(0, k) \biggr\|_{L^2_{k \ud k}} + \biggl\| \begin{pmatrix} \partial_k + \frac{1}{k} & 0 \\ 0 & \partial_k \end{pmatrix} (\pt^2 \widetilde{\bmv})(0, k) \biggr\|_{L^2_{k \ud k}} + \bigl\|\jap{k}^{-1} k \, \widetilde{\bmv}(0, k) \bigr\|_{L^2_{k\ud k}} \\
            &\lesssim \bigl\| \jap{r} P_c^{\bfL} (\pt \bmv)(0) \bigr\|_{L^2_{r \ud r}} + \bigl\| \jap{r} P_c^{\bfL} (\pt^2 \bmv)(0) \bigr\|_{L^2_{r \ud r}} + \| P_c^{\bfL} \bmv(0) \|_{L^2_{r\ud r}} \\ 
            &\lesssim \bigl\| \jap{r} P_c^{\bfL} (\pt \bmv)(0) \bigr\|_{L^2_{r \ud r}} + \bigl\| \jap{r} \bfL P_c^{\bfL} \bmv(0) \bigr\|_{L^2_{r \ud r}} + \bigl\| \jap{r} \bmF(0) \bigr\|_{L^2_{r\ud r}} + \| P_c^{\bfL} \bmv(0) \|_{L^2_{r\ud r}}.
        \end{aligned}
    \end{equation}
    To estimate the last term on the right-hand side of \eqref{equ:weighted_estimate_black_box_wtilbfOmega_duhamel_bound}, we break up $\widetilde{\bfOmega}_{1,0}^{+,-}$ and then use Lemma~\ref{eq:Lzerodefintro1} as well as the dual ILED estimate \eqref{eq:ILEDDuhamel1} with $\gamma_2=1$ from Lemma~\ref{lem:LEDallfereqs1} to infer that
    \begin{equation}
        \begin{aligned}
            &\biggl\| \int_0^t \frac{\sin\bigl((t-s)\jap{k}\bigr)}{\jap{k}} \bigl(\widetilde{\bfOmega}_{1,0}^{+,-} \widetilde{\bmF}\bigr)(s,k) \, \ud s \biggr\|_{L^2_{k\ud k}} \\
            &\lesssim \biggl\| \int_0^t e^{\pm i s \jap{k}} \jap{k}^{-1} k \cdot s \cdot \widetilde{\bmF}(s,k) \, \ud s \biggr\|_{L^2_{k\ud k}} + \biggl\| \int_0^t e^{\pm i s \jap{k}} \jap{k}^{-1} \begin{pmatrix} \partial_k + \frac{1}{k} & 0 \\ 0 & \partial_k \end{pmatrix} (\partial_s \widetilde{\bmF})(s,k) \, \ud s \biggr\|_{L^2_{k\ud k}} \\
            &\lesssim \bigl\| s \cdot \jap{r} \bmF(s,r) \bigr\|_{L^2_s([0,t]; L^2_{r\ud r})} + \bigl\| \jap{r} \partial_s \bmF(s,r) \bigr\|_{L^1_s([0,t]; L^2_{r\ud r})}.
        \end{aligned}
    \end{equation}
    Proceeding analogously, using in particular \eqref{eq:dkbounds1} from Lemma~\ref{lem:dkbounds1}, we obtain that
    \begin{equation}
        \begin{aligned}
            &\bigl\| \bigl( \chi_{\leq 1}(k) \widetilde{\calM}_k \partial_t \widetilde{\bmv}\bigr)(t,k) \bigr\|_{L^2_{k\ud k}} \lesssim \bigl\| \bigl( \chi_{\leq 1}(k) \widetilde{\calM}_k \partial_t \widetilde{\bmv}\bigr)(0, k) \bigr\|_{L^2_{k\ud k}} + \bigl\|\jap{k}^{-1} \bigl( \chi_{\leq 1}(k) \widetilde{\calM}_k \partial_t^2 \widetilde{\bmv}\bigr)(0, k) \bigr\|_{L^2_{k\ud k}} \\
            &\quad + \biggl\| \int_0^t \frac{\sin\bigl((t-s)\jap{k}\bigr)}{\jap{k}} \bigl( \chi_{\leq 1}(k) \widetilde{\calM}_k \partial_s \widetilde{\bmF}\bigr)(s,k) \, \ud s \biggr\|_{L^2_{k\ud k}} \\ 
            &\lesssim \bigl\| \jap{r} P_c^{\bfL} (\pt \bmv)(0) \bigr\|_{L^2_{r\ud r}} + \bigl\| \jap{r} P_c^{\bfL} (\pt^2 \bmv)(0) \bigr\|_{L^2_{r\ud r}} 
            + \bigl\| \jap{r} (\partial_s \bmF)(s,r) \bigr\|_{L^1_s([0,t]; L^2_{r\ud r})} \\ 
            &\lesssim \bigl\| \jap{r} P_c^{\bfL} (\pt \bmv)(0) \bigr\|_{L^2_{r\ud r}} + \bigl\| \jap{r} \bfL P_c^{\bfL} \bmv(0) \bigr\|_{L^2_{r\ud r}} + \bigl\| \jap{r} \bmF(0) \bigr\|_{L^2_{r\ud r}} + \bigl\| \jap{r} (\partial_s \bmF)(s,r) \bigr\|_{L^1_s([0,t]; L^2_{r\ud r})}.             
        \end{aligned}
    \end{equation}
    In conclusion, we have found that 
    \begin{equation} \label{equ:weighted_energy_black_box_termIII_1_final_bound}
        \begin{aligned}
            III_1(t) &\lesssim \| P_c^{\bfL} \bmv(0) \|_{L^2_{r\ud r}} + \bigl\| \jap{r} \bfL P_c^{\bfL} \bmv(0) \|_{L^2_{r\ud r}} + \bigl\| \jap{r} P_c^{\bfL} (\pt \bmv)(0) \bigr\|_{L^2_{r \ud r}} + \bigl\| \jap{r} \bmF(0) \bigr\|_{L^2_{r\ud r}} \\
            &\quad  + \sum_{\ast \in \{\real,\imag\}} \Bigl( \bigl\| \bmg_\ast(t) \bigr\|_{H^1_x} + t \, \bigl\| \jD e^{it\jD} \bmg_\ast(t) \bigr\|_{L^{\infty-}_x} \Bigr) \\
            &\quad + \bigl\| s \cdot \jap{r} \bmF(s,r) \bigr\|_{L^2_s([0,t]; L^2_{r\ud r})} + \bigl\| \jap{r} \partial_s \bmF(s,r) \bigr\|_{L^1_s([0,t]; L^2_{r\ud r})}.
        \end{aligned}
    \end{equation}

    Next, we estimate the second term $III_2(t)$ on the right-hand side of \eqref{equ:weighted_energy_g_jxi_term3_decomp}, proceeding similarly as for the term $III_1(t)$.
    Using the identity \eqref{equ:equivboost1}, we have 
    \begin{equation} \label{equ:weighted_energy_g_jxi_term3_2_decomp}
        \begin{aligned}
            III_2(t) &= \bigl\| \pt \, \Omega \, \Re\bigl( e^{i\theta} P_c^{\bfL} \bmv(t) \bigr) \bigr\|_{L^2_x} \\ 
            &\lesssim \bigl\| \nabla_x \Re\bigl( e^{i\theta} P_c^{\bfL} \bmv(t) \bigr) \bigr\|_{L^2_x} + \bigl\| \Omega \, \Re\bigl( e^{i\theta} P_c^{\bfL} \pt \bmv(t) \bigr) \bigr\|_{L^2_x} \\ 
            &\lesssim \|\bmg_\real(t)\|_{H^1_x} + \bigl\| \chi_{< 1}(r) \, (x \pt + t \nabla_x) \, \Re\bigl( e^{i\theta} P_c^{\bfL} \pt \bmv(t) \bigr) \bigr\|_{L^2_x}  \\ 
            &\quad + \bigl\| \chi_{\geq 1}(r) \Omega_1^- P_c^{\bfL} \pt \bmv(t) \bigr\|_{L^2_{r\ud r}} + \bigl\| \chi_{\geq 1}(r) \Omega_1^+ P_c^{\bfL} \pt \bmv(t) \bigr\|_{L^2_{r\ud r}}.
        \end{aligned}
    \end{equation}
    For the second term on the right-hand side of \eqref{equ:weighted_energy_g_jxi_term3_2_decomp}, using H\"older's inequality and inserting the evolution equation \eqref{equ:weighted_estimate_black_box_evol_equ}, we obtain 
    \begin{equation}
        \begin{aligned}
            \bigl\| \chi_{< 1}(r) \, (x \pt + t \nabla_x) \, \Re\bigl( e^{i\theta} P_c^{\bfL} \pt \bmv(t) \bigr) \bigr\|_{L^2_x} &\lesssim \bigl\| P_c^{\bfL} \pt^2 \bmv(t) \bigr\|_{L^2_{r\ud r}} + t \, \bigl\| \jD e^{it\jD} \bmg_\real(t) \bigr\|_{L^{\infty-}_x} \\ 
            &\lesssim \bigl\| \bfL P_c^{\bfL} \bmv(t) \bigr\|_{L^2_{r\ud r}} + \bigl\| \bmF(t) \bigr\|_{L^2_{r\ud r}} + t \, \bigl\| \jD e^{it\jD} \bmg_\real(t) \bigr\|_{L^{\infty-}_x}.
        \end{aligned}
    \end{equation}
    For the third term on the right-hand side of \eqref{equ:weighted_energy_g_jxi_term3_2_decomp}, we again use the relation $\Omega_1^- = \Omega_1^+-2tr^{-1}$ along with H\"older's inequality to conclude that
    \begin{equation}
        \begin{aligned}
            \bigl\| \chi_{\geq 1}(r) \Omega_1^- P_c^{\bfL} \pt \bmv(t) \bigr\|_{L^2_{r\ud r}} &\lesssim \bigl\| \chi_{\geq 1}(r) \Omega_1^+ P_c^{\bfL} \pt \bmv(t) \bigr\|_{L^2_{r\ud r}} + t \, \bigl\| r^{-1} \chi_{\geq 1}(r) P_c^{\bfL} \pt \bmv(t) \bigr\|_{L^2_{r \ud r}} \\
            &\lesssim  \bigl\| \chi_{\geq 1}(r) \Omega_1^+ P_c^{\bfL} \pt \bmv(t) \bigr\|_{L^2_{r\ud r}} + \sum_{\ast \in \{\real,\imag\}} t \, \bigl\| \jD e^{it\jD} \bmg_\ast(t) \bigr\|_{L^{\infty-}_x}.
        \end{aligned}
    \end{equation}
    We are thus left to bound the term $\bigl\| \chi_{\geq 1}(r) \Omega_1^+ P_c^{\bfL} \pt \bmv(t) \bigr\|_{L^2_{r\ud r}}$, which is also the fourth term on the right-hand side of \eqref{equ:weighted_energy_g_jxi_term3_2_decomp}. 
    By Lemmas~\ref{lem:trans1},~\ref{lem:calTcalRbounds1}, Plancherel, H\"older's inequality, and the evolution equation \eqref{equ:weighted_estimate_black_box_evol_equ}, we have 
    \begin{equation}
        \begin{aligned}
            \bigl\| \chi_{\geq 1}(r) \Omega_1^+ P_c^{\bfL} \pt \bmv(t) \bigr\|_{L^2_{r\ud r}} 
            &\lesssim \bigl\|\bigl( \widetilde{\bfOmega}_{1,0}^{+,-} \pt \widetilde{\bmv}\bigr)(t,k)\bigr\|_{L^2_{k\ud k}} + \bigl\|\bigl( \chi_{\leq 1}(k) \widetilde{\calM}_k \partial_t^2 \widetilde{\bmv}\bigr)(t,k)\bigr\|_{L^2_{k\ud k}} \\ 
            &\quad + \bigl\|\P_c^{\bfL} \pt^2 \bmv(t)\bigr\|_{L^2_{r\ud r}} + t \, \bigl\| P_c^{\bfL} \pt \bmv(t) \bigr\|_{L^{\infty-}_{r\ud r}} \\ 
            &\lesssim \bigl\|\bigl( \widetilde{\bfOmega}_{1,0}^{+,-} \pt\widetilde{\bmv}\bigr)(t,k)\bigr\|_{L^2_{k\ud k}} + \bigl\|\bigl( \chi_{\leq 1}(k) \widetilde{\calM}_k \pt^2 \widetilde{\bmv}\bigr)(t,k)\bigr\|_{L^2_{k\ud k}} \\ 
            &\quad + \bigl\| \bfL P_c^{\bfL} \bmv(t) \bigr\|_{L^2_{r\ud r}} + \bigl\| \bmF(t) \bigr\|_{L^2_{r\ud r}} + \sum_{\ast \in \{\real,\imag\}} t \, \bigl\| \jD e^{it\jD} \bmg_\ast(t) \bigr\|_{L^{\infty-}_x}.
        \end{aligned}
    \end{equation}
    Commuting $\widetilde{\bfOmega}_{1,0}^{+,-} \pt$ with the evolution equation for $\widetilde{\bmv}(t)$ on the distorted Fourier side, we obtain as before 
    \begin{equation}
        \begin{aligned}
            \bigl\|\bigl( \widetilde{\bfOmega}_{1,0}^{+,-} \pt\widetilde{\bmv}\bigr)(t,k)\bigr\|_{L^2_{k\ud k}} &\lesssim \bigl\|\bigl( \widetilde{\bfOmega}_{1,0}^{+,-} \pt\widetilde{\bmv}\bigr)(0,k) \bigr\|_{L^2_{k\ud k}} + \bigl\| \jap{k}^{-1} \bigl( \pt \widetilde{\bfOmega}_{1,0}^{+,-} \pt\widetilde{\bmv}\bigr)(0,k) \bigr\|_{L^2_{k\ud k}} \\ 
            &\quad + \biggl\| \int_0^t \frac{\sin\bigl((t-s)\jap{k}\bigr)}{\jap{k}} \bigl( \widetilde{\bfOmega}_{1,0}^{+,-} \partial_s \widetilde{\bmF}\bigr)(s,k) \, \ud s \biggr\|_{L^2_{k\ud k}}.
        \end{aligned}
    \end{equation}
    Inserting the equation $\pt^2 \widetilde{\bmv} = - \jap{k}^2 \widetilde{\bmv} + \widetilde{\bmF}$ and invoking Lemma~\ref{lem:dkbounds1}, we find
    \begin{equation}
        \begin{aligned}
            &\bigl\|\bigl( \widetilde{\bfOmega}_{1,0}^{+,-} \pt\widetilde{\bmv}\bigr)(0,k) \bigr\|_{L^2_{k\ud k}} + \bigl\| \jap{k}^{-1} \bigl( \pt \widetilde{\bfOmega}_{1,0}^{+,-} \pt\widetilde{\bmv}\bigr)(0,k) \bigr\|_{L^2_{k\ud k}} \\
            &\lesssim \biggl\| \begin{pmatrix} \partial_k + \frac{1}{k} & 0 \\ 0 & \partial_k \end{pmatrix} (\pt^2 \widetilde{\bmv})(0,k) \biggr\|_{L^2_{k\ud k}} + \biggl\| \jap{k}^{-1} \begin{pmatrix} \partial_k + \frac{1}{k} & 0 \\ 0 & \partial_k \end{pmatrix} (\pt^3 \widetilde{\bmv})(0,k) \biggr\|_{L^2_{k\ud k}} + \bigl\| (\pt \widetilde{\bmv})(0,k) \bigr\|_{L^2_{k\ud k}} \\ 
            &\lesssim \bigl\| \jap{r} P_c^{\bfL} (\pt^2 \bmv)(0) \bigr\|_{L^2_{r\ud r}} + \biggl\| \begin{pmatrix} \partial_k + \frac{1}{k} & 0 \\ 0 & \partial_k \end{pmatrix} \jap{k} (\pt \widetilde{\bmv})(0,k) \biggr\|_{L^2_{k\ud k}} \\
            &\quad + \biggl\| \begin{pmatrix} \partial_k + \frac{1}{k} & 0 \\ 0 & \partial_k \end{pmatrix} (\pt \widetilde{\bmF})(0,k) \biggr\|_{L^2_{k\ud k}} + \bigl\| (\pt \widetilde{\bmv})(0,k) \bigr\|_{L^2_{k\ud k}} \\      
            &\lesssim \bigl\| \jap{r} \bfL \bfP_c^{\bfL} \bmv(0) \bigr\|_{L^2_{r\ud r}} + \bigl\| \jap{r} \bmF(0) \bigr\|_{L^2_{r \ud r}} + \bigl\| \jap{r} \bfL^{\frac12} P_c^{\bfL} (\pt \bmv)(0) \bigr\|_{L^2_{r\ud r}} + \bigl\| \jap{r} (\pt \bmF)(0) \bigr\|_{L^2_{r\ud r}} + \bigl\| P_c^{\bfL} \pt \bmv(0) \bigr\|_{L^2_{r \ud r}}.
        \end{aligned}
    \end{equation}
    Moreover, breaking up $\widetilde{\bfOmega}_{1,0}^{+,-}$ again and using Lemma~\ref{lem:dkbounds1} as well as the dual ILED estimate \eqref{eq:ILEDDuhamel1} with $\gamma_2=1$ from Lemma~\ref{lem:LEDallfereqs1}, we find that
    \begin{equation}
        \begin{aligned}
            &\biggl\| \int_0^t \frac{\sin\bigl((t-s)\jap{k}\bigr)}{\jap{k}} \bigl( \widetilde{\bfOmega}_{1,0}^{+,-} \partial_s \widetilde{\bmF}\bigr)(s,k) \, \ud s \biggr\|_{L^2_{k\ud k}} \\
            &\quad \lesssim \bigl\| s \cdot \jap{r} \partial_s \bmF(s,r) \bigr\|_{L^2_s([0,t]; L^2_{r\ud r})} + \bigl\| \jap{r} \partial_s^2 \bmF(s,r) \bigr\|_{L^1_s([0,t]; L^2_{r\ud r})}.
        \end{aligned}
    \end{equation}
    
    Similarly, inserting the equation $\pt^2 \widetilde{\bmv} = - \jap{k}^2 \widetilde{\bmv} + \widetilde{\bmF}$, invoking Duhamel's formula for $\widetilde{\bmv}(t,k)$, and using Lemma~\ref{lem:dkbounds1}, we infer that
    \begin{equation}
        \begin{aligned}
            &\bigl\|\bigl( \chi_{\leq 1}(k) \widetilde{\calM}_k \partial_t^2 \widetilde{\bmv}\bigr)(t,k)\bigr\|_{L^2_{k\ud k}} \\
            &\lesssim \bigl\| \chi_{\leq 1}(k) \widetilde{\calM}_k \jap{k}^2 \widetilde{\bmv}(0,k) \bigr\|_{L^2_{k \ud k}} + \bigl\| \chi_{\leq 1}(k) \widetilde{\calM}_k \jap{k} (\pt \widetilde{\bmv})(0,k) \bigr\|_{L^2_{k \ud k}} \\ 
            &\quad + \biggl\| \int_0^t \sin\bigl((t-s)\jap{k}\bigr) \widetilde{\calM}_k \jap{k} \widetilde{\bmF}(s,k) \, \ud s \biggr\|_{L^2_{k\ud k}} + \bigl\| \chi_{\leq 1}(k) \widetilde{\calM}_k \widetilde{\bmF}(t,k) \bigr\|_{L^2_{k\ud k}} \\
            &\lesssim \bigl\| \jap{r} \bfL P_c^{\bfL} \bmv(0) \bigr\|_{L^2_{r \ud r}} + \bigl\| \jap{r} \bfL^{\frac12} P_c^{\bfL} (\pt \bmv)(0) \bigr\|_{L^2_{r \ud r}} + \bigl\| \jap{r} \bfL^{\frac12} P_c^{\bfL} \bmF(s) \bigr\|_{L^1_s([0,t]; L^2_{r\ud r})} + \bigl\| \jap{r} \bmF(t) \bigr\|_{L^2_{r \ud r}}.
        \end{aligned}
    \end{equation}
    In summary, we have found that 
    \begin{align} 
            III_2(t) 
            &\lesssim \bigl\| P_c^{\bfL} \pt \bmv(0) \bigr\|_{L^2_{r \ud r}} + \bigl\| \jap{r} \bfL P_c^{\bfL} \bmv(0) \bigr\|_{L^2_{r\ud r}} + \bigl\| \jap{r} \bfL^{\frac12} P_c^{\bfL} (\pt \bmv)(0) \bigr\|_{L^2_{r \ud r}} \nonumber\\ 
            &\quad + \bigl\| \jap{r} \bmF(0) \bigr\|_{L^2_{r\ud r}} + \bigl\| \jap{r} (\pt \bmF)(0) \bigr\|_{L^2_{r\ud r}} + \bigl\| \jap{r} \bmF(t) \bigr\|_{L^2_{r\ud r}} \label{equ:weighted_energy_black_box_termIII_2_final_bound}\\
            &\quad + \bigl\| \bfL P_c^{\bfL} \bmv(t) \bigr\|_{L^2_{r\ud r}} + \|\bmg_\real(t)\|_{H^1_x} + \sum_{\ast \in \{\real,\imag\}} t \, \bigl\| \jD e^{it\jD} \bmg_\ast(t) \bigr\|_{L^{\infty-}_x}\nonumber \\ 
            &\quad + \bigl\| s \cdot \jap{r} \partial_s \bmF(s,r) \bigr\|_{L^2_s([0,t]; L^2_{r\ud r})} + \bigl\| \jap{r} \partial_s^2 \bmF(s,r) \bigr\|_{L^1_s([0,t]; L^2_{r\ud r})} + \bigl\| \jap{r} \bfL^{\frac12} P_c^{\bfL} \bmF(s) \bigr\|_{L^1_s([0,t]; L^2_{r\ud r})}.\nonumber
    \end{align}
    
    We continue with the estimates for the third term $III_3(t)$ on the right-hand side of \eqref{equ:weighted_energy_g_jxi_term3_decomp}, where we proceed similarly as for the preceding terms $III_1(t)$ and $III_2(t)$.
    Using the identity \eqref{equ:equivboost1}, we first obtain
    \begin{equation} \label{equ:weighted_energy_g_jxi_term3_3_decomp}
        \begin{aligned}
            III_3(t) &= \bigl\| \Omega \, \Re\bigl( e^{i\theta} \bfL^{\frac12} P_c^{\bfL} \bmv(t) \bigr) \bigr\|_{L^2_x} \\             
            &\lesssim \bigl\| \chi_{<1}(r) \bigl(x \pt + t \nabla_x\bigr) \Re\bigl( e^{i\theta} \bfL^{\frac12} P_c^{\bfL} \bmv(t) \bigr) \bigr\|_{L^2_x} + \bigl\| \chi_{\geq 1}(r) \Omega_1^- \bfL^{\frac12} P_c^{\bfL} \bmv(t) \bigr\|_{L^2_{r\ud r}} \\ 
            &\quad + \bigl\| \chi_{\geq 1}(r) \Omega_1^+ \bfL^{\frac12} P_c^{\bfL} \bmv(t) \bigr\|_{L^2_{r\ud r}}.
        \end{aligned}
    \end{equation}    
    For the first term on the right-hand side of \eqref{equ:weighted_energy_g_jxi_term3_3_decomp}, we obtain 
    \begin{equation}
        \begin{aligned}
            \bigl\| \chi_{<1}(r) \bigl(x \pt + t \nabla_x\bigr) \Re\bigl( e^{i\theta} \bfL^{\frac12} P_c^{\bfL} \bmv(t) \bigr) \bigr\|_{L^2_x} &= \bigl\| \chi_{<1}(r) \bigl(x \pt + t \nabla_x\bigr) \bfL_{\mathrm{nr}}^{\frac12} \Re\bigl( e^{i\theta} P_c^{\bfL} \bmv(t) \bigr) \bigr\|_{L^2_x} \\ 
            &\lesssim \bigl\| \bfL_{\mathrm{nr}}^{\frac12} \Re\bigl( e^{i\theta} P_c^{\bfL} \pt \bmv(t) \bigr) \bigr\|_{L^2_x} + t \, \bigl\| \nabla_x \bfL_{\mathrm{nr}}^{\frac12} \Re\bigl( e^{i\theta} P_c^{\bfL} \bmv(t) \bigr) \bigr\|_{L^{\infty-}_x} \\ 
            &\lesssim \bigl\| \bmg_\real(t) \bigr\|_{H^2_x} + t \, \bigl\| \jD^2 e^{it\jD} \bmg_\real(t) \bigr\|_{L^{\infty-}_x}.
        \end{aligned}
    \end{equation}
    For the second term on the right-hand side of \eqref{equ:weighted_energy_g_jxi_term3_3_decomp} we again use the relation $\Omega_1^{-} = \Omega_1^{+}-2tr^{-1}$ and H\"older's inequality to conclude
    \begin{equation}
        \begin{aligned}
            \bigl\| \chi_{\geq 1}(r) \Omega_1^- \bfL^{\frac12} P_c^{\bfL} \bmv(t) \bigr\|_{L^2_{r\ud r}} &\lesssim \bigl\| \chi_{\geq 1}(r) \Omega_1^+ \bfL^{\frac12} P_c^{\bfL} \bmv(t) \bigr\|_{L^2_{r\ud r}} + t \, \bigl\| r^{-1} \chi_{\geq 1}(r) \bfL^{\frac12} P_c^{\bfL} \bmv(t) \bigr\|_{L^2_{r \ud r}}.
        \end{aligned}
    \end{equation}
    We may bound the last term by
    \begin{equation}
        \begin{aligned}
            t \, \bigl\| r^{-1} \chi_{\geq 1}(r) \bfL^{\frac12} P_c^{\bfL} \bmv(t) \bigr\|_{L^2_{r \ud r}} \lesssim t \, \bigl\| r^{-1} \chi_{\geq 1}(r) \bfL^{\frac12}_{\mathrm{nr}} \bigl( e^{i\theta} P_c^{\bfL} \bmv(t) \bigr) \bigr\|_{L^2_x} &\lesssim t \, \bigl\| \bfL_{\mathrm{nr}}^{\frac12} \bigl( e^{i\theta} P_c^{\bfL} \bmv(t) \bigr) \bigr\|_{L^{\infty-}_x} \\ 
            &\lesssim \sum_{\ast \in \{\real,\imag\}} t \, \bigl\| \jD e^{it\jD} \bmg_\ast(t) \bigr\|_{L^{\infty-}_x}.
        \end{aligned}
    \end{equation}
    Hence, it remains to bound the term $\bigl\| \chi_{\geq 1}(r) \Omega_1^+ \bfL^{\frac12} P_c^{\bfL} \bmv(t) \bigr\|_{L^2_{r\ud r}}$.
    By Lemma~\ref{lem:trans1} and~\ref{lem:calTcalRbounds1}, Plancherel, and H\"older's inequality, we have 
    \begin{equation}
        \begin{aligned}
            \bigl\| \chi_{\geq 1}(r) \Omega_1^+ \bfL^{\frac12} P_c^{\bfL} \bmv(t) \bigr\|_{L^2_{r\ud r}} 
            &\lesssim \bigl\|\bigl( \widetilde{\bfOmega}_{1,0}^{+,-} \jap{k} \widetilde{\bmv}\bigr)(t,k)\bigr\|_{L^2_{k\ud k}} + \bigl\|\bigl( \chi_{\leq 1}(k) \widetilde{\calM}_k \jap{k} \partial_t \widetilde{\bmv}\bigr)(t,k)\bigr\|_{L^2_{k\ud k}} \\ 
            &\quad + \bigl\| \bfL^{\frac12} \P_c^{\bfL} \pt \bmv(t)\bigr\|_{L^2_{r\ud r}} + t \, \bigl\| r^{-1} \chi_{\geq 1}(r) \bfL^{\frac12} P_c^{\bfL} \bmv(t) \bigr\|_{L^2_{r\ud r}} \\ 
            &\lesssim \bigl\|\bigl( \widetilde{\bfOmega}_{1,0}^{+,-} \jap{k} \widetilde{\bmv}\bigr)(t,k)\bigr\|_{L^2_{k\ud k}} + \bigl\|\bigl( \chi_{\leq 1}(k) \widetilde{\calM}_k \jap{k} \pt \widetilde{\bmv}\bigr)(t,k)\bigr\|_{L^2_{k\ud k}} \\ 
            &\quad + \sum_{\ast \in \{\real, \imag\}} \Bigl( \bigl\| \bmg_\ast(t) \bigr\|_{H^2_x} + t \, \bigl\| \jD e^{it\jD} \bmg_\ast(t) \bigr\|_{L^{\infty-}_x} \Bigr).
        \end{aligned}
    \end{equation}
    Commuting $\widetilde{\bfOmega}_{1,0}^{+,-} \jap{k}$ with the evolution equation for $\widetilde{\bmv}(t)$ on the distorted Fourier side, we get
    \begin{equation}
        \begin{aligned}
            \bigl\| \bigl( \widetilde{\bfOmega}_{1,0}^{+,-} \jap{k} \widetilde{\bmv}\bigr)(t,k)\bigr\|_{L^2_{k\ud k}} &\lesssim \bigl\| \bigl( \widetilde{\bfOmega}_{1,0}^{+,-} \jap{k} \widetilde{\bmv}\bigr)(0,k)\bigr\|_{L^2_{k\ud k}} + \bigl\| \jap{k}^{-1} \bigl( \pt \widetilde{\bfOmega}_{1,0}^{+,-} \jap{k} \widetilde{\bmv}\bigr)(0,k)\bigr\|_{L^2_{k\ud k}} \\ 
            &\quad + \biggl\| \int_0^t \frac{\sin\bigl((t-s)\jap{k}\bigr)}{\jap{k}} \bigl(\widetilde{\bfOmega}_{1,0}^{+,-} \jap{k} \widetilde{\bmF}\bigr)(s,k) \, \ud s \biggr\|_{L^2_{k \ud k}}. 
        \end{aligned}
    \end{equation}
    Then we find, using Lemma~\ref{lem:dkbounds1} and inserting the equation \eqref{equ:weighted_estimate_black_box_evol_equ},
    \begin{equation}
        \begin{aligned}
            &\bigl\| \bigl( \widetilde{\bfOmega}_{1,0}^{+,-} \jap{k} \widetilde{\bmv}\bigr)(0,k)\bigr\|_{L^2_{k\ud k}} + \bigl\| \jap{k}^{-1} \bigl( \pt \widetilde{\bfOmega}_{1,0}^{+,-} \jap{k} \widetilde{\bmv}\bigr)(0,k)\bigr\|_{L^2_{k\ud k}} \\
            &\lesssim \biggl\| \begin{pmatrix} \partial_k + \frac{1}{k} & 0 \\ 0 & \partial_k \end{pmatrix} \jap{k} (\pt \widetilde{\bmv})(0,k) \biggr\|_{L^2_{k\ud k}} + \biggl\| \begin{pmatrix} \partial_k + \frac{1}{k} & 0 \\ 0 & \partial_k \end{pmatrix} (\pt^2 \widetilde{\bmv})(0,k) \biggr\|_{L^2_{k\ud k}} \\
            &\quad + \bigl\| (\pt^2 \widetilde{\bmv})(0,k) \bigr\|_{L^2_{k\ud k}} + \bigl\| \jap{k} \widetilde{\bmv}(0,k) \bigr\|_{L^2_{k\ud k}} \\ 
            &\lesssim \bigl\| \jap{r} \bfL^{\frac12} P_c^{\bfL} \pt \bmv(0) \bigr\|_{L^2_{r \ud r}} + \bigl\| \jap{r} P_c^\bfL \pt^2 \bmv(0) \bigr\|_{L^2_{r\ud r}} + \bigl\| \bfL^{\frac12} P_c^\bfL \bmv(0) \bigr\|_{L^2_{r\ud r}} \\
            &\lesssim \bigl\| \jap{r} \bfL^{\frac12} P_c^{\bfL} \pt \bmv(0) \bigr\|_{L^2_{r \ud r}} + \bigl\| \jap{r} \bfL P_c^\bfL \bmv(0) \bigr\|_{L^2_{r\ud r}} + \bigl\| \jap{r} P_c^\bfL \bmF(0) \bigr\|_{L^2_{r\ud r}} + \bigl\| \bfL^{\frac12} P_c^\bfL \bmv(0) \bigr\|_{L^2_{r\ud r}}.
        \end{aligned}
    \end{equation}
    Moreover, breaking up $\widetilde{\bfOmega}_{1,0}^{+,-}$ again and using Lemma~\ref{lem:dkbounds1} as well as the dual ILED estimate \eqref{eq:ILEDDuhamel1} with $\gamma_2=1$ from Lemma~\ref{lem:LEDallfereqs1}, we find that
    \begin{equation}
        \begin{aligned}
            &\biggl\| \int_0^t \frac{\sin\bigl((t-s)\jap{k}\bigr)}{\jap{k}} \bigl(\widetilde{\bfOmega}_{1,0}^{+,-} \jap{k} \widetilde{\bmF}\bigr)(s,k) \, \ud s \biggr\|_{L^2_{k \ud k}} \\ 
            &\lesssim \bigl\| s \cdot \jap{r} \bfL^{\frac12} P_c^{\bfL} \bmF(s,r) \bigr\|_{L^2_s([0,t]; L^2_{r\ud r})} + \bigl\| \jap{r} \partial_s \bmF(s,r) \bigr\|_{L^1_s([0,t]; L^2_{r\ud r})}.
        \end{aligned}
    \end{equation}
    Similarly, using Lemma~\ref{lem:dkbounds1}, we obtain 
    \begin{equation}
        \begin{aligned}
            &\bigl\|\bigl( \chi_{\leq 1}(k) \widetilde{\calM}_k \jap{k} \pt \widetilde{\bmv}\bigr)(t,k)\bigr\|_{L^2_{k\ud k}} \\
            &\lesssim \bigl\|\bigl( \chi_{\leq 1}(k) \widetilde{\calM}_k \jap{k} \pt \widetilde{\bmv}\bigr)(0,k)\bigr\|_{L^2_{k\ud k}} + \bigl\| \jap{k}^{-1} \bigl( \chi_{\leq 1}(k) \widetilde{\calM}_k \jap{k} \pt^2 \widetilde{\bmv}\bigr)(0,k)\bigr\|_{L^2_{k\ud k}} \\ 
            &\quad + \biggl\| \int_0^t \frac{\sin\bigl((t-s)\jap{k}\bigr)}{\jap{k}} \chi_{\leq 1}(k) \widetilde{\calM}_k \jap{k} \partial_s \widetilde{\bmF}(s,k) \, \ud s \biggr\|_{L^2_{k\ud k}} \\ 
            &\lesssim \bigl\| \jap{r} \bfL^{\frac12} P_c^\bfL \pt \bmv(0) \bigr\|_{L^2_{r\ud r}} + \bigl\| \jap{r} P_c^{\bfL} \pt^2 \bmv(0) \bigr\|_{L^2_{r\ud r}} + \bigl\| \jap{r} \partial_s \bmF(s) \bigr\|_{L^1_s([0,t]; L^2_{r\ud r})} \\ 
            &\lesssim \bigl\| \jap{r} \bfL^{\frac12} P_c^\bfL \pt \bmv(0) \bigr\|_{L^2_{r\ud r}} + \bigl\| \jap{r} \bfL P_c^{\bfL} \bmv(0) \bigr\|_{L^2_{r\ud r}} + \bigl\| \jap{r} \bmF(0) \bigr\|_{L^2_{r\ud r}}  + \bigl\| \jap{r} \partial_s \bmF(s) \bigr\|_{L^1_s([0,t]; L^2_{r\ud r})}.
        \end{aligned}
    \end{equation}
    In conclusion, we have found that 
    \begin{equation} \label{equ:weighted_energy_black_box_termIII_3_final_bound}
        \begin{aligned}
            III_3(t) 
            &\lesssim \bigl\| \jap{r} \bfL P_c^{\bfL} \bmv(0) \bigr\|_{L^2_{r\ud r}} + \bigl\| \jap{r} \bfL^{\frac12} P_c^\bfL \pt \bmv(0) \bigr\|_{L^2_{r\ud r}} + \bigl\| \jap{r} \bmF(0) \bigr\|_{L^2_{r\ud r}} \\ 
            &\quad + \bigl\| \bmg_\real(t) \bigr\|_{H^2_x} + t \, \bigl\| \jD^2 e^{it\jD} \bmg_\real(t) \bigr\|_{L^{\infty-}_x} + \sum_{\ast \in \{\real, \imag\}} \Bigl( \bigl\| \bmg_\ast(t) \bigr\|_{H^2_x} + t \, \bigl\| \jD^2 e^{it\jD} \bmg_\ast(t) \bigr\|_{L^{\infty-}_x} \Bigr) \\
            &\quad + \bigl\| s \cdot \jap{r} \bfL^{\frac12} P_c^{\bfL} \bmF(s,r) \bigr\|_{L^2_s([0,t]; L^2_{r\ud r})} + \bigl\| \jap{r} \partial_s \bmF(s,r) \bigr\|_{L^1_s([0,t]; L^2_{r\ud r})}. 
        \end{aligned}
    \end{equation}
    
    Finally, it remains to bound the fourth term $III_4(t)$ on the right-hand side of \eqref{equ:weighted_energy_g_jxi_term3_decomp}.
    Invoking the commutator bound (see Lemma~\ref{lem:LnrOmegacomm1})
    \begin{equation}
        \bigr\| \bigl[ \bfL_{\mathrm{nr}}^{\frac12}\bfP_\nr^c, \Omega \bigr] u(t,x) \bigr\|_{L^2_x(\bbR^2)} \lesssim \|\pt u(t)\|_{L^2_x(\bbR^2)} + t \, \bigl\| \jx^{-2} u(t) \bigr\|_{L^2_x(\bbR^2)}
    \end{equation}
    we obtain that
    \begin{equation} \label{equ:weighted_energy_black_box_termIII_4_final_bound}
        \begin{aligned}
            III_4(t) = \Bigl\| \bigl[ \bfL_{\mathrm{nr}}^{\frac12}\bfP_\nr^c, \Omega \bigr] \, \Re\bigl( e^{i\theta} P_c^{\bfL} \bmv(t) \bigr) \Bigr\|_{L^2_x} &\lesssim \bigl\| \Re\bigl( e^{i\theta} P_c^{\bfL} \pt \bmv(t) \bigr) \bigr\|_{L^2_x} + t \, \bigl\| \jx^{-2} \Re\bigl( e^{i\theta} P_c^{\bfL} \bmv(t) \bigr) \bigr\|_{L^2_x} \\ 
            &\lesssim \| \bmg_\real(t) \|_{H^1_x} + t \, \bigl\| e^{it\jD} \bmg_\real(t) \bigr\|_{L^{\infty-}_x}.
        \end{aligned}
    \end{equation}

    Combining the bounds \eqref{equ:weighted_energy_black_box_termI_final_bound}, \eqref{equ:weighted_energy_black_box_termII_final_bound}, \eqref{equ:weighted_energy_black_box_termIII_1_final_bound}, \eqref{equ:weighted_energy_black_box_termIII_2_final_bound}, \eqref{equ:weighted_energy_black_box_termIII_3_final_bound}, \eqref{equ:weighted_energy_black_box_termIII_4_final_bound} and using (see Lemma~\ref{lem:Lnrxcomm1})
    \begin{equation}
        \begin{aligned}
            \bigl\| \jap{r} \bfL P_c^{\bfL} \bmv(0) \bigr\|_{L^2_{r\ud r}} &\lesssim \bigl\| \jap{x} e^{i\theta} P_c^{\bfL} \bmv(0) \bigr\|_{H^2_x}, \\
            \bigl\| \jap{r} \bfL^{\frac12} P_c^{\bfL} (\pt \bmv)(0) \bigr\|_{L^2_{r\ud r}} &\lesssim \bigl\| \jap{x} e^{i\theta} P_c^{\bfL} (\pt \bmv)(0) \bigr\|_{H^1_x},\\ 
            \bigl\| \bfL P_c^{\bfL} \bmv(t) \bigr\|_{L^2_{r\ud r}} &\lesssim \sum_{\ast \in \{\real,\imag\}} \|\bmg_\ast(t)\|_{H^2_x}, \\ 
            \bigl\| \jap{r} \bfL^{\frac12} P_c^{\bfL} \bmF(s,r) \bigr\|_{L^2_{r\ud r}} &\lesssim \bigl\| \jx \jD \bigl( e^{i\theta} \bmF(s,r) \bigr) \bigr\|_{L^2_x},
        \end{aligned}
    \end{equation}   
    we arrive at the asserted weighted energy estimate \eqref{equ:weighted_estimate_black_box_bound} in the statement of the proposition.
\end{proof}


\appendix
\section{Estimates for the Flat Fourier Transform}

\begin{lemma}\label{lem:A2}
We have 
\begin{equation}
    \label{eq:SM0}
    \Big\| e^{it\jap{D}} \sqrt{\frac{|\partial_1|}{\jap{D}}} \, g \Big\|_{L^\infty_{x_1}L^2_{x_2}L^2_t}\lesssim \|g\|_{L^2(\R^2)}, 
\end{equation}
as well as the symmetric version with $x_1$ and~$x_2$ interchanged. 
In particular, for all $\sigma>\frac 12$, 
\begin{equation}
    \label{eq:SM1}
    \Big\| \jap{x}^{-\sigma} e^{it\jap{D}} \sqrt{\frac{|D|}{\jap{D}}} \, g \Big\|_{L^2_{x}L^2_t}\lesssim \| g\|_{L^2(\R^2)}, 
\end{equation}
and dually
    \begin{equation}
        \label{eq:ILED1}
        \Big\| \int_0^T e^{-is\jap{D}} \sqrt{\frac{|D|}{\jap{D}}}\, F(s,\cdot)\, ds \Big\|_{L^2(\R^2)} \leq C_\sigma \| \jap{x}^\sigma F\|_{L^2_{t,x}([0,T]\times \R^2)}
    \end{equation}
    uniformly in $T\ge0$. 
\end{lemma}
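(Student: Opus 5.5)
We prove \eqref{eq:SM0} by a Plancherel argument in $t$ and $x_2$, reducing to a one-dimensional change of variables in $\xi_1$. Write $\widehat g(\xi_1,\xi_2)$ and fix $\xi_2$. For fixed $x_1$, the function $t\mapsto e^{it\jap{D}}\sqrt{|\partial_1|/\jap{D}}\,g$ is
\[
\frac{1}{(2\pi)^2}\int e^{ix_2\xi_2}\int e^{ix_1\xi_1+it\jap{\xi}}\sqrt{\tfrac{|\xi_1|}{\jap{\xi}}}\,\widehat g(\xi)\,\ud\xi_1\,\ud\xi_2 .
\]
Taking the $L^2_{x_2}$ norm first, Plancherel in $x_2$ reduces matters to the inner integral for each fixed $\xi_2$.

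For fixed $\xi_2$, split the $\xi_1$ integral into $\xi_1>0$ and $\xi_1<0$. On each half-line the map $\xi_1\mapsto\lambda=\jap{\xi}=\sqrt{1+\xi_1^2+\xi_2^2}$ is a diffeomorphism onto $(\jap{\xi_2},\infty)$ with $\ud\lambda=\frac{|\xi_1|}{\jap{\xi}}\ud\xi_1$. The inner integral then becomes
\[
\int e^{it\lambda}\, e^{ix_1\xi_1(\lambda)}\Big(\tfrac{\jap{\xi}}{|\xi_1|}\Big)^{1/2}\widehat g(\xi_1(\lambda),\xi_2)\,\ud\lambda .
\]
Plancherel in $t$ bounds its $L^2_t$ norm by
\[
\Big(\int |\widehat g|^2\,\tfrac{\jap{\xi}}{|\xi_1|}\,\ud\lambda\Big)^{1/2}=\Big(\int|\widehat g(\xi_1,\xi_2)|^2\,\ud\xi_1\Big)^{1/2},
\]
uniformly in $x_1$, because the modulus $|e^{ix_1\xi_1}|=1$. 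The two half-lines are combined by the triangle inequality. Integrating over $\xi_2$ then gives \eqref{eq:SM0}, since $L^\infty_{x_1}$ commutes past the $L^2_{x_2}L^2_t$ norm; the sup over $x_1$ is taken after the bound, which is uniform. The key point is that the multiplier $\sqrt{|\xi_1|/\jap{\xi}}$ is exactly the square root of the Jacobian. The only subtlety is justifying the order of norms, which is standard for Schwartz $g$ followed by density.

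For \eqref{eq:SM1}, note that $\jap{x}^{-\sigma}\le\jap{x_1}^{-\sigma}\jap{x_2}^{0}$. Since $\jap{x_1}^{-\sigma}\in L^2_{x_1}$ for $\sigma>\frac12$, applying \eqref{eq:SM0} gives control of $\|\jap{x}^{-\sigma}e^{it\jap{D}}\sqrt{|\partial_1|/\jap{D}}\,g\|_{L^2_{t,x}}$, and symmetrically for $\partial_2$.

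It remains to replace $|D|$ by $|\partial_1|,|\partial_2|$. Choose a smooth homogeneous-degree-zero partition of unity $1=m_1(\xi)+m_2(\xi)$ with $|\xi_j|\gtrsim|\xi|$ on $\operatorname{supp}m_j$. Then
\[
\sqrt{|\xi|/\jap{\xi}}\,m_j(\xi)=\sqrt{|\xi_j|/\jap{\xi}}\;\widetilde m_j(\xi),
\]
where $\widetilde m_j=m_j\sqrt{|\xi|/|\xi_j|}$ is a bounded multiplier. Applying \eqref{eq:SM0} to $\widetilde m_j(D)g$, whose $L^2$ norm is at most $\|g\|_{L^2}$, gives \eqref{eq:SM1}. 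This works because the multiplier is applied to $g$ before the weight, so we never need to commute $\widetilde m_j$ with $\jap{x}$.

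Finally, \eqref{eq:ILED1} follows by the standard $TT^*$/duality argument. Pair $\int_0^T e^{-is\jap D}\sqrt{|D|/\jap D}\,F\,\ud s$ with $g\in L^2$, move the operator onto $g$, and apply Cauchy--Schwarz with the weights $\jap{x}^{\sigma}$ and $\jap{x}^{-\sigma}$. Then \eqref{eq:SM1} yields the bound $C_\sigma\|\jap x^\sigma F\|_{L^2_{t,x}}$, uniformly in $T$. The main (mild) obstacle is the careful bookkeeping in the change of variables and in the angular decomposition; everything else is routine.
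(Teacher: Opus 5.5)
Your proposal is correct and follows essentially the paper's argument: \eqref{eq:SM0} by the same change of variables $\xi_1\mapsto\jap{\xi}$ (Jacobian $|\xi_1|/\jap{\xi}$) together with Plancherel in $(x_2,t)$, followed by the weight $\jap{x_1}^{-\sigma}\in L^2_{x_1}$ and duality. The differences are minor. You derive \eqref{eq:SM1} first, by Cauchy--Schwarz in $x_1$, and then dualize to get \eqref{eq:ILED1}, whereas the paper dualizes first. You also make explicit two points the paper leaves implicit: the split into $\xi_1>0$ and $\xi_1<0$ (the map is two-to-one), and the angular partition of unity that combines the $|\partial_1|$ and $|\partial_2|$ bounds into one for $|D|$, which the paper covers only by ``switching $x_1$ and $x_2$''.
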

\begin{proof}
    The map $\Phi(\xi)=(\xi_2,\jap{\xi})=:\eta$ has Jacobian $J=|\xi_1|\jap{\xi}^{-1}$. 
    We can therefore bound the left-hand side of~\eqref{eq:SM0} 
    \[
    \Big\| \int e^{i(x_1\xi_1+x_2\xi_2)} e^{it\jap{\xi}}  g(\xi_1,\xi_2)\,\sqrt{J}\,  d\xi_1 d\xi_2\Big\|_{L^\infty_{x_1} L^2_{x_2}L^2_t}\lesssim \| g\|_2 
    \]
     by switching to $\eta$-coordinates, applying Plancherel relative to $(x_2,t)$ and~$\eta$, and then switching back to $\xi$. The third bound~\eqref{eq:ILED1} follows by dualizing~\eqref{eq:SM0} and then bounding $L^1_{x_1}$ by a $\jap{x_1}^{-\sigma}L^2_{x_1}$. Switching $x_1$ and~$x_2$ then finishes the proof. The second inequality~\eqref{eq:SM1} follows from~\eqref{eq:ILED1} by duality. 
\end{proof}

\begin{lemma}
    \label{lem:ILED2}
    We have for any $\sigma>1$
    \begin{equation}\label{eq:LSM0}
\big\| \jap{x}^{-\sigma} e^{it\jap{D}} f\big\|_{L_{t,x}^2([0,T]\times \R^2)} \leq C_\sigma  \|f\|_2
    \end{equation}
    or, dually,
    \begin{equation}\label{eq:LSM0*}
\big\| \int_0^T e^{-it\jap{D}} F(t,\cdot)\, dt\big\|_{L^2(\R^2)} \leq C_\sigma  \|\jap{x}^{\sigma}F\|_{L^2_{t,x}([0,T]\times \R^2)}
    \end{equation}
     uniformly in $T\ge0$. 
\end{lemma}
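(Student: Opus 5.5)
The dual estimate \eqref{eq:LSM0*} is equivalent to \eqref{eq:LSM0} by the usual $TT^*$-type duality. Pairing $\int_0^T e^{-it\jap{D}}F(t)\,dt$ against $f\in L^2$ gives $\int_0^T\langle F(t), e^{it\jap{D}}f\rangle\,dt$, and Cauchy--Schwarz with the weights $\jap{x}^{\sigma}$ and $\jap{x}^{-\sigma}$ reduces it to \eqref{eq:LSM0}. So I will only prove \eqref{eq:LSM0}, and I will prove it with $[0,T]$ replaced by $\mathbb{R}$, which gives uniformity in $T$ for free.

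I split $f$ in frequency as $f=\chi_{\le1}(|D|)f+\chi_{\ge1}(|D|)f$.

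\emph{High frequencies.} On the support of $\chi_{\ge1}(|D|)$ the symbol $\sqrt{|D|/\jap{D}}$ is bounded above and below. I therefore write $\chi_{\ge1}(|D|)f=\sqrt{|D|/\jap{D}}\,g$ with $\|g\|_2\lesssim\|f\|_2$. Then \eqref{eq:SM1} from Lemma~\ref{lem:A2} bounds this piece with weight $\jap{x}^{-\sigma'}$ for any $\sigma'>\tfrac12$, and in particular for $\sigma>1$.

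\emph{Low frequencies.} Here $\sqrt{|D|/\jap{D}}$ degenerates like $|\xi|^{1/2}$, and the extra half power of weight has to compensate. This is the main obstacle. I would prove the bound directly by Plancherel in time. Pass to polar coordinates $\xi=\rho\omega$ and change variables $\lambda=\jap{\rho}$, so that $d\lambda=\rho\jap{\rho}^{-1}d\rho$. Plancherel in $t$ then gives
\[
\int_{\mathbb{R}}|e^{it\jap{D}}f(x)|^2\,dt = 2\pi\int_0^\infty \Big|\int_{S^1} e^{i\rho x\cdot\omega}\hat f(\rho\omega)\,d\omega\Big|^2 \frac{\rho^2\jap{\rho}}{\rho}\,d\rho ,
\]
up to harmless constants. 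Call the angular integral $A(\rho,x)$; it is the trace of $\hat f$ on the circle of radius $\rho$, paired with $e^{i\rho x\cdot\omega}$. By Cauchy--Schwarz on $S^1$, $|A(\rho,x)|^2\lesssim\int_{S^1}|\hat f(\rho\omega)|^2\,d\omega$ uniformly in $x$. Integrating in $x$ against $\jap{x}^{-2\sigma}$, which is integrable on $\mathbb{R}^2$ precisely because $\sigma>1$, yields for the low-frequency piece
\[
\lesssim \int_0^2\int_{S^1}|\hat f(\rho\omega)|^2\,\rho\jap{\rho}\,d\omega\,d\rho \lesssim \|f\|_2^2 .
\]
Here the Jacobian factor $\rho\jap{\rho}$ exactly reproduces the measure $\rho\,d\rho$, so no singularity appears at $\rho=0$.

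In fact this computation works for all $\rho$, since $\jap{\rho}$ is only harmless for bounded $\rho$. If the high-frequency Jacobian growth is a concern, I use the split above and handle large $\rho$ via Lemma~\ref{lem:A2}. The only step needing care is the justification of Plancherel after the change of variables $\rho\mapsto\jap{\rho}$, which is monotone on $(0,\infty)$. I would carry it out for Schwartz $f$ and conclude by density.
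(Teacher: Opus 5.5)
Your proof is correct, but it takes a different route from the paper's. The paper does not split frequencies. It localizes to balls with $\chi(\cdot/R)$ and takes Plancherel in $(t,x)$, so that the time-frequency side is the measure $\delta(\tau-\langle\eta\rangle)$, regularized as $\epsilon^{-1}\chi_0((\tau-\langle\eta\rangle)/\epsilon)$. It then bounds the kernel $R^2\widehat\chi(R(\xi-\eta))\,\epsilon^{-1}\chi_0((\tau-\langle\eta\rangle)/\epsilon)$ by Schur's test, which gives $\|\chi(\cdot/R)e^{it\langle D\rangle}f\|_{L^2_{t,x}}\lesssim R\|f\|_2$ uniformly over all frequencies. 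Summing over $R=2^j$ against $2^{-j\sigma}$ converges exactly when $\sigma>1$.

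You instead reuse Lemma~\ref{lem:A2} for $|\xi|\gtrsim1$, where only $\sigma>\frac12$ is needed. At low frequency you carry out the time Plancherel explicitly in polar coordinates, bound the angular integral uniformly in $x$, and use $\langle x\rangle^{-\sigma}\in L^2(\mathbb{R}^2)$. This makes transparent that the threshold $\sigma>1$ comes only from the bottom of the spectrum, and it avoids Schur's test and the dyadic summation. The paper's argument, by contrast, is self-contained (it does not rely on Lemma~\ref{lem:A2}) and treats all frequencies in one stroke.

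One correction to your write-up: the remark that the polar computation ``works for all $\rho$'' is wrong as an estimate. The uniform bound on the angular integral leaves the factor $\langle\rho\rangle$, so applied to all frequencies it only controls $\|\langle D\rangle^{1/2}f\|_2$. The frequency split is therefore necessary, not an optional safeguard. Since you do carry it out, the proof stands.
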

\begin{proof}
Let $\chi$ be a radial bump which equals~$1$ on the unit ball, and let $\chi_0\ge0$ be a bump function on the line which equals~$1$ near~$0$.  Then for any $R\ge1$, 
\begin{align*}
    \Big\| \chi(\cdot/R) e^{it\jap{D}} f\Big\|_{L_{t,x}^2(\R\times\R^2)} &=  \Big\| \int_{\R^2} R^2 \widehat\chi(R(\xi-\eta)) e^{it\jap{\eta}} \hat{f}(\eta)\,d\eta \Big\|_{L^2_tL_\xi^2(\R\times\R^2)}\\
    &\lesssim \limsup_{\eps\to0} \Big\| \int_{\R^2} R^2 \widehat\chi(R(\xi-\eta)) \eps^{-1}\chi_0((\tau-\jap{\eta})/\eps) \hat{f}(\eta)\,d\eta \Big\|_{L^2_\tau L_\xi^2(\R\times \R^2)} \\
    &\lesssim R \|f\|_2.
\end{align*}
The final bound follows from Schur's test via the obvious 
\[
\sup_{\eta,\eps} \int R^2 |\widehat\chi(R(\xi-\eta)) |\eps^{-1}\chi_0((\tau-\jap{\eta})/\eps)\, d\tau \,d\xi \lesssim 1
\]
and the less obvious 
\[
\sup_{\xi,\tau,\eps} \int R^2 |\widehat\chi(R(\xi-\eta)) |\eps^{-1}\chi_0((\tau-\jap{\eta})/\eps)\, d\eta \lesssim R^2
\]
To see this, note that the worst case occurs for $\tau=1$ in which case the $\xi$ plays no role.  The lemma follows by summing over dyadic scales. 
\end{proof}

\begin{cor}
For $\sigma>\frac12$
    \begin{equation}\label{eq:LSM0**1}
\big\| \jap{x}^{-\sigma}\int_0^T e^{i(t-s)\jap{D}} {\partial_j}\jD^{-1} F(s,\cdot)\, ds\big\|_{L^2_{t,x}([0,T]\times \R^2)} \leq C_\sigma  \|\jap{x}^{\sigma}F\|_{L^2_{s,x}([0,T]\times \R^2)}
    \end{equation}
    and
      \begin{equation}\label{eq:LSM0**2}
\big\| \jap{x}^{-\sigma}\int_0^T e^{i(t-s)\jap{D}} {\partial_j}\jD^{-1} F(s,\cdot)\, ds\big\|_{L^2_{t,x}([0,T]\times \R^2)} \leq C_\sigma  \|F\|_{L^1_s([0,T], L^2_x(\R^2))}
    \end{equation}
     uniformly in $T\ge0$. Furthermore, we can replace $T$ by~$t$ as upper limit in the integrals on the left-hand side. 
\end{cor}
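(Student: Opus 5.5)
The plan is to derive both bounds from the flat estimates already available in the appendix: the smoothing estimate \eqref{eq:SM0} (in its dual form \eqref{eq:ILED1}) and the local smoothing bound \eqref{eq:LSM0}. The first inequality is a Christ--Kiselev-free $TT^*$ argument, while the second follows from Minkowski in $s$. In both, the key observation is the factorization of the symbol as
\[
\frac{\xi_j}{\jap{\xi}}=\sqrt{\frac{|\xi|}{\jap{\xi}}}\cdot\sqrt{\frac{|\xi|}{\jap{\xi}}}\cdot\frac{\xi_j}{|\xi|},
\]
where the Riesz factor $\xi_j/|\xi|$ is bounded on $L^2$ and commutes with $e^{it\jap{D}}$.

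For \eqref{eq:LSM0**1}, I first take the full integral $\int_0^T$. I write $e^{i(t-s)\jap D}=e^{it\jap D}e^{-is\jap D}$ and place one factor $\sqrt{|D|/\jap D}$ on each side. The inner quantity is $G:=\int_0^T e^{-is\jap D}\sqrt{|D|/\jap D}\,F(s)\,ds$, and by \eqref{eq:ILED1} it satisfies $\|G\|_{L^2}\lesssim\|\jap x^\sigma F\|_{L^2_{s,x}}$. I then apply $\partial_j/|D|$, which is $L^2$ bounded. Finally, \eqref{eq:SM1} bounds $\|\jap x^{-\sigma}e^{it\jap D}\sqrt{|D|/\jap D}\,\cdot\|_{L^2_{t,x}}$ by $\|\cdot\|_{L^2}$, which closes this case for $\sigma>\frac12$.

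For \eqref{eq:LSM0**2}, the estimate \eqref{eq:SM1} alone requires only $\sigma>\frac12$ on the left side. I apply Minkowski in $s$ to get
\[
\Big\|\jap x^{-\sigma}\int_0^T e^{i(t-s)\jap D}\partial_j\jap D^{-1}F(s)\,ds\Big\|_{L^2_{t,x}}\le\int_0^T\big\|\jap x^{-\sigma}e^{it\jap D}\sqrt{|D|/\jap D}\,\big(e^{-is\jap D}\sqrt{|D|/\jap D}\,(\partial_j/|D|)F(s)\big)\big\|_{L^2_{t,x}}\,ds.
\]
By \eqref{eq:SM1} and unitarity, each integrand is bounded by $\|F(s)\|_{L^2}$, which yields \eqref{eq:LSM0**2}.

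The main obstacle is replacing the upper limit $T$ by $t$. For \eqref{eq:LSM0**2} this is harmless: Minkowski works with the truncation $\mathbf 1_{s<t}$ inside, because for fixed $s$ the $L^2_{t,x}$ norm over $\{t>s\}$ is dominated by the norm over all $t$. For \eqref{eq:LSM0**1} I would avoid Christ--Kiselev, which fails at the $L^2\to L^2$ endpoint, and instead use the Fourier-in-time argument from the proof of Lemma~\ref{lem:ILEDDuhamel}. I would extend $F$ by zero, insert the damping factor $e^{-\epsilon (t-s)}$, and take the Fourier transform in $t$. This reduces the retarded operator to the resolvent $\partial_j\jap D^{-1}(\jap D-\tau-i\epsilon)^{-1}$, and I need a weighted bound on it that is uniform in $\tau$ and $\epsilon$. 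The principal-value part is given by the untruncated estimate already proved. The delta part, $\pi\delta(\jap D-\tau)$, is the $TT^*$ of the trace map, and that too is controlled by \eqref{eq:SM0} via the coarea change of variables $\eta=(\xi_2,\jap\xi)$. I expect verifying this uniform resolvent bound, especially near $|\tau|\approx 1$ where the symbol degenerates, to be the main technical step. There, however, the factor $|\xi|/\jap\xi$ vanishes and compensates, which is precisely why $\sigma>\frac12$ suffices.
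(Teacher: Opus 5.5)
Everything except the retarded version of \eqref{eq:LSM0**1} is correct and matches the paper's proof. Splitting $\xi_j/\jap{\xi}$ and composing \eqref{eq:SM1} with \eqref{eq:ILED1} gives the untruncated \eqref{eq:LSM0**1}. Applying \eqref{eq:SM1} with Minkowski in $s$ gives \eqref{eq:LSM0**2} with either upper limit. For the retarded \eqref{eq:LSM0**1} you also choose the paper's framework: extend by zero, damp, Fourier transform in $t$, and reduce to a weighted resolvent bound uniform in $\tau,\epsilon$. However, your way of obtaining that bound fails. After the time Fourier transform the multiplier is
\[
\frac{i}{\jap{D}-\tau+i0}=i\,\mathrm{p.v.}\frac{1}{\jap{D}-\tau}+\pi\,\delta(\jap{D}-\tau),
\]
and the untruncated operator $\int_{\R}e^{i(t-s)\jap{D}}\partial_j\jap{D}^{-1}F(s)\,ds$ has multiplier $2\pi\delta(\jap{D}-\tau)$. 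So the untruncated estimate, the $TT^*$ of the trace map, and the bound via \eqref{eq:SM0} are one and the same input, and they control only the $\delta$-part. The principal-value part corresponds to the kernel $\tfrac12\,\mathrm{sgn}(t-s)\,e^{i(t-s)\jap{D}}$ and is not implied by the untruncated bound. This is exactly the obstruction behind the failure of Christ--Kiselev at the $L^2_t\to L^2_t$ endpoint. Hence your step ``the principal-value part is given by the untruncated estimate already proved'' is false. The bound
\[
\sup_{\tau\in\R,\ \epsilon>0}\bigl\|\jap{x}^{-\sigma}\partial_j\jap{D}^{-1}(\jap{D}-\tau-i\epsilon)^{-1}\jap{x}^{-\sigma}\bigr\|_{L^2\to L^2}<\infty
\]
is therefore left unproved, and your sketch gives no argument for the only genuinely new term.

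The missing ingredient is a true limiting absorption principle for the free operator in $\jap{x}^{\mp\sigma}L^2$, $\sigma>\frac12$, uniform up to the threshold. The paper does not derive it from Lemma~\ref{lem:A2}. Instead it invokes the weighted bound for $\jap{x}^{-\sigma}D\jap{D}^{-1}(-\Delta+1-(\tau+i\epsilon)^2)^{-1}$ from \cite[Chapter XIII.8, Lemma 4]{RSIV}. For a self-contained treatment of the principal-value part, the standard route is Agmon's. It needs H\"older continuity in the level $\tau$ of the trace of $\widehat{\jap{x}^{-\sigma}g}$ on $\{\jap{\xi}=\tau\}$, which is strictly more than the boundedness of the trace given by \eqref{eq:SM0}, plus a separate treatment of $|\jap{\xi}-\tau|\gtrsim1$. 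Alternatively, you can use explicit Hankel-kernel bounds for the free resolvent. Your threshold heuristic is right, and it is where $\partial_j$ matters. At $\tau=1$, near $\xi=0$ the multiplier behaves like $2i\xi_j/|\xi|^2$, a multiple of the symbol of $\partial_j(-\Delta)^{-1}$. That operator maps $\jap{x}^{-\sigma}L^2\to\jap{x}^{\sigma}L^2$ for $\sigma>\frac12$ by Hardy--Littlewood--Sobolev. Without the derivative, the two-dimensional resolvent diverges logarithmically there.
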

\begin{proof}
    The first bound follows by composing the two bounds of Lemma~\ref{lem:A2}: first~\eqref{eq:SM1} and then~\eqref{eq:ILED1}. The second one follows from~\eqref{eq:SM1} and Minkowski. The claim about $T\to t$ in~\eqref{eq:LSM0**2} is obvious. For~\eqref{eq:LSM0**1} by the same argument as in the proof of Lemma~\ref{lem:ILEDDuhamel}, the desired estimate follows from the uniform in $\tau\in\bbR$ and $\epsilon>0$ limiting absorption bound
    \begin{equation}
        \|\jap{x}^{-\sigma}D\jap{D}^{-1}\big(-\Delta+1-(\tau+i\epsilon)^2\big)^{-1}g\|_{L^2_x(\bbR^2)}\lesssim \|\jap{x}^\sigma g\|_{L^2_x(\bbR^2)}.
    \end{equation}
    A proof of this estimate can be found for instance in \cite[Chapter XIII.8, Lemma 4]{RSIV}.
\end{proof}

Finally, we establish pointwise decay. 

\begin{lemma}  \label{lem:decay}
    For every $0 < \delta \ll 1$ there exists $C_\delta \geq 1$ such that for any Schwartz function $g \in \calS(\bbR^2)$ and for any $t \in \bbR$,
    \begin{equation}
        \bigl\| e^{it\jD} g \bigr\|_{L^\infty_x(\bbR^2)} \leq \frac{C_
        \delta}{\jt^{1-\delta}} \Bigl( \bigl\| \jxi^2 \nabla_\xi \widehat{g}(\xi) \bigr\|_{L^2_\xi(\bbR^2)} + \bigl\| \jxi^2 \widehat{g}(\xi) \bigr\|_{L^2_\xi(\bbR^2)} \Bigr).
    \end{equation}    
\end{lemma}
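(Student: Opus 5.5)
The idea is to write $e^{it\jD}g$ as an oscillatory integral, $e^{it\jD}g(x)=(2\pi)^{-2}\int e^{i(x\cdot\xi+t\jxi)}\widehat g(\xi)\,\ud\xi$, and trade one $\xi$-derivative of $\widehat g$ for a factor $t^{-1}$. Since only one derivative in $L^2$ is available, the gain is $t^{-1+\delta}$ rather than $t^{-1}$. For $|t|\lesssim1$ the bound is immediate: $\|\widehat g\|_{L^1}\lesssim\|\jxi^2\widehat g\|_{L^2}$ by Cauchy--Schwarz. So assume $t\ge1$.

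First I would perform a Littlewood--Paley decomposition $\widehat g=\sum_{j\ge0}\fy_j(|\xi|)\widehat g$. The weight $\jxi^2$ in the norm gives a factor $2^{-2j}$ of room on the $j$-th piece. That is more than enough to absorb polynomial losses $2^{Cj}$ with $C<2$, and summing over $j$ costs nothing.

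Fix $x$ and $j$. The phase $\Phi(\xi)=x\cdot\xi+t\jxi$ has gradient $\nabla\Phi=x+t\xi/\jxi$, which vanishes only at $\xi_0$ with $\xi_0/\jap{\xi_0}=-x/t$. I would split the $j$-th piece with a cutoff $\chi(|\xi-\xi_0|/\rho)$ at a radius $\rho$ to be chosen.

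On the ball $|\xi-\xi_0|\le\rho$, use Cauchy--Schwarz together with a weighted Hardy/Sobolev-type bound. In two dimensions $\dot H^1$ fails to embed into $L^\infty$, but the Sobolev embedding $\dot H^1\subset L^{p}$ for all $p<\infty$ gives $\int_{|\xi-\xi_0|<\rho}|\widehat g|\lesssim\rho^{2-2/p}\|\widehat g\|_{L^p}$. Here $\|\widehat g\|_{L^p}$ is controlled, for the localized piece, by $2^{Cj}$ times the norm on the right-hand side of the lemma.

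Off the ball, integrate by parts once with the vector field $\nabla\Phi\cdot\nabla/|\nabla\Phi|^2$. The Hessian of $t\jxi$ is nondegenerate with eigenvalues $\simeq t\,2^{-3j}$, so $|\nabla\Phi|\gtrsim t\,2^{-3j}|\xi-\xi_0|$. The derivative then falls either on $\widehat g$, contributing $\frac{1}{t}\|\nabla\widehat g\|_{L^2}\,\|2^{3j}|\xi-\xi_0|^{-1}\|_{L^2(|\xi-\xi_0|>\rho,\,|\xi|\sim2^j)}\lesssim t^{-1}2^{Cj}\jap{\log\rho}^{1/2}$, or on $1/|\nabla\Phi|$ and the cutoffs. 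Those terms are bounded similarly by $t^{-1}2^{Cj}\rho^{-1}\int|\widehat g|\,|\xi-\xi_0|^{-1}$, which is handled via $L^p$ in the same way.

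Choosing $\rho=t^{-1}$ balances the pieces: the inner region gives $t^{-2+2/p}\cdot$ (at most a $\rho^{-1}$ correction), and the outer region gives $t^{-1}\log t$. Both are $\lesssim t^{-1+\delta}$ once $p$ is large depending on $\delta$. The main obstacle is the logarithmic failure of the $L^2$ endpoint: $|\xi-\xi_0|^{-1}$ is not in $L^2$, and $\nabla\widehat g\in L^2$ does not give $\widehat g\in L^\infty$. I would absorb this by interpolating through $L^p$, $p<\infty$, which is exactly where the $\delta$-loss enters. I would also double-check that the exponent $C$ in the $2^{Cj}$ losses stays below the available $2$. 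If it does not, I would first use Bernstein-type bounds to replace $2^{3j}$ by the scale $|\xi_0|$, and apply the argument only to $j$ with $2^j\sim\jap{\xi_0}$; for all other $j$ the phase is non-stationary, and two integrations by parts give the gain easily.
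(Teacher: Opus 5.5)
There is a genuine gap in the summation over $j$, and it comes from treating the phase isotropically. The Hessian of $\langle\xi\rangle$ has eigenvalue $\langle\xi\rangle^{-3}$ in the radial direction and $\langle\xi\rangle^{-1}$ in the angular one. So $|\det\nabla_\xi^2\langle\xi\rangle|^{-1/2}\simeq 2^{2j}$ on $|\xi|\sim 2^j$, and this natural loss per dyadic block already uses up the whole weight $\langle\xi\rangle^2$; there is no room for anything worse.

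Your bound $|\nabla\Phi|\gtrsim t2^{-3j}|\xi-\xi_0|$ charges the small radial eigenvalue in every direction. By your own formula, the term where the derivative falls on $\widehat g_j$ is of size $t^{-1}2^{3j}\log^{1/2}(2^j/\rho)\,\|\nabla\widehat g_j\|_{L^2}$. The norm only gives $\|\widehat g_j\|_{H^1}\lesssim 2^{-2j}N$, with $N$ the right-hand side. So the best your estimates give for block $j$ is $t^{-1}2^{j}\log^{1/2}(2^jt)\,N$, which is not summable. Interpolating with the trivial bound $\|\widehat g_j\|_{L^1}\lesssim 2^{-j}N$ yields only $t^{-1/2}$.

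The term where the derivative hits $1/|\nabla\Phi|$ is worse, because $\Delta\Phi\simeq t2^{-j}$ sees the large eigenvalue. That term is of size $t^{-1}2^{5j}\int_{|\xi-\xi_0|>\rho}|\widehat g_j|\,|\xi-\xi_0|^{-2}$. In the form you wrote it, $t^{-1}\rho^{-1}\int|\widehat g_j|\,|\xi-\xi_0|^{-1}$ with $\rho=t^{-1}$, it has no decay in $t$ at all, since the $L^p$ bound for the integral does not improve as $\rho$ shrinks.

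Your fallback does not touch the cause: at the stationary scale $2^j\sim\langle\xi_0\rangle$ one has $|\xi_0|^3\sim 2^{3j}$, so nothing changes. Two integrations by parts on the remaining blocks are not available, since only one $\xi$-derivative of $\widehat g$ is controlled. These blocks are also not uniformly non-stationary: near the light cone $|x|\approx t$, for $\xi$ pointing along $-x$, one only has $|\nabla\Phi|\gtrsim t2^{-2j}$, so one crude integration by parts again loses $2^{3j}$. (Also, $\dot H^1(\bbR^2)$ embeds in no $L^p$; you need the inhomogeneous embedding $H^1\subset L^p$, which is harmless here.)

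The paper avoids all of this by never putting the rough function $\widehat g$ into a stationary-phase argument. It uses the $L^1\to L^\infty$ dispersive estimate for $e^{it\langle D\rangle}$ on $\bbR^2$; this is stationary phase on the smooth kernel, losing exactly two derivatives. It then interpolates with unitarity on $L^2$ and applies Sobolev embedding to get $\|e^{it\langle D\rangle}g\|_{L^\infty}\lesssim t^{-1+\delta}\|g\|_{W^{2,q}}$ for some $q=q(\delta)\in(1,2)$. Finally it uses H\"older: $\|g\|_{W^{2,q}}\lesssim\|\langle x\rangle g\|_{H^2}$, since $\langle x\rangle^{-1}\in L^m$ with $\frac1q=\frac12+\frac1m$, $m>2$. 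By Plancherel, using $\widehat{xg}=i\nabla_\xi\widehat g$, the last norm is controlled by the right-hand side. The limited regularity of $\widehat g$ therefore enters only through the physical-space decay of $g$ and H\"older, never through integration by parts.

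If you want to stay on the frequency side, you would need at least three things: an anisotropic decomposition around $\xi_0$ (radial scale $t^{-1/2}\langle\xi_0\rangle^{3/2}$, angular scale $t^{-1/2}\langle\xi_0\rangle^{1/2}$), integrations by parts in the two directions separately, and a separate treatment of the region near the light cone. That amounts to reproving the dispersive estimate with a rough amplitude.
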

\begin{proof}
    This is standard. We concentrate on $|t|\geq1$. By Corollary~2.38 in~\cite{NSbook}, we conclude that 
    \[
    \bigl\| e^{it\jD} g \bigr\|_{L^\infty_x(\bbR^2)} \leq  Ct^{-1}\|g\|_{B^2_{1,\infty}(\R^2)}
    \]
    where the norm on the right-hand side is from the standard Besov space. Then it follows by interpolation with unitarity on $L^2$ and Sobolev embedding that for some $q=q(\delta)>1$
    \[
    \bigl\| e^{it\jD} g \bigr\|_{L^\infty_x(\bbR^2)} \leq  C_\delta t^{-1+\delta}\|g\|_{W^{2,q}(\R^2)}\leq  C_\delta t^{-1+\delta}\|\jap{r}g\|_{W^{2,2}(\R^2)}
    \]
    as claimed. 
\end{proof}

The following three lemmas present results from pseudo-differential calculus. In fact, with $\bfL_\nr$ defined in~\eqref{eq:Lnr}, we claim that for $\bmu(t,r,\theta)=e^{i\theta}\bmv(t,r)$ (see Lemmas~\ref{lem:LnroverD1},~\ref{lem:Lnrxcomm1},~\ref{lem:LnrOmegacomm1} for the precise statements),
\begin{itemize}
\item $\|\jap{D}^{-1}\bfL_\nr^{\frac{1}{2}}\bmu\|_{L^\infty_x(\bbR^2)}\lesssim \|\bmu\|_{L^\infty_x(\bbR^2)}$.
    \item $\|\jap{x}^{\gamma}\bfL_\nr^{\frac{k}{2}}  \bmu\|_{L^2_{x}(\bbR^2)}\lesssim \|\jap{x}^{\gamma} \bmu\|_{H^k_x(\bbR^2)}$, $\gamma\in\bbR$.
    \item $\bigr\| \bigl[ \bfL_{\mathrm{nr}}^{\frac12}, \Omega \bigr] \bmu(t,x) \bigr\|_{L^2_x(\bbR^2)} \lesssim \|\pt \bmu(t)\|_{L^2_x(\bbR^2)} + t \, \bigl\| \jx^{-2} \bmu(t) \bigr\|_{L^2_x(\bbR^2)}$.
\end{itemize}
Here $\bfL_\nr^{\frac{1}{2}}$ is defined via the functional calculus as the positive root of a positive operator. Indeed the assumption $\bmu=e^{i\theta}\bmv$ is only to guarantee this positivity, which in this case follows from that of $\bfL$. Alternatively we could have restricted to the orthogonal subspace to any potential negative eigenvalue of $\bfL_{\nr}$ which would have to be purely radial. The $\Psi$DO interpretation of these powers goes back to Seeley's 1967 theorem, albeit on compact manifolds. Here we will rely on a more recent reference~\cite{MSS}, which applies to our setting (at least for the scalar case). For the sake of simplicity, we establish these three estimates for a scalar operator and simply note that the matrix case is exactly analogous thanks to~\cite{MSS}. Thus, let
\[
    H=-\Delta+1+V
    \quad \text{on } \R^2,
\]
with $H>0$. Assume that $V$ is smooth and radial and that, for large $r=|x|$,
\[
    V(r)=-r^{-2}+V_1(r),
\]
where $V_1$ decays exponentially together with all derivatives (this choice is precisely the potential in the upper left-hand corner of $\bfL_\nr$). Then $V$ is a classical SG-symbol of order $(0,-2)$, viz. 
\[
    V\in S_{\mathrm{cl}}^{0,-2}.
\]
Consequently,
\[
    h(x,\xi)=\la \xi\ra^2+V(x)
    \in S_{\mathrm{cl}}^{2,0}.
\]
Here $S^{\mu,m}$ denotes the SG-symbol class with $\xi$-order $\mu$ and $x$-order $m$, i.e., 
\[
    |\partial_\xi^\alpha\partial_x^\beta a(x,\xi)|
    \lesssim_{\alpha,\beta}
    \la \xi\ra^{\mu-|\alpha|}
    \la x\ra^{m-|\beta|}.
\]
By the complex-power theorem of 
\cite[Theorem 3.2 and Proposition 3.1]{MSS}, applied to the
parameter-elliptic classical SG-operator $H$, the spectral square root
$H^{1/2}$ is an SG-pseudodifferential operator
\[
    H^{1/2}\in \Op S_{\mathrm{cl}}^{1,0}.
\]
Moreover, comparing \(H\) with the free operator
\[
    H_0=-\Delta+1=\la D\ra^2,
\]
one obtains the sharper expansion
\[
    H^{1/2}-\la D\ra
    \in \Op S_{\mathrm{cl}}^{-1,-2}.
\]
Equivalently, there exists
\begin{equation}\label{eq:AOp}
    A\in \Op S_{\mathrm{cl}}^{-1,-2}
\end{equation}
such that
\[
    H^{1/2}=\la D\ra+A.
\]
The estimate
\[
    \la D\ra^{-1}H^{1/2}=I+\Psi_{1,0}^{-2}
\]
then follows immediately.  Indeed,
\[
    \la D\ra^{-1}H^{1/2}
    =
    I+\la D\ra^{-1}A,
\]
and since
\[
    \la D\ra^{-1}\in \Op S_{\mathrm{cl}}^{-1,0},
    \qquad
    A\in \Op S_{\mathrm{cl}}^{-1,-2},
\]
the SG composition theorem gives
\[
    \la D\ra^{-1}A\in \Op S_{\mathrm{cl}}^{-2,-2}.
\]
In particular,
\[
    S_{\mathrm{cl}}^{-2,-2}\subset S_{1,0}^{-2},
\]
so
\[
    \la D\ra^{-1}H^{1/2}=I+\Psi_{1,0}^{-2}.
\]

\begin{lem}\label{lem:LnroverD1}
    With $\bfL_\nr$ defined as in \eqref{eq:Lnr}, and with $\bmu(r,\theta)=e^{i\theta}\bmv(r)$,
    \begin{equation}
        \|\jap{D}^{-1}\bfL_\nr^{\frac{1}{2}}\bmu\|_{L^\infty_x(\bbR^2)}\lesssim \|\bmu\|_{L^\infty_x(\bbR^2)}.
    \end{equation}
\end{lem}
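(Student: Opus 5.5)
The natural route is the SG-calculus decomposition recorded just before the statement, applied entrywise to the matrix operator $\bfL_\nr$ (the paper notes that \cite{MSS} covers the matrix case as well). On the class of functions $\bmu=e^{i\theta}\bmv$, positivity of $\bfL$ makes $\bfL_\nr^{1/2}$ the positive square root. We may therefore write
\[
\jap{D}^{-1}\bfL_\nr^{\frac12}=I+B,\qquad B:=\jap{D}^{-1}A\in \Op S^{-2,-2}_{\mathrm{cl}},
\]
where $A$ is as in \eqref{eq:AOp}. Since the identity is trivially bounded on $L^\infty_x$, the statement reduces to showing
\[
\|B\bmu\|_{L^\infty_x(\bbR^2)}\lesssim \|\bmu\|_{L^\infty_x(\bbR^2)}.
\]

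For this we control the Schwartz kernel $K_B(x,y)=(2\pi)^{-2}\int e^{i(x-y)\cdot\xi}b(x,\xi)\,\ud\xi$. The symbol satisfies $b\in S^{-2,-2}$, and order $-2$ in $\xi$ on $\bbR^2$ is the borderline case.

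\textbf{Near the diagonal.} We split $b$ with a Littlewood--Paley decomposition in $\xi$. Each dyadic piece at frequency $2^j$ has kernel bounded by $2^{2j}2^{-2j}\jap{x}^{-2}\,(1+2^j|x-y|)^{-N}$. Summing over $j$ gives, for $|x-y|\le1$,
\[
|K_B(x,y)|\lesssim \jap{x}^{-2}\jap{\log|x-y|}.
\]
This singularity is locally integrable in $y$.

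\textbf{Off the diagonal.} For $|x-y|\ge1$ we integrate by parts in $\xi$ repeatedly. Each $\partial_\xi$ gains a factor $\jap{\xi}^{-1}$, so the kernel decays like $|x-y|^{-N}$ for every $N$.

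Combining the two regions yields the Schur-type bound
\[
\sup_x\int|K_B(x,y)|\,\ud y<\infty,
\]
and hence $L^\infty$ boundedness of $B$.

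The main obstacle is justifying that $\bfL_\nr^{1/2}$, which is defined spectrally, coincides with the SG pseudodifferential operator when restricted to the functions $e^{i\theta}\bmv$. This matters because $\bfL_\nr$ itself need not be positive on all of $L^2$. There are two ways to handle it. The first is to replace $\bfL_\nr$ by $\bfL_\nr+c\bfP$, where $\bfP$ is the projection onto any negative eigenfunctions; this modification is a smoothing operator with rapidly decaying kernel. The second is to note that any such eigenfunctions are radial, so the degree-one sector is invariant under the square root. Either way the parametrix is changed only by an operator with smooth, rapidly decaying kernel, and such an operator is trivially bounded on $L^\infty$. A further point is that $L^\infty$ is not dense in a useful sense. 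We therefore prove the kernel bound first on Schwartz functions and then extend the estimate by a weak-$*$ and duality argument using the $L^1$ boundedness of $B^*$.
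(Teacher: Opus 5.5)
Your proposal is correct and follows essentially the same route as the paper's proof: both write $\jap{D}^{-1}\bfL_\nr^{1/2}=I+\mathrm{Op}(b)$ with $b$ of $\xi$-order $-2$, decompose the kernel dyadically in $\xi$, integrate by parts, and conclude by Schur's test. The only difference is that the paper bounds each dyadic kernel by $\jap{2^j|x-y|}^{-3}$ and sums the $L^1_y$ norms $\lesssim 2^{-2j}$ directly, which uses only the H\"ormander class $S^{-2}_{1,0}$ and not your extra $\jap{x}^{-2}$ decay; you instead sum the kernels pointwise and absorb the logarithmic diagonal singularity, and your remarks on positivity and on extending from Schwartz functions make explicit points the paper leaves implicit.
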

\begin{proof}
We prove this for $H$ rather than $\bfL_\nr$. 
    We write 
    \[
    \jap{D}^{-1}H^{\frac{1}{2}} = \Id+ \mathrm{Op}(b)
    \]
    where $b\in S^{-2}_{1,0}(\R^2)$. In fact, the principal symbol of $b$ is
    \[
    b(x,\xi)= \frac{V(x)}{2\jap{\xi}^2} + \text{lower order}
    \]
    This follows from~\cite[Theorem 3.2]{MSS}.  Next, we write
    \[
    (\mathrm{Op}(b) f)(x) = \int_{\R^2} K(x,y) f(y)\, dy
    \]
    with kernel
    \[
    K(x,y) = \int_{\R^2} e^{i(x-y)\xi} \; b(x,\xi)\, d\xi
    \]
    We decompose dyadically $K=\sum_{j=0}^\infty K_j$, where for $j\ge1$
    \[
    K_j(x,y) = \int_{\R^2} e^{i(x-y)\xi} \; b(x,\xi)\chi(2^{-j}\xi) \, d\xi
    \]
    with a cutoff $\chi(\xi)$ to the region $|\xi|\simeq 1$.  By integration by parts,
    \[
    |K_j(x,y)|\le C\jap{2^j|x-y|}^{-3}  
    \]
    By Schur's test and summing, this implies that $\| \mathrm{Op}(b) f\|_\infty \leq C\|f\|_\infty$. 
\end{proof}

\begin{lem}\label{lem:Lnrxcomm1}
    For any $\gamma\in\R$, any integer $k\geq0$, and $\bmu(r,\theta)=e^{i\theta}\bmv(r)$, 
    \[\|\jap{x}^{\gamma}\, \bfL_\nr^{\frac{k}{2}} \, \bmu\|_{L^2_{x}(\bbR^2)}\lesssim \|\jap{x}^{\gamma}\bmu\|_{H^k_x(\bbR^2)},\]
    and
    \[\bigl\| \jx^\gamma \bmu \bigr\|_{H^k_x(\bbR^2)} \lesssim \bigl\| \jx^\gamma \bmu \bigr\|_{L^2_x(\bbR^2)} + \bigl\| \jx^\gamma \bfL_{\nr}^{\frac{k}{2}} \bmu \bigr\|_{L^2_x(\bbR^2)}.\]
\end{lem}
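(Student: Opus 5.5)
We prove the estimates for the scalar model $H=-\Delta+1+V$ introduced above; the matrix case $\bfL_\nr$ is identical thanks to the matrix-valued SG calculus of \cite{MSS}. The tools are the SG-pseudodifferential description of the powers of $H$, namely $H^{k/2}\in \Op S^{k,0}_{\mathrm{cl}}$ via \cite[Theorem 3.2]{MSS}, and the $L^2$-boundedness of SG operators of order $(0,0)$. For $k$ even we can avoid complex powers altogether, since $H^{k/2}$ is then a differential operator with coefficients in $S^{0,0}$. For $k$ odd we write $H^{k/2}=H^{(k-1)/2}H^{1/2}$ and use $H^{1/2}=\jap{D}+A$ with $A\in\Op S^{-1,-2}_{\mathrm{cl}}$ as in \eqref{eq:AOp}. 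The equivariance assumption $\bmu=e^{i\theta}\bmv$ is used only to guarantee $H>0$, so that the spectral powers coincide with the SG powers.

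For the first inequality we conjugate by the weight. Since $\jap{x}^{\gamma}\in S^{0,\gamma}_{\mathrm{cl}}$ is elliptic with inverse $\jap{x}^{-\gamma}\in S^{0,-\gamma}$, the SG composition theorem gives
\begin{equation*}
    B_\gamma:=\jap{x}^{\gamma}H^{k/2}\jap{x}^{-\gamma}\in \Op S^{k,0}.
\end{equation*}
Its symbol is the symbol of $H^{k/2}$ modulo $S^{k-1,-1}$; what matters is only that it stays in $\Op S^{k,0}$. Then $B_\gamma\jap{D}^{-k}\in \Op S^{0,0}$, which is bounded on $L^2$ by Calder\'on--Vaillancourt. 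We write
\begin{equation*}
    \jap{x}^\gamma H^{k/2}\bmu=B_\gamma\jap{D}^{-k}\jap{D}^{k}\big(\jap{x}^\gamma\bmu\big),
\end{equation*}
so that $\|\jap{x}^\gamma H^{k/2}\bmu\|_{L^2}\lesssim\|\jap{x}^{\gamma}\bmu\|_{H^k}$.

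The second inequality is an elliptic estimate. Since $H^{k/2}$ is SG-elliptic of order $(k,0)$, so is $B_\gamma$, and it has a parametrix $Q\in\Op S^{-k,0}$ satisfying $QB_\gamma=I+R$ with $R\in\Op S^{-\infty,-\infty}$. Applying this to $\jap{x}^{\gamma}\bmu$ gives
\begin{equation*}
    \|\jap{x}^{\gamma}\bmu\|_{H^k}\le \|Q\,\jap{x}^{\gamma}H^{k/2}\bmu\|_{H^k}+\|R\,\jap{x}^\gamma\bmu\|_{H^k}.
\end{equation*}
The first term on the right is bounded by $\|\jap{x}^\gamma H^{k/2}\bmu\|_{L^2}$, because $\jap{D}^kQ\in\Op S^{0,0}$. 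The second is bounded by $\|\jap{x}^\gamma\bmu\|_{L^2}$, because $\jap{D}^kR$ is smoothing.

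The main obstacle is justifying that the weight $\jap{x}^\gamma$ can be commuted through $H^{k/2}$ for arbitrary real $\gamma$, including large negative $\gamma$, with only the loss allowed by the statement. This is precisely where the SG structure is essential. The potential $V$ has only $\jap{x}^{-2}$ decay and is not compactly supported, so ordinary H\"ormander classes would not control the $x$-growth of the commutator. In the SG classes, each $x$-derivative of a symbol gains a factor $\jap{x}^{-1}$, so every commutator term lies in $S^{k-1,-1}$ and is harmless. I would cite \cite[Proposition 3.1]{MSS} to confirm that the complex powers depend on the symbol class only through $(\mu,m)$, and I would check explicitly that the matrix-valued $V$, with entries $\tfrac{(1-a)^2}{r^2}$ and $\tfrac{(1-a)U}{r}$, is a classical SG symbol of order $(0,-2)$.
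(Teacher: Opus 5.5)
Your proof is correct. For the first inequality it matches the paper's argument: both reduce the estimate to the $L^2$-boundedness of $\jap{x}^{\gamma}\bfL_\nr^{k/2}\jap{x}^{-\gamma}\jap{D}^{-k}\in\Op S^{0,0}$, with \cite{MSS} supplying the SG description of the spectral power for odd $k$.

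For the second inequality your route differs slightly. The paper inverts exactly. It writes $\jap{x}^\gamma\bmu=\bigl[\jap{x}^{\gamma}(1+\bfL_\nr^{k/2})^{-1}\jap{x}^{-\gamma}\bigr]\jap{x}^\gamma(1+\bfL_\nr^{k/2})\bmu$ and uses that $\jap{D}^k\jap{x}^{\gamma}(1+\bfL_\nr^{k/2})^{-1}\jap{x}^{-\gamma}$ is bounded on $L^2$. This needs positivity of $\bfL_\nr^{k/2}$ on the relevant subspace, so that $1+\bfL_\nr^{k/2}$ is invertible. It also needs the functional-calculus inverse to be an SG operator of order $(-k,0)$, which is a spectral-invariance-type input.

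Your parametrix $Q$ with $QB_\gamma=I+R$, $R\in\Op S^{-\infty,-\infty}$, is purely symbolic. It needs only SG-ellipticity, which holds because $V=O(\jap{x}^{-2})$ gives $|\jap{\xi}^2+V(x)|\gtrsim\jap{\xi}^2$ once $|x|+|\xi|$ is large. The smoothing error lands in the $\|\jx^\gamma\bmu\|_{L^2_x}$ term that the statement allows anyway. So your argument is a little more elementary at this step. The paper's is shorter, with the ``$1+$'' producing the same lower-order term.

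One correction to your last remark: the potential in $\bfL_\nr$ is $\bfV_0$ from \eqref{eq:Lzerodefintro1}. Its upper-left entry is $\frac{a_\theta^2-2a_\theta}{r^2}-\frac32(1-U^2)$, not $\frac{(1-a)^2}{r^2}$. The latter is an entry of $\bfV_\infty$ and behaves like $r^{-2}$ at the origin, so it is not an SG symbol at all, and the check you propose would fail for it. The check must be done for $\bfV_0$. It is smooth at $r=0$ because $a_\theta=O(r^2)$, and its upper-left entry equals $-r^{-2}$ plus an exponentially decaying term at infinity. This is exactly the scalar model $V=-r^{-2}+V_1$.
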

\begin{proof}
    The first estimate reduces to the mapping property
    \[
    \jap{x}^{\gamma}\, \bfL_\nr^{\frac{k}{2}} \, \jap{x}^{-\gamma} \, \jap{D}^{-k}:\: L^2(\R^2)\to L^2(\R^2).
    \]
    By means of~\cite{MSS} and the symbol calculus in this paper we deduce that the operator on the left-hand side lies in $\mathrm{Op}(S^{0,0}_{\mathrm{cl}})$. Similarly, the second estimate reduces to the mapping property
    \[\jap{D}^k\jap{x}^{\gamma}(1+\bfL_\nr^{\frac{k}{2}})^{-1}\jap{x}^{-\gamma}:\: L^2(\R^2)\to L^2(\R^2),\]
    which holds by a similar argument.
\end{proof}

\begin{lem}\label{lem:LnrOmegacomm1}
    With $\Omega=t\nabla_x+x\partial_t$, and with $\bfP_\nr^+$ denoting the projection onto the orthogonal complement of the span of any potential eigenfunctions of $\bfL_\nr$ with negative eigenvalue, we have 
    \[
    \bigr\| \bigl[ \bfL_{\mathrm{nr}}^{\frac12}\bfP_\nr^+, \Omega \bigr] u(t,x) \bigr\|_{L^2_x(\bbR^2)} \lesssim \|\pt u(t)\|_{L^2_x(\bbR^2)} + t \, \bigl\| \jx^{-2} u(t) \bigr\|_{L^2_x(\bbR^2)}
    \]
\end{lem}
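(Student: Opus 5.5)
We will prove the estimate for the scalar model $H=-\Delta+1+V$ on $\R^2$ introduced above. The matrix case for $\bfL_\nr$ is then handled verbatim, using the matrix version of the SG calculus of \cite{MSS}. The finite-rank projection onto negative eigenfunctions commutes harmlessly: $\bfP_\nr^+=I-P_-$ with $P_-$ a finite-rank operator whose kernel is a sum of Schwartz (in fact exponentially decaying) functions. Hence $[\bfL_\nr^{1/2}P_-,\Omega]$ is bounded on $L^2$ by $C(\|\partial_t u\|_{L^2}+t\|\jx^{-2}u\|_{L^2})$. Indeed, commuting $x\partial_t$ gives a term $\partial_t$ against a bounded operator, while commuting $t\nabla_x$ gives $t$ times an operator with rapidly decaying kernel, which we can factor as $t\,B\jx^{-2}$ with $B$ bounded.

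For the main term we use the decomposition $H^{1/2}=\jap{D}+A$ with $A\in\Op S_{\mathrm{cl}}^{-1,-2}$ from \eqref{eq:AOp}. Since $\jap{D}$ commutes with $t\nabla_x$, and $[\jap{D},x]=-i\nabla_\xi\jap{\xi}(D)=-iD\jap{D}^{-1}$ is bounded, we get
\[
[\jap{D},\Omega]u=\partial_t[\jap{D},x]u,
\qquad
\|[\jap{D},\Omega]u\|_{L^2}\lesssim\|\partial_tu\|_{L^2}.
\]
For $A$ we write $[A,\Omega]=[A,x]\partial_t+t[A,\nabla_x]$, using that $A$ is time independent. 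By the SG composition theorem, $[A,x]\in\Op S^{-2,-2}$, which is bounded on $L^2$ and contributes $\lesssim\|\partial_t u\|_{L^2}$. Similarly $[A,\nabla_x]=-\Op(\nabla_x a)$ with symbol in $S^{-1,-3}$. Hence $[A,\nabla_x]\jx^{2}\in\Op S^{-1,-1}$ is bounded on $L^2$, and
\[
t\|[A,\nabla_x]u\|_{L^2}\lesssim t\|\jx^{-2}u\|_{L^2}.
\]

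The main obstacle is justifying the SG-membership $H^{1/2}-\jap{D}\in\Op S^{-1,-2}_{\mathrm{cl}}$, in particular the $x$-order $-2$, which encodes the critical $r^{-2}$ decay of $V$. We plan to obtain it from the parametrix construction for the resolvent $(H-\mu)^{-1}$ in \cite[Theorem 3.2, Proposition 3.1]{MSS}, combined with the Balakrishnan formula
\[
H^{1/2}=\frac{1}{\pi}\int_0^\infty \mu^{-1/2}H(H+\mu)^{-1}\,d\mu .
\]
We then subtract the same formula for $H_0$ and use the second resolvent identity
\[
(H+\mu)^{-1}-(H_0+\mu)^{-1}=-(H+\mu)^{-1}V(H_0+\mu)^{-1}.
\]
This exhibits the difference as $V$ sandwiched between parameter-dependent symbols of order $-2$ in $\xi$, uniformly in $\mu$, and integration in $\mu$ gains back one order, giving $\xi$-order $-1$ and $x$-order $-2$. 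Positivity of $H$ on the range of $\bfP_\nr^+$ guarantees the resolvent exists for all $\mu\ge0$. Finally, the whole computation is applied to $u(t)$ pointwise in $t$ for Schwartz $u$ and extended by density.
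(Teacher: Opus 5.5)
Your proposal is correct and follows essentially the same route as the paper: the paper also writes $H^{1/2}=\jap{D}+A$ with $A\in\Op S^{-1,-2}_{\mathrm{cl}}$, splits $[H^{1/2},\Omega_j]=[H^{1/2},x_j]\partial_t+t[H^{1/2},\partial_{x_j}]$, and uses exactly your symbol-class bookkeeping for $[\jap{D},x_j]$, $[A,x_j]$ and $[A,\partial_{x_j}]\jap{x}^2$. The differences are these. The paper takes $H^{1/2}-\jap{D}\in\Op S^{-1,-2}_{\mathrm{cl}}$ directly from the complex-power theorem of Maniccia--Schrohe--Seiler, and it works with $H>0$ without treating $\bfP_\nr^+$ separately. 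You instead handle the finite-rank part and sketch that symbolic input via the Balakrishnan formula; if negative eigenvalues $\lambda_j$ are present, run that resolvent argument for a positive finite-rank modification such as $H+cP_-$, because $(H+\mu)^{-1}$ itself does not exist at $\mu=-\lambda_j$.
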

\begin{proof}
Once again, we use the scalar operator $H$ in place of $\bfL_{\mathrm{nr}}$. For each spatial component
\[
    \Omega_j=x_j\partial_t+t\partial_{x_j},
    \qquad j=1,2,
\]
we have, since $H^{1/2}$ is independent of $t$,
\[
    [H^{1/2},\Omega_j]u
    =
    [H^{1/2},x_j]\partial_tu
    +
    t[H^{1/2},\partial_{x_j}]u.
\]
Using $H^{1/2}=\la D\ra+A$, this becomes
\[
    [H^{1/2},\Omega_j]u
    =
    [\la D\ra,x_j]\partial_tu
    +
    [A,x_j]\partial_tu
    +
    t[A,\partial_{x_j}]u,
\]
because $\la D\ra$ commutes with $\partial_{x_j}$.
First,
\[
    [\la D\ra,x_j]\in \Op S^{0,0},
\]
since commuting with multiplication by $x_j$ differentiates the symbol in
$\xi_j$. Also,
\[
    [A,x_j]\in \Op S_{\mathrm{cl}}^{-2,-2},
\]
because $A\in \Op S_{\mathrm{cl}}^{-1,-2}$, see \eqref{eq:AOp},  and the commutator with $x_j$
again lowers the $\xi$-order by one. Hence
\[
    [H^{1/2},x_j]\in \Op S^{0,0}.
\]
By the SG $L^2$-mapping theorem, every operator in $\Op S^{0,0}$ is
bounded on $L^2(\R^2)$. Therefore
\[
    \|[H^{1/2},x_j]\partial_tu(t)\|_{L^2_x}
    \lesssim
    \|\partial_tu(t)\|_{L^2_x}.
\]
Next, since $A\in \Op S_{\mathrm{cl}}^{-1,-2}$,
\[
    [A,\partial_{x_j}]
    =
    -\Op(\partial_{x_j}a)
\]
at the symbolic level, where $a$ is the full symbol of $A$. Thus
\[
    [A,\partial_{x_j}]
    \in \Op S_{\mathrm{cl}}^{-1,-3}.
\]
Multiplication by $\la x\ra^2$ has SG-order $(0,2)$, so the SG
composition theorem gives
\[
    [A,\partial_{x_j}]\la x\ra^2
    \in
    \Op S_{\mathrm{cl}}^{-1,-1}
    \subset
    \Op S^{0,0}.
\]
Hence
\[
    \|[A,\partial_{x_j}]u(t)\|_{L^2_x}
    =
    \|[A,\partial_{x_j}]\la x\ra^2\,
      \la x\ra^{-2}u(t)\|_{L^2_x}
    \lesssim
    \|\la x\ra^{-2}u(t)\|_{L^2_x}.
\]
Combining the two estimates yields
\[
    \|[H^{1/2},\Omega_j]u(t)\|_{L^2_x(\R^2)}
    \lesssim
    \|\partial_tu(t)\|_{L^2_x(\R^2)}
    +
    |t|\,\|\la x\ra^{-2}u(t)\|_{L^2_x(\R^2)}.
\]
This concludes the proof. 
\end{proof}

We remark that the key input from
\cite[Theorem 3.2 and Proposition 3.1]{MSS} is the refined symbolic
statement
\[
    H^{1/2}-\la D\ra\in \Op S_{\mathrm{cl}}^{-1,-2},
\]
which is stronger than the ordinary Hörmander statement
\[
    \la D\ra^{-1}H^{1/2}=I+\Psi_{1,0}^{-2}.
\]
The latter is not sufficient for our purposes.


\begin{refcontext}[sorting=nyt]
  \printbibliography

@book{AbStebook,
author = {Abramowitz, M. and Stegun, I.A.},
title= {Handbook of mathematical functions with formulas, graphs, and mathematical tables},
year = {1964},
address = {New York},
publisher = {Dover},
}

@misc{Delort16,
  author = {Delort, J. M.},
  title = {Modified scattering for odd solutions of cubic nonlinear Schr\"{o}dinger equations with potential in dimension one},
  note = {Preprint hal-01396705},
  year = {2016}
}

@book{DMKink,
  author = {Delort, J. M. and Masmoudi, N.},
  title = {Long time dispersive estimates for perturbations of a kink solution of one dimensional cubic wave equations},
  publisher = {EMS Press},
  address = {Berlin},
  series = {Mem. Eur. Math. Soc.},
  volume = {1},
  year = {2022},
  pages = {ix+280}
}

@article{PusSof20,
  title={Bilinear Estimates in the Presence of a Large Potential and a Critical NLS in 3D},
  author={F. Pusateri and A. Soffer},
  journal={Memoirs of the American Mathematical Society},
  year={2020},
  %url={https://api.semanticscholar.org/CorpusID:211677571}
}

@misc{PalPus24,
       author = {{Palacios}, J. M. and {Pusateri}, F.},
        title = "{Local Energy control in the presence of a zero-energy resonance}",
note = {Preprint arXiv:2401.02623}
}

@misc{Li25,
      title={Asymptotic stability of solitary waves for the 1D focusing cubic Schr\"odinger equation}, 
      author={Y. Li},
      note = {Preprint arXiv:2510.17763}
}

@article{GerPus22,
author = {Germain, P. and Pusateri, F.},
year = {2022},
month = {07},
pages = {},
title = {Quadratic Klein-Gordon equations with a potential in one dimension},
volume = {10},
journal = {Forum of Mathematics, Pi},
%doi = {10.1017/fmp.2022.9}
}

@misc{ChenLuhr24,
      title={Asymptotic stability of the sine-Gordon kink}, 
      author={G. Chen and J. L{\"u}hrmann},
       note = {Preprint arXiv:2411.07004}
}

@article{ChenPus24,
author = {Chen, G. and Pusateri, F.},
year = {2024},
month = {02},
pages = {},
title = {On the 1d Cubic NLS with a Non-generic Potential},
volume = {405},
journal = {Communications in Mathematical Physics},
%doi = {10.1007/s00220-023-04894-4}
}

@article{ColGer25,
author = {Collot, C. and Germain, P.},
year = {2025},
month = {04},
pages = {},
title = {Asymptotic stability of solitary waves for one-dimensional nonlinear Schrödinger equations},
journal = {Journal of the European Mathematical Society},
%doi = {10.4171/jems/1642}
}

@misc{LiLuhr24,
      title={Asymptotic stability of solitary waves for the 1D focusing cubic Schr\"odinger equation under even perturbations}, 
      author={Y. Li and J. L{\"u}hrmann},
      note = {Preprint arXiv:2408.15427}
}

@article {BusPer92,
    AUTHOR = {Buslaev, V. S. and Perelman, G. S.},
     TITLE = {Scattering for the nonlinear {S}chr\"{o}dinger equation:
              states that are close to a soliton},
   JOURNAL = {Algebra i Analiz},
    VOLUME = {4},
      YEAR = {1992},
    NUMBER = {6},
     PAGES = {63--102},
  MRNUMBER = {1199635},
}

@article{GHW,
  author = {Germain, P. and Hani, Z. and Walsh, S.},
  title = {Nonlinear resonances with a potential: multilinear estimates and an application to NLS},
  journal = {Int. Math. Res. Not. IMRN},
  number = {18},
  year = {2015},
  pages = {8484--8544}
}

@article{GPR18,
  author = {Germain, P. and Pusateri, F. and Rousset, F.},
  title = {The nonlinear Schr\"{o}dinger equation with a potential in dimension 1},
  journal = {Ann. Inst. H. Poincar\'e C},
  volume = {35},
  number = {6},
  year = {2018},
  pages = {1477--1530}
}

@article{GZ,
  author = {Gesztesy, F. and Zinchenko, M.},
  title = {On spectral theory for Schr\"{o}dinger operators with strongly singular potentials},
  journal = {Math. Nachr.},
  volume = {279},
  number = {9--10},
  year = {2006},
  pages = {1041--1082}
}

@misc{CGP,
      title={Estimates for the Gross-Pitaevskii equation linearized around a vortex}, 
      author={C. Collot and P. Germain and E. Pacherie},
      note = {Preprint arXiv:2503.02953}
}

@incollection{GusVort0,
  author = {Gustafson, S.},
  title = {Stability of vortex solutions of the Ginzburg-Landau heat equation},
  booktitle = {Partial differential equations and their applications},
  pages = {159--165},
  series = {CRM Proc. Lecture Notes},
  volume = {12},
  publisher = {Amer. Math. Soc.},
  address = {Providence, RI},
  year = {1997}
}

@article{GusVort,
  author = {Gustafson, S.},
  title = {Dynamic stability of magnetic vortices},
  journal = {Nonlinearity},
  volume = {15},
  number = {5},
  year = {2002},
  pages = {1717--1728}
}

@article{GusSigVort,
  author = {Gustafson, S. and Sigal, I. M.},
  title = {The Stability of Magnetic Vortices},
  journal = {Commun. Math. Phys.},
  volume = {212},
  year = {2000},
  pages = {257--275}
}

@article{GusSig2,
  author = {Gustafson, S. and Sigal, I. M.},
  title = {Effective dynamics of magnetic vortices},
  journal = {Adv. Math.},
  volume = {199},
  number = {2},
  year = {2006},
  pages = {448--498}
}

@book{bookJT,
  author = {Jaffe, A. and Taubes, C.},
  title = {Vortices and monopoles. Structure of static gauge theories},
  series = {Progress in Physics},
  number = {2},
  publisher = {Birkh\"{a}user},
  address = {Boston, MA},
  year = {1980},
  pages = {v+287},
  isbn = {3-7643-3025-2}
}

@misc{KMS,
  author = {Krieger, J. and Miao, S. and Schlag, W.},
  title = {A stability theory beyond the co-rotational setting for critical Wave Maps blow up},
  note = {Preprint arXiv:2009.08843}
}

@article{KST,
  author = {Krieger, J. and Schlag, W. and Tataru, D.},
  title = {Renormalization and blow up for charge one equivariant critical wave maps},
  journal = {Invent. Math.},
  volume = {171},
  number = {3},
  year = {2008},
  pages = {543--615}
}

@article{KriSchNLS,
  author = {Krieger, J. and Schlag, W.},
  title = {Stable manifolds for all monic supercritical focusing nonlinear Schr\"{o}dinger equations in one dimension},
  journal = {J. Amer. Math. Soc.},
  volume = {19},
  number = {4},
  year = {2006},
  pages = {815--920}
}

@book{bookManSut,
  author = {Manton, N. and Sutcliffe, P.},
  title = {Topological solitons},
  series = {Cambridge Monographs on Mathematical Physics},
  publisher = {Cambridge University Press},
  address = {Cambridge},
  year = {2004},
  pages = {xii+493}
}

@book{NSbook,
  author = {Nakanishi, K. and Schlag, W.},
  title = {Invariant manifolds and dispersive Hamiltonian evolution equations},
  series = {Zurich Lectures in Advanced Mathematics},
  publisher = {European Mathematical Society (EMS)},
  address = {Z\"{u}rich},
  year = {2011},
  pages = {vi+253}
}

@article {MSS,
    AUTHOR = {Maniccia, L. and Schrohe, E. and Seiler, J.},
     TITLE = {Complex powers of classical {SG}-pseudodifferential operators},
   JOURNAL = {Ann. Univ. Ferrara Sez. VII Sci. Mat.},
    VOLUME = {52},
      YEAR = {2006},
    NUMBER = {2},
     PAGES = {353--369}
}

@misc{PPGL,
  author = {Palacios, J. M. and Pusateri, F.},
  title = {Linearized dynamic stability for vortices of Ginzburg-Landau evolutions},
  note = {Preprint arXiv:2409.04393}
}

@book{RS,
  author = {Reed, M. and Simon, B.},
  title = {Methods of modern mathematical physics. II. Fourier analysis, self-adjointness},
  publisher = {Academic Press},
  address = {New York-London},
  year = {1975},
  pages = {xv+361},
  msc = {47-02 (81.47)}
}

@article{KS06,
  title={Stable manifolds for all monic supercritical focusing nonlinear Schr{\"o}dinger equations in one dimension},
  author={Krieger, J. and Schlag, W.},
  journal={Journal of the American Mathematical Society},
  year={2006},
  volume={19},
  pages={815-920},
  %url={https://api.semanticscholar.org/CorpusID:120867346}
}

@book{RSIV,
  address = {New York},
  author = {Reed, M. and Simon, B.},
  publisher = {Academic Press},
  title = {Methods of Modern Mathematical Physics. IV Analysis
  of Operators},
  year = 1978
}

@article{Stu,
  author = {Stuart, D.},
  title = {Dynamics of abelian Higgs vortices in the near Bogomolny regime},
  journal = {Comm. Math. Phys.},
  volume = {159},
  number = {1},
  year = {1994},
  pages = {51--91}
}

@incollection{DR1,
	author = {Dafermos, M. and Rodnianski, I.},
	booktitle = {X{VI}th {I}nternational {C}ongress on {M}athematical {P}hysics},
	mrnumber = {2730803},
	pages = {421--432},
	title = {A new physical-space approach to decay for the wave equation with applications to black hole spacetimes},
	year = {2010},
}

@article{Schlue1,
	author = {Schlue, V.},
	journal = {Anal. PDE},
	mrnumber = {3080190},
	number = {3},
	pages = {515--600},
	title = {Decay of linear waves on higher-dimensional {S}chwarzschild black holes},
	volume = {6},
	year = {2013},
}

@article{DSS12,
    author      = "Donninger, R. and Schlag, W. and Soffer, A.",
    title       = "{On pointwise decay of linear waves on a Schwarzschild black hole background}",
    journal     = "Commun. Math. Phys.",
    volume      = "309",
    year        = "2012",
    pages       = "51-86",
}

@misc{OPT26,
      title={The good commutator approach to global asymptotics for the Schr\"odinger equation with variable coefficients}, 
      author={S.-J. Oh and F. Pasqualotto and N. Tang},
      note = {Preprint arXiv:2608.00268}
}

@article {GermPusZhang22,
    AUTHOR = {Germain, P. and Pusateri, F. and Zhang, Z.},
     TITLE = {On 1d quadratic {K}lein-{G}ordon equations with a potential
              and symmetries},
   JOURNAL = {Arch. Ration. Mech. Anal.},
    VOLUME = {247},
      YEAR = {2023},
    NUMBER = {2},
     PAGES = {Paper No. 17, 39},
}

@article{SSS10p2,
 author = {W. Schlag and A. Soffer and W. Staubach},
 journal = {Transactions of the American Mathematical Society},
 number = {1},
 pages = {289--318},
 title = {Decay for the wave and Schr\"odinger evolutions on maniffolds with conical ends, Part II},
 %urldate = {2026-06-03},
 volume = {362},
 year = {2010}
}

@article{Hintz22,
  title={A sharp version of Price's law for wave decay on asymptotically flat spacetimes},
  author={Hintz, P.},
  journal={Communications in Mathematical Physics},
  volume={389},
  number={1},
  pages={491--542},
  year={2022},
}

@article{SSS10p1,
title = "Decay for thewaveand schrodinger evolutions on manifolds with conical ends, Part I",
author = "W. Schlag and A. Soffer and W. Staubach",
year = "2010",
volume = "362",
pages = "19--52",
journal = "Transactions of the American Mathematical Society",
number = "1",
}

@article{MTT12,
title = {Price’s law on nonstationary space–times},
journal = {Advances in Mathematics},
volume = {230},
number = {3},
pages = {995-1028},
year = {2012},
author = {J. Metcalfe and D. Tataru and M. Tohaneanu},
}

@article{DSS11,
title = {A proof of Price's Law on Schwarzschild black hole manifolds for all angular momenta},
journal = {Advances in Mathematics},
volume = {226},
number = {1},
pages = {484-540},
year = {2011},
author = {R. Donninger and W. Schlag and A. Soffer},
}

@article{BVW18,
title = {Asymptotics of scalar waves on long-range asymptotically Minkowski spaces},
journal = {Advances in Mathematics},
volume = {328},
pages = {160-216},
year = {2018},
author = {D. Baskin and A. Vasy and J.Wunsch},
}

@article{BVW15,
 author = {D. Baskin and A. Vasy and J. Wunsch},
 journal = {American Journal of Mathematics},
 number = {5},
 pages = {1293--1364},
 title = {Asymptotics of radiation fields in asymptotically Minkowski space},
 %urldate = {2026-06-03},
 volume = {137},
 year = {2015}
}

@article{AAG18,
title = {Late-time asymptotics for the wave equation on spherically symmetric, stationary spacetimes},
journal = {Advances in Mathematics},
volume = {323},
pages = {529-621},
year = {2018},
author = {Y. Angelopoulos and S. Aretakis and D. Gajic},
}

@misc{LukOh24,
      title={Late time tail of waves on dynamic asymptotically flat spacetimes of odd space dimensions}, 
      author={J. Luk and S.-J. Oh},
      note = {Preprint arXiv:2404.02220}
}

@article{Mos1,
	author = {Moschidis, G.},
	journal = {Ann. PDE},
	mrnumber = {3493208},
	number = {1},
	pages = {Art. 6, 194},
	title = {The {$r^p$}-weighted energy method of {D}afermos and {R}odnianski in general asymptotically flat spacetimes and applications},
	volume = {2},
	year = {2016},
	}

@misc{KSW25,
      title={The cubic NLS on the line with an inverse square potential}, 
      author={J. Krieger and W. Schlag and K. Widmayer},
      note = {Preprint arXiv:2508.01919} 
}

@misc{Stewart24,
      title={Asymptotics for the cubic 1D NLS with a slowly decaying potential}, 
      author={G. Stewart},
      note = {Preprint arXiv:2408.03391} 
}

@article{DKSW16,
	author = {Donninger, R. and Krieger, J. and Szeftel, J. and Wong, W. W. Y.},
	%doi = {10.1215/00127094-3167383},
	fjournal = {Duke Mathematical Journal},
	%issn = {0012-7094},
	journal = {Duke Math. J.},
	%mrclass = {53C42 (35B30 35B35 35B40 35B44 35L70 53A15 53C44)},
	mrnumber = {3474816},
	%mrreviewer = {David James Hartley},
	number = {4},
	pages = {723--791},
	title = {Codimension one stability of the catenoid under the vanishing mean curvature flow in {M}inkowski space},
	%url = {https://doi.org/10.1215/00127094-3167383},
	volume = {165},
	year = {2016},
	%bdsk-url-1 = {https://doi.org/10.1215/00127094-3167383}
    }

@article{DonKri16,
	author = {Donninger, R. and Krieger, J.},
	%doi = {10.1090/memo/1142},
	fjournal = {Memoirs of the American Mathematical Society},
	%isbn = {978-1-4704-1873-1; 978-1-4704-2877-8},
	%issn = {0065-9266},
	journal = {Mem. Amer. Math. Soc.},
	%mrclass = {35L10 (35B40 35B45 42B35 53C44)},
	mrnumber = {3478759},
	%mrreviewer = {Marcio Antonio Jorge Silva},
	number = {1142},
	pages = {v+80},
	title = {A vector field method on the distorted {F}ourier side and decay for wave equations with potentials},
	%url = {https://doi.org/10.1090/memo/1142},
	volume = {241},
	year = {2016},
	%bdsk-url-1 = {https://doi.org/10.1090/memo/1142}
    }

@article{CGV13,
author = {Cuccagna, S. and Georgiev, V. and Visciglia, N.},
title = {Decay and Scattering of Small Solutions of Pure Power NLS in $\mathbb{R}$ with $p>3$ and with a Potential},
journal = {Communications on Pure and Applied Mathematics},
volume = {67},
number = {6},
pages = {957-981},
year = {2014}
}

@ARTICLE{RodTao15,
       author = {{Rodnianski}, I. and {Tao}, T.},
        title = "{Effective Limiting Absorption Principles, and Applications}",
      journal = {Communications in Mathematical Physics},
     %keywords = {Helmholtz Equation, Pseudodifferential Operator, Decay Estimate, Strichartz Estimate, Limit Absorption Principle},
         year = 2015,
        month = jan,
       volume = {333},
       number = {1},
        pages = {1-95},
          %doi = {10.1007/s00220-014-2177-8},
       %adsurl = {https://ui.adsabs.harvard.edu/abs/2015CMaPh.333....1R},
      %adsnote = {Provided by the SAO/NASA Astrophysics Data System}
}

@article{KriSch05,
  title={Non-generic blow-up solutions for the critical focusing NLS in 1-D},
  author={Krieger, J. and Schlag, W.},
  journal={Journal of the European Mathematical Society},
  year={2005},
  volume={11},
  pages={1-125},
  %url={https://api.semanticscholar.org/CorpusID:119641236}
}

@article{Naumkin16,
  title = {Sharp asymptotic behavior of solutions for cubic nonlinear Schr{\"o}dinger equations with a potential},
  author = {Naumkin, I. P.},
  journal = {Journal of Mathematical Physics},
  volume = {57},
  number = {5},
  pages = {051501},
  year = {2016},
  %doi = {10.1063/1.4947470},
  %url = {https://doi.org}
}

@article{Naumkin18,
  title={Nonlinear Schr{\"o}dinger equations with exceptional potentials},
  author={Naumkin, I. P.},
  journal={Journal of Differential Equations},
  volume={265},
  number={9},
  pages={4575--4631},
  year={2018},
  %publisher={Elsevier}
}

@misc{LPPSS1PartI,
shorthand = {L{\"u}h+26a},
  author = {L\"uhrmann, J. and Palacios, J. M.  and Pusateri, F. and Schlag, W. and Shahshahani, S.},
  title  = {Asymptotic stability of the degree-one vortex in the abelian Yang--Mills--Higgs model: Spectral Theory and Numerics},
  note   = {Preprint},
  year   = {2026}
}

@misc{LPPSS3,
shorthand = {L{\"u}h+26c},
  author       = {L{\"u}hrmann, J. and Palacios, J. M. and Pusateri, F. and Schlag, W. and Shahshahani, S.},
  title        = {Asymptotic stability of the degree-one vortex in the abelian Yang-Mills-Higgs model},
  year         = {2026},
  note         = {Preprint}
}

@misc{LSS1,
      title={On the Gross-Pitaevskii evolution linearized around the degree-one vortex}, 
      author={L\"uhrmann, J. and Schlag, W. and Shahshahani, S.},
      note = {Preprint arXiv:2503.07345}
}

@book{GrafakosCFA,
author = {Grafakos, L.},
year = {2014},
month = {01},
pages = {},
title = {Classical Fourier Analysis},
volume = {249}}
\end{refcontext}

\end{document}